\numberwithin{equation}{section}
\numberwithin{figure}{section}
\colorlet{darkishRed}{red!60!black}
\colorlet{darkishBlue}{blue!60!black}
\colorlet{darkishGreen}{green!50!black}
\colorlet{lightishGreen}{green!70!black}
\theoremstyle{plain}
\newtheorem{thm}{Theorem}[section]
\crefname{thm}{Theorem}{Theorems}
\newtheorem{lem}[thm]{Lemma}
\crefname{lem}{Lemma}{Lemmas}
\newtheorem{cor}[thm]{Corollary}
\newtheorem{prop}[thm]{Proposition}
\newtheorem{prob}[thm]{Problem}
\newtheorem*{prob*}{Problem}
\newtheorem{conj}[thm]{Conjecture}
\newtheorem{obs}[thm]{Observation}
\crefname{obs}{Observation}{Observations}
\newtheorem{mainresult}{Theorem}
\crefname{mainresult}{Theorem}{Theorems}
\theoremstyle{definition}
\newcommand{\R}{\ensuremath{\mathbb{R}}}
\newcommand{\N}{\ensuremath{\mathbb{N}}}
\newcommand{\Z}{\ensuremath{\mathbb{Z}}}
\newcommand{\cC}{\ensuremath{\mathcal{C}}}
\newcommand{\cD}{\ensuremath{\mathcal{D}}}
\newcommand{\cE}{\ensuremath{\mathcal{E}}}
\newcommand{\cV}{\ensuremath{\mathcal{V}}}
\newcommand{\Aut}{\textnormal{Aut}}
\newcommand{\sub}{\subseteq}
\def\qt{quasi-tran\-si\-tive}
\def\qi{quasi-iso\-metric}
\def\qiy{quasi-iso\-me\-try}
\def\lf{locally finite}
\newcommand{\comment}[1]{}
\def\?#1{\vadjust{\vbox to 0pt{\vss\vskip-8pt\leftline{%
     \llap{\hbox{\vbox{\pretolerance=-1
     \doublehyphendemerits=0\finalhyphendemerits=0
     \hsize16truemm\tolerance=10000\small
     \lineskip=0pt\lineskiplimit=0pt
     \rightskip=0pt plus16truemm\baselineskip8pt\noindent
     \hskip0pt        %(without this, the first word is never hyphenated!)
     #1\endgraf}\hskip7truemm}}}\vss}}}
\newenvironment{txteq*}
  {
    \begin{equation*}
    \begin{minipage}[c]{0.85\textwidth} % set width to 0.9 x textwidth
    \em                                % switch on emph
  }
  {\end{minipage}\end{equation*}\ignorespacesafterend}
\let\eps=\varepsilon
\let\phi=\varphi
\newcommand{\defn}[1]{{\color{darkishRed}{\emph{#1}}}}
\newcommand{\defnm}[1]{{\color{darkishRed}{#1}}}
\newenvironment{customthm}[1]
  {\innercustomthm}
  {\endinnercustomthm}
\newcounter{claimcounter}[thm]
\newtheorem{claim}[claimcounter]{Claim}
\newtheorem*{claim*}{Claim}
\crefname{claim}{Claim}{Claims}
\newenvironment{claimproof}
{\begin{proof}
 [Proof.]
 \vspace{-1.5\parsep}
}
{\renewcommand{\qed}{\hfill $\Diamond$} \end{proof}}
\newcommand\thankssymb[1]{\textsuperscript{\@fnsymbol{#1}}}
\begin{document}
	
	\title{Asymptotic half-grid and full-grid minors}
	\author[Sandra Albrechtsen]{Sandra Albrechtsen}
	\author[Matthias Hamann]{Matthias Hamann\thankssymb{1}}
	\thanks{\thankssymb{1} Funded by the Deutsche Forschungsgemeinschaft (DFG) - Project No.\ 549406527.}
	\address{University of Hamburg, Department of Mathematics, Bundesstra{\ss}e 55 (Geomatikum), 20146 Hamburg, Germany}
	\email{\{sandra.albrechtsen,matthias.hamann\}@uni-hamburg.de}
	
	\keywords{Asymptotic minor, $K$-fat minor, diverging subdivision, half-grid, full-grid, quasi-transitive graph, coarse graph theory}
	\subjclass[2020]{05C83, 20F69, 05C63, 51F30}

	\begin{abstract}
		We prove that every locally finite, quasi-transitive graph with a thick end whose cycle space is generated by cycles of bounded length contains the full-grid as an asymptotic minor and as a diverging minor. This in particular includes all locally finite Cayley graphs of finitely presented groups that are not virtually free, and partially solves problems of Georgakopoulos and Papasoglu and of Georgakopoulos and Hamann.
		
		Additionally, we show that every (not necessarily quasi-transitive) graph of finite maximum degree which has a thick end and whose cycle space is generated by cycles of bounded length contains the half-grid as an asymptotic minor and as a diverging minor.
	\end{abstract}
	
	\maketitle
	
	\section{Introduction}\label{sec_intro}
	
	Fat minors are a coarse or metric variant of graph minors. They first appeared in works of Chepoi, Dragan, Newman, Rabinovich and Vaxes~\cite{FatK23Minor} and of Bonamy, Bousquet, Esperet, Groenland, Liu, Pirot and Scott~\cite{asymptoticdimminorclosed}. They play an important role in many (open) problems at the intersection of structural graph theory and coarse geometry -- an area which can be described as `coarse graph theory'. 
	
	A \defn{model} of a graph $X$ in a graph~$G$ is a collection of connected \emph{branch sets} and \emph{branch paths} in~$G$ such that after contracting each branch set to a vertex, and each branch path to an edge, we obtain a copy of~$X$. A model of $X$ is \defn{$K$-fat} (in $G$), for some~$K \in \N$, if its branch sets and paths are pairwise at least~$K$ apart, except that we do not require this for incident branch set-path pairs (see also \cref{subsec:fatminors} for the definition). We say that $X$ is a \defn{($K$-fat) minor} of $G$ if $G$ contains a ($K$-fat) model of $X$. The graph $X$ is an \defn{asymptotic minor} of $G$ if $X$ is a $K$-fat minor of $G$ for every $K \in \N$.
	An important advantage of asymptotic minors over the usual minors is that they are preserved under quasi-isometries, 
	and in particular, it does not depend on the choice of a finite generating set whether a Cayley graph of a finitely generated group contains a fixed graph as an asymptotic minor~\cite{GP2023+}.
	
	Recently, Georgakopoulos and Papasoglu~\cite{GP2023+} suggested a systematic approach of graph problems from a coarse perspective, establishing the bases of what they called 'coarse graph theory', where they presented results and open problems regarding the interplay of geometry and graphs, many of which concern fat minors. These problems have already attracted quite some attention; some (partial) solutions can be found in \cites{FatK4Minor,M24+,coarsecacti,CounterexAgelosPanosConjecture,FatK23Minor,ExcludingKAleph0,radialpathwidth}. Our main contribution is a partial resolution of a problem of Georgakopoulos and Papasoglu about asymptotic grid minors in quasi-transitive graphs~\cite{GP2023+}*{Problem~7.3}. To state this problem, we first need some definitions.
	\medskip
	
	An \defn{end} of a graph $G$ is an equivalence class of rays where two rays in $G$ are equivalent if there are infinitely many pairwise disjoint paths between them in $G$. An end is
	\defn{thick} if it has infinitely many pairwise disjoint rays. The \defn{full-grid} is the graph on $\Z \times \Z$ in which two vertices $(m,n)$ and $(m',n')$ are adjacent if and only if $|m - m'| + |n - n'| = 1$, and the \defn{half-grid}\footnote{Note that usually the grid on $\N^2$ is referred to as the half-grid. However, for us it will be more convenient to work with the grid on $\N \times \Z$. It is easy to see that our results about the half-grid also hold for the grid on $\N^2$.} is its induced subgraph on $\N \times \Z$.
	
	One of the cornerstones of infinite graph theory is \emph{Halin's Grid Theorem}~\cite{halin65}*{Satz 4$'$}, which asserts that every graph with a thick end contains the half-grid as a minor.
	In contrast to that, graphs with a full-grid as a minor form a proper subclass of the graphs with a thick end: while it is clearly true that every graph with a full-grid minor has a thick end, the converse is false in general, as the half-grid itself already witnesses. However, as it turned out, if we only consider graphs which are \defn{\qt}, i.e.\ graphs whose vertex set has only finitely many orbits under its automorphism group, then these two graph classes coincide. Indeed, Georgakopoulos and the second author~\cite{GH24+} showed that every \lf, \qt\ graph with a thick end contains the full-grid as a minor.
	
	Georgakopoulos and Papasoglu \cite{GP2023+} asked whether this result can be generalised to the coarse setting in the following sense. 
	
	\begin{prob}{\cite{GP2023+}*{Problem~7.3}} \label{mainprob:HalfGrid:CayleyGraphs}
		Let $G$ be a locally finite %\footnote{Note that a Cayley graph is locally finite if and only if it is the Cayley graph with respect to a finite generating set. If we allow infinite generating sets, then the problem is obviously false, as e.g.\ choosing as generating set the whole group yields the infinite clique as Cayley graph, which contains no graph other than $K_1$ as an asymptotic minor.} 
		Cayley graph of a one-ended finitely generated group. Must the half-grid be an asymptotic minor of $G$? Must the full-grid be an asymptotic minor of~$G$?
	\end{prob}
	
	\noindent Note that every Cayley graph of a group is (quasi-)transitive. Moreover, the unique end of a one-ended, quasi-transitive graph is always thick~\cites{T1992,CLM2019}. We remark that \cref{mainprob:HalfGrid:CayleyGraphs} can also be thought of as a coarse version of the grid minor theorem from Robertson and Seymour \cite{GMV} (because locally finite, quasi-transitive graphs with unbounded tree width are exactly those which contain the infinite (half- and full-)grid as a minor \cites{GH24+,KM08}).
	%For graphs that are not locally finite this was proven by Carmesin, Lehner and Möller~\cite{CLM2019}.
	\medskip
	
	Our main theorem partially answers both questions in the affirmative, under the additional assumption that $G$ is a \lf\ Cayley graph of a finitely presented group.
	In fact, we show the following result. 
	
	\begin{mainresult} \label{main:AsymptoticFullGrid}
		Let $G$ be a locally finite, \qt\ graph whose cycle space is generated by cycles of bounded length.
		If $G$ has a thick end, then the full-grid is an asymptotic minor of~$G$.
	\end{mainresult}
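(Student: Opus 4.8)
Let $\omega$ be a thick end of $G$. The plan is to show that, for every $K \in \N$, the graph $G$ contains a $K$-fat model of the full-grid; since the full-grid has a single thick end, I would in fact produce such a model ``converging to $\omega$'', meaning that after deleting any finite set of vertices all of its branch sets and branch paths eventually lie in the component at $\omega$. Observe first that a locally finite quasi-transitive graph has bounded maximum degree, so the hypotheses of our half-grid theorem are satisfied; in particular the half-grid is already an asymptotic minor of $G$. The remaining task is to upgrade a fat half-grid minor to a fat full-grid minor --- the coarse analogue of the step carried out in~\cite{GH24+} --- and it is here that the hypothesis on the cycle space will be used.

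First I would extract from (a mild strengthening of the proof of) the half-grid theorem, for each large parameter $K'$, a $K'$-fat model $\mathcal H$ of the half-grid converging to $\omega$ together with some control on its shape: it should be anchored along pairwise far-apart rays of $\omega$ and have a well-behaved boundary double ray $B$ (playing the role of $\{0\}\times\Z$) that separates the interior of $\mathcal H$ from the part of $G$ ``behind'' it and admits no shortcuts. The bounded-cycle-space hypothesis is exactly what makes such a tidy boundary available: it controls the coarse topology near $\omega$, so that the levels of the half-grid close up coherently and the separator $B$ can be taken to be an actual double ray rather than an unwieldy fat set.

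Next I would double $\mathcal H$ into a full-grid model using quasi-transitivity. The full-grid is the union of two copies of the half-grid glued along their boundary double rays, so I would look for $g \in \Aut(G)$ fixing $\omega$ --- such automorphisms exist because $G$ is quasi-transitive with a thick end --- for which $g(\mathcal H)$ sits on the far side of $B$ from $\mathcal H$ with $g(B)$ running parallel and close to $B$, while being otherwise disjoint from and far from $\mathcal H$. Given such a $g$, one identifies $B$ with $g(B)$ into a single double ray and reroutes the finitely many branch paths needed to join the two boundary rays; the union is then a model of the full-grid. Choosing $K'$ large enough, and if necessary replacing $g$ by a suitable power or passing to a deeper nested copy of $\mathcal H$ to gain room, one verifies $K$-fatness: the two halves are $K$-fat individually, they are $K$-far apart away from $B$ because $g$ moves $\mathcal H$ far from itself there, and the $O(1)$ rerouted branch paths near $B$ stay $K$-far from all non-incident parts because $\mathcal H$ was chosen very fat to begin with.

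The hard part, I expect, is the existence of a suitable ``mirroring'' automorphism $g$: one needs an element of $\Aut(G)$ whose translate of $\mathcal H$ simultaneously avoids $\mathcal H$ (at distance $\ge K$) away from $B$ and can be glued to $\mathcal H$ along $B$. This is where quasi-transitivity and bounded cycle space must be played against each other --- quasi-transitivity supplies automorphisms in abundance, while bounded cycle space rules out the pathological coarse geometries near $\omega$ in which no such automorphism could exist. (Some global hypothesis of this kind is genuinely needed: dropping bounded degree already allows ``tree of bundles'' graphs with a thick end and bounded cycle space but no fat half-grid minor whatsoever.) A secondary, more technical, difficulty is the $K$-fatness bookkeeping, since the connecting branch paths near $B$ have only a bounded region in which to manoeuvre; one must choose all the gaps and sub-structures large enough, and in the right order, to guarantee the required separations.
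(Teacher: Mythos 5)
Your overall strategy (first get a fat half-grid anchored in the thick end, then produce a second half-grid on the other side of its boundary double ray and glue the two along it) starts in the same direction as the paper, but the step you yourself flag as the hard part -- the existence of a ``mirroring'' automorphism $g$ with $g(\mathcal H)$ on the far side of $B$, $g(B)$ parallel to $B$, and $g(\mathcal H)$ otherwise far from $\mathcal H$ -- is a genuine gap, and it is not merely a technical one. Quasi-transitivity only gives finitely many vertex orbits; it gives no control whatsoever on how an automorphism interacts with a prescribed configuration $(\mathcal H,B)$, and there is no reason an element of $\Aut(G)$ should act like a reflection across, or a translation transverse to, a particular quasi-geodesic double ray. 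The paper never uses such an automorphism. Instead it constructs the second half-grid from scratch: it shows that (unless one is already done) there is a quasi-geodesic double $\eps$-ray $R$ such that $G-B_G(R,L)$ has \emph{two thick components}, and then applies the half-grid theorem (\cref{thm:HalfGrid:BoundedCycles:copy}) once into each component, gluing the two escaping subdivisions along their common first vertical ray $R$ (\cref{lem:TwoThickCompsYieldFullGrid}). The automorphisms are used quite differently from your proposal: in \cref{lem:TwoThickComponents} one translates \emph{along} the ray (mapping vertices $r_i$ far out on $R_{\geq 0}$ to a fixed vertex and taking a limit), which upgrades a ``half-thick'' second side to a thick one; no automorphism ever has to carry $\mathcal H$ across $B$.

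Moreover, your plan is missing the dichotomy that makes the paper's argument work at all. It can happen that no second side of the boundary ray is available in the configuration you set up -- the end may ``wrap around'' the half-grid, witnessed by $H$-paths jumping over its vertical rays, or the third ray of the quasi-geodesic $3$-star may run close to or far above the half-grid. In all of these failure cases the paper does not find a second half-grid at all; it instead extracts an \emph{ultra fat} $K_{\aleph_0}$ minor (\cref{lem:KFatHexGridWithJumpingPaths}, used inside \cref{lem:ThickAndHalfThickComponent}), which by \cref{obs:UFImpliesAsym} yields every countable graph, in particular the full-grid, as an asymptotic minor. Your proposal has no counterpart to this alternative outcome, so even granting generous fatness bookkeeping it would get stuck precisely in the cases where the mirroring picture is false. (A smaller inaccuracy: gluing two half-grids along identified boundary rays requires handling infinitely many horizontal connections incident to the shared column, not ``finitely many'' rerouted branch paths; this is manageable, but the decisive issues are the two above.)
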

	
	\noindent (We refer the reader to \cref{sec:CycleSpace} for the definitions concerning the cycle space.)
	\medskip
	
	Note that \cref{main:AsymptoticFullGrid} includes all \lf\ Cayley graphs of finitely presented groups. 
	Examples such as inaccessible graphs and groups \cites{D1993,D2011} or Diestel-Leader graphs \cites{DL2001,EFW2012} indicate that the geome\-try of arbitrary \lf, \qt\ (or Cayley) graphs may be far more involved. 
	This is why generalising \cref{main:AsymptoticFullGrid} to \lf\ Cayley graphs of arbitrary finitely generated groups or even to all \lf, \qt\ graphs may be much harder, and will require a different approach to that presented in this paper (see the sketch of the proof in \cref{sec:NewDefs} for details).
	
	For the proof of \cref{main:AsymptoticFullGrid} we construct, for every such graph $G$, a single model of the full-grid (see \cref{thm:AsymptoticFullGrid}), which can be turned into a $K$-fat model of the full-grid, for every $K \in \N$, by deleting some of its branch sets and paths. Moreover, it can be turned into a model of the full-grid that \emph{diverges}:
	for any two diverging sequences of vertices and/or edges of the full-grid also their branch sets/paths diverge in~$G$ (see \cref{subsec:DivSubdivisions} for the definition).
	
	\begin{mainresult} \label{main:DivergingFullGrid}
		Let $G$ be a locally finite, \qt\ graph whose cycle space is generated by cycles of bounded length.
		If $G$ has a thick end, then the full-grid is a diverging minor of $G$.
	\end{mainresult}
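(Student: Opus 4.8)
The plan is to obtain \cref{main:DivergingFullGrid} directly from the single full-grid model built for \cref{main:AsymptoticFullGrid}, that is, from \cref{thm:AsymptoticFullGrid}, exploiting that this model is constructed so as to be ``fat at infinity''. Concretely, I would use that \cref{thm:AsymptoticFullGrid} provides a model $\mathcal{M}$ of the full-grid $H$ in $G$ whose branch sets and branch paths are placed in $G$ in a scale-controlled way: as the indexing vertices and edges of $H$ recede to infinity, the corresponding branch objects recede from one another in $G$, in the sense that for every $K \in \N$ only finitely many non-incident pairs of branch objects of $\mathcal{M}$ lie at distance less than $K$ in $G$.

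Granting this, the deduction is short. First I would record the elementary fact that any model $\mathcal{M}'$ of $H$ in $G$ with this property is automatically a diverging model. Indeed, if $(a_i)_{i \in \N}$ and $(b_i)_{i \in \N}$ were vertices and/or edges of $H$ with $d_H(a_i, b_i) \to \infty$ while the distance in $G$ between the corresponding branch objects stayed bounded by some $N$, then, since $d_H(a_i,b_i) \to \infty$ forces the pairs $(a_i,b_i)$ to be non-incident for all large $i$, these pairs would all lie among the finitely many non-incident pairs of branch objects at $G$-distance less than $N+1$; hence some pair would occur infinitely often, contradicting $d_H(a_i, b_i) \to \infty$. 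It thus remains to arrange that $\mathcal{M}$ itself satisfies the hypothesis of this fact --- which is immediate from the scale control, possibly after first deleting the finitely many branch objects indexed inside a fixed bounded ``central'' region of $H$ together with those incident to them and extracting a new full-grid model from what remains (this is possible because deleting a bounded set from the full-grid leaves a graph that still has the full-grid as a minor). The resulting model then diverges, so the full-grid is a diverging minor of $G$.

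The real content --- and the main obstacle --- therefore lies entirely in the scale-controlled model supplied by \cref{thm:AsymptoticFullGrid}: one must lay out the grid-like pattern of rays and their bounded-length connecting paths inside $G$ so that pieces indexed far apart in $H$ genuinely recede from each other in $G$, at a rate tending to infinity. This is precisely where the three hypotheses enter. The cycle space being generated by cycles of bounded length supplies the tree-amalgamation / accessibility-type structure along which the grid can be routed with controlled distortion; quasi-transitivity makes this routing essentially periodic, so that the distance control is uniform; and local finiteness both underlies the compactness arguments and ensures that every branch object of $\mathcal{M}$ is finite, so that a long branch path indexed near the centre of $H$ cannot stay close to branch objects indexed arbitrarily far out. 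One last point needs care but is routine given local finiteness: the divergence condition quantifies over mixed sequences of vertices and edges of $H$, hence over branch sets and possibly long branch paths, and one reduces the general case to the case where both sequences tend to infinity in $H$ --- if one of them stays within a bounded region of $H$ then its branch objects lie in a fixed finite part of $G$ while, by local finiteness, the branch objects of the other escape every bounded set of $G$.
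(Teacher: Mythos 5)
There is a genuine gap: you attribute to \cref{thm:AsymptoticFullGrid} a property it does not state. That theorem yields either $K_{\aleph_0} \prec^\eps_{UF} G$ or an \emph{escaping} subdivision of the hexagonal full-grid, and ``escaping'' only controls the radial placement of the pieces relative to the central double ray $S^0$ (each $S^i$ and the $P_{ij}$ are confined to suitable annuli $G[S^0,M_i]-B_G(S^0,\cdot)$, with the two halves of the full-grid separated). It does \emph{not} give your scale-control hypothesis that for every $K$ only finitely many non-incident pairs of branch objects are at $G$-distance less than $K$: nothing in the definition of escaping prevents, say, the infinitely many horizontal paths $P_{ij}$, $j\in\Z$, between two fixed consecutive vertical rays from repeatedly coming within bounded distance of one another, or the upward and downward tails of the same annulus, or paths in adjacent annuli, from doing so. Your elementary reduction (scale-controlled $\Rightarrow$ diverging, after discarding a bounded central region) is fine, but its hypothesis is exactly the hard part, and you treat it as already ``supplied by \cref{thm:AsymptoticFullGrid}'' rather than proving it. In the paper this bridge is the content of \cref{lem:DivSubHexGridOfEscHexGrid} (with \cref{cor:FurtherPropertiesOfEscapingHGs,prop:HexGridAfterDeletingPaths}): one keeps the vertical double rays but recursively re-selects the horizontal paths, at each step choosing, among the infinitely many candidates, one far from the finitely many previously fixed objects and from the relevant initial segments of the vertical rays; this uses local finiteness, the escaping conditions, and crucially the divergence of $S^0$ to push the chosen paths out along the rays. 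Without an argument of this kind your proposal does not go through.

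Two smaller points. First, the dichotomy in \cref{thm:AsymptoticFullGrid} also has the branch $K_{\aleph_0}\prec^\eps_{UF} G$, which your write-up ignores; it is handled easily (as in \cref{obs:UFImpliesDiv}), but it must be addressed since in that branch no full-grid model is handed to you at all. Second, your closing discussion of how the hypotheses enter (accessibility, periodicity, compactness) concerns the proof of \cref{thm:AsymptoticFullGrid} itself, which you may cite as a black box; the missing work is not there but in upgrading its escaping output to a diverging (or scale-controlled) model, i.e.\ the analogue of \cref{cor:EscHexGridYieldsAsympAndDivHexGrid}~\ref{itm:EscHexGridYieldsDivHexGrid}.
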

	
	\noindent This partially solves a question of Georgakopoulos and the second author \cite{GH24+}*{Problem~4.1}.
	\medskip

	Kr\"on and M\"oller \cite{KM08}*{Theorem~5.5} proved that a \lf, \qt\ connected graph has no thick end if and only if it is \qi\ to a tree.
	Thus, instead of assuming that the graph $G$ in \cref{main:AsymptoticFullGrid,main:DivergingFullGrid} has a thick end, we may assume that $G$ is not \qi\ to a tree (see \cref{subsec:QuasiIsomToTrees} for details).
	\medskip
	
	As a first step in the proof of \cref{main:AsymptoticFullGrid}, we find the half-grid as an asymptotic minor. For this, we do not need the transitivity assumption on~$G$. Indeed, we prove the following theorem.
	
	\begin{mainresult} \label{main:HalfGrid:BoundedCycles}
		Let $G$ be a graph of finite maximum degree whose cycle space is generated by cycles of bounded length. 
		If $G$ has a thick end, then the half-grid is an asymptotic minor of $G$.
	\end{mainresult}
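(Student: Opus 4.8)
The plan is to prove, for every $K\in\N$, that $G$ contains a $K$-fat model of the half-grid; since that is exactly what it means for the half-grid to be an asymptotic minor of $G$, this suffices. Fix the maximum degree $\Delta$ of $G$, fix $\ell$ such that the cycle space of $G$ is generated by cycles of length at most $\ell$, and assume $G$ connected with thick end $\omega$. There are two ingredients: Halin's Grid Theorem, which produces a ``skeleton'', and the bounded cycle length, which is what lets us make the skeleton metrically spread out.

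The first step is to apply Halin's Grid Theorem to obtain the half-grid $\N\times\Z$ as a minor of $G$, with connected branch sets $B_{i,j}$ ($i\in\N$, $j\in\Z$) and branch paths joining $B_{i,j}$ to $B_{i+1,j}$ and to $B_{i,j+1}$; call the union of all of them the skeleton $H^\ast$. This skeleton is typically very far from $K$-fat, since branch objects that are far apart in the grid metric of $H^\ast$ may run within distance $K$ of each other in $G$. The point is that there is a lot of slack: the half-grid has infinitely many rows and columns, so it is enough to extract a \emph{sub-pattern}, i.e.\ increasing sequences $a_0<a_1<\cdots$ in $\N$ and $\cdots<b_{-1}<b_0<b_1<\cdots$ in $\Z$, with consecutive terms at least $D$ apart for a suitable $D=D(K,\ell,\Delta)$, such that the half-grid minor carried by the branch objects at the positions $(a_k,b_l)$ — with branch paths the corresponding grid-paths of $H^\ast$ — is $K$-fat. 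To localise the search one can fix a root $v_0\in B_{0,0}$ and a fast-growing sequence $r_0<r_1<\cdots$, so that the spheres $S_{r_n}=\{v:d(v_0,v)=r_n\}$ are finite, pairwise far apart, and split $G$ into annuli in which the near-collisions of $H^\ast$ can be treated one at a time.

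Showing that such a sub-pattern exists is the heart of the proof, and I expect it to be the main obstacle; it is also precisely where the hypothesis on the cycle space is needed. The obstruction one must rule out is a ``wrap-around'' of the skeleton, with all columns confined to a bounded neighbourhood of each other as in a cylindrical grid, which would make any spreading impossible — and it is the generation of the cycle space by short cycles that ultimately forbids this, since a wrap-around cycle is not a bounded sum of short cycles. Quantitatively, the idea is that a cycle space generated by cycles of length at most $\ell$ permits bounded-range surgeries on the skeleton: a short $G$-path witnessing a near-collision, together with a segment of $H^\ast$, bounds a cycle that is a sum of $\le\ell$-cycles, and pushing the skeleton across these short cycles re-routes it past the near-collision within a bounded neighbourhood, affecting only boundedly many branch objects. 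Combined with local finiteness of $G$ — which caps how many branch objects can cluster near any one of them — this lets one argue, annulus by annulus, that after locally repairing or discarding a bounded number of offending branch paths, enough of the structure survives to continue the sub-pattern with the required spacing; choosing $D$ large enough relative to $K$, $\ell$, $\Delta$ at the outset then makes the final model $K$-fat, because non-incident branch objects wind up at grid-distance at least $D$, hence at $G$-distance at least $K$.

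Finally, letting $K$ range over $\N$ shows that the half-grid is an asymptotic minor of $G$. (With a little extra care in the spreading step — forcing distinct far-apart branch objects to actually diverge in $G$, not merely to be $K$-apart — the same construction also yields the diverging half-grid minor used elsewhere in the paper.) In short, Halin's theorem and the final rescaling are routine; the work is the quantitative spreading lemma that turns a topological half-grid minor into a metrically spread-out one using bounded cycle length and bounded degree.
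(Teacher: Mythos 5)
There is a genuine gap: the step you yourself identify as the heart of the proof (the ``quantitative spreading lemma'') is only described, and the mechanism you propose for it does not work as stated. Writing the cycle formed by a near-collision path together with a segment of the skeleton as a sum of cycles of length at most $\ell$ gives no control over \emph{where} those generating cycles lie, so ``pushing the skeleton across'' them is not a well-defined bounded-range operation; and any re-routing has to preserve the disjointness of all branch sets and branch paths simultaneously, which is exactly the difficulty being assumed away. More fundamentally, starting from an arbitrary Halin grid and performing only repairs confined to bounded neighbourhoods cannot in general produce branch objects at arbitrarily large mutual distance: nothing in Halin's theorem prevents the entire skeleton from lying inside a bounded neighbourhood of a single ray, and then no amount of local surgery yields a $K$-fat model for large $K$ -- one has to construct new vertical rays at larger and larger distances from scratch. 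Your appeal to ``a wrap-around cycle is not a bounded sum of short cycles'' is likewise only a heuristic; a wrapped-around skeleton does not hand you a single cycle that violates the hypothesis.

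This is also where your route diverges from (and falls short of) the paper's. The paper does not spread out a Halin grid at all: it first constructs a \emph{diverging} double ray $R$ in the given thick end (\cref{thm:DivergingRays}, via a coarse Menger theorem), and the cycle-space hypothesis enters only through \cref{lem:kappa/2NhoodIsConnected}, a connectivity statement saying that for every component $C$ of $G-B_G(R,L)$ the neighbourhood $C[\partial_G C,\lfloor(\kappa-2)/2\rfloor]$ is connected. Combined with the existence of long components (\cref{lem:LongCompsExist}), this produces a new double ray inside each successive thickened cylinder around $R$, and \cref{lem:HalfGrid} then links these rays and yields a dichotomy: either an \emph{escaping} subdivision of the hexagonal half-grid (which contains $K$-fat and diverging subdivisions for every $K$ by \cref{cor:EscHexGridYieldsAsympAndDivHexGrid}) or an ultra fat $K_{\aleph_0}$ minor, which also gives the half-grid asymptotically (\cref{obs:UFImpliesAsym}). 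Your sketch has no counterpart to the diverging ray, replaces the boundary-connectivity lemma by an unproved re-routing principle, and has no mechanism for the $K_{\aleph_0}$ alternative that arises when the connecting paths cluster instead of forming a grid; until the spreading lemma is actually proved, the proposal does not establish the theorem.
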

	
	\noindent Note that every graph satisfying the premise of \cref{main:AsymptoticFullGrid,main:DivergingFullGrid} has finite maximum degree as it is \lf\ and \qt.
	\medskip
	
	Similar to the proof of \cref{main:AsymptoticFullGrid}, we again construct a single model of the half-grid (see \cref{thm:HalfGrid:BoundedCycles}), which can be turned into a $K$-fat model of the half-grid, for every $K \in \N$, and into a diverging model of the half-grid.
	
	\begin{mainresult} \label{main:DivergingHalfGrid}
		Let $G$ be a graph of finite maximum degree whose cycle space is generated by cycles of bounded length.
		If $G$ has a thick end, then the half-grid is a diverging minor of $G$.
	\end{mainresult}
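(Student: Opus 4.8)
The plan is to obtain the diverging model from the construction behind \cref{thm:HalfGrid:BoundedCycles}, which already yields a \emph{single} model $\cM$ of the half-grid $H=\N\times\Z$ in $G$ with branch sets and branch paths of uniformly bounded diameter. Write $\phi(x)$ for the branch set or branch path that $\cM$ assigns to a vertex or edge $x$ of $H$. After a routine reformulation, $\cM$ being \emph{diverging} amounts to the uniform-properness statement
\[
  \forall\,K\in\N\ \ \exists\,R\in\N:\qquad d_H(x,y)\ge R\ \Longrightarrow\ d_G\bigl(\phi(x),\phi(y)\bigr)\ge K,
\]
i.e.\ that $\phi$ sends any two diverging sequences of vertices/edges of $H$ to branch sets/paths diverging in $G$; this is what I would verify (allowing myself, as explained below, to first pass to a sub-half-grid of $H$ and to discard some branch sets and paths, which is harmless since a sub-half-grid of $H$ is again isomorphic to $H$).

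First I would make the geometry of $\cM$ explicit. Its columns $\{m\}\times\Z$ are carried by pairwise disjoint rays (or double rays) $R_0,R_1,\dots$ converging to the thick end $\omega$, and its rows by pairwise disjoint finite ``rungs'' joining consecutive rays; the bounded cycle space of $G$ is what permits these rungs to be routed so that the rows close up into an honest half-grid model (cf.\ \cref{thm:HalfGrid:BoundedCycles}). Crucially there is slack in these choices. By simultaneously thinning out the rungs and then passing to a sub-half-grid $H'\cong H$, I may assume that along each $R_m$ the branch sets $\phi(m,n)$ occur in the order of $n$ and are pairwise at distance at least $|n-n'|$ in $G$, and that the rungs used at level $n$ drift out towards $\omega$ as $|n|\to\infty$. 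This settles the uniform-properness statement for pairs whose vertical coordinates differ unboundedly — and this very coarsening is also what turns $\cM$ into the $K$-fat models of \cref{main:HalfGrid:BoundedCycles}, which is why both results flow from one construction.

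The crux — and the step I expect to be the main obstacle — is divergence in the horizontal direction: if $|m-m'|\to\infty$, then $\phi(m,n)$ and $\phi(m',n')$ must be far apart in $G$ even when both have escaped to $\omega$, where a priori a ``shortcut'' invisible to the half-grid structure might bring two far-apart columns close. To rule this out I would choose the rays $R_m$ to lie in nested regions of $G$ cut off by a sequence of finite separators converging to $\omega$, so that $R_{m+1},\dots,R_{m'-1}$ separate $R_m$ from $R_{m'}$ in the relevant part of $G$; then every $R_m$--$R_{m'}$ path meets each intermediate ray. Converting this qualitative separation into a quantitative bound is where local finiteness and the bounded cycle space re-enter: bounded cycle space prevents $G$ from folding back at small scale, which should let one propagate a distance lower bound across successive separators and conclude that any such path has length growing with $|m-m'|$. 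Combining the vertical and horizontal estimates, and handling the mixed case by the triangle inequality, yields the required $R=R(K)$, so the (coarsened) model $\cM$ is diverging.
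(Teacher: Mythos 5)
Your high-level plan (take the single half-grid structure produced by the construction behind \cref{thm:HalfGrid:BoundedCycles} and coarsen it into a diverging model) is indeed the paper's strategy, but the two steps you rely on are not carried out, and the one you rightly call the crux is left as a hope rather than an argument. For the horizontal direction you propose that the columns themselves form nested separators and that ``bounded cycle space prevents $G$ from folding back at small scale'', allowing a distance bound to be ``propagated across successive separators''. Neither half of this is substantiated: the construction of \cref{thm:HalfGrid:BoundedCycles} does not make the columns separate anything (the separating objects are the balls $B_G(S^0,M_i)$ around the base double ray), and no mechanism is given that converts ``every $R_m$--$R_{m'}$ path meets each intermediate column'' into a length bound growing with $|m-m'|$; for that you would need consecutive columns to be uniformly far apart, which is exactly what has to be built, not assumed. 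The paper obtains horizontal divergence quite differently and essentially for free from the \emph{escaping} property: the $i$-th column is constructed inside the annulus $G[S^0,M_i]-B_G(S^0,M_{i-1}+2i)$, so any two columns are at distance at least about twice the larger index, and the bounded-cycle-space hypothesis enters much earlier and in a different role, namely via \cref{lem:kappa/2NhoodIsConnected} (connectedness of $C[\partial_G C,\lfloor(\kappa-2)/2\rfloor]$) to build the columns in these annuli at all, not to rule out ``small-scale folding''.

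The vertical step is also not ``routine''. Passing to a sub-half-grid and thinning rungs can only make branch sets along a column at distance at least $|n-n'|$ if the column double rays are themselves diverging (and if their upper/lower tails track the upper/lower tails of the base ray); if the forward and backward tails of a column stayed within bounded distance of each other, no subsampling would separate heights $n$ and $-n$. In the paper this is supplied by \cref{thm:DivergingRays} (existence of a diverging double $\eps$-ray, proved via the coarse Menger theorem), by \cref{cor:FurtherPropertiesOfEscapingHGs}, and by the careful recursive re-selection of horizontal paths in \cref{lem:DivSubHexGridOfEscHexGrid}; none of these ingredients appears in your proposal. (Two smaller points: the model coming from \cref{thm:HalfGrid:BoundedCycles} does not have branch sets and paths of uniformly bounded diameter, though you do not really use this; and \cref{thm:HalfGrid:BoundedCycles} is a dichotomy, whose ultra-fat $K_{\aleph_0}$ branch is handled separately via \cref{obs:UFImpliesDiv}.)
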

	
	\noindent This partially solves a question of Georgakopoulos and the second author \cite{GH24+}*{Problem 4.2}.
	\medskip
	
	This paper is structured as follows. 
	In \cref{sec:Prelims} we recall some important definitions. 
	\cref{sec:NewDefs} con\-sists of three parts. We first introduce some new definitions in \cref{subsec:UltraFatMinors,subsec:EscapingSubdivisions}. We then give a sketch of the proofs of \cref{main:AsymptoticFullGrid,main:DivergingFullGrid,main:HalfGrid:BoundedCycles,main:DivergingHalfGrid} in \cref{subsec:ProofSketch12,subsec:ProofSketch34}, where we also state \cref{thm:AsymptoticFullGrid,thm:HalfGrid:BoundedCycles}, our stronger results on half-grid and full-grid minors, which we already briefly mentioned above. In \cref{subsec:EscSubYieldsFatDivMinor,subsec:FormalProofOfMainResults} we derive \cref{main:AsymptoticFullGrid,main:DivergingFullGrid,main:HalfGrid:BoundedCycles,main:DivergingHalfGrid} from \cref{thm:AsymptoticFullGrid,thm:HalfGrid:BoundedCycles}.
	\cref{sec:DivAndGeodDR} contains some preparatory work about diverging and quasi-geodesic rays. We then prove \cref{thm:HalfGrid:BoundedCycles,thm:AsymptoticFullGrid} in \cref{sec:FG,sec:HG:BC}, respectively.
	We finish in \cref{sec:problems} by discussing some related problems.

	\section{Preliminaries} \label{sec:Prelims}
	
	Our notions mainly follow~\cite{Bibel}. In what follows, we recap some important definitions which we need later.
	
	Given sets $U' \subseteq U$ of vertices of a graph $G$, a component $C$ of $G-U$ \defn{attaches} to~$U'$ if $C$ has a neighbour in~$U'$.
	The \defn{boundary $\partial_G X$} of a subgraph $X$ of $G$ is the set $N_G(V(G- V(X)))$ of vertices of~$X$ that send in~$G$ an edge outside of~$X$. For example, the boundary $\partial_G C$ of a component $C$ of $G-U$ is $N_G(U) \cap V(C)$.
	
	A graph $G$ is \defn{\qt} if the automorphism group of~$G$ acts on $V(G)$ with only finitely many orbits, that is if $V(G)$ can be partitioned into finitely many sets $U_0, \dots, U_n$ such that for all $i \in \{0, \dots, n\}$ and $u, v \in U_i$ there exists an automorphism~$\phi$ of~$G$ such that $\phi(u) = v$.
	The \defn{stabilizer} of a subgraph~$X$ of~$G$ consists of precisely those automorphisms of~$G$ that map~$X$ to itself.

	\subsection{Paths, rays and combs}
	
	For two sets $A,B$ of vertices of $G$, an \defn{$A$--$B$ path} meets~$A$ precisely in its first vertex and~$B$ precisely in its last vertex.
	For a subgraph $H$ of $G$, an \defn{$H$-path} is a non-trivial path which meets $H$ precisely in its endvertices.
	
	A \defn{ray} is a one-way infinite path, and a \defn{double ray} is a two-way infinite path. A \defn{tail} of a (double) ray~$R$ is any ray $S \subseteq R$.
	If $R = r_0 r_1 \dots$ is a ray, then we denote by \defn{$r_iRr_j$} for $i, j \in \N$ the subpath $r_i \dots r_j$ of~$R$, and by \defn{$r_iR$} or \defn{$R_{\geq i}$} the tail $r_i r_{i+1} \dots$ of~$R$.
	Further, we denote by \defn{$Rr_i$} or \defn{$R_{\leq i}$} the subpath $r_0 \dots r_i$ of~$R$.
	We use these notions analogously for double rays; in particular, if $R = \dots r_{-1}r_0r_1 \dots$ is a double ray, then \defn{$Rr_i$} and \defn{$R_{\leq i}$} denote the tail $r_ir_{i-1} \dots$ of~$R$. 
	
	A \defn{comb} is a union of a ray $R$ with infinitely many pairwise disjoint finite paths which have precisely their first vertex on $R$; we call the last vertices of these paths the \defn{teeth} of the comb and refer to $R$ as its \defn{spine}.
	The following observation about combs in infinite graphs is well-known and follows immediately from the \emph{Star-Comb Lemma}; see e.g.\ \cite{Bibel}*{Lemma~8.2.2} for a proof.
	
	\begin{lem} \label{lem:StarComb} 
		Let $U$ be an infinite set of vertices in a \lf, connected graph~$G$. Then $G$ contains a comb with all teeth in $U$.
		
		In particular, every infinite, connected graph has a vertex of infinite degree or contains a ray.
	\end{lem}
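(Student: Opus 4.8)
The plan is to prove the comb statement first and then deduce the \emph{Moreover} part as an easy consequence. Since $G$ is connected, it has a spanning tree $T$, and since $T$ is a subgraph of the locally finite graph $G$, the tree $T$ is itself locally finite (so there is no need to invoke normal or BFS spanning trees). Fix any root $r$ of $T$ and let $T'$ be the minimal subtree of $T$ containing $r$ together with every vertex of $U$; concretely, $T'$ is the union over all $u \in U$ of the $r$--$u$ path of $T$. Then $T'$ is a locally finite subtree containing the infinite set $U$, so, viewed as rooted at $r$, it is infinite and finitely branching.

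Next I would extract the spine. Starting from $x_0 := r$, build a ray $R = x_0 x_1 x_2 \dots$ in $T'$ by repeatedly passing from $x_n$ to a child whose subtree (in $T'$, rooted at $r$) still meets $U$ in an infinite set. This is possible at every step: $x_n$ has only finitely many children, and the $U$-vertices below $x_n$ are distributed among $x_n$ itself and these finitely many child-subtrees, so if infinitely many lie below $x_n$ then infinitely many lie below one of its children. Now there are two cases. If there are infinitely many $n$ for which $x_n$ has a child $c_n \neq x_{n+1}$ in $T'$, then minimality of $T'$ forces the subtree $T'_{c_n}$ below $c_n$ to contain some $u_n \in U$; the $x_n$--$u_n$ path $P_n$ in $T'$ lies inside $\{x_n\} \cup T'_{c_n}$, and since the subtrees hanging off distinct children of distinct $x_n$'s are pairwise disjoint and avoid the tail of $R$ after $x_n$, the paths $P_n$ are pairwise disjoint and each meets $R$ precisely in its first vertex $x_n$. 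Hence $R \cup \bigcup_n P_n$ is a comb with all teeth in $U$. Otherwise, for all large $n$ the only child of $x_n$ in $T'$ is $x_{n+1}$, so some tail of $R$ coincides with the subtree $T'_{x_N}$ below a suitable $x_N$; since $T'_{x_N}$ meets $U$ infinitely by construction, infinitely many $x_n$ lie in $U$, and $R$ together with these vertices taken as trivial teeth is again a comb with all teeth in $U$.

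For the \emph{Moreover} statement: if $G$ has a vertex of infinite degree we are done; otherwise $G$ is locally finite, and applying the comb statement with $U := V(G)$ (which is infinite since $G$ is) produces a comb, and in particular a ray, in $G$.

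I do not expect a genuine obstacle here; the only points requiring care are the observation that a spanning tree of a locally finite graph is automatically locally finite, and the second case above, where $U$ ends up lying along the spine and one must allow the (explicitly permitted) trivial teeth — this degenerate case is easy to overlook. The pairwise disjointness of the tooth paths and the fact that each meets the spine only at its attachment vertex are immediate from the tree structure.
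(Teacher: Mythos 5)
Your proof is correct. Note, though, that the paper does not prove this lemma at all: it records it as a well-known consequence of the Star--Comb Lemma and points to Diestel's book (Lemma~8.2.2) for a proof, so there is no in-paper argument to compare against. What you give is essentially the standard proof of the Star--Comb Lemma specialised to the locally finite case, where local finiteness rules out the star outcome: pass to the subtree of a spanning tree spanned by the root and $U$, use finite branching to descend always into a child subtree meeting $U$ infinitely (a K\H{o}nig-type argument), and then split into the case of infinitely many side-branches (giving nontrivial tooth paths into $U$) and the eventually-path case (giving teeth on the spine). Your handling of the degenerate second case is fine, since the paper's definition of a comb permits trivial tooth paths, and your disjointness claims follow correctly from the tree structure; the deduction of the ``moreover'' statement from the comb statement with $U := V(G)$ is also valid and not circular, since the comb argument never invokes it. The only thing your write-up buys beyond the paper is self-containedness; conversely, citing Diestel also covers the general (non-locally-finite) star alternative, which your argument does not address but the lemma does not need.
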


	\subsection{(Hexagonal) grids}
	
	The \defn{full-grid}, denoted by \defn{$FG$}, is the graph on $\Z^2$ in which two vertices $(m,n)$ and $(m',n')$ are adjacent if and only if $|m - m'| + |n - n'| = 1$. 
	The \defn{hexagonal full-grid} is obtained from $FG$ by deleting every other rung, as shown in \cref{fig:HexGrid}.
	The \defn{(hexagonal) half-grid}, denoted by \defn{$HG$}, is the induced subgraph of the (hexagonal) full-grid on vertex set $\N \times \Z$. 
	
	We call the double rays $R^i$ of the (hexagonal) full- and half-grid its \defn{vertical double rays} and the edges~$e_{ij}$ its \defn{horizontal edges} (see \cref{fig:HexGrid}).
	
	\begin{figure}[ht]
		\begin{tikzpicture}
    \tikzset{edge/.style = {->,> = stealth}}
    
%vertical double rays
    \foreach \x in {-4,-3,-2,-1,0,1,2} {
    \draw[stealth-stealth] (\x,-0.4) to (\x,4.4);
    \foreach \y in {0,0.5,1,1.5,2,2.5,3,3.5,4} {
      \draw[fill,black] (\x,\y) circle (.05);
}}

%complete rungs

\foreach \x in {-4,-2,0}{
    \foreach \y in {0,1,2,3,4} {
	\draw (\x,\y) to (\x+1,\y);
}}

\foreach \x in {-3,-1,1}{
    \foreach \y in {0.5,1.5,2.5,3.5} {
	\draw (\x,\y) to (\x+1,\y);
}}

%half rungs

\foreach \y in {0.5,1.5,2.5,3.5} {
	\draw (-4.5,\y) to (-4,\y);
}
         
\foreach \y in {0,1,2,3,4} {
	\draw (2,\y) to (2.5,\y);
}

%dots
         
    \node at (3,2) {$\dots$};
     \node at (-5,2) {$\dots$};

%ray labels
\foreach \x in {-3,-2,-1}{
    \node at (\x-0.8,-0.7) {\footnotesize{$R^{\x}$}};
     }
    \node at (-0.9,-0.7) {\footnotesize{$R^{0}$}};
\foreach \x in {1,2,3}{
    \node at (\x-0.9,-0.7) {\footnotesize{$R^{\x}$}};
     }

%edge labels

    \node at (-1.5,1.7) {\footnotesize{$e_{-1j}$}};
    \node at (-0.5,2.2) {\footnotesize{$e_{1j}$}};
    \node at (-0.5,1.2) {\footnotesize{$e_{1(j-1)}$}};
    \node at (-0.5,3.2) {\footnotesize{$e_{1(j+1)}$}};
    \node at (0.5,1.7) {\footnotesize{$e_{2j}$}};
    \node at (-2.5,2.2) {\footnotesize{$e_{-2j}$}};
    \node at (-3.5,1.7) {\footnotesize{$e_{-3j}$}};
    \node at (1.5,2.2) {\footnotesize{$e_{3j}$}};

\end{tikzpicture}
		\vspace{-3em}
		\caption{The hexagonal full-grid with vertical double rays $R^i$ and horizontal edges~$e_{ij}$.}
		\label{fig:HexGrid}
	\end{figure}
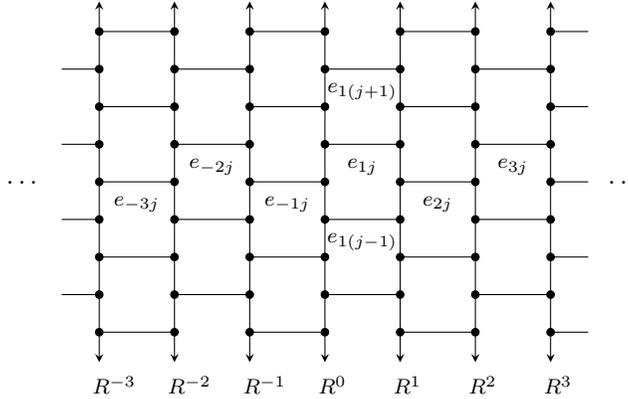

	\subsection{Cycle space}\label{sec:CycleSpace}
	
	Let $G$ be a graph. The \defn{edge space} of~$G$ is the vector space over the $2$-element field~$\mathbb{F}_2$ of all functions $E(G) \rightarrow \mathbb{F}_2$: its elements correspond to the subsets of $E(G)$ and vector addition corresponds to symmetric difference.
	The \defn{cycle space} of~$G$ is the subspace of the edge space of~$G$ spanned by all the cycles in~$G$~-- more precisely, by their edge sets; for simplicity, we will not distinguish between the edge sets in the cycle space and the subgraphs they induce in~$G$.
	
	We say that the cycle space of~$G$ is \defn{generated by cycles of bounded length} if there is some $n \in \N$ such that the cycles in~$G$ of length at most $n$ generate the cycle space of~$G$.
	
	\subsection{Ends}
	
	An \defn{end} $\eps$ of a graph~$G$ is an equivalence class of rays in~$G$ where two rays are equivalent if they are joined by infinitely many disjoint paths in~$G$ or, equivalently, if for every finite set~$U \subseteq V(G)$ both rays have tails in the same component of~$G - U$.
	A \defn{(double) $\eps$-ray} is a (double) ray whose tails are all contained in~$\eps$.
	An end is \defn{thick} if, for every $n \in \N$, there are $n$ pairwise disjoint $\eps$-rays. Halin \cite{halin65}*{Satz~1} showed that this is the case if and only if there are infinitely many pairwise disjoint $\eps$-rays.
	
	A finite set $U \subseteq V(G)$ \defn{distinguishes} two ends $\eps, \eps'$ of $G$ if no component of $G-U$ contains rays from both $\eps$ and $\eps'$. 
	A graph~$G$ is \defn{accessible} if there exists some $n \in \N$ such that every two distinct ends of~$G$ can be distinguished by a set of at most~$n$ vertices of~$G$.
	
	\begin{thm}{\cite{AccessibleCycleSpace}*{Corollary~3.2}} \label{thm:CycleSpaceAccessible}
		Every locally finite, \qt\ graph whose cycle space is generated by cycles of bounded length is accessible.
	\end{thm}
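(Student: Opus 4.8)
\emph{Setup and reduction.} This is the difficult (``if'') direction of the cycle-space characterisation of accessibility for \lf, \qt\ graphs; for Cayley graphs of finitely presented groups it specialises to the graph version of Dunwoody's accessibility theorem, so I would run the proof along those lines. Since $G$ is \lf\ and \qt\ it has bounded maximum degree $\Delta$, and a finite vertex separator of two ends converts into a finite edge cut of size at most $\Delta$ times as large, and conversely; so it suffices to produce a constant $k$ such that any two distinct ends of $G$ are separated by an edge cut of size at most $k$, equivalently to bound $\lambda(\eps,\eps')$, the least order of such a cut, uniformly over all pairs of ends. Fix $n$ so that the cycles of $G$ of length at most $n$ generate its cycle space.

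\emph{Main argument.} I would realise $G$ as the $1$-skeleton of a suitable $2$-complex and invoke Dunwoody's track machinery. Let $X$ be obtained from $G$ by attaching a disc along every cycle of $G$ of length at most $n$. Then $\Aut(G)$ acts on $X$ with only finitely many orbits of cells: finitely many vertex orbits since $G$ is \qt, finitely many edge orbits because $\Delta<\infty$, and finitely many $2$-cell orbits because, $G$ having bounded degree, only boundedly many cycles of length $\le n$ pass through a given vertex. Moreover $H_1(X;\mathbb{F}_2)=0$, precisely because the attached cycles span the cycle space of $G$. So $X$ is an $\mathbb{F}_2$-acyclic (in degree one) complex of finite type carrying a cocompact $\Aut(G)$-action --- exactly the input of Dunwoody's accessibility argument (in the homological form of Dicks--Dunwoody, or the structure-tree form of Dunwoody--Kr\"on). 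A separating track in $X$ of weight $w$ induces an edge cut of $G$ of size $w$, and conversely every finite edge cut of $G$ can be realised as such a track, since it meets the boundary of each $2$-cell in an even number of edges. Dunwoody's theorem then furnishes a constant $W$ such that any two ends of $G$ separated by some finite edge cut are separated by a track of weight at most $W$; dividing out the $\Delta$-factor gives the required vertex bound $k$, and hence accessibility. Equivalently, the $\Aut(G)$-invariant nested family of minimum-weight tracks of weight $\le W$ is an adhesion-$\le W$ tree-decomposition of $G$ distinguishing all its ends.

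\emph{Main obstacle.} The technical heart is Dunwoody's track estimate: that a separating track of least weight among those separating a fixed pair of ends has weight bounded by a function of $n$ and $\Delta$ alone (cocompactness then promotes this to a single global $W$). One first extracts an $\Aut(G)$-invariant nested subfamily of least-weight tracks by an uncrossing argument --- two least-weight tracks can cross only boundedly, since each $2$-cell has at most $n$ boundary edges --- and then shows that an infinite, strictly refining chain of such nested tracks cannot exist in a complex of bounded $2$-cell size with a cocompact action: the complementary regions near an end would be ``infinitely complicated'', which an Euler-characteristic/counting estimate rules out. Two points need care in the translation back: ends of $G$ are defined via vertex separators, not edge cuts, so the $\Delta$-factor must be carried throughout; and $\Aut(G)$ may have infinite vertex stabilisers, so one should argue for the action on $X$ (or work directly with graph structure trees as in Dunwoody--Kr\"on) rather than invoke accessibility of a discrete group. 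Finally, one can avoid the $2$-complex language entirely: build an $\Aut(G)$-invariant nested set of edge cuts distinguishing all ends, and use the generating short cycles to trim any cut of excessive order --- a cut met by the short cycles too often admits a weight-reducing modification --- which is the same content repackaged, with the trimming step playing the role of the track estimate.
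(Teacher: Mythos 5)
First, a point of comparison that matters: the paper does not prove \cref{thm:CycleSpaceAccessible} at all — it is imported verbatim from \cite{AccessibleCycleSpace}*{Corollary~3.2} — so there is no in-paper argument to measure your proposal against; the relevant benchmark is the proof in that reference. Judged against it, your sketch follows the classical Dunwoody route: attach discs along the generating cycles of length at most $n$, note that $H_1(X;\mathbb{F}_2)=0$ and that $\Aut(G)$ acts with finitely many orbits of cells, translate finite edge cuts into tracks, and invoke a bounded-weight track estimate. The cited paper instead argues purely graph-theoretically, establishing that the cut space of $G$ is a finitely generated $\Aut(G)$-module over $\mathbb{F}_2$ and deducing accessibility from that; this is morally the combinatorial shadow of your track estimate, but it never needs the $2$-complex and, crucially, it carries out the bounding argument for the automorphism group of a \qt\ graph rather than for a finitely presented group.

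That is also where the genuine gap in your write-up lies: the sentence ``Dunwoody's theorem then furnishes a constant $W$'' is doing all the work, and Dunwoody's accessibility theorem does not apply off the shelf here. It concerns (almost) finitely presented groups acting properly; $\Aut(G)$ is in general non-discrete, with infinite vertex stabilisers and no properness, and it is exactly this obstruction that kept the statement open as a conjecture of Diestel long after Dunwoody's theorem was available, until it was resolved in \cite{AccessibleCycleSpace}. You do flag this in your ``main obstacle'' paragraph and gesture at Dunwoody--Kr\"on structure trees or a direct argument on $X$, but that adaptation — the uncrossing and finiteness/trimming argument for least-weight tracks (equivalently, bounding cuts using the short generating cycles) under a merely cocompact action of a possibly non-discrete group — is the technical heart of the theorem, and in your proposal it is asserted rather than proved. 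So the strategy is sound and consistent with the known proof, but as written it reduces the statement to a result that does not directly cover this setting, leaving the essential step open.
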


	\subsection{Distance and balls}
	
	Let $G$ be a graph.
	We write~\defn{$d_G(v, u)$} for the distance between the two vertices~$v$ and~$u$ in~$G$. 
	For two sets~$U$ and~$U'$ of vertices of~$G$, we write~\defn{$d_G(U, U')$} for the minimum distance of two elements of~$U$ and~$U'$, respectively.
	If one of~$U$ or~$U'$ is just a singleton, then we omit the braces, writing $d_G(v, U') := d_G(\{v\}, U')$ for $v \in V(G)$.
	If $X$ is a subgraph of $G$, then we abbreviate $d_G(U,V(X))$ as $d_G(U,X)$.
	
	Given a set~$U$ of vertices of~$G$, the \defn{ball (in~$G$) around~$U$ of radius $r \in \N$}, denoted by~\defn{$B_G(U, r)$}, is the set of all vertices in~$G$ of distance at most~$r$ from~$U$ in~$G$.
	If~$U = \{v\}$ for some~$v \in V(G)$, then we omit the braces, writing~$B_G(v, r)$ for the ball (in $G$) around~$v$ of radius~$r$.
	Additionally, we abbreviate the induced subgraph on $B_G(U,r)$ of $G$ with $\defnm{G[U,r]} := G[B_G(U,r)]$.
	If $X$ is a subgraph of $G$, then we abbreviate $B_G(V(X),r)$ and $G[V(X),r]$ as $B_G(X,r)$ and $G[X,r]$, respectively.
	
	A subgraph~$X$ of~$G$ is \defn{$c$-quasi-geodesic\footnote{Note that in general metric spaces one also allows for an additive error; this property is there called `$(c, b)$-quasi-geodesic'. So here a subgraph is $c$-quasi-geodesic if and only if it is $(c,0)$-quasi-geodesic. It is easily verifiable that these two notions of quasi-geodesic, with or without an additive error, are equivalent for graphs.} (in~$G$)} for some $c \in \N$ if for every two vertices $u, v \in V(X)$ we have $d_X(u, v) \leq c \cdot d_G(u, v)$.
	We call~$X$ \defn{quasi-geodesic} if it is $c$-quasi-geodesic for some $c \in \N$ and \defn{geodesic} if it is $1$-quasi-geodesic.\footnote{Geodesic subgraphs are also called \defn{isometric} subgraphs in the literature.}
	
	Two rays $R,S$ in $G$ \defn{diverge} if for every $n \in \N$ they have tails $R' \subseteq R$, $S' \subseteq S$ satisfying $d_G(R', S') > n$.
	A double ray $R$ in $G$ \defn{diverges} if every two disjoint tails of $R$ diverge.

	\subsection{Fat and diverging minors} \label{subsec:fatminors}
	
	Let $G, X$ be graphs.
	A \defn{model} $(\cV,\cE)$ of $X$ in $G$ is a collection $\cV$ of disjoint sets $V_x \subseteq V(G)$ for vertices $x$ of $X$ such that each $G[V_x]$ is connected, and a collection~$\cE$ of pairwise internally disjoint $V_{x_0}$--$V_{x_1}$ paths $E_{e}$ for edges $e=x_0x_1$ of $X$ which are disjoint from every $V_x$ with $x \neq x_0, x_1$.
	The sets $V_x$ are its \defn{branch sets} and the sets $E_e$ are its \defn{branch paths}.
	A model $(\cV, \cE)$ of $X$ in $G$ is \defn{$K$-fat} for $K \in \N$ if $d_G(Y,Z) \geq K$ for every two distinct $Y,Z \in \cV \cup \cE$ unless $Y = E_e$ and $Z = V_x$ for some vertex $x \in V(X)$ incident to $e \in E(X)$, or vice versa.
	The graph~$X$ is a \defn{($K$-fat) minor} of~$G$, denoted by \defn{$X\prec G$ ($X \prec_K G$)}, if~$G$ contains a ($K$-fat) model of~$X$. Moreover,~$X$ is an \defn{asymptotic minor} of~$G$, denoted by \defn{$X \prec_\infty G$}, if~$X$ is a $K$-fat minor of~$G$ for all $K \in \N$.
	Let $\eps$ be an end of $G$. If $X$ is a one-ended graph, then we write \defn{$X \prec_K^\eps G$} if $G$ contains a $K$-fat model $(\cV, \cE)$ of $X$ such that every ray in $\left(\bigcup_{x \in V(X)} G[V_x]\right) \cup \left(\bigcup_{e \in E(X)} E_e\right)$ is an $\eps$-ray. Similarly, we write \defn{$X \prec^\eps_\infty G$} if $X \prec^\eps_K G$ for all $K \in \N$.
	
	A model $(\cV, \cE)$ of~$X$ in~$G$ \defn{diverges} if for every two sequences $(x_n)_{n \in \N}$ and $(y_n)_{n\in \N}$ of vertices and/or edges of $X$ such that $d_X(x_n,y_n) \rightarrow \infty$, we have $d_G(U_n, W_n) \rightarrow \infty$ where $U_n := V_{x_n}$ if $x_n \in V(X)$ and $U_n := V(E_{x_n})$ if $x_n \in E(X)$ and analogously $W_n := V_{y_n}$ or $W_n := V(E_{y_n})$.

	\subsection{Fat and diverging subdivisions} \label{subsec:DivSubdivisions}
	
	A \defn{subdivision} of a graph~$X$ is a graph which arises from~$X$ by replacing every edge in~$X$ by a new path between its endvertices such that no new path has an inner vertex in $V(X)$ or on any other new path. The original vertices of $X$ are the \defn{branch vertices} of the subdivision and the new paths are its \defn{branch paths}.
	Let~$G$ be a graph and let~$H \subseteq G$ be a subdivision of~$X$ with branch vertices~$v_x$ for $x \in V(X)$ and branch paths $E_e$ for $e \in E(X)$. Then $H$ is \defn{$K$-fat} (in~$G$) if there are sets $V_x \subseteq V(H)$ with $v_x \in V_x$ for $x \in V(X)$ and paths $E'_e \subseteq E_e$ for $e \in E(X)$ such that $((V_x)_{x \in V(X)}, (E'_e)_{e \in E(X)})$ is a $K$-fat model of $X$.
	The subdivision~$H$ of~$X$ \defn{diverges} (in~$G$) if the model $((\{v_x\})_{x \in V(X)}, (E_e)_{e \in E(X)})$ of~$X$ in~$G$ diverges.

	\section{Further definitions and a sketch of the proof} \label{sec:NewDefs}
	
	In this section we first introduce ultra fat minors and escaping subdivisions of certain graphs (see \cref{subsec:UltraFatMinors,subsec:EscapingSubdivisions}). We then give in \cref{subsec:ProofSketch12,subsec:ProofSketch34} a sketch of the proofs of \cref{main:AsymptoticFullGrid,main:DivergingFullGrid,main:HalfGrid:BoundedCycles,main:DivergingHalfGrid}. There, we also state two stronger theorems, \cref{thm:AsymptoticFullGrid,thm:HalfGrid:BoundedCycles}, from which we then derive \cref{main:AsymptoticFullGrid,main:DivergingFullGrid,main:HalfGrid:BoundedCycles,main:DivergingHalfGrid} in \cref{subsec:EscSubYieldsFatDivMinor,subsec:FormalProofOfMainResults}.

	\subsection{Ultra fat minors} \label{subsec:UltraFatMinors}
	
	We say that a model $((V_i)_{i \in \N}, (E_{ij})_{i \neq j \in \N})$ of the countably infinite clique $K_{\aleph_0}$ in a graph $G$ is \defn{ultra fat} if
	\begin{itemize}
		\item $d_G(V_i, V_j) \geq \min\{i,j\}$ for all $i \neq j \in \N$,
		\item $d_G(E_{ij}, E_{k\ell}) \geq \min\{i, j, k, \ell\}$ for all $i, j, k, \ell \in \N$ with $\{i,j\} \neq \{k, \ell\}$, and 
		\item $d_G(V_i, E_{k\ell}) \geq \min\{i, k, \ell\}$ for all $i, k, \ell \in \N$ with $i \notin \{k, \ell\}$.
	\end{itemize}
	Further, we say that $K_{\aleph_0}$ is an \defn{ultra fat minor} of $G$, and write \defn{$K_{\aleph_0} \prec_{UF} G$}, if $G$ contains an ultra fat model of $K_{\aleph_0}$.
	The idea is that an ultra fat model of $K_{\aleph_0}$ in a graph~$G$ witnesses that~$G$ contains~$K_{\aleph_0}$ as an asymptotic minor. Indeed, if $((V_i)_{i \in \N}, (E_{ij})_{i \neq j \in \N})$ is an ultra fat model of $K_{\aleph_0}$ in $G$, then $((V_i)_{i \in \N_{\geq K}}, (E_{ij})_{i \neq j \in \N_{\geq K}})$ is a $K$-fat model of $K_{\aleph_0}$ in $G$. The following observation follows from the fact that every countable graph is a subgraph of~$K_{\aleph_0}$. %In particular, we have the following observation.
	\begin{obs} \label{obs:UFImpliesAsym}
		If a graph $G$ contains $K_{\aleph_0}$ as an ultra fat minor, then it contains every countable graph as an asymptotic minor. 
		Moreover, if $K_{\aleph_0} \prec_{UF}^\eps G$ for some end $\eps$ of $G$, then also $X \prec_\infty^\eps G$ for every one-ended, countable graph $X$. \qed
	\end{obs}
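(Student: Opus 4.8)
The plan is to deduce both assertions from the single fact already recorded immediately before the statement, namely that for an ultra fat model $((V_i)_{i\in\N},(E_{ij})_{i\neq j\in\N})$ of $K_{\aleph_0}$ in $G$ and any $K\in\N$ the sub-collection $((V_i)_{i\in\N_{\geq K}},(E_{ij})_{i\neq j\in\N_{\geq K}})$ is a $K$-fat model of $K_{\aleph_0}$ in $G$, combined with the trivial observation that $K_{\aleph_0}$ on any countably infinite vertex set contains every countable graph as a subgraph.

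For the first assertion I would fix an ultra fat model of $K_{\aleph_0}$ in $G$ and an arbitrary $K\in\N$, pass to the $K$-fat model $((V_i)_{i\in\N_{\geq K}},(E_{ij})_{i\neq j\in\N_{\geq K}})$ of $K_{\aleph_0}$, and then, given a countable graph $X$, choose an injection $\iota\colon V(X)\to\N_{\geq K}$. Since $K_{\aleph_0}$ is complete, $\iota$ identifies $X$ with a subgraph of $K_{\aleph_0}$, so I may consider the sub-collection consisting of the branch sets $V_{\iota(x)}$ for $x\in V(X)$ and the branch paths $E_{\iota(x_0)\iota(x_1)}$ for $x_0x_1\in E(X)$. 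This is again a $K$-fat model, now of $X$ in $G$: the model axioms (connectedness of branch sets, internal disjointness of branch paths, disjointness of each branch path from the remaining branch sets) are preserved when one deletes branch sets and paths, and the $K$-fat exception clause transfers because, by injectivity of $\iota$, the incident set--path pairs of the $X$-model are exactly the incident pairs of the $K_{\aleph_0}$-model both of whose members survive in the sub-collection. Hence $X\prec_K G$, and since $K$ was arbitrary, $X\prec_\infty G$.

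For the ``moreover'' part I would rerun exactly this argument starting from an ultra fat $\eps$-model of $K_{\aleph_0}$, additionally observing that $\bigcup_{x\in V(X)}G[V_{\iota(x)}]\cup\bigcup_{x_0x_1\in E(X)}E_{\iota(x_0)\iota(x_1)}$ is a subgraph of $\bigcup_{i\in\N}G[V_i]\cup\bigcup_{i\neq j\in\N}E_{ij}$, so that every ray in the former is a ray in the latter and hence an $\eps$-ray; as $X$ is one-ended this is precisely the statement $X\prec_K^\eps G$, and letting $K\to\infty$ gives $X\prec_\infty^\eps G$. I do not expect any genuine obstacle here: the argument is essentially immediate, and the only point deserving a moment's care is the bookkeeping in the previous paragraph, i.e. checking that restricting to a sub-collection of branch sets and branch paths preserves both the model axioms and the exact set of $K$-fat exceptions, which in each case is a matter of monotonicity under deletion.
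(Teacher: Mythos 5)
Your argument is correct and matches the paper's (implicit) reasoning: the observation is stated with no further proof precisely because discarding the branch sets and paths with index below $K$ yields a $K$-fat model of $K_{\aleph_0}$, and restricting to the sub-collection indexed by a copy of $X$ inside $K_{\aleph_0}$ preserves the model axioms, the fatness exceptions, and (for the moreover part) the property that all rays lie in $\eps$. Nothing further is needed.
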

	
	Moreover, the following proposition follows easily from the definitions.
	
	\begin{prop} \label{obs:UFImpliesDiv}
		If a graph $G$ contains $K_{\aleph_0}$ as an ultra fat minor, then it contains every connected, locally finite graph as a diverging minor, and in particular, it contains every connected graph of maximum degree at most $3$ as a diverging subdivision. 
		Moreover, if $K_{\aleph_0} \prec_{UF}^\eps G$ for some end $\eps$ of $G$, then we may choose the diverging minor / subdivision so that all its rays lie in $\eps$.
	\end{prop}

	\begin{proof}
		We only show the first part of the assertion, the `moreover'-part then follows easily.
		
		Let $(\cV, \cE)$ be an ultra-fat model of $K_{\aleph_0}$ in $G$, and let $X$ be some connected, locally finite graph. Then~$X$ is a subgraph of~$K_{\aleph_0}$. Let $(\cV', \cE')$ be a `submodel' of $(\cV, \cE)$ that corresponds to some copy of~$X$ in~$K_{\aleph_0}$. Since~$X$ is locally finite, we may assume, by taking subsets of the branch sets in~$\cV'$ if necessary, that all branch sets in~$\cV'$ are finite. 
		Now let $(x_n)_{n\in \N}$ and $(y_n)_{n \in \N}$ with $x_n, y_n \in V(X) \cup E(X)$ for all $n \in \N$ such that $d_X(x_n, y_n) \rightarrow \infty$. Let $U'_n, W'_n$ be the branch sets or paths of $(\cV', \cE')$ corresponding to $x_n, y_n$, respectively. 
		Now suppose for a contradiction that $d_G(U'_n, W'_n)$ does not tend to infinity. Then we may assume, by restricting to subsequences, that $d_G(U'_n, W'_n) = K$ for some $K \in \N$ and all $n \in \N$. This can only happen if one of $(x_n)_{n \in \N}$ and $(y_n)_{n \in \N}$, say $(x_n)_{n \in \N}$ is eventually constant because $(\cV, \cE)$ is ultra-fat and the branch sets and paths of $(\cV', \cE')$ are subsets of the branch sets and paths of $(\cV, \cE)$. Hence, we may assume that $x_n = z$ for all $n \in \N$ and some $z \in V(X) \cup E(X)$. 
		Since $d_G(x_n, y_n) \rightarrow \infty$ and $X$ is connected, it follows that (after possibly restricting to a subsequence of $(y_n)_{n \in \N}$) that the $y_n$'s are pairwise distinct.
		As $V_z$ is finite and $d_G(V_z, W'_n) = K$ for all $n \in \N$, there is some $v \in V_z$ and an infinite index set $I \subseteq \N$ such that $d_G(v, W'_n) = K$ for all $n \in I$. Hence, $d_G(W'_n, W'_m) \leq 2K$ for all $n,m \in \N$. But since $I$ is infinite and the $W'_n$'s are subsets of the branch sets and paths of $(\cV, \cE)$, this contradicts that $(\cV, \cE)$ is ultra-fat.
	\end{proof}

	\subsection{Escaping subdivisions} \label{subsec:EscapingSubdivisions}
	
	We call the double rays in a subdivision of the hexagonal half- or full-grid corresponding to the vertical double rays $R^i$ of the hexagonal half- or full-grid its \defn{vertical (double) rays}, and the branch paths corresponding to the horizontal edges $e_{ij}$ its \defn{horizontal paths}, and we usually denote the former by~$S^i$ and the latter by $P_{ij}$. 
	Whenever we introduce a subdivision of the hexagonal half- or full-grid with vertical double rays $S^i$ without specifying the vertex sets of the $S^i$'s, we tacitly assume that $S^i = \dots s^i_{-1} s^i_0 s^i_{1} \dots$ and that their tails $S^i_{\geq 0}$ are the image of the `upper' half of the vertical double ray~$R^i$ of the hexagonal half- or full-grid.
	
	Let $G$ be a graph and let $H \subseteq G$ be a subdivision of the hexagonal half-grid with vertical double rays~$S^i$ and horizontal paths $P_{ij}$ for all $i\in\N$ and $j\in\Z$. We say that $H$ is \defn{escaping} if $S^0$ is diverging and if there are $0 := M_0 < M_1 < \ldots \in \N$ such that $M_i > M_{i-1}+2i$ for all $i \geq 1$ and
	\begin{enumerate}[label=\rm{(\roman*)}]
		\item \label{itm:DefEscHG:S^i} $S^i \subseteq G[S^0, M_i] - B_G(S^0, M_{i-1}+2i)$ for all $i \in \N_{\geq 1}$, and
		\item \label{itm:DefEscHG:P_ij} $P_{1j} \subseteq G[S^0, M_1]$ and $P_{ij} \subseteq G[S^0, M_i] - B_G(S^0, M_{i-2}+i)$ for all $i \in \N_{\geq 2}$ and $j \in \Z$.
	\end{enumerate}
	
	A subdivision $H \subseteq G$ of the hexagonal full-grid with vertical double rays $S^i$ and horizontal paths $P_{ij}$ is \defn{escaping} if the $S^i$'s and $P_{ij}$'s with $i \geq 0$ form an escaping subdivision of the hexagonal half-grid as well as the $S^i$'s and $P_{ij}$'s with $i \leq 0$, and if there is some $M \in \N$ such that the $S^i$'s with $i > 0$ are contained in a different component of $G- B_G(S^0, M)$ than the $S^i$'s with $i < 0$.
	
	We remark that properties \ref{itm:DefEscHG:S^i} and \ref{itm:DefEscHG:P_ij} in particular ensure that the distances between the double rays and horizontal paths of an escaping subdivision increase with the $i$-coordinate; e.g.\ $d_G(S^i, S^{i'}) \geq 2i'$ for $i < i'$.
	So roughly speaking, an escaping subdivision (of the hexagonal half- or full-grid) has some good divergence properties with respect to the $i$-coordinate. In \cref{subsec:EscSubYieldsFatDivMinor} we show that, since $S^0$ diverges, the subdivision also has good divergence properties with respect to the $j$-coordinate (up to some extraction), and in fact contains a diverging subdivision, and, for every $K \in \N$, a $K$-fat subdivision of the hexagonal half- or full-grid, respectively.

	\subsection{Sketch of the proofs of \texorpdfstring{\cref{main:HalfGrid:BoundedCycles,main:DivergingHalfGrid}}{Theorems 3 and 4}} \label{subsec:ProofSketch34}
	
	We will prove \cref{main:HalfGrid:BoundedCycles,main:DivergingHalfGrid} simultaneously by showing the following stronger result. 
	
	\begin{thm} \label{thm:HalfGrid:BoundedCycles}
		Let $\eps$ be a thick end of a graph $G$ with finite maximum degree whose cycle space is generated by cycles of bounded length. 
		Then either $K_{\aleph_0} \prec_{UF}^\eps G$ or $G$ contains an escaping subdivision $H$ of the hexagonal half-grid whose rays all lie in $\eps$. 
	\end{thm}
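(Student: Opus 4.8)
The plan is to start from the thick end $\eps$ and extract a well-controlled `skeleton' of $\eps$-rays, then feed them into a greedy nested-balls construction. First I would fix a generating set of cycles of length at most $n$, and pick a $c$-quasi-geodesic double $\eps$-ray $S^0$ to serve as the leftmost vertical ray of the half-grid (or rather, a quasi-geodesic $\eps$-ray, which by the preparatory work of \cref{sec:DivAndGeodDR} we may assume exists and diverges). Using thickness of $\eps$, for every $r$ there is a component $C_r$ of $G - B_G(S^0, r)$ that meets $\eps$; the heart of the matter is to understand how these components can fail to be unique and how one can route horizontal paths $P_{ij}$ across the annuli $B_G(S^0, M_i) \setminus B_G(S^0, M_{i-1})$ while keeping a vertical double ray $S^i$ living essentially at distance about $M_i$ from $S^0$. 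This is where the bounded cycle space enters: a horizontal path that leaves the annulus can be pushed back in using the bounded-length cycles, since any cycle through it decomposes into short cycles, at most one of which can reach far from $S^0$.

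The main dichotomy, matching the statement, is whether the `width' of $\eps$ at distance $r$ from $S^0$ stays bounded as $r \to \infty$ or not. If for every $k$ and every large enough $r$ the sphere $B_G(S^0,r)\setminus B_G(S^0,r-1)$ (restricted to the $\eps$-relevant component) cannot be separated from $\eps$ by fewer than $k$ vertices, then I would harvest, level by level, arbitrarily many pairwise far-apart $\eps$-rays and pairwise far-apart connections between them — this yields an ultra fat model of $K_{\aleph_0}$ directed into $\eps$, giving $K_{\aleph_0}\prec_{UF}^\eps G$. In the opposite case there is a bound $k$ and a radius after which every sphere around $S^0$ has an $\eps$-separator of size $\le k$; accessibility (\cref{thm:CycleSpaceAccessible}) is what guarantees such a global bound rather than merely a pointwise-finite one. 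In this bounded regime I would build the escaping half-grid inductively: having constructed $S^0,\dots,S^{i-1}$ and the horizontal paths among them inside $G[S^0,M_{i-1}]$, use thickness to find one more $\eps$-ray far outside $B_G(S^0, M_{i-1}+2i)$, take $S^i$ to be (a quasi-geodesic tail of) it, route $P_{ij}$ from $S^{i-1}$ to $S^i$ for each $j$, and choose $M_i$ large enough that everything built so far sits inside $G[S^0,M_i]$ — the inequalities $M_i > M_{i-1}+2i$ and the separation conditions \ref{itm:DefEscHG:S^i}, \ref{itm:DefEscHG:P_ij} are then arranged by taking $M_i$ sufficiently large and discarding initial segments. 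Local finiteness keeps each $G[S^0,r]$ finite so these choices are genuine finite optimisations, and that all constructed rays lie in $\eps$ is maintained because at each step we only ever append tails of $\eps$-rays and $\eps$-$\eps$ connections.

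The step I expect to be the real obstacle is the bounded regime: routing the horizontal paths $P_{ij}$ so that simultaneously (a) they connect consecutive vertical rays, (b) they stay in the correct annulus $G[S^0,M_i] - B_G(S^0, M_{i-2}+i)$, and (c) the resulting object is genuinely a subdivision of the \emph{hexagonal} half-grid (internally disjoint paths, correct incidences). The hexagonal structure is presumably chosen precisely to give enough slack — each branch vertex has degree $3$, so there is room to reroute — but one still has to argue that the $\le k$-sized $\eps$-separators, combined with the bounded-length cycle generators, force the $\eps$-part of each annulus to be `path-connected in a grid-like way' across its two bounding spheres. I would handle this by a pigeonhole/linking argument on the at-most-$k$ separators: between two consecutive spheres there are at most $k$ `strands' carrying $\eps$, and by choosing the $M_i$ far apart one can find within each strand, or by relinking across strands using short cycles, a single double ray plus the finitely many rungs needed at level $i$. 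Making this relinking clean while preserving divergence of the vertical rays (so that the final half-grid is escaping, not merely a subdivision) is the technical crux, and is presumably why \cref{sec:DivAndGeodDR} is devoted to diverging and quasi-geodesic rays beforehand.
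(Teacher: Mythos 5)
Your overall scaffolding (a diverging double $\eps$-ray $S^0$, vertical rays $S^i$ placed in annuli $G[S^0,M_i]-B_G(S^0,M_{i-1}+2i)$, bounded cycle space used to control connectivity near $S^0$) matches the paper's outline, but the engine of your argument -- the dichotomy -- is not correct. You propose to split according to whether the spheres around $S^0$ can eventually be separated from $\eps$ by a bounded number $k$ of vertices, and in the unbounded case to ``harvest'' an ultra fat model of $K_{\aleph_0}$. This branch fails already for the hexagonal half- or full-grid itself with $S^0$ a vertical double ray: the sets $B_G(S^0,r)\setminus B_G(S^0,r-1)$ are infinite and stretch along the end, so they cannot be separated from $\eps$ by any finite set, yet these graphs are planar and a $K$-fat model with $K\geq 1$ is in particular a model, so they do not even contain $K_5$ as a minor, let alone $K_{\aleph_0}\prec_{UF}^\eps$. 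Moreover, your bounded branch leans on accessibility via \cref{thm:CycleSpaceAccessible}, which requires quasi-transitivity; \cref{thm:HalfGrid:BoundedCycles} assumes only finite maximum degree, so no such global separator bound is available, and ``use thickness to find one more $\eps$-ray far outside $B_G(S^0,M_{i-1}+2i)$'' is unjustified -- thickness gives disjoint rays, not rays far from $S^0$.

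The paper's proof resolves exactly these two points differently. The vertical rays are not found as pre-existing far-away $\eps$-rays: \cref{lem:kappa/2NhoodIsConnected} (the sole use of the cycle-space hypothesis) shows that for any component $C$ of $G-B_G(S^0,L)$ the graph $C[\partial_G C,\lfloor\frac{\kappa-2}{2}\rfloor]$ is connected, \cref{lem:LongCompsExist} shows a long component exists, and a Star--Comb argument inside that thin shell produces $S^i$ together with infinitely many disjoint connecting paths to $S^0$ on both sides. The dichotomy is then decided in \cref{lem:HalfGrid} by an auxiliary graph whose vertices are the tails $T'_i$ of the $S^i$, with an edge when infinitely many of the (shortened) connecting paths run between two of them: by \cref{lem:StarComb} this graph has a vertex of infinite degree, in which case the rays attaching to a common $T'_\ell$ become the branch sets of an ultra fat $K_{\aleph_0}$ (the annular placement supplies the growing distances), or it contains a ray, in which case consecutive links furnish the horizontal paths of an escaping subdivision of the hexagonal half-grid. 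Your proposal contains no substitute for this linking analysis, and the separator-based criterion you put in its place gives the wrong answer on the target graphs, so the argument as sketched does not go through.
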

	
	By \cref{obs:UFImpliesAsym,obs:UFImpliesDiv}, an ultra fat model of $K_{\aleph_0}$ contains a diverging and a $K$-fat model of the half-grid for every $K\in\N$. So to derive \cref{main:HalfGrid:BoundedCycles,main:DivergingHalfGrid} from \cref{thm:HalfGrid:BoundedCycles} it suffices to show that an escaping subdivision of the hexagonal half-grid also contains a diverging and a $K$-fat subdivision of the hexagonal half-grid (see \cref{subsec:FormalProofOfMainResults,subsec:EscSubYieldsFatDivMinor}).
	\medskip
	
	For the proof of \cref{thm:HalfGrid:BoundedCycles}, we first show that $G$ contains for every thick end $\eps$ a diverging double $\eps$-ray $R$ (see \cref{thm:DivergingRays}),
	and we then set $S^0 := R$. Second, we show that $G$ contains double rays $S^1, S^2, \ldots$ such that the $S^i$'s are contained in increasingly distant `thickened cylinders' around $R$ of the form $G[R, M_i] - B_G(R, M_{i-1}+2i)$ for some $M_0 < M_1 < \ldots \in \N$, as required by \ref{itm:DefEscHG:S^i} for the vertical double rays of an escaping subdivision of the hexagonal half-grid.
	Finally, we connect the $S^i$'s by infinitely many paths so that infinitely many of them either form the vertical double rays of an escaping subdivision of the hexagonal half-grid or form the branch sets of an ultra fat model of $K_{\aleph_0}$ (see \cref{lem:HalfGrid}). 
	
	Let us describe the second step in more detail. We will choose the $S^i$'s recursively, starting from the diverging double $\eps$-ray $R = \dots r_{-1}r_0r_1 \dots (=: S^0)$.
	For this, we first show that $C[\partial_G C, \lfloor\frac{\kappa-2}{2}\rfloor] = C \cap G[R, L + \lfloor \frac{\kappa}{2}\rfloor]$ is connected for every $L \in \N$ and every component $C$ of $G - B_G(R,L)$, where $\kappa \in \N$ is such that the cycle space of $G$ is generated by cycles of length $\leq \kappa$ (see \cref{lem:kappa/2NhoodIsConnected}). Note that this is almost the only part in the proofs of \cref{main:AsymptoticFullGrid,main:DivergingFullGrid,main:DivergingHalfGrid,main:HalfGrid:BoundedCycles} where we use the assumption that the cycle space is generated by cycles of bounded length; nevertheless, the assumption is crucial here, and the rest of the proof relies on this lemma.\footnote{We in fact prove stronger versions of \cref{main:AsymptoticFullGrid,main:DivergingFullGrid,main:DivergingHalfGrid,main:HalfGrid:BoundedCycles} (see \cref{subsec:FormalProofOfMainResults} below), which find the desired minors in a prescribed end. For this, in the case of \cref{main:AsymptoticFullGrid,main:DivergingFullGrid}, we need the assumption that the cycle space is generated by cycles of bounded length once more, to ensure that the graph is accessible, which is the only other place where this assumption is needed.} 
	
	We then show that for every $L \in \N$ some component~$C$ of $G - B_G(R,L)$ is `long', i.e.\ it has a neighbour in $B_G(R_{\geq j},L)$ and in $B_G(R_{\leq -j}, L)$ for all $j \in \N$ (see \cref{lem:LongCompsExist}). 
	Combining that $C$ is long and $C[\partial_G C, \lfloor\frac{\kappa-2}
	{2}\rfloor]$ is connected then allows us to find a double ray in $C[\partial_G C, \lfloor\frac{\kappa-2}
	{2}\rfloor]$, which thus also lies in $G[R,L + \lfloor\frac{\kappa}{2}\rfloor] - B_G(R, L)$. 
	Hence, we may proceed recursively by increasing the radius $L$ of the ball around $R$ by a summand of $\lfloor\frac{\kappa}{2}\rfloor + 2i$ in each step.

	\subsection{Sketch of the proof of \texorpdfstring{\cref{main:AsymptoticFullGrid,main:DivergingFullGrid}}{Theorems 1 and 2}} \label{subsec:ProofSketch12}
	
	Similarly to before, we prove \cref{main:AsymptoticFullGrid,main:DivergingFullGrid} simultaneously, by showing the following stronger result.
	
	\begin{thm} \label{thm:AsymptoticFullGrid}
		Let $\eps$ be a thick end of a \lf, \qt\ graph $G$ whose cycle space is generated by cycles of bounded length. Then either $K_{\aleph_0} \prec_{UF}^\eps G$ or $G$ contains an escaping subdivision of the hexagonal full-grid whose rays all lie in $\eps$.
	\end{thm}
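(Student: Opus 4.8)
The plan is to build on \cref{thm:HalfGrid:BoundedCycles} and its proof, and to use quasi-transitivity and accessibility to upgrade an escaping half-grid to an escaping full-grid. Since $G$ is \lf\ and \qt, it has finite maximum degree and, by \cref{thm:CycleSpaceAccessible}, is accessible; fix $\kappa \in \N$ such that the cycle space of $G$ is generated by cycles of length at most~$\kappa$. We may assume $K_{\aleph_0} \not\prec_{UF}^\eps G$, as otherwise we are done. As in the proof of \cref{thm:HalfGrid:BoundedCycles} we start from a diverging, quasi-geodesic double $\eps$-ray $R = \dots r_{-1} r_0 r_1 \dots$ and set $S^0 := R$, and we will reuse the two facts established there (\cref{lem:kappa/2NhoodIsConnected,lem:LongCompsExist}): for every $L \in \N$ and every component $C$ of $G - B_G(R, L)$ the graph $C[\partial_G C, \lfloor\tfrac{\kappa-2}{2}\rfloor]$ equals $C \cap G[R, L + \lfloor\tfrac{\kappa}{2}\rfloor]$ and is connected, and for every $L \in \N$ some component of $G - B_G(R, L)$ is \emph{long}, i.e.\ attaches to $B_G(R_{\geq j}, L)$ and to $B_G(R_{\leq -j}, L)$ for all $j \in \N$.

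The new ingredient, and the heart of the argument, is to make this two-sided. We claim that, still assuming $K_{\aleph_0} \not\prec_{UF}^\eps G$, the double ray $R$ can be chosen (again diverging and quasi-geodesic), together with some $L_0 \in \N$ and two distinct components $D^+, D^-$ of $G - B_G(R, L_0)$, so that for every $L \geq L_0$ both $D^+$ and $D^-$ contain a long component of $G - B_G(R, L)$; when even this cannot be arranged, we shall instead read off $K_{\aleph_0} \prec_{UF}^\eps G$. This is the one place where accessibility and quasi-transitivity are genuinely used. The point is that a thick end of a \lf, \qt, accessible graph cannot be ``one-sided'' in the way the end of the bare half-grid is: this is forced --- through the $\Aut(G)$-invariant \td\ of $G$ with finite adhesion supplied by accessibility, together with the orbit-finiteness of $\Aut(G)$ --- by essentially the mechanism by which Georgakopoulos and the second author \cite{GH24+} obtain an ordinary full-grid minor from a thick end of a \lf, \qt\ graph, with accessibility being what makes the argument survive the passage to the coarse, ``escaping'' setting. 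Moreover, if along the way one meets a radius with enough ``branching'' long components, then joining $R$ to escaping double rays in sufficiently many of them already yields, via the gluing step below, an ultra fat $K_{\aleph_0}$.

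Granting the two-sided data, we run the construction from the proof of \cref{thm:HalfGrid:BoundedCycles} twice, for a suitable increasing sequence $0 = M_0 < M_1 < \dots$ with $M_1 \geq L_0$. On the positive side we recursively extract double rays $S^1, S^2, \dots$, each $S^i$ a double ray inside the connected graph $C_i[\partial_G C_i, \lfloor\tfrac{\kappa-2}{2}\rfloor]$ for a long component $C_i \subseteq D^+$ of $G - B_G(R, M_{i-1}+2i)$ (which exists by the two-sided property), so that $S^i \subseteq G[R, M_i] - B_G(R, M_{i-1}+2i)$, together with horizontal paths $P_{ij}$ for $i \geq 1$; symmetrically we build $S^{-1}, S^{-2}, \dots$ with long components inside $D^-$ and the $P_{ij}$ for $i \leq -1$, and we join $S^0$ to $S^1$ and to $S^{-1}$ by horizontal paths. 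Finally, exactly as in \cref{lem:HalfGrid}, we connect consecutive $S^i$ on each side by infinitely many paths and pass to a subsequence: either on each side infinitely many of the $S^i$ assemble into the vertical double rays of an escaping subdivision of the hexagonal half-grid --- and then, since the positive-side rays lie in $D^+$ and the negative-side rays in $D^- \neq D^+$, the two halves together form an escaping subdivision of the hexagonal full-grid, with $M := L_0$ witnessing the required separation and all of whose rays lie in $\eps$ --- or else the connecting paths spread out enough that we extract $K_{\aleph_0} \prec_{UF}^\eps G$.

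The main obstacle is the second paragraph: showing that a thick end of a \lf, \qt, accessible graph admits a two-sided diverging quasi-geodesic double ray, unless we already see an ultra fat $K_{\aleph_0}$. The rest is a careful but essentially routine two-sided reprise of the proof of \cref{thm:HalfGrid:BoundedCycles}. It is precisely this two-sidedness step that breaks down for inaccessible \lf, \qt\ graphs, so a genuinely different approach would be needed to dispense with the accessibility assumption.
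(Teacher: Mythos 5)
Your overall architecture matches the paper's: assume $K_{\aleph_0}\not\prec_{UF}^\eps G$, find a diverging quasi-geodesic double ray $R$ such that $G-B_G(R,L)$ has two thick components, build an escaping half-grid into each via \cref{thm:HalfGrid:BoundedCycles:copy}, and glue along $R$ (this is exactly \cref{lem:TwoThickCompsYieldFullGrid}). But there is a genuine gap at precisely the point you flag as "the main obstacle": the existence of the two-sided ray is asserted, not proved. You say it is "forced --- through the $\Aut(G)$-invariant tree-decomposition supplied by accessibility, together with orbit-finiteness --- by essentially the mechanism of \cite{GH24+}". That is not an argument. The GH24+ mechanism produces an ordinary full-grid \emph{minor}, and it is exactly the failure of such arguments to survive the passage to fat/escaping structures that makes this theorem nontrivial; the paper does not use the tree-decomposition in this way at all. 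Instead it spends all of \cref{subsec:TwoThickComps}: first \cref{lem:QuasiGeodesic3StarOfRays} (a quasi-geodesic $3$-star of $\eps$-rays, where accessibility enters only through \cref{cor:1EndQTGeodSubgraph}), then \cref{lem:ThickAndHalfThickComponent}, a delicate case analysis of how the third ray $R_3$ sits relative to the escaping half-grid on $R_1\cup R_2$ -- producing either a thick/half-thick pair or fat jumping $H$-paths feeding into \cref{lem:KFatHexGridWithJumpingPaths} -- and finally \cref{lem:TwoThickComponents}, which uses quasi-transitivity and a compactness argument translating the configuration along the ray to upgrade "thick plus half-thick" to "two thick components" for a new double ray. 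None of this is routine, and your sketch contains no substitute for it.

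A second, related problem is your fallback alternative: you claim that if the two-sided structure cannot be arranged, one can "read off" $K_{\aleph_0}\prec_{UF}^\eps G$ from "a radius with enough branching long components". This is not substantiated and is not how the ultra fat clique arises in the paper: finitely or even infinitely many long components at a single radius do not obviously yield branch sets and branch paths with the \emph{growing} distance requirements of an ultra fat model; in the paper the fat $K_{\aleph_0}$ comes from $m$-fat $H$-paths jumping over the half-grid (\cref{lem:KFatHexGridWithJumpingPaths}, built on \cref{lem:HalfGridWithCrosses,cor:HalfGridWithJumpingEdges:NEW}), which requires the careful construction in Cases 1, 2a, 2b of \cref{lem:ThickAndHalfThickComponent}. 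Finally, a small structural remark: the paper first passes to a one-ended quasi-geodesic quasi-transitive subgraph via \cref{cor:1EndQTGeodSubgraph} and proves the end-free statement \cref{thm:AsymptoticFullGridWeaker}, recovering the prescribed end $\eps$ afterwards; your plan to work with $\eps$ directly is fine in principle (the authors remark on this), but as they note, \cref{lem:TwoThickComponents} as proved does not localise to a prescribed end without the accessibility reduction, so your outline would still need that reduction or a modified argument.
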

	
	Similarly to above, \cref{obs:UFImpliesAsym,obs:UFImpliesDiv} together with results from \cref{subsec:EscSubYieldsFatDivMinor} below will show that it suffices to prove \cref{thm:AsymptoticFullGrid} in order to obtain \cref{main:AsymptoticFullGrid,main:DivergingFullGrid} (see \cref{subsec:FormalProofOfMainResults}).
	\medskip
	
	The proof of \cref{thm:AsymptoticFullGrid} builds on \cref{thm:HalfGrid:BoundedCycles}. 
	From the proof of \cref{thm:HalfGrid:BoundedCycles} it follows that we have more control over where the escaping subdivision of the hexagonal half-grid lies (see \cref{thm:HalfGrid:BoundedCycles:copy}, the detailed version of \cref{thm:HalfGrid:BoundedCycles}, in \cref{sec:HG:BC}). For this, let $\eps$ be a thick end of $G$, and let $R$ be a diverging double $\eps$-ray. Given a `thick' component $C$ of $G - B_G(R, K)$ for some $K \in \N$, that is one which contains a long component of $G-B_G(R, L)$ for every $L \geq K$,  we in fact obtain an escaping subdivision~$H$ of the hexagonal half-grid whose first vertical double ray is $R$ and which is `mostly' contained in $C$ (unless \cref{thm:HalfGrid:BoundedCycles:copy} yields an ultra fat model of $K_{\aleph_0}$, in which case we are immediately done).
	Now suppose that for some large enough $L \in \N$ there is another thick component~$D$ of $G - B_G(R, L)$. Then \cref{thm:HalfGrid:BoundedCycles} yields another escaping subdivision~$H'$ of the hexagonal half-grid whose first vertical double ray is $R$ and which is `mostly' contained in $D$ (or an ultra fat model of $K_{\aleph_0}$). Gluing~$H$ and~$H'$ together along their common first vertical double ray $R$ then yields the desired subdivision of the hexagonal full-grid (see \cref{lem:TwoThickCompsYieldFullGrid}). 
	
	It thus suffices to prove that $G$ contains a diverging double $\eps$-ray $R$ such that, for some large enough $K \in \N$, there are two distinct thick components of $G-B_G(R, K)$. This step is mainly divided into two lemmas (see \cref{lem:ThickAndHalfThickComponent,lem:TwoThickComponents}).
	We first show that if $R'$ is a double $\eps$-ray which is not only diverging but even quasi-geodesic, then it is enough that for some large enough $K \in \N$ there are distinct components $C \neq D$ of $G-B_G(R', K)$ such that~$C$ is thick but~$D$ is only `half-thick' (see \cref{lem:TwoThickComponents}) because then we can use the quasi-transitivity of~$G$ to find another quasi-geodesic double $\eps$-ray $R$ such that $G-B_G(R, K)$ has two distinct thick components (see \cref{lem:TwoThickComponents}).
	Here, a component of $G-B_G(R, K)$ is half-thick if it includes for every $L \geq K$ a component of $G-B_G(R, L)$ which is `half-long', i.e.\ which has neighbours in $B_G(R_{\geq n}, L)$ \emph{or} in $B_G(R_{\leq -n}, L)$ for all $n \in \N$.
	
	Next, we show that such a double ray $R'$ exists. For this, we first prove that~$G$ contains three $\eps$-rays $R_1, R_2, R_3$ that are pairwise disjoint except that they share the same first vertex and such that $R_1 \cup R_2 \cup R_3$ is quasi-geodesic (see \cref{lem:QuasiGeodesic3StarOfRays}). Applying the detailed version \cref{thm:HalfGrid:BoundedCycles:copy} of \cref{thm:HalfGrid:BoundedCycles} to the quasi-geodesic, and hence diverging, double ray $R_1 \cup R_2$ then yields an escaping subdivision $H$ of the hexagonal half-grid whose first vertical double ray is~$R_1 \cup R_2$. 
	Now for every $K \in \N$, by the definition of escaping,  $H$ will lie `mostly' in one component $C_K$ of $G - B_G(R, K)$, which then needs to be thick.
	We then analyse where~$R_3$ lies in relation to~$H$.
	If, for some large enough $L \in \N$, $R_3$ has a tail in a component $D_L \neq C_L$ of $G-B_G(R_1 \cup R_2, L)$, then we are done since $D_L$ needs to be half-thick as $R_3$ diverges from $R_1 \cup R_2$ but lies in the same end as~$R_1$ and~$R_2$. 
	
	Otherwise, again since $R_3$ diverges from $R_1 \cup R_2$, it has a tail in~$C_K$ for all $K \in \N$. We then distinguish two cases. First assume that $R_3$ is far away from $H$. Then, since $R_3$ has a tail in each $C_K$, we can connect~$R_3$ and $H$ with infinitely many pairwise disjoint paths. These paths together with $R_3$ then yield infinitely many $H$-paths that `jump over' $H$. We then use these paths together with $H$ to find an ultra fat model of~$K_{\aleph_0}$. Otherwise, $R_3$ lies close to~$H$. Then either~$R_3$ separates $H$ into an `upper half' containing (a tail of) $R_1$ and a `lower half' containing (a tail of) $R_2$, and then $R_1 \cup R_3$ (or symmetrically $R_2 \cup R_3$) is the desired double ray~$R'$, or there are infinitely many $H$-paths that `jump over' $R_3$, which then again yield an ultra fat model of~$K_{\aleph_0}$ (see \cref{lem:ThickAndHalfThickComponent}).

	\subsection{Obtaining fat and diverging minors from escaping subdivisions} \label{subsec:EscSubYieldsFatDivMinor}
	
	In this section we describe how one can turn an escaping subdivision $H$ of the hexagonal half-grid (full-grid) into a $K$-fat or diverging minor of the hexagonal half-grid (full-grid). 
	
	\begin{lem} \label{cor:EscHexGridYieldsAsympAndDivHexGrid}
		Let $G$ be a locally finite graph, and let $H \subseteq G$ be an escaping subdivision of the hexagonal half-grid (full-grid). Then the following assertions hold for all $K \in \N$: 
		\begin{enumerate}[label=\rm{(\roman*)}]
			\item \label{itm:EscHexGridYieldsAsympHexGrid} $H$ contains a subdivision of the hexagonal half-grid (full-grid) which is $K$-fat in~$G$, and
			\item \label{itm:EscHexGridYieldsDivHexGrid} $H$ contains a subdivision of the hexagonal half-grid (full-grid) which diverges in $G$.
		\end{enumerate}
	\end{lem}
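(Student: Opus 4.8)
The plan is to exploit the defining inclusions of an escaping subdivision, which place the $i$-th vertical double ray $S^i$ and the horizontal paths $P_{ij}$ into an annulus $G[S^0,M_i] - B_G(S^0, M_{i-1}+2i)$ (and similarly, with a slightly larger inner radius, for the $P_{ij}$). The key point is that consecutive annuli that are \emph{several indices apart} are then far apart in $G$: concretely, everything with index $\leq i$ lives inside $B_G(S^0, M_i)$, while everything with index $\geq i+t$ lives outside $B_G(S^0, M_{i+t-1} + 2(i+t)) \supseteq B_G(S^0, M_i + 2)$, so any two objects whose indices differ by at least $t$ are at distance $> M_{i+t-1} - M_i \geq \sum_{\ell=i+1}^{i+t-1}(\text{gap}) $, which tends to infinity with $t$ uniformly in $i$ since $M_\ell > M_{\ell-1} + 2\ell$. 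Thus for \ref{itm:EscHexGridYieldsAsympHexGrid}, given $K$, I would choose a threshold $t = t(K)$ and keep only the vertical double rays $S^{i}$ with $i \equiv 0 \pmod{t}$ (for the full-grid, do this on both the positive and negative sides, and note the extra ball $B_G(S^0,M)$ separating the two sides keeps positively- and negatively-indexed parts far apart, except near $S^0$ itself). Relabelling these kept rays as $0,1,2,\dots$ gives a subdivision of the hexagonal half-grid (resp.\ full-grid) again: the horizontal paths between $S^{i}$ and $S^{i+t}$ already exist in $H$ after subdividing, since in the hexagonal grid consecutive vertical double rays are joined by infinitely many horizontal paths, and deleting intermediate rays merely lengthens each such path through the (now suppressed) intermediate structure. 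Within a single annulus one still needs the branch-set/branch-path pairs that are \emph{incident} to be allowed to touch — which is exactly what the $K$-fat definition permits — and non-incident pairs inside one annulus are handled by additionally thinning the horizontal paths (keeping every $t'$-th one) and, if necessary, discarding a bounded initial segment; since $S^0$ diverges, tails of $S^0$ and hence of every annulus can be taken pairwise $K$-far, so only finitely many low-index objects cause trouble and those we simply delete.

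For \ref{itm:EscHexGridYieldsDivHexGrid} the strategy is the same but with a growing threshold instead of a fixed one: I would build a diverging subdivision by a diagonal/telescoping choice, keeping vertical double rays $S^{n_1}, S^{n_2}, \dots$ and horizontal paths so sparsely that, for the resulting relabelled grid, two sequences of vertices/edges with grid-distance $\to \infty$ necessarily have index-gap $\to\infty$, whence by the annulus estimate their images in $G$ have distance $\to\infty$. Here the divergence of $S^0$ is needed in a second, orthogonal way: two objects of \emph{the same} annulus index but with grid-positions $j, j'$ far apart along the vertical direction must also diverge in $G$; this follows because $S^i$ runs `parallel' to $S^0$ inside its annulus and $d_G$ between far-apart portions of $S^0$ blows up, and the portions of $S^i$ near them are within $M_i$ of the corresponding portions of $S^0$, so a triangle-inequality argument (distance along $S^0$ minus $2M_i$) gives divergence. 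Combining the along-the-ray divergence (from $S^0$ diverging) with the across-annuli divergence (from the $M_i$ growing) covers every pair of diverging sequences in the hexagonal grid.

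The routine but slightly fiddly parts are: (a) checking that after deleting vertical double rays the remaining horizontal paths still form \emph{internally disjoint} $S^{n_a}$--$S^{n_{a+1}}$ paths avoiding the other kept vertical rays — this is immediate because in an escaping subdivision all of $H$ is already a subdivision of the hexagonal grid, so its branch paths are internally disjoint by definition, and re-routing is not actually needed, one only reindexes; (b) verifying the incident-pair exception is correctly inherited; and (c) bookkeeping the finitely many small-index exceptions. The main genuine obstacle is getting a \emph{single} annulus-distance estimate that is simultaneously uniform in the base index $i$ and strong enough: one must phrase it as ``objects of index $\le a$ and objects of index $\ge b$ are at distance $\ge M_{b-1} - M_a \ge 2(a+1) + 2(a+2) + \dots$'' and then observe this lower bound depends only on $b-a$ (it is at least, say, $(b-a-1)\cdot 2(a+1)$, hence at least $b-a-1$), which is what both parts need. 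Once this estimate is isolated as a sub-claim, \ref{itm:EscHexGridYieldsAsympHexGrid} is a fixed-threshold corollary and \ref{itm:EscHexGridYieldsDivHexGrid} a growing-threshold corollary, with the full-grid cases differing only by also invoking the separating ball $B_G(S^0,M)$ and symmetry.
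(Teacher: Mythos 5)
There is a genuine gap, and it sits exactly where the real work of the paper's proof lies: in separating objects that have the \emph{same} (or adjacent) column index, i.e.\ along the vertical direction. Your across-column estimate (everything of index $\le a$ lies in $G[S^0,M_a]$, everything of index $\ge b$ avoids $B_G(S^0,M_{b-1}+2b)$, and the gaps $M_\ell>M_{\ell-1}+2\ell$ make the separation grow with $b-a$) is correct and is indeed how the paper uses \ref{itm:DefEscHG:S^i} and \ref{itm:DefEscHG:P_ij}. But for $K$-fatness and for divergence you also need, for instance, that two non-incident subdivided vertical edges of the \emph{same} column $S^i$, or two rungs $P_{ij},P_{ij'}$ with $|j-j'|$ large, are far apart in $G$. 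Your justification -- that $S^i$ runs ``parallel'' to $S^0$, so grid-heights $j,j'$ far apart sit near far-apart portions of $S^0$ and a triangle inequality finishes -- assumes a correspondence between the parametrisation of $S^i$ by grid-height and progress along $S^0$ that the escaping condition simply does not provide: it only confines $S^i$ to an annulus around $S^0$, and divergence of $S^0$ only concerns its two opposite tails. The ambient graph may contain shortcuts inside the annulus, so the attachment vertices $x^i_j$ of the rungs can come back close to each other in $G$ for pairs with $|j-j'|\to\infty$; keeping every $t'$-th rung (a selection by index) does nothing to prevent this, so neither the fixed-threshold version (i) nor the growing-threshold version (ii) of your thinning is sound as stated.

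What is needed instead is an \emph{adaptive} choice of the horizontal paths, and this is the content of the paper's \cref{lem:DivSubHexGridOfEscHexGrid}: one recursively selects rungs $Q_{ij}$ among the $P_{ik}$ so that conditions like $d_G(S^iy^i_{j-1},\,y^k_jS^k)\ge\max\{|i|,|j|,|k|\}$ hold, which forces the attachment points to march outwards and makes far-apart portions of one column genuinely far apart in $G$. That this selection is possible at all is not free: it uses local finiteness (only finitely many of the pairwise disjoint rungs meet any fixed ball) together with \cref{lem:FurtherPropertiesOfEscapingHGs} and \cref{cor:FurtherPropertiesOfEscapingHGs}, which say that tails of $S^i$ and far-out rungs eventually lie near far-out tails of $S^0$ -- and it is precisely here that the hypothesis that $S^0$ diverges enters, rather than through the direct triangle inequality you propose. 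Once such a lemma is available, both (i) and (ii) follow much as you describe (the paper derives (i) from the same selection, dropping the columns of index $<K$ and a periodic pattern of rungs, and handles the full-grid via the separating ball and connecting paths between the two halves); without it, your argument proves neither part.
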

	
	\noindent In fact, the subdivisions which we obtain from \cref{cor:EscHexGridYieldsAsympAndDivHexGrid}~\ref{itm:EscHexGridYieldsAsympHexGrid} and~\ref{itm:EscHexGridYieldsDivHexGrid} will have the property that their sets of vertical double rays are a subset of the sets of vertical double rays of $H$.
	\medskip
	
	Since we will have to delete some of the branch paths from $H$ in the proof of \cref{cor:EscHexGridYieldsAsympAndDivHexGrid}, we need the following auxiliary result.
	
	\begin{prop} \label{prop:HexGridAfterDeletingPaths}
		Let $H$ be a subdivision of the hexagonal half-grid (full-grid) with vertical double rays~$S^i$ and horizontal paths~$P_{ij}$. Let~$H'$ be obtained from~$H$ by deleting some of the~$P_{ij}$. If~$H'$ still contains, for every $i \in \N \setminus \{0\}$ ($i \in \Z\setminus\{0\}$), infinitely many $P_{ij}$ with $j \in \N$ and infinitely many $P_{ij}$ with $j \in \Z_{\leq 0}$, then~$H'$ contains a subdivision $H''$ of the hexagonal half-grid (full-grid) whose vertical double rays are the~$S^i$ and whose set of horizontal paths is a subset of the $P_{ij}$'s.
	\end{prop}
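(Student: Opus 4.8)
The plan is to relabel the surviving horizontal paths level by level so that on each level they appear in the correct cyclic/linear order, and then simply read off a sub-subdivision isomorphic to the hexagonal half-grid (full-grid). Fix attention on the half-grid case; the full-grid case is the same argument applied to the two halves $i \geq 0$ and $i \leq 0$ separately, noting that the vertical double rays $S^i$ are untouched so the two halves still fit together along $S^0$.

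First I would set up the bookkeeping. Recall that in the hexagonal half-grid the horizontal edge $e_{ij}$ joins $S^{i-1}$ and $S^i$, so after deleting some $P_{ij}$ the surviving horizontal paths between consecutive vertical double rays $S^{i-1}$ and $S^i$ form, by hypothesis, an infinite set accumulating in both directions along $S^i$ (equivalently along $S^{i-1}$, since the two double rays are linearly ordered compatibly by the grid structure). Concretely: for each $i \geq 1$ let $J_i \subseteq \Z$ be the set of indices $j$ with $P_{ij}$ surviving in $H'$; the hypothesis says $J_i$ is unbounded above and below. Now I would choose, for each $i \geq 1$, a strictly increasing bijection $\phi_i \colon \Z \to J_i$ — this exists precisely because $J_i$ is infinite and unbounded in both directions. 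Since the foot of $P_{ij}$ on $S^i$ occurs at the vertex $s^i_{f(i,j)}$ for some strictly monotone-in-$j$ index function $f(i,\cdot)$ (a structural feature of any subdivision of the hexagonal half-grid: along a fixed vertical double ray the feet of the horizontal paths appear in the same order as their indices), the composite orderings are compatible, and this is what lets the re-indexed paths bound genuine four-cycles.

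Next I would define $H''$. Take the vertical double rays to be the $S^i$, $i \in \N$, unchanged, and take the horizontal paths to be $P''_{ij} := P_{i,\phi_i(j)}$ for $i \geq 1$, $j \in \Z$. I claim $H''$ is a subdivision of the hexagonal half-grid. The vertical double rays are pairwise disjoint and internally disjoint from all the $P''_{ij}$ because $H$ already was a subdivision of the hexagonal half-grid; the $P''_{ij}$ are internally disjoint from each other for the same reason; so the only thing to check is the incidence pattern, i.e.\ that contracting each $S^i$ appropriately and each $P''_{ij}$ to an edge yields the hexagonal half-grid. This reduces to checking that for each fixed $i$ the feet of $P''_{i,j}$ on $S^i$ and the feet of $P''_{i+1,j'}$ on $S^i$ interleave in the pattern dictated by the hexagonal half-grid — but since $\phi_i$ and $\phi_{i+1}$ are order-preserving and the feet functions $f(i,\cdot)$, $f(i+1,\cdot)$ are order-preserving, the relative order of all these feet along $S^i$ is the same as in $H$ restricted to the surviving indices, which in turn (after the monotone re-indexing) matches the hexagonal half-grid. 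Here one uses the bipartite-type structure of the hexagonal grid: on level $S^i$ the feet coming from below and from above can be matched into consecutive pairs $\{s^i_{2j}, s^i_{2j+1}\}$-style blocks, and a monotone reindexing on each side preserves this.

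I expect the only real subtlety — the "main obstacle" — to be making the order-compatibility argument fully rigorous without drowning in notation: one must verify that the feet of the horizontal paths along a fixed $S^i$ respect the index order, and that re-indexing on the two sides of $S^i$ by $\phi_i, \phi_{i+1}$ keeps the correct "upper foot between two lower feet" alternation of the hexagonal grid. This is visually obvious from \cref{fig:HexGrid} and is a soft combinatorial fact about subdivisions, but it is the point where the proof must actually invoke what it means to be a subdivision of the hexagonal half-grid rather than of some other cubic graph. Once this is in place, $H''$ is the desired subdivision: its vertical double rays are exactly the $S^i$, and its horizontal paths $\{P''_{ij}\} = \{P_{i,\phi_i(j)}\} \subseteq \{P_{ij}\}$, as required.
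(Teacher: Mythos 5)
There is a genuine gap, and in fact your central claim is false as stated. Since each $\phi_i$ is a bijection from $\Z$ onto the whole set $J_i$ of surviving indices, your $H''$ consists of \emph{all} of $H'$: you never discard a single path, so you are really claiming that $H'$ itself is a subdivision of the hexagonal half-grid with vertical double rays the $S^i$. That is not true in general. Take $H$ to be the hexagonal half-grid itself, delete every other horizontal path between $S^0$ and $S^1$, and delete nothing else; the hypothesis of \cref{prop:HexGridAfterDeletingPaths} is satisfied. But now, walking along $S^1$, two feet of paths to $S^2$ occur consecutively between two surviving feet of paths to $S^0$, whereas in the hexagonal half-grid the horizontal edges incident with $R^1$ strictly alternate between the left and the right side. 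Since the vertical double rays are prescribed to be the $S^i$, this alternation along $S^1$ is forced for any subdivision of the required kind, so $H'$ is not one. Your ``order-compatibility'' step is exactly where this is swept under the rug: a strictly increasing re-indexing is a relabelling and cannot change the incidence pattern, and the relative order of the surviving feet being inherited from $H$ does not imply that the left/right interleaving pattern of the hexagonal grid survives the deletions, because deletions at level $i$ are not coordinated with deletions at level $i+1$.

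The correct proof must genuinely select a \emph{proper} subfamily of the surviving paths, coordinating consecutive columns so that the alternation along each $S^i$ is restored; this is what the paper does. It enumerates the horizontal edges of the hexagonal half-grid in the spiral order $f_1, f_2, \dots$ of \cref{fig:HalfHexGrid} and recursively chooses, for each $f_k$, a surviving path $P_{ij}$ with $|j|$ sufficiently large that its feet lie beyond the feet of all previously chosen paths on the relevant vertical rays; the hypothesis that infinitely many $P_{ij}$ survive in both directions at every level is exactly what makes each such choice possible. If you want to salvage your write-up, replace the bijections $\phi_i$ by such a recursive, level-coordinated selection (each $\phi_i$ then being a strictly increasing injection whose image is chosen during the recursion), and verify the interleaving as part of the recursive choice rather than asserting it a posteriori.
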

	
	\begin{proof}
		To obtain the desired graph $H''$, one may recursively select paths $P_{ij} \subseteq H'$ with sufficiently large~$|j|$ to represent the edges $f_k$ in the order indicated in \cref{fig:HalfHexGrid} (and similarly for the hexagonal full-grid in the order $f_1, f_2, f'_2, f_3, f'_3, f_4, f'_4, f_5, f_6, f_7, f'_7, \ldots$).
	\end{proof}

	\begin{figure}[ht]
		\begin{tikzpicture}
    \tikzset{edge/.style = {->,> = stealth}}
    
%vertical double rays
    \foreach \x in {0,1,2,3,4,5} {
    \draw[stealth-stealth] (\x,-0.4) to (\x,4.4);
    \foreach \y in {0,0.5,1,1.5,2,2.5,3,3.5,4} {
      \draw[fill,black] (\x,\y) circle (.05);
}}

\foreach \x in {-4,-3,-2,-1} {
    \draw[Gray, stealth-stealth] (\x,-0.4) to (\x,4.4);
    \foreach \y in {0,0.5,1,1.5,2,2.5,3,3.5,4} {
      \draw[fill,Gray] (\x,\y) circle (.05);
}}

%complete rungs

\foreach \x in {1,3}{
    \foreach \y in {0.5,1.5,2.5,3.5} {
	\draw (\x,\y) to (\x+1,\y);
}}

\foreach \x in {-3,-1,}{
    \foreach \y in {0.5,1.5,2.5,3.5} {
	\draw[Gray] (\x,\y) to (\x+1,\y);
}}

\foreach \x in {0,2,4}{
    \foreach \y in {0,1,2,3,4} {
	\draw (\x,\y) to (\x+1,\y);
}}

\foreach \x in {-4,-2}{
    \foreach \y in {0,1,2,3,4} {
	\draw[Gray] (\x,\y) to (\x+1,\y);
}}

%half rungs
         
\foreach \y in {0.5,1.5,2.5,3.5} {
	\draw (5,\y) to (5.5,\y);
}

\foreach \y in {0.5,1.5,2.5,3.5} {
	\draw[Gray] (-4.5,\y) to (-4,\y);
}

%ray 7

    \node at (6,2) {$\dots$};
    \node[Gray] at (-5,2) {$\dots$};

%ray labels
\foreach \x in {0,1,2,3,4,5}{
    \node at (\x,-0.7) {\footnotesize{$R^{\x}$}};
     }
     
\foreach \x in {-4,-3,-2,-1}{
    \node[Gray] at (\x,-0.7) {\footnotesize{$R^{\x}$}};
     }

%edge labels

    \node at (0.5,-0.3) {\footnotesize{$f_{15}$}};
    \node at (0.5,0.7) {\footnotesize{$f_{6}$}};
    \node at (0.5,1.7) {\footnotesize{$f_{1}$}};
    \node at (0.5,2.7) {\footnotesize{$f_{5}$}};
    \node at (0.5,3.7) {\footnotesize{$f_{14}$}};

    \node at (1.5,0.2) {\footnotesize{$f_{13}$}};
    \node at (1.5,1.2) {\footnotesize{$f_{4}$}};
    \node at (1.5,2.2) {\footnotesize{$f_{3}$}};
    \node at (1.5,3.2) {\footnotesize{$f_{12}$}};

    \node at (2.5,0.7) {\footnotesize{$f_{11}$}};
    \node at (2.5,1.7) {\footnotesize{$f_{2}$}};
    \node at (2.5,2.7) {\footnotesize{$f_{10}$}};

    \node at (3.5,1.2) {\footnotesize{$f_{9}$}};
    \node at (3.5,2.2) {\footnotesize{$f_{8}$}};

    \node at (4.5,1.7) {\footnotesize{$f_{7}$}};

    \node[Gray] at (-1.5,1.7) {\footnotesize{$f'_{2}$}};
    \node[Gray] at (-0.5,1.2) {\footnotesize{$f'_{4}$}};
    \node[Gray] at (-0.5,2.2) {\footnotesize{$f'_{3}$}};
    \node[Gray] at (-3.5,1.7) {\footnotesize{$f'_{7}$}};
    \node[Gray] at (-2.5,2.2) {\footnotesize{$f'_{8}$}};
    \node[Gray] at (-2.5,1.2) {\footnotesize{$f'_{9}$}};
    \node[Gray] at (-1.5,2.7) {\footnotesize{$f'_{10}$}};
    \node[Gray] at (-1.5,0.7) {\footnotesize{$f'_{11}$}};
    \node[Gray] at (-0.5,3.2) {\footnotesize{$f'_{12}$}};
    \node[Gray] at (-0.5,0.2) {\footnotesize{$f'_{13}$}};
\end{tikzpicture}
		\vspace{-3em}
		\caption{The hexagonal half-grid (full-grid) with an enumeration of its horizontal edges as needed for the proof of \cref{prop:HexGridAfterDeletingPaths}.}
		\label{fig:HalfHexGrid}
	\end{figure}
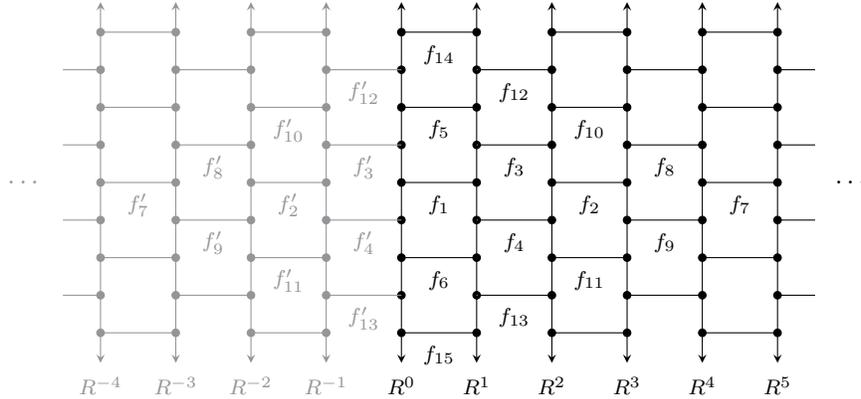

	For the proof of \cref{cor:EscHexGridYieldsAsympAndDivHexGrid}, we will use the next three lemmas, which show some useful additional properties of escaping subdivisions.
	
	\begin{lem} \label{lem:FurtherPropertiesOfEscapingHGs}
		Let $R = \dots r_{-1}r_0r_1 \dots$ be a diverging double ray in a locally finite graph $G$, let $L \in \N$, and let $S$ be a ray in $G[R, L]$. Suppose there are infinitely many pairwise disjoint $R_{\geq 0}$--$S$ paths $P_i$ in $G[R, L]$. Then $S$ has a tail $T$ such that $T \subseteq G[R_{\geq 0}, L]$, and all but finitely many $P_i$ are contained in $G[R_{\geq 0}, L]$.
	\end{lem}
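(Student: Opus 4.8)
The plan is to remove a suitable finite ``central'' set from $G[R,L]$ so that what remains splits into a part near $R_{\geq 0}$ and a part near the negative rays of $R$, with \emph{no} edges between the two parts; then any connected subgraph of $G[R,L]$ that avoids the central set and meets $R_{\geq 0}$ is trapped near $R_{\geq 0}$, and the pairwise disjoint $P_i$ (as well as a tail of $S$) can hit the finite central set only finitely often. As a preliminary step I would upgrade the divergence of $R$ to the statement that $d_G(R_{\geq 0}, R_{\leq -k}) \to \infty$ as $k \to \infty$: given $n$, first use divergence of the disjoint tails $R_{\geq 1}$ and $R_{\leq 0}$ to fix $N$ with $d_G(R_{\geq N}, R_{\leq -N}) > n$, and then use that each ball $B_G(r_p, m)$ is finite while the $r_{-q}$ are distinct to find $k_0 \ge N$ with $d_G(r_p, R_{\leq -k}) > n$ for all $p < N$ and all $k \ge k_0$; together these give $d_G(R_{\geq 0}, R_{\leq -k}) > n$ for $k \ge k_0$. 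Fix $j \in \N$ with $d_G(R_{\geq 0}, R_{\leq -j-1}) > 2L+1$.

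Next I would partition $B_G(R,L)$ (which contains every vertex of $S$ and of every $P_i$, as these lie in $G[R,L]$) as $A^+ \sqcup N \sqcup A^-$, where $A^+ := B_G(R_{\geq 0}, L)$, the set $N := B_G(\{r_{-j}, \dots, r_{-1}\}, L) \setminus A^+$ is finite by local finiteness, and $A^- := B_G(R,L) \setminus (A^+ \cup N)$. Here $A^- \subseteq B_G(R_{\leq -j-1}, L)$: a vertex of $B_G(R,L) \setminus A^+$ lies within $L$ of some $r_k$ with $k \le -1$, and it lies within $L$ of such an $r_k$ with $-j \le k \le -1$ only if it belongs to $N$. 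The key point is that there is no edge between $A^+$ and $A^-$: an edge $vw$ with $v$ within $L$ of some $r_a$ ($a \ge 0$) and $w$ within $L$ of some $r_{-b}$ ($b \ge j+1$) would yield $d_G(r_a, r_{-b}) \le L + 1 + L = 2L+1$, contradicting $d_G(R_{\geq 0}, R_{\leq -j-1}) > 2L+1$.

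Now the conclusions follow quickly. Since the $P_i$ are pairwise disjoint and $N$ is finite, all but finitely many $P_i$ avoid $N$; such a $P_i$ is a connected subgraph of $G[R,L] - N$ containing its first vertex, which lies on $R_{\geq 0} \subseteq A^+$, and since $G[R,L]-N$ has no edge between $A^+$ and $A^-$ this forces $P_i \subseteq A^+$, i.e.\ $P_i \subseteq G[R_{\geq 0}, L]$. This is the second assertion. For the first, note that $S$, being a ray, has a tail $T$ disjoint from the finite set $N$; $T$ is connected, hence $T \subseteq A^+$ or $T \subseteq A^-$. The endpoints $s_i$ of the $P_i$ on $S$ are pairwise distinct, so all but finitely many of them lie on $T$; picking an index $i$ with $s_i \in T$ and $P_i \subseteq A^+$ (both hold for all but finitely many $i$) gives $s_i \in T \cap A^+$, whence $T \subseteq A^+ = B_G(R_{\geq 0}, L)$, i.e.\ $T \subseteq G[R_{\geq 0}, L]$.

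The only real subtlety — and the step I would expect to need the most care — is the bookkeeping around the centre $r_0$, where balls around $R_{\geq 0}$ genuinely overlap balls around the negative rays, so that no clean ``positive/negative'' dichotomy holds on all of $B_G(R,L)$. The role of divergence (boosted via local finiteness to control $d_G(R_{\geq 0}, R_{\leq -k})$) is exactly to confine this overlap to the finite set $N$, and once that is done the pairwise disjointness of the $P_i$ and the fact that a ray meets a finite set only finitely often do the rest.
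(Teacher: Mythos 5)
Your proof is correct and follows essentially the same route as the paper's: use divergence (together with local finiteness) to separate the ``positive'' part of $G[R,L]$ from the far-negative part by a finite set, which the pairwise disjoint $P_i$ and a tail of $S$ can meet only finitely often. The only difference is cosmetic: the paper separates $G[R_{\geq n},L]$ from $G[R_{\leq -n},L]$ by the finite ball $B_G(r_{-n+1}Rr_{n-1},L)$ and then notes that infinitely many $P_i$ start in $R_{\geq n}$, whereas you keep all of $B_G(R_{\geq 0},L)$ on one side at the cost of first boosting divergence to $d_G(R_{\geq 0},R_{\leq -k})\to\infty$, which makes that last observation unnecessary.
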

	
	\begin{proof}
		Since $R$ diverges, there is some $n \in \N$ such that $R_{\geq n}$ and $R_{\leq -n}$ have distance at least $2L+2$ from each other; in particular, $G[R_{\geq n}, L]$, and $G[R_{\leq -n}, L]$ are disjoint and not joined by an edge. 
		Hence, they are separated in $G[R, L]$ by $B_G(r_{-n+1}Rr_{n-1}, L)$. Since $B_G(r_{-n+1}Rr_{n-1}, L)$ is finite as $G$ is locally finite, it follows that $S$ is eventually contained in either $G[R_{\geq n}, L]$ or $G[R_{\leq -n}, L]$ and that at most finitely many~$P_i$ meet $B_G(r_{-n+1}Rr_{n-1}, L)$. As the $P_i$'s are disjoint and start in $R_{\geq 0}$, and hence infinitely many $P_i$ start in $R_{\geq n}$, it follows that $S$ has a tail $T$ such that $T \subseteq G[R_{\geq n}, L] \subseteq G[R_{\geq 0}, L]$ and that all but finitely many $P_i$ lie in $G[R_{\geq 0}, L]$.
	\end{proof}

	\begin{cor} \label{cor:FurtherPropertiesOfEscapingHGs}
		Let $G$ be a locally finite graph and let $H$ be an escaping subdivision of the hexagonal half-grid with vertical double rays $S^i$ and horizontal paths $P_{ij}$ such that $S^0$ diverges. Then 
		\begin{enumerate}[label=\rm{(\roman*)}]
			\item \label{itm:DefEscHG:S^i:2} for all $i, k \in \N$ there is $\ell \in \N$ such that $S^i_{\geq \ell} \subseteq G[S^0_{\geq k}, M_i]$ and $S^i_{\leq -\ell} \subseteq G[S^0_{\leq -k}, M_i]$, and
			\item \label{itm:DefEscHG:P_ij:2} for all $i, k \in \N$ there is $\ell \in \N$ such that $P_{ij} \subseteq G[S^0_{\geq k}, M_i]$ and $P_{i(-j)} \subseteq G[S^0_{\leq -k}, M_i]$ for all $j \geq \ell$.
		\end{enumerate}
	\end{cor}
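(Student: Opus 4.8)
The plan is to derive both assertions from \cref{lem:FurtherPropertiesOfEscapingHGs}, using that $S^0$ diverges and that $G$ is locally finite, proving (i) first and deducing (ii) from it. Two preliminary remarks set the stage. First, since $S^i_{\geq 0}$ (respectively $S^i_{\leq 0}$) is a subdivision of the upper (respectively lower) half of the vertical double ray $R^i$, the branch vertex of $H$ corresponding to the grid vertex $(i,n)$ is $s^i_{c_i(n)}$ for a strictly increasing $c_i\colon\Z\to\Z$, so $c_i(n)\to\pm\infty$ as $n\to\pm\infty$, and the horizontal path $P_{ij}$ contains $s^i_{c_i(j)}$, which lies on $S^i_{\geq 0}$ for $j\geq 0$ and on $S^i_{\leq 0}$ for $j\leq 0$. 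Second, by the escaping property $S^m,P_{mj}\subseteq G[S^0,M_m]\subseteq G[S^0,M_i]$ whenever $m\leq i$ (with $S^0\subseteq G[S^0,0]$), since $M_0<M_1<\cdots$.

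For (i), fix $i\geq 1$ and $k\in\N$ (the case $i=0$ is immediate with $\ell:=k$), and treat the ``upper'' statement; the ``lower'' one follows by the same argument with $S^0$ traversed in the opposite direction. In the hexagonal half-grid there are infinitely many pairwise disjoint paths from column $0$ to column $i$ that stay within the columns $0,\dots,i$ and use only heights $\geq 0$ -- build them greedily, always choosing the next one above all heights used so far, which is possible because between consecutive columns there are horizontal edges at arbitrarily large heights. For each such grid path take its image in $H$, and in that image the subpath from its last vertex on $S^0_{\geq 0}$ to the following vertex on $S^i_{\geq 0}$ (which exists as $S^0$ and $S^i$ are disjoint). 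By the first remark these subpaths use only segments of the $S^m_{\geq 0}$ and paths $P_{mj}$ with $m\leq i$, so they form infinitely many pairwise disjoint $S^0_{\geq 0}$--$S^i_{\geq 0}$ paths inside $G[S^0,M_i]$. Applying \cref{lem:FurtherPropertiesOfEscapingHGs} with $R:=S^0$, $L:=M_i$, $S:=S^i_{\geq 0}$ yields $\ell'$ with $S^i_{\geq\ell'}\subseteq G[S^0_{\geq 0},M_i]$. Since $G$ is locally finite, the set $B_G(\{s^0_0,\dots,s^0_{k-1}\},M_i)$ is finite and contains $B_G(S^0_{\geq 0},M_i)\setminus B_G(S^0_{\geq k},M_i)$, so some further tail $S^i_{\geq\ell}$ of the ray $S^i_{\geq\ell'}$ is disjoint from it and hence lies in $G[S^0_{\geq k},M_i]$.

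For (ii), fix $i\geq 1$ and $k\in\N$ and again treat only the ``upper'' statement. By (i) pick $\ell_i$ with $S^i_{\geq\ell_i}\subseteq G[S^0_{\geq k},M_i]$. Since $S^0$ diverges, pick $a,b\geq\max\{k,1\}$ with $d_G(S^0_{\geq a},S^0_{\leq -b})\geq 2M_i+2$; then $B_G(S^0_{\geq a},M_i)$ and $B_G(S^0_{\leq -b},M_i)$ are disjoint with no edge between them, and $B_G(S^0,M_i)=B_G(S^0_{\geq a},M_i)\cup B_G(S^0_{\leq -b},M_i)\cup F$ where $F:=B_G(\{s^0_{-b+1},\dots,s^0_{a-1}\},M_i)$ is finite. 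Now $P_{ij}$ contains $s^i_{c_i(j)}$ with $c_i$ strictly increasing, so for all large $j$ we have $s^i_{c_i(j)}\in S^i_{\geq\ell_i}\subseteq B_G(S^0_{\geq k},M_i)=B_G(\{s^0_k,\dots,s^0_{a-1}\},M_i)\cup B_G(S^0_{\geq a},M_i)$, and since $F$ is finite while the $s^i_{c_i(j)}$ are pairwise distinct, in fact $s^i_{c_i(j)}\in B_G(S^0_{\geq a},M_i)$ for all large $j$. Moreover the $P_{ij}$ with $j\geq 0$ are pairwise disjoint, so $P_{ij}\cap F=\emptyset$ for all large $j$. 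Hence there is $\ell$ such that for every $j\geq\ell$ the connected subgraph $P_{ij}\subseteq B_G(S^0,M_i)$ avoids $F$ and hence is contained in $B_G(S^0_{\geq a},M_i)\cup B_G(S^0_{\leq -b},M_i)$; being connected and meeting the first of these two sets, which are disjoint with no edge between them, it satisfies $P_{ij}\subseteq B_G(S^0_{\geq a},M_i)\subseteq G[S^0_{\geq k},M_i]$. The statement for $P_{i(-j)}$ follows symmetrically.

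I expect the only genuinely delicate point to be getting the conclusion of (ii) for \emph{all} $j\geq\ell$ rather than merely for all but finitely many $j$: this relies on the attachment points $s^i_{c_i(j)}$ of the horizontal paths marching monotonically to infinity along $S^i$ -- so that each $P_{ij}$ with $j$ large has its $S^i$-endpoint deep inside $B_G(S^0_{\geq a},M_i)$ -- together with the elementary fact that a finite vertex set meets only finitely many pairwise disjoint subgraphs. The remaining care is bookkeeping: keeping the connecting grid paths within columns $0,\dots,i$ so that their images stay in $G[S^0,M_i]$, and using local finiteness to pass from $S^0_{\geq 0}$ to $S^0_{\geq k}$.
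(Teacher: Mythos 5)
Your proof is correct, and while it rests on the same key lemma as the paper (\cref{lem:FurtherPropertiesOfEscapingHGs}), it is organised differently and is in places more careful. The paper's proof is a two-line application of that lemma: it re-enumerates $R := S^0$ so that $r_0 = s^0_k$ and feeds in the horizontal paths $P_{ij}$, $j \geq 0$, as the required disjoint $R_{\geq 0}$--$S$ paths, reading off both the tail statement \ref{itm:DefEscHG:S^i:2} and the statement \ref{itm:DefEscHG:P_ij:2} about the $P_{ij}$ directly from the lemma's two conclusions. You instead (a) manufacture the disjoint $S^0_{\geq 0}$--$S^i_{\geq 0}$ paths inside $G[S^0, M_i]$ from the grid structure of $H$, (b) apply the lemma only for the tail of $S^i$ and pass from $S^0_{\geq 0}$ to $S^0_{\geq k}$ by a local-finiteness argument rather than by re-enumerating $S^0$, and (c) prove \ref{itm:DefEscHG:P_ij:2} by redoing the divergence/finite-middle separation argument by hand (the decomposition $B_G(S^0, M_i) = B_G(S^0_{\geq a}, M_i) \cup F \cup B_G(S^0_{\leq -b}, M_i)$, connectivity of $P_{ij}$, and the attachment points marching to infinity along $S^i$). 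Step (a) is a genuine gain in rigour: for $i \geq 2$ the $P_{ij}$ are $S^{i-1}$--$S^i$ paths, not $S^0$--$S^i$ paths, so strictly speaking they do not satisfy the hypothesis of \cref{lem:FurtherPropertiesOfEscapingHGs} in the form in which the paper's proof invokes it, and your auxiliary grid paths (together with your direct argument for the $P_{ij}$) are precisely the kind of patch needed. The paper's route buys brevity; yours buys a proof whose hypotheses are actually verified and which delivers the ``for all $j \geq \ell$'' form of \ref{itm:DefEscHG:P_ij:2} explicitly.
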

	
	\begin{proof}
		Let $i, k \in \N$ be given. Set $R := S^0$, where we enumerate $R = \dots r_{-1}r_0r_1\dots$ so that $r_0 = s^0_k$. 
		Applying \cref{lem:FurtherPropertiesOfEscapingHGs} to $R$, with $L := M_i$ and $S := S^i_{\geq 0}$ and the paths $P_j := P_{ij}$ for $j \geq 0$ yields some $m \in \N$ such that $S^i_{\geq m}$ and all $P_{ij}$ with $j \geq m$ are contained in $G[R_{\geq 0}, M_i] = G[S^0_{\geq k}, M_i]$. Similarly, we find some $n \in \N$ such that $S^i_{\leq -n}$ and all $P_{ij}$ with $j \leq -n$ are contained in $G[S^0_{\leq -k}, M_i]$. Then $\ell := \max\{m,n\}$ is as desired.
	\end{proof}

	The next lemma allows to find a diverging subdivision in an escaping subdivision of the hexagonal half- or full-grid.
	
	\begin{lem} \label{lem:DivSubHexGridOfEscHexGrid}
		Let $H$ be an escaping subdivision of the hexagonal half-grid (full-grid) in a locally finite graph $G$ with vertical double rays $S^i$ and horizontal paths $P_{ij}$. Then there exists for every $K\in\N$ an escaping subdivision $H' \subseteq H$ of the hexagonal half-grid (full-grid) whose vertical double rays are the~$S^i$ and whose horizontal paths $P'_{ij}$ are a subcollection of the $P_{ij}$'s such that $H'$ diverges and such that for every two non-incident edges of the hexagonal half-grid (full-grid) their images in $H'$ are at least $K$ apart in $G$ if they are contained in $H'_{K} := \left(\bigcup_{i \geq K} S^i\right) \cup \left(\bigcup_{i > K, j \in \Z} P'_{ij}\right)$.
	\end{lem}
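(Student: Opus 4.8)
The plan is to keep all vertical double rays $S^i$ unchanged and to replace the horizontal paths by a suitably sparse subcollection $P'_{ij}$, chosen by a single recursion that at once meets the hypotheses of \cref{prop:HexGridAfterDeletingPaths}, makes the resulting subdivision diverge, and yields the graded fatness. First I would isolate the two separation mechanisms that the escaping structure provides. The \emph{radial} one is immediate from the triangle inequality through $S^0$: if one object lies in $G[S^0,M_i]$ while another avoids $B_G(S^0,M_{i'-1}+2i')$ for some $i'>i$, they are more than $2i'$ apart, and they are more than $i'$ apart if instead the second avoids $B_G(S^0,M_{i'-2}+i')$ with $i'\ge i+2$; in particular $d_G(S^i,S^j)>2\max\{i,j\}$ whenever $i\ne j$, more generally any two of the $S^i$ and $P_{ij}$ whose indices differ by at least $2$ are far apart once those indices are large, and in the full-grid case the objects of positive index and those of negative index are far apart because of the component separation in $G-B_G(S^0,M)$. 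The \emph{longitudinal} one is precisely \cref{cor:FurtherPropertiesOfEscapingHGs} together with the assumption that $S^0$ diverges: for all $i,k$ there is $\ell$ with $S^i_{\ge\ell}\subseteq G[S^0_{\ge k},M_i]$ and $S^i_{\le-\ell}\subseteq G[S^0_{\le-k},M_i]$, and likewise $P_{ij}\subseteq G[S^0_{\ge k},M_i]$ once $j$ is large and $P_{i(-j)}\subseteq G[S^0_{\le-k},M_i]$ once $j$ is large; feeding this into the divergence of $S^0$ shows in particular that every $S^i$ diverges.

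Next I would carry out the recursion. Process $i$ in rounds $n=1,2,\dots$, committing in round $n$, for every $i\le n$, one further surviving path $P'_{ij}$ with $j>0$ and one with $j<0$. When committing $P'_{ij}$, first use the longitudinal mechanism to take $|j|$ so large that $P_{ij}$ lies in $G[S^0_{\ge k_n},M_i]$ (resp.\ in $G[S^0_{\le-k_n},M_i]$) for a prescribed sequence $k_n\to\infty$, and so large that $|j|$ exceeds every index previously used for this $i$; then, since $G$ is locally finite, the ball of radius $n$ around the finite union of all paths committed so far meets only finitely many of the pairwise internally disjoint paths $P_{ij}$ with this fixed $i$, so we may moreover take $P_{ij}$ at distance more than $n$ from all of them. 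The outcome has, for each $i$, infinitely many surviving $P'_{ij}$ with $j>0$ and infinitely many with $j<0$, so \cref{prop:HexGridAfterDeletingPaths} yields a subdivision $H'\subseteq\bigcup_i S^i\cup\bigcup P'_{ij}$ of the hexagonal half-grid (full-grid) with vertical double rays the $S^i$ and horizontal paths among the $P'_{ij}$; discarding the unused ones we may assume its horizontal paths are exactly the committed $P'_{ij}$.

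It then remains to verify divergence and graded fatness. For divergence, take grid sequences with $d_X(x_n,y_n)\to\infty$: if their column indices differ unboundedly the radial mechanism already separates the images; otherwise the heights diverge, and then two points of a common $S^i$ are separated by divergence of $S^i$, two horizontal paths $P'_{ij_n},P'_{ij'_n}$ of a common index $i$ with $|j_n-j'_n|\to\infty$ are separated because any two committed paths of index $i$ lie at distance more than the smaller of their round numbers, which tends to infinity, and the mixed cases reduce to these via the longitudinal mechanism. For graded fatness, let $e,e'$ be non-incident grid edges whose images lie in $H'_K$, so that their indices are at least $K$: if these indices differ by at least $2$ (or have opposite sign, in the full-grid case) the radial mechanism gives distance more than $K$; if they are equal or adjacent, the surviving horizontal paths involved were committed in rounds $\ge K$ and are therefore pairwise at distance more than $K$ and, by the longitudinal mechanism, also at distance more than $K$ from every vertical edge of an $S^i$ to which they are not incident, while two non-incident vertical edges of a common $S^i$ with $i\ge K$ are separated by divergence of $S^i$. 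Matching the recursion bookkeeping so that these thresholds are the very $K$ that appears in $H'_K$ then finishes the proof.

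I expect the real difficulty to lie in the ``equal or adjacent index'' situations, both for divergence and for graded fatness, where the radial escape from $S^0$ gives no help at all and every bit of separation has to be extracted from the single diverging double ray $S^0$ through \cref{cor:FurtherPropertiesOfEscapingHGs}. The recursion defining the $P'_{ij}$ must then be orchestrated so that its three competing demands --- enough survivors on each side of each $S^i$ for \cref{prop:HexGridAfterDeletingPaths}, unbounded pairwise distances of the surviving horizontal paths for divergence, and $K$-graded distances inside each $H'_K$ for fatness --- hold at the same time; keeping this bookkeeping consistent, in particular tying the round number $n$, the cut-off $k_n$ along $S^0$, and the surviving index $i$ together correctly, is where the actual work lies.
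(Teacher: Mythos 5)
Your overall strategy is the paper's: keep the $S^i$, recursively commit a sparse subfamily of the $P_{ij}$ using the escaping conditions for columns of index difference at least $2$, local finiteness for the finitely many previously chosen objects, \cref{cor:FurtherPropertiesOfEscapingHGs} plus divergence of $S^0$ for longitudinal separation, and \cref{prop:HexGridAfterDeletingPaths} at the end. But there is a genuine gap exactly at the equal/adjacent-column cases, which you yourself flag as ``where the actual work lies'' and then do not carry out; worse, the two mechanisms you name for them do not work. First, ``two non-incident vertical edges of a common $S^i$ are separated by divergence of $S^i$'' is false: divergence of a double ray only says that two disjoint tails (one up, one down) contain \emph{subtails} that are far apart; it gives no lower bound on the distance between two bounded segments of the same half of $S^i$, nor between a segment and the tail just beyond the next attachment point. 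If two consecutive committed attachment points happen to be close in $G$ (nothing in your commit conditions prevents this), two non-incident vertical-edge images on $S^i$ can be at bounded distance, killing both the graded fatness and the divergence claim for same-column sequences whose heights both tend to infinity. Second, for a committed path $P'_{ij}$ versus a non-incident vertical-edge image of $S^{i-1}$ or $S^i$ lying \emph{above} its attachment point, the ``longitudinal mechanism'' is useless: both objects lie near the far-out upper part of $S^0$ and in the same radial annulus, so neither \cref{cor:FurtherPropertiesOfEscapingHGs} nor the escaping conditions separate them.

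The paper handles precisely these pairs by building them into the recursion: its conditions (2) and (3) require $d_G(Q_{ij}, S^ky^k_{j-1}\cup y^k_{j+1}S^k)$ and $d_G(S^iy^i_{j-1}, y^k_jS^k)$ to be at least $\max\{|i|,|j|,|k|\}$, and it secures them by demanding, at each step, that the candidate path \emph{together with the tails of $S^{i-1}$ and $S^i$ beyond its endvertices} is far from both the previously chosen paths ($X_{in}$) and the initial segments $S^ky^k_{n-1}$ of the nearby vertical rays ($Y_{in}$); the feasibility of such a choice uses \cref{cor:FurtherPropertiesOfEscapingHGs}, divergence of $S^0$, and the fact that balls are finite in a locally finite graph (so far-out tails eventually avoid any fixed finite set). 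Your commit conditions constrain only the location of the new path itself (longitudinally far out, and far from previously committed \emph{paths}), so they control neither the tails of $S^{i-1},S^i$ beyond the new attachment points relative to earlier paths, nor anything relative to the lower/middle portions of the incident vertical rays. Without adding these two-sided requirements (and the bookkeeping that the ``above the next attachment point'' part of the separation is arranged when the \emph{next} path is chosen), the fatness and divergence verifications do not go through, so the proposal is incomplete at the heart of the lemma.
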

	
	\begin{proof}
		We only give the proof for the hexagonal full-grid; the construction for the hexagonal half-grid is analogous.
		For the sake of this proof, we denote the horizontal edges $e_{ij}$ of the hexagonal full-grid by~$f_{i(2j)}$ if $i \in 2\Z+1$, and by $f_{i(2j-1)}$ if $i \in 2\Z$, and we enumerate the $P_{ij}$'s accordingly. Let $x^{i-1}_j, x^{i}_j$ denote the endvertices of $P_{ij}$ on $S^{i-1}$ and $S^i$, respectively.
		We will recursively select the branch paths $Q_{ij}$ of the edges~$f_{ij}$ amongst the $P_{k\ell}$'s such that 
		\begin{enumerate}[label=\rm{(\arabic*)}]
			\item \label{itm:DivHexGrid:1} $d_G(Q_{ij}, Q_{k\ell}) \geq \max\{|i|,|j|,|k|,|\ell|\}$ for all $i, j, k, \ell \in \Z$ such that $\{i,j\} \neq \{k, \ell\}$,
			\item \label{itm:DivHexGrid:2} $d_G(Q_{ij}, S^ky^k_{j-1} \cup y^k_{j+1}S^k) \geq \max\{|i|,|j|,|k|\}$ for all $i, j, k \in \Z$, and 
			\item \label{itm:DivHexGrid:3} $d_G(S^iy^i_{j-1}, y^{k}_{j}S^k) \geq \max\{|i|, |k|, |j|\}$ for all $i, j, k \in \Z$,
		\end{enumerate}
		where $y^{i-1}_{j}$ and $y^{i}_{j}$ denote the endvertices of $Q_{ij}$ on $S^{i-1}$ and $S^{i}$, respectively. Before explaining how our construction works, we show that the graph $H'$ defined by the union of the $S^i$'s and the $Q_{ij}$'s is an escaping subdivision satisfying the desired properties.
		Clearly, $H'$ is still an escaping subdivision of the hexagonal full-grid (whose horizontal paths $P'_{ij}$ are essentially the $Q_{ij}$'s, except that they are again enumerated as usual). It follows from \ref{itm:DivHexGrid:1}--\ref{itm:DivHexGrid:3} and \ref{itm:DefEscHG:S^i} of escaping subdivisions that $H'_K$ has the desired property. Moreover,~$H'$ diverges.
		Indeed, the distances in $G$ between the images $U_n, W_n$ in $H'$ of vertices and/or edges $a_n, b_n$ of the hexagonal full-grid which form diverging sequences $(a_n)_{n \in \N}, (b_n)_{n \in \N}$ in $H$ grow by \ref{itm:DivHexGrid:1}--\ref{itm:DivHexGrid:3} unless the $U_n$'s, $W_n$'s are of the form $Q_{ij}, y^k_{j-1}S^ky^k_{j}$ or $Q_{ij}, y^k_{j}S^ky^k_{j+1}$ or $y^i_{j-1}S^iy^i_{j}, y^k_{j-1}S^ky^k_{j}$. But their distances grow because of \ref{itm:DefEscHG:S^i} and \ref{itm:DefEscHG:P_ij} of escaping subdivisions; we omit the details.
		
		We now describe how we choose the paths $Q_{ij}$. First, we set $Q_{i0} := P_{i0}$ for all $i \in 2\N+1$ and $Q_{i0} := P_{(i-1)0}$ for $i \in 2\Z_{< 0}+1$. Note that it follows from \ref{itm:DefEscHG:S^i} and \ref{itm:DefEscHG:P_ij} of escaping subdivisions that the $Q_{i0}$'s satisfy \ref{itm:DivHexGrid:1}. For \ref{itm:DivHexGrid:2} and \ref{itm:DivHexGrid:3} there is nothing to check yet. Now let $n \in \N$ be given, and assume that we have already chosen paths~$Q_{ij}$ for all $|j| < n$. Let us now assume that $n$ is even. The case that $n$ is odd follows analogously and can be checked in parallel. We now describe how we choose the paths $Q_{in}$, where $i \in 2\Z+1$ since $n$ is even. The choice of the paths $Q_{i(-n)}$ can be done analogously after the choice of the $Q_{in}$'s.
		
		\begin{figure}[ht]
			\begin{tikzpicture}
    \tikzset{edge/.style = {->,> = stealth}}
    
%Q_i gray

    \begin{scope}[opacity=0.5]
		\foreach \x in{-6,-4,-2,0,2,4}{
			\draw[Gray, line width=6pt,line cap=round] (\x,1) to (\x+1,1);}
		\foreach \x in{-3,1,3}{
			\draw[Gray, line width=6pt,line cap=round] (\x,0.5) to (\x+1,0.5);}
		\foreach \x in{1}{
			\draw[Gray, line width=6pt,line cap=round] (\x,1.5) to (\x+1,1.5);}
		\foreach \x in{-3,-1,3}{
			\draw[Gray, line width=6pt,line cap=round] (\x,2.5) to (\x+1,2.5);}
		\draw[Gray, line width=6pt,line cap=round] (0,4) to (1,4);
		\draw[Gray, line width=6pt,line cap=round] (-5,1.5) to (-4,1.5);
		\draw[Gray, line width=6pt,line cap=round] (-5,-0.5) to (-4,-0.5);
		\draw[Gray, line width=6pt,line cap=round] (-1,-0.5) to (0,-0.5);
        \draw[Gray, line width=6pt,line cap=round] (6,1) to (6.5,1);
		\draw[Gray, line width=6pt,line cap=round] (5,1.5) to (6,1.5);
		\draw[Gray, line width=6pt,line cap=round] (5,-0.5) to (6,-0.5);
        \draw[Gray, line width=6pt,line cap=round] (-6,-0.5) to (-6.5,-0.5);
        \draw[Gray, line width=6pt,line cap=round] (-6,2.5) to (-6.5,2.5);
    \end{scope}

%vertical double rays
    \foreach \x in {-6,-5,5,6} {
    \draw[stealth-stealth] (\x,-0.9) to (\x,4.4);
}

\foreach \x in {-4} {
    \draw[-stealth] (\x,1.5) to (\x,4.4);
}

\foreach \x in {-3,-2,-1,4} {
    \draw[-stealth] (\x,2.5) to (\x,4.4);
}

\foreach \x in {0,1} {
    \draw[-stealth] (\x,4) to (\x,4.4);
}

    \draw (2,1.5) to (2,3);
    \draw (3,2.5) to (3,3);

%complete rungs

\foreach \x in {1,3,5}{
    \foreach \y in {-0.5,0.5,1.5,2.5,3.5} {
	\draw (\x,\y) to (\x+1,\y);
}}

\foreach \x in {-5,-3,-1,}{
    \foreach \y in {-0.5,0.5,1.5,2.5,3.5} {
	\draw (\x,\y) to (\x+1,\y);
}}

\foreach \x in {0,2,4}{
    \foreach \y in {0,1,2,3,4} {
	\draw (\x,\y) to (\x+1,\y);
}}

\foreach \x in {-6,-4,-2}{
    \foreach \y in {0,1,2,3,4} {
	\draw (\x,\y) to (\x+1,\y);
}}

%half rungs
         
\foreach \y in {-0.5,0.5,1.5,2.5,3.5} {
	\draw (6,\y+0.5) to (6.5,\y+0.5);
}

\foreach \y in {-0.5,0.5,1.5,2.5,3.5} {
	\draw (-6.5,\y) to (-6,\y);
}

%ray 7

    \node at (7,2) {$\dots$};
    \node at (-7,2) {$\dots$};

%X_{32}
	\foreach \x in{-4,-2,0,2}{
		\draw[blue, line width=1.4pt,line cap=round] (\x,1) to (\x+1,1);}
	\foreach \x in{-3,1,3}{
		\draw[blue, line width=1.4pt,line cap=round] (\x,0.5) to (\x+1,0.5);}
	\foreach \x in{1}{
		\draw[blue, line width=1.4pt,line cap=round] (\x,1.5) to (\x+1,1.5);}
	\foreach \x in{-3,-1,3}{
		\draw[blue, line width=1.4pt,line cap=round] (\x,2.5) to (\x+1,2.5);}
	\foreach \x in{-5,-1}{
		\draw[blue, line width=1.4pt,line cap=round] (\x,-0.5) to (\x+1,-0.5);}
	\draw[blue, line width=1.4pt,line cap=round] (0,4) to (1,4);
	\draw[blue, line width=1.4pt,line cap=round] (-5,1.5) to (-4,1.5);

%Y_{32}

\foreach \x in {-4,2} {
	\draw[-stealth, Green, line width=1.4pt] (\x,1.5) to (\x,-0.9);
}

\foreach \x in {-3,-2,-1,3,4} {
	\draw[-stealth, Green, line width=1.4pt] (\x,2.5) to (\x,-0.9);
}

\foreach \x in {0,1} {
    \draw[-stealth, Green, line width=1.4pt] (\x,4) to (\x,-0.9);
}

%Q_i red
         
	\draw[Red, line width=1.4pt,line cap=round] (2,3) to (3,3);

%orange

\foreach \x in{2,3}{
\draw[-stealth, orange, line width=1.4pt] (\x,3) to (\x,4.4);
}

%nodes

    \foreach \x in {-6,5,6} {
    \foreach \y in {-0.5,0,0.5,1,1.5,2,2.5,3,3.5,4} {
      \draw[fill,black] (\x,\y) circle (.05);
}}

    \foreach \x in {-5} {
    \foreach \y in {0,0.5,1,2,2.5,3,3.5,4} {
      \draw[fill,black] (\x,\y) circle (.05);
}}

    \foreach \x in {-4} {
    \foreach \y in {2,2.5,3,3.5,4} {
      \draw[fill,black] (\x,\y) circle (.05);
}}

    \foreach \x in {-3,-2,-1,4} {
    \foreach \y in {3,3.5,4} {
      \draw[fill,black] (\x,\y) circle (.05);
}}

      \draw[fill,black] (2,2) circle (.05);
      \draw[fill,black] (2,2.5) circle (.05);

\draw[fill,Red] (2,3) circle (.05);
\draw[fill,Red] (3,3) circle (.05);

\draw[fill,Orange] (2,3.5) circle (.05);
\draw[fill,Orange] (3,3.5) circle (.05);
\draw[fill,Orange] (2,4) circle (.05);
\draw[fill,Orange] (3,4) circle (.05);

    \foreach \y in {-0.5,1.5} {
      \draw[fill,blue] (-5,\y) circle (.05);
}

    \foreach \y in {-0.5,1,1.5} {
      \draw[fill,blue] (-4,\y) circle (.05);
}

    \foreach \y in {0.5,1,2.5} {
      \draw[fill,blue] (-3,\y) circle (.05);
}

    \foreach \y in {0.5,1,2.5} {
      \draw[fill,blue] (-2,\y) circle (.05);
}

    \foreach \y in {-0.5,1,2.5} {
      \draw[fill,blue] (-1,\y) circle (.05);
}

    \foreach \y in {-0.5,1,2.5,4} {
      \draw[fill,blue] (0,\y) circle (.05);
}

    \foreach \y in {0.5,1,1.5,4} {
      \draw[fill,blue] (1,\y) circle (.05);
}

    \foreach \y in {0.5,1,1.5} {
      \draw[fill,blue] (2,\y) circle (.05);
}

    \foreach \y in {0.5,1,2.5} {
      \draw[fill,blue] (3,\y) circle (.05);
}

    \foreach \y in {0.5,2.5} {
      \draw[fill,blue] (4,\y) circle (.05);
}

    \foreach \y in {0,0.5} {
      \draw[fill,Green] (-4,\y) circle (.05);
}

    \foreach \y in {-0.5,0,1.5,2} {
      \draw[fill,Green] (-3,\y) circle (.05);
}

    \foreach \y in {-0.5,0,1.5,2} {
      \draw[fill,Green] (-2,\y) circle (.05);
}

    \foreach \y in {0,0.5,1.5,2} {
      \draw[fill,Green] (-1,\y) circle (.05);
}

    \foreach \y in {0,0.5,1.5,2,3,3.5} {
      \draw[fill,Green] (0,\y) circle (.05);
}

    \foreach \y in {-0.5,0,2,2.5,3,3.5} {
      \draw[fill,Green] (1,\y) circle (.05);
}

    \foreach \y in {-0.5,0} {
      \draw[fill,Green] (2,\y) circle (.05);
}

    \foreach \y in {-0.5,0,1.5,2} {
      \draw[fill,Green] (3,\y) circle (.05);
}

    \foreach \y in {-0.5,0,1,1.5,2} {
      \draw[fill,Green] (4,\y) circle (.05);
}

%ray labels
\foreach \x in {0,1,2,3,4,5,6}{
    \node at (\x,-1.2) {\footnotesize{$S^{\x}$}};
     }
     
\foreach \x in {-6,-5,-4,-3,-2,-1}{
    \node at (\x,-1.2) {\footnotesize{$S^{\x}$}};
     }

%edge labels

    \draw[Gray] (0.5,0.7) node {\footnotesize{$Q_{10}$}};
    \draw[Gray] (2.5,0.7) node {\footnotesize{$Q_{30}$}};
    \draw[Gray] (-1.5,0.7) node {\footnotesize{$Q_{(-1)0}$}};
    \draw[Gray] (0.5,3.7) node {\footnotesize{$Q_{12}$}};
    \draw[Gray] (1.5,0.2) node {\footnotesize{$Q_{2(-1)}$}};
    \draw[Gray] (1.5,1.2) node {\footnotesize{$Q_{21}$}};
    \draw[Gray] (-0.5,2.2) node {\footnotesize{$Q_{01}$}};
    \draw[Gray] (-0.472,-0.8) node {\footnotesize{$Q_{0(-1)}$}};
    \draw[Gray] (3.5,2.2) node {\footnotesize{$Q_{41}$}};
    \draw[Gray] (3.5,0.2) node {\footnotesize{$Q_{4(-1)}$}};
    \draw[Red] (2.5,2.7) node {\footnotesize{$Q_{32}$}};

    \draw[blue] (-0.3,4) node {\footnotesize{$y^0_2$}};
    \draw[blue] (-0.3,1) node {\footnotesize{$y^0_0$}};
    \draw[blue] (0.3,2.5) node {\footnotesize{$y^0_1$}};
    \draw[blue] (0.4,-0.5) node {\footnotesize{$y^0_{-1}$}};

    \draw[red] (1.7,3) node {\footnotesize{$y^2_{2}$}};
    \draw[red] (3.3,3) node {\footnotesize{$y^3_{2}$}};

    \draw[orange] (3.5,4.2) node {\footnotesize{$y^3_{2}S^3$}};
    \draw[orange] (1.5,4.2) node {\footnotesize{$y^2_{2}S^2$}};

\end{tikzpicture}
			\vspace{-3em}
			\caption{Depicted in blue and green are the subgraphs $X_{32}$ and $Y_{32}$ that are used to choose $Q_{32}$. The paths $Q_{ij}$ that are chosen before $Q_{32}$ are shown in grey.}
			\label{fig:X32}
		\end{figure}
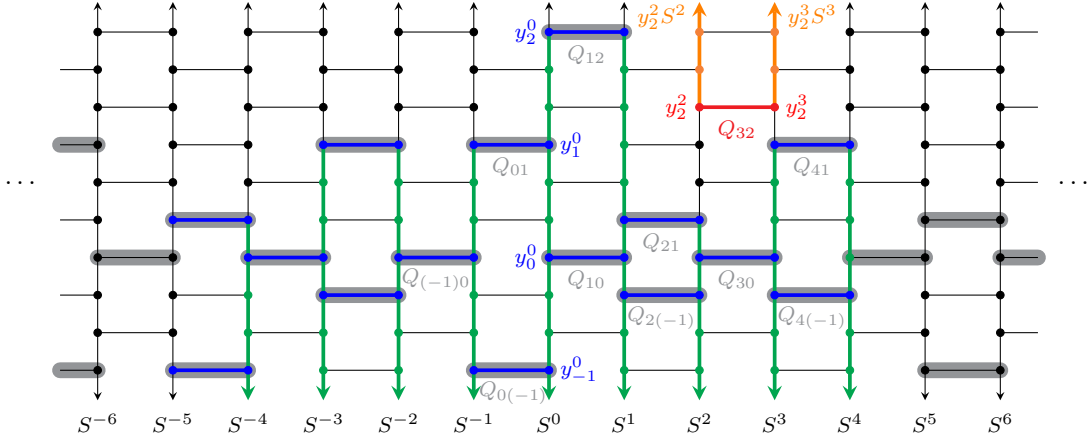
		
		So let $i \in 2\N+1$ be given, and assume that we have already chosen paths $Q_{kn}$ for all $k \in 2\N+1$ with ${k < i}$. We will now select a path $P_{ij}$ to be $Q_{in}$; again, the choice of the $Q_{(-i)n}$'s can be done analogously after the choice of the $Q_{in}$'s.
		
		It follows from \ref{itm:DefEscHG:S^i} and \ref{itm:DefEscHG:P_ij} of escaping subdivisions that the distance from a horizontal path $P_{k\ell}$ to some $S^i$ or $P_{ij}$ increases with the horizontal coordinate; more precisely we have $d_G(P_{ij}, P_{k\ell}), d_G(S^i, P_{k\ell}) \geq \max\{i, |k|\}$ for all $k \notin {\{i-1, i, i+1\}}$ and $j, \ell \in \Z$. Hence, no matter which $P_{ij}$ we choose to be $Q_{in}$, we will have that $d_G(Q_{in}, Q_{k\ell}) \geq \max\{i, n, |k|, |\ell|\}$ and $d_G(y^{i-1}_nS^{i-1}, Q_{k\ell}), d_G(y^i_nS^i, Q_{k\ell}) \geq \max\{i, n, |k|\}$ for all $k$ with $|k| \geq k_{in} := \max\{i+2, n\}$ and $|\ell| \leq n$. So when choosing~$Q_{in}$, we only need to consider those finitely many $Q_{k\ell}$ with $|k| < k_{in}$ and $|\ell| \leq n$. Since
		$X_{in} := \left(\bigcup_{|k| < k_{in}, |\ell| < n} Q_{k\ell}\right) \cup \left(\bigcup_{0< k < i} Q_{kn}\right)$ is finite and $G$ is locally finite, all but finitely many~$P_{ij}$ have the property that $x^{i-1}_jS^{i-1} \cup P_{ij} \cup x^{i}_jS^{i}$ has distance at least $\max\{i, n\}$ from $X_{in}$ (see \cref{fig:X32}). 
		
		Moreover, similar as above, it follows from properties \ref{itm:DefEscHG:S^i} and \ref{itm:DefEscHG:P_ij} of escaping subdivisions that\\ $d_G(P_{ij}, S^k), d_G(S^i, S^k)$ $\geq \max\{i, |k|\}$ for all $k \neq i$. Hence, no matter which $P_{ij}$ we choose to be $Q_{in}$, we will have that each of $d_G(Q_{in}, S^k)$, $d_G(y^i_nS^i, S^k)$, and $d_G(y^{i-1}_nS^{i-1}, S^k)$ is at least $\max\{i, n, |k|\}$ for all~$k$ with $|k| \geq \max\{i+1, n\}$ and $|\ell| \leq n$.
		Thus, we only need to choose~$Q_{in}$ in a way, so that it satisfies \ref{itm:DivHexGrid:2} and~\ref{itm:DivHexGrid:3} with respect to the finitely many $S^k$ with $|k| < \max\{i+1,n\}$. 
		By \cref{cor:FurtherPropertiesOfEscapingHGs}~\ref{itm:DefEscHG:S^i:2} and~\ref{itm:DefEscHG:P_ij:2} and because $S^0$ diverges, all but finitely many of the~$P_{ij}$ have the property that $x^{i-1}_nS^{i-1} \cup P_{ij} \cup x^i_{j}S^i$ has distance at least $\max\{i, n\}$ from $Y_{in} := \bigcup_{|k| < k_{in}} S^ky^k_{(n-1)}$.
		Hence, we can pick a path~$P_{ij}$ whose endvertices on~$S^{i-1}$ and~$S^i$ appear on $S^{i-1}$ and $S^{i}$ after the endvertices of $Q_{(i-1)(n-1)}$ and $Q_{(i+1)(n-1)}$, respectively, such that $x^{i-1}_jS^{i-1} \cup P_{ij} \cup x^{i}_jS^{i}$ has distance at least $\max\{i, n\}$ from $X_{in} \cup Y_{in}$, and we set $Q_{in} := P_{ij}$. Clearly,~$Q_{in}$ satisfies properties \ref{itm:DivHexGrid:1}--\ref{itm:DivHexGrid:3}.
	\end{proof}
	
	We are now ready to prove \cref{cor:EscHexGridYieldsAsympAndDivHexGrid}.
	
	\begin{proof}[Proof of \cref{cor:EscHexGridYieldsAsympAndDivHexGrid}]
		\ref{itm:EscHexGridYieldsDivHexGrid}: By \cref{lem:DivSubHexGridOfEscHexGrid} every escaping subdivision $H$ of the hexagonal half- or full-grid contains a diverging subdivision $H'$ of the hexagonal half- or full-grid, respectively, as a subgraph.
		
		\ref{itm:EscHexGridYieldsAsympHexGrid}: Assume first that $H$ is an escaping subdivision of the hexagonal half-grid, and let $H' \subseteq H$ be obtained from $H$ by applying \cref{lem:DivSubHexGridOfEscHexGrid}. Further, let $\widetilde{H}$ be obtained from $H'$ by deleting all horizontal paths that are not of the form $P'_{ij}$ for $i \in 2\Z$ and $j \in 3\Z$ or $i \in 2\Z+1$ and $j \in 3\Z+1$ (see \cref{fig:EscHexGridYieldsAsympAndDivHexGrid}). Then, for all $K\in\N$, the subgraph $\widetilde{H}_{K}$ of $\widetilde{H}$ consisting of all $S^i$ with $i \geq K$ and all $P'_{ij} \subseteq \widetilde{H}$ with $i > K$ is still a subdivision of the hexagonal half-grid, and we claim that it is $K$-fat. Indeed, to turn $\widetilde{H}_{K}$ into a $K$-fat model of the hexagonal half-grid we may choose the following sets $V_x$ as branch sets, for every $x \in V(\widetilde{H}_{K})$ of degree~$3$. Assume that $x \in V(S^i)$ for some $i\geq K$ and let $E_{e}$ and $E_{f}$ be the branch paths of $H'_{K} := \left(\bigcup_{i \geq K} S^i\right) \cup \left(\bigcup_{i > K, j \in \Z} P'_{ij}\right) \subseteq H'$ starting at~$x$ that are contained in $S^i$. We then set $V_x:=E_{e}\cup E_f$. By construction and since the images in $H'_{K}$ of any two non-incident edges of the hexagonal half-grid have distance at least $K$ in $G$, it follows that the model is $K$-fat.
		
		\begin{figure}[ht]
			\begin{tikzpicture}
    \tikzset{edge/.style = {->,> = stealth}}
    
%vertical double rays
    \foreach \x in {0,1,2,3,4,5} {
    \draw[stealth-stealth, line width=1.3] (\x,-0.4) to (\x,4.9);
}

%complete rungs

\foreach \x in {1,3}{
    \foreach \y in {0.5,2.5,3.5} {
	\draw[Gray] (\x,\y) to (\x+1,\y);
}}

\foreach \x in {1,3}{
    \foreach \y in {1.5,4.5} {
	\draw[line width=1.3] (\x,\y) to (\x+1,\y);
}}

\foreach \x in {0,2,4}{
    \foreach \y in {1,2,4} {
	\draw[Gray] (\x,\y) to (\x+1,\y);
}}

\foreach \x in {0,2,4}{
    \foreach \y in {0,3} {
	\draw[line width=1.3] (\x,\y) to (\x+1,\y);
}}

%half rungs
         
\foreach \y in {0.5,2.5,3.5} {
	\draw[Gray] (5,\y) to (5.5,\y);
}

\foreach \y in {1.5,4.5} {
	\draw[line width=1.3] (5,\y) to (5.5,\y);
}

% vertices

    \foreach \x in {0,1,2,3,4,5} {
    \foreach \y in {0.5,1,2,2.5,3.5,4} {
      \draw[fill,black] (\x,\y) circle (.05);
}}

    \foreach \x in {0,1,2,3,4,5} {
    \foreach \y in {0,1.5,3,4.5} {
      \draw[fill,black] (\x,\y) circle (.05);
}}

%ray 7

    \node at (6,3.5) {$\widetilde H$};

    \node at (6,2) {$\dots$};

    \node[Gray] at (6,0.5) {$H'$};

\end{tikzpicture}
			\vspace{-3em}
			\caption{The black subdivision $\widetilde{H}$ of the hexagonal half-grid is a subgraph of the grey subdivision $H'$ of the hexagonal half-grid.}
			\label{fig:EscHexGridYieldsAsympAndDivHexGrid}
		\end{figure}
		
		Second, assume that $H$ is an escaping subdivision of the hexagonal full-grid, let $H' \subseteq H$ be obtained from $H$ by applying \cref{lem:DivSubHexGridOfEscHexGrid}, and let $\widetilde{H} \subseteq H'$ be defined as above.
		Then the graph $\widetilde{H}_{K+1}$ defined as above is $K$-fat for every $K \in \N$ by the argument above. Similarly, it follows by (the symmetry of) the construction of $H'$ in the proof of \cref{lem:DivSubHexGridOfEscHexGrid} that also the subgraph $\widetilde{H}_{-K-1}$ consisting of all $S^i$ with $i \leq -K-1$ and all $P'_{ij} \subseteq \widetilde{H}$ with $i \leq -K-1$ is $K$-fat for all $K \in \N$. Since $G$ is locally finite, we then find infinitely many $S^{-K-1}$--$S^{K+1}$ paths $W_j$ in $\widetilde{H}$ which are pairwise at least $K$ apart. 
		By the assumptions on~$\widetilde{H}$ and since $\widetilde{H}$ is escaping, gluing the $W_i$'s with $\widetilde{H}_{K+1}$ and $\widetilde{H}_{-K-1}$ together yields (after possibly applying \cref{prop:HexGridAfterDeletingPaths}) a subdivision $H''$ of the hexagonal full-grid. By construction, $H''$ is $K$-fat. 
	\end{proof}

	\subsection{Proof of the main results given \texorpdfstring{\cref{thm:AsymptoticFullGrid,thm:HalfGrid:BoundedCycles}}{Theorems 3.3 and 3.4}} \label{subsec:FormalProofOfMainResults}
	
	In this section we derive \cref{main:HalfGrid:BoundedCycles,main:DivergingHalfGrid,main:AsymptoticFullGrid,main:DivergingFullGrid} from \cref{thm:AsymptoticFullGrid,thm:HalfGrid:BoundedCycles}; in fact, we show the following more detailed versions. 
	
	\begin{customthm}{Theorem~1$^\prime$} \label{thm:AsympFG}
		\emph{Let $G$ be a locally finite, \qt\ graph whose cycle space is generated by cycles of bounded length. 
			Then $FG \prec^\eps_\infty G$ for every thick end $\eps$ of $G$.}
	\end{customthm}
	
	\begin{proof}[Proof of \cref{main:AsymptoticFullGrid} and \cref{thm:AsympFG} given \cref{thm:AsymptoticFullGrid}]
		By \cref{obs:UFImpliesAsym} and \cref{cor:EscHexGridYieldsAsympAndDivHexGrid}~\ref{itm:EscHexGridYieldsAsympHexGrid}, \cref{thm:AsymptoticFullGrid} yields \cref{thm:AsympFG} and hence also \cref{main:AsymptoticFullGrid}, where we note that the subdivision obtained from \cref{cor:EscHexGridYieldsAsympAndDivHexGrid}~\ref{itm:EscHexGridYieldsAsympHexGrid} has all its rays in the same end as the full-grid obtained from \cref{thm:AsymptoticFullGrid}.
	\end{proof}

	\begin{customthm}{\cref*{main:DivergingFullGrid}$^\prime$} \label{thm:DivergingFG}
		\emph{Let $\eps$ be a thick end of a locally finite, \qt\ graph $G$ whose cycle space is generated by cycles of bounded length. Then $G$ contains a diverging subdivision of the hexagonal full-grid whose rays all lie in $\eps$.}
	\end{customthm}
	
	\begin{proof}[Proof of \cref{main:DivergingFullGrid} and \cref{thm:DivergingFG} given \cref{thm:AsymptoticFullGrid}]
		By \cref{obs:UFImpliesDiv} and \cref{cor:EscHexGridYieldsAsympAndDivHexGrid}~\ref{itm:EscHexGridYieldsDivHexGrid}, \cref{thm:AsymptoticFullGrid} yields \cref{thm:DivergingFG}, where we note that the subdivision obtained from \cref{cor:EscHexGridYieldsAsympAndDivHexGrid}~\ref{itm:EscHexGridYieldsDivHexGrid} has all its rays in the same end as the full-grid obtained from \cref{thm:AsymptoticFullGrid}.
		For \cref{main:DivergingFullGrid}, note that every diverging subdivision of the hexagonal full-grid yields a diverging model of the full-grid by choosing suitable branch sets and paths inside the subdivision.
	\end{proof}

	\begin{customthm}{Theorem~3$^\prime$} \label{thm:HG:BC}
		\emph{Let $G$ be graph of finite maximum degree whose cycle space is generated by cycles of bounded length. Then $HG \prec^\eps_\infty G$ for every thick end $\eps$ of~$G$.}
	\end{customthm}
	
	\begin{proof}[Proof of \cref{main:HalfGrid:BoundedCycles} and \cref{thm:HG:BC} given \cref{thm:HalfGrid:BoundedCycles}]
		By \cref{obs:UFImpliesAsym} and \cref{cor:EscHexGridYieldsAsympAndDivHexGrid}~\ref{itm:EscHexGridYieldsAsympHexGrid}, \cref{thm:HalfGrid:BoundedCycles} yields \cref{thm:HG:BC}, and hence also \cref{main:HalfGrid:BoundedCycles}, where we note that the subdivision obtained from \cref{cor:EscHexGridYieldsAsympAndDivHexGrid}~\ref{itm:EscHexGridYieldsAsympHexGrid} has all its rays in the same end as the half-grid obtained from \cref{thm:HalfGrid:BoundedCycles}.
	\end{proof}

	\begin{customthm}{Theorem~4$^\prime$} \label{thm:DivergingHG}
		\emph{Let $\eps$ be a thick end of a graph $G$ of finite maximum degree whose cycle space is generated by cycles of bounded length. Then $G$ contains a diverging subdivision of the hexagonal half-grid whose rays all lie in $\eps$.}
	\end{customthm}
	
	\begin{proof}[Proof of \cref{main:DivergingHalfGrid} and \cref{thm:DivergingHG} given \cref{thm:HalfGrid:BoundedCycles}]
		By \cref{obs:UFImpliesDiv} and \cref{cor:EscHexGridYieldsAsympAndDivHexGrid}~\ref{itm:EscHexGridYieldsDivHexGrid}, \cref{thm:HalfGrid:BoundedCycles} yields \cref{thm:DivergingHG}, where we note that the subdivision obtained from \cref{cor:EscHexGridYieldsAsympAndDivHexGrid}~\ref{itm:EscHexGridYieldsDivHexGrid} has all its rays in the same end as the half-grid obtained from \cref{thm:HalfGrid:BoundedCycles}.
		For \cref{thm:DivergingHG}, note that a diverging subdivision of the hexagonal half-grid yields a diverging model of the half-grid by choosing suitable branch sets and paths inside the subdivision.
	\end{proof}

	\section{Diverging double rays and quasi-geodesic 3-stars of rays in thick ends} \label{sec:DivAndGeodDR}
	
	In this section we prove two theorems about double rays and $3$-stars of rays in thick ends, which we need for the proofs of \cref{main:AsymptoticFullGrid,main:HalfGrid:BoundedCycles,main:DivergingFullGrid,main:DivergingHalfGrid}.

	\subsection{Diverging double rays} \label{subsec:DivergingDR}
	
	Georgakopoulos and Papasoglu \cite{GP2023+}*{Theorem~8.16} showed that every connected graph of finite maximum degree which has an infinite set of pairwise disjoint rays has a diverging double ray (whose tails may lie in two distinct ends).
	For the proofs of \cref{main:HalfGrid:BoundedCycles,main:DivergingHalfGrid} we will need the following variant of that theorem, which lets us find the diverging double ray in any thick end we like.
	
	\begin{thm} \label{thm:DivergingRays}
		Let $G$ be a graph of finite maximum degree, and let $\eps$ be a thick end of $G$. Then $G$ has a diverging double $\eps$-ray.
	\end{thm}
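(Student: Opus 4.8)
The plan is to follow the Georgakopoulos--Papasoglu argument as far as possible, the genuinely new point being that the double ray one extracts from it could escape into ends other than $\eps$. Two elementary facts handle most of the bookkeeping. First, \emph{two rays of a locally finite graph that lie in distinct ends must diverge}: otherwise, by finite maximum degree there would be infinitely many pairwise disjoint paths of bounded length between them, forcing them into a common end. Second, \emph{every quasi-geodesic double ray diverges}: if $D=\dots d_{-1}d_0d_1\dots$ is $c$-quasi-geodesic then $d_G(d_{a'},d_{b'})\ge\tfrac1c|a'-b'|$, hence $d_G(D_{\le a},D_{\ge b})\ge\tfrac1c(b-a)\to\infty$ for $a<b$; so it would in particular suffice to find a quasi-geodesic double $\eps$-ray. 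We may assume $G$ is connected by passing to the component of $\eps$, and by Halin's theorem there are infinitely many pairwise disjoint $\eps$-rays.

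Fix a vertex $v_0$ and a geodesic $\eps$-ray $R$ starting at $v_0$ (such a ray exists by the standard König-type compactness argument applied to geodesics from $v_0$ to the vertices of an $\eps$-ray). For $k\in\N$ let $C_k$ be the component of $G-B_G(v_0,k)$ containing a tail of $R$, equivalently of every $\eps$-ray; the $C_k$ are nested. The argument splits according to whether the $\eps$-rays stay uniformly close to $R$ or escape arbitrarily far from it, which we measure by
\[
\psi(k):=\sup\bigl\{\,d_G(v,R)\ :\ v\in V(C_k)\text{ lies on an }\eps\text{-ray with a tail in }C_k\,\bigr\}\ \in\ \N\cup\{\infty\}.
\]
Since $C_{k+1}\subseteq C_k$, the function $\psi$ is non-increasing, so either $\psi(k)=L$ for all large $k$ and some $L\in\N$, or $\psi(k)=\infty$ for every $k$.

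In the first case every $\eps$-ray has a tail inside $H:=G[B_G(R,L)]$, which is connected, locally finite, and contains infinitely many pairwise disjoint rays (tails of the disjoint $\eps$-rays). Since $R$ is geodesic, the inclusion $H\hookrightarrow G$ is a quasi-isometric embedding: for $u,v\in V(H)$ and $r,r'\in V(R)$ with $d_G(u,r),d_G(v,r')\le L$ we get $d_H(u,v)\le 2L+d_R(r,r')=2L+d_G(r,r')\le d_G(u,v)+4L$. By the Georgakopoulos--Papasoglu theorem $H$ contains a double ray $D$ that diverges in $H$, and then $D$ also diverges in $G$ because $H\hookrightarrow G$ is a quasi-isometric embedding. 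Moreover every ray of $H$ is an $\eps$-ray: a ray of $H$ lying in an end $\delta\neq\eps$ would remain within distance $L$ of the $\eps$-ray $R$, contradicting that rays in distinct ends diverge. Hence $D$ is the desired diverging double $\eps$-ray.

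In the second case $\psi(k)=\infty$ for every $k$, so at every depth $C_k$ there are $\eps$-rays reaching arbitrarily far from $R$. The plan now is to build, directly and recursively, an $\eps$-ray $Q$ with $d_G(Q_{\ge i},R)\to\infty$; joining $Q$ to $R$ by a finite path then produces a double ray both of whose tails lie in $\eps$ and which diverges, since $Q$ and $R$ diverge. At each step one extends $Q$ inside the $\eps$-component of a carefully chosen finite separator, so that $Q$ remains an $\eps$-ray whose tail still tends to $\eps$, while using $\psi(k)=\infty$ and local finiteness to reach vertices that are simultaneously deep towards $\eps$ and far from $R$, thereby pushing $Q$ permanently out of the next ball $B_G(R,m)$. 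I expect this second case to be the main obstacle: the recursion must be organised so that the far-from-$R$ region into which $Q$ is routed is genuinely the part of the graph leading to $\eps$ and not to some other end reachable there, and managing this is precisely where a localised form of the Georgakopoulos--Papasoglu construction is needed. (Equivalently, one may phrase the second case as producing two $\eps$-rays that diverge from one another; showing this is always possible when $\psi\equiv\infty$ is essentially the same task.)
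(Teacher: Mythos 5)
Your Case 1 is essentially sound (the quasi-isometric embedding of $H=G[B_G(R,L)]$ into $G$, the disjoint tails inside $H$, and the application of the Georgakopoulos--Papasoglu theorem all work; the claim that every ray of $H$ is an $\eps$-ray needs the small refinement that an infinite ray in $B_G(R,L)$ must come within $L$ of arbitrarily late vertices of $R$, whence infinitely many disjoint short connecting paths, but that is routine). The genuine gap is Case 2, which you yourself flag as unproved: there you only describe a plan, and the plan as stated cannot work in general. You propose to build an $\eps$-ray $Q$ with $d_G(Q_{\geq i},R)\to\infty$ (or at least diverging from the fixed geodesic ray $R$) and then join it to $R$. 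But there are graphs of bounded degree with a single thick end, a geodesic $\eps$-ray $R$, and $\psi\equiv\infty$, in which \emph{no} $\eps$-ray diverges from $R$. For instance, string together finite $w_k\times w_k$ grids ($w_k\to\infty$) by ``pinch'' stages, where the $k$-th pinch consists of $w_k$ long, pairwise disjoint tubes each of which passes through its own vertex of $R$, these crossing vertices marching to infinity along $R$; between pinches the grids sit at distance $\geq T_k\to\infty$ from $R$. Making the tubes long keeps $R$ geodesic, the growing pinch width makes the end thick, and the far-away grids give $\psi\equiv\infty$; yet every ray to infinity must cross cofinally many pinches and hence meets arbitrarily late vertices of $R$, so for every $\eps$-ray $Q$ and all tails $Q'\subseteq Q$, $R'\subseteq R$ one has $d_G(Q',R')=0$. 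A diverging double $\eps$-ray still exists in such a graph (two rays threading opposite ends of each pinch), but neither of its tails diverges from $R$; so the correct target is your parenthetical reformulation ``two $\eps$-rays diverging from one another'', which is precisely the content of the theorem and is what remains unproved.

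For comparison, the paper does not argue relative to a fixed base ray at all: it proves an end-relative version of the coarse Menger theorem for two paths by a compactness argument (\cref{cor:RaysOrBall}), adapts the Georgakopoulos--Papasoglu edge-length rescaling so that the auxiliary disjoint rays lie in the prescribed end $\eps$ (\cref{lem:AgelosPanosLemmaForDivergingRays}), and then runs the proof of their Theorem~8.16 with these two ingredients. That rescaling-plus-Menger mechanism is exactly what handles the situation of your Case 2, where rays keep returning to any fixed reference ray; some substitute for it (rather than a ``localised'' recursion along $R$) is needed to complete your argument.
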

	
	The proof of \cref{thm:DivergingRays} uses the same idea as the one of \cite{GP2023+}*{Theorem~8.16} by Georgakopoulos and Papasoglu, in that \cref{cor:RaysOrBall} and \cref{lem:AgelosPanosLemmaForDivergingRays} below are variants of \cite{GP2023+}*{Corollary 8.15 and Lemma 8.17}. However, our proof is more involved, as we need to take care that the tails of the double ray lie in the prescribed end.
	
	Essentially, we will deduce \cref{thm:DivergingRays} from the following coarse Menger's theorem for two paths, which was proven independently by the first author, Huynh, Jacobs, Knappe and Wollan \cite{DistanceMengerForTwo}*{Theorem~1} and by Georgakopoulos and Papasoglu \cite{GP2023+}*{Theorem~8.1}; the version we state here is the one from the latter paper. 
	A \defn{metric graph} is a pair $(G, \ell)$ of a graph $G$ and an assignment of edge-lengths $\ell\colon E(G) \rightarrow \R_{> 0}$.
	
	\begin{thm} \label{thm:DistanceMenger2Paths:GP}
		Let $G$ be a metric graph, and let $X,Y \subseteq V(G)$. For every $K > 0$, there is either
		\begin{enumerate}[label=\rm{(\roman*)}]
			\item \label{itm:DistMenger2Paths:GP:ball} a set $B \subseteq V(G)$ of diameter $\leq K$ such that $G - B$ contains no path joining $X$ to $Y$, or 
			\item \label{itm:DistMenger2Paths:GP:paths} two $X$--$Y$ paths at distance at least $d := K/272$ from each other.
		\end{enumerate}
	\end{thm}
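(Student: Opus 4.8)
The goal is the dichotomy of \cref{thm:DistanceMenger2Paths:GP}, so the plan is to show that if alternative \ref{itm:DistMenger2Paths:GP:paths} fails then alternative \ref{itm:DistMenger2Paths:GP:ball} must hold. I would work in the length space obtained by realising $(G,\ell)$; then $h\colon x\mapsto d(x,X)$ is $1$-Lipschitz, and I may assume $G$ is connected and meets both $X$ and $Y$ (otherwise $B=\emptyset$ already works and there is nothing to prove) and that $D:=d(X,Y)<\infty$. Fix once and for all an $X$--$Y$ geodesic $P$ with arc-length parametrisation $p(s)$, so that $h(p(s))=s$ for $s\in[0,D]$, and set $d:=K/272$. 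The failure of \ref{itm:DistMenger2Paths:GP:paths} says that any two $X$--$Y$ paths lie within distance $d$ of one another; applying this with $P$ as one of the two paths shows that \emph{every} $X$--$Y$ path of $G$ meets the tube $T:=\{v\in V(G):d(v,V(P))<d\}$.

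If $D+2d\le K$ we are already done: $B:=T$ has diameter at most $D+2d\le K$, and $G-B$ contains no $X$--$Y$ path since every such path meets $T=B$. So I may assume $D$ is large. The separator in that case will be a bounded ``cross-section'' of the tube. First subdivide every edge so that all edge-lengths become at most $1$ -- this changes neither the realisation nor, up to a bounded modification, the family of $X$--$Y$ separators -- and for a level $t$ put $\Sigma_t:=\{v\in T:|h(v)-t|\le 1\}$. The diameter bound on $\Sigma_t$ is the clean part: for $v\in T$ there is $s$ with $d(v,p(s))<d$, and $1$-Lipschitzness of $h$ together with $h(p(s))=s$ gives $|h(v)-s|<d$; hence for $v,w\in\Sigma_t$ we obtain $s_v,s_w$ with $|s_v-t|,|s_w-t|<d+1$, so, $P$ being a geodesic, $d(v,w)\le d+|s_v-s_w|+d<4d+2$, which is well below $K$. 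Thus every $\Sigma_t$ has diameter at most $K$, and the whole content is to choose $t$ so that $\Sigma_t$ actually separates $X$ from $Y$ in $G$.

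This last point is the heart of the matter and, I expect, the main obstacle. A single cross-section need not separate, because an $X$--$Y$ path may detour out of the tube and traverse level $t$ while far from $P$; and the mere non-existence of a small separator does not obviously contradict anything, since any two tube-contained paths are automatically close. The plan is to show that an \emph{unavoidable} detour provides exactly the room needed for two far $X$--$Y$ paths: partitioning $[0,D]$ into blocks of bounded length, if the cross-sections over all blocks failed to separate then one gets, for each block, an $X$--$Y$ path forced to run outside the tube throughout that block's range of levels, and a recursive/extremal analysis (e.g.\ on $d(X,Y)$, or on a detour of minimal length) should let one re-route two such detours over well-separated blocks into two $X$--$Y$ paths that stay far apart along their \emph{entire} length, contradicting the failure of \ref{itm:DistMenger2Paths:GP:paths}. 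Making this re-routing rigorous -- controlling where detours re-enter the tube, and guaranteeing the two re-routed paths are $d$-far everywhere rather than only on the detours -- together with the bookkeeping that converts the realisation back into a genuine vertex set of $G$ (the subdivision of long edges and the thickening of $\Sigma_t$ to the band $|h(\cdot)-t|\le 1$), is precisely what consumes the explicit ratio $272$: a cruder execution would give a worse constant and a more careful one perhaps a better, but some fixed ratio of this flavour is what one should expect to pay.
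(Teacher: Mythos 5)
First, note that the paper does not prove \cref{thm:DistanceMenger2Paths:GP} at all: it is imported as a known result, proven independently in \cite{DistanceMengerForTwo}*{Theorem~1} and \cite{GP2023+}*{Theorem~8.1}, so there is no in-paper proof to compare against; your attempt has to stand on its own.

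It does not. The genuine gap is exactly the step you yourself flag as ``the heart of the matter'': showing that some bounded cross-section of the tube separates $X$ from $Y$, or else producing two $X$--$Y$ paths that are $d$-far apart \emph{everywhere}. The failure of \ref{itm:DistMenger2Paths:GP:paths} only tells you that every $X$--$Y$ path comes within $d$ of $V(P)$ \emph{somewhere}; it gives no control over where a path crosses a given level set, so a single band $\Sigma_t$ need not separate, and your proposed repair --- for each block of levels extract a path forced to detour outside the tube over that block, then ``re-route two such detours over well-separated blocks'' --- does not work as described. Two such detour paths may run through the same far-away region (so they are close to each other there), and each of them is, by hypothesis, within $d$ of $P$ at some point; choosing the blocks far apart along $P$ separates the paths only near those blocks, not along their entire length, and any attempt to splice the detours onto $P$ or onto each other reintroduces proximity near the splice points. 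Turning ``two local detours'' into ``two globally $d$-separated $X$--$Y$ paths'' is precisely the content of the theorem, and the published proofs do not follow the tube-around-a-single-geodesic scheme at all: they run a considerably more intricate iterative/extremal construction of the two paths (this is also where the specific constant $272$ in \cite{GP2023+} comes from). As written, your argument proves the easy diameter bound on $\Sigma_t$ and then defers the actual theorem to an unexecuted ``recursive/extremal analysis''. (Minor additional points: in a general metric graph an $X$--$Y$ geodesic need not exist, so one must work with near-geodesics; and the edge-subdivision step should be checked to interact harmlessly with both alternatives, e.g.\ with the diameter of the would-be separator.)
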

	
	In the proof of the next lemma, we use a compactness argument to show that we may replace in \cref{thm:DistanceMenger2Paths:GP} the set $Y$ by an end $\eps$ and the two paths in \ref{itm:DistMenger2Paths:GP:paths} by $\eps$-rays.
	
	\begin{lem} \label{cor:RaysOrBall}
		Let $(G, \ell)$ be a metric graph such that $B_{(G, \ell)}(v, n)$ is finite for all $v \in V(G)$ and $n \in \N$. Let~$A$ be a finite set of vertices in $G$, and let $\eps$ be an end of $G$. For every $K > 0$, there is either
		\begin{enumerate}[label=\rm{(\roman*)}]
			\item \label{itm:RaysOrBall:Ball} a set $B \subseteq V(G)$ of diameter $\leq K$ such that $G-B$ contains no $\eps$-ray starting at $A$, or 
			\item \label{itm:RaysOrBall:Rays} two $\eps$-rays starting at $A$ at distance at least $d := K/272$ from each other.
		\end{enumerate}
	\end{lem}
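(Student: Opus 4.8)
The plan is to deduce \cref{cor:RaysOrBall} from the coarse Menger theorem \cref{thm:DistanceMenger2Paths:GP} by a compactness argument, in effect replacing its target set $Y$ by the end $\eps$. I would argue by contraposition: suppose conclusion \ref{itm:RaysOrBall:Ball} fails, i.e.\ for \emph{every} set $B\subseteq V(G)$ of diameter $\le K$ the graph $G-B$ contains an $\eps$-ray starting at $A$; the goal is then to produce two such rays at distance at least $d:=K/272$. Since $G$ is connected and all balls $B_{(G,\ell)}(v,n)$ are finite, $G$ is countable and $V(G)=\bigcup_{n\in\N}S_n$ for the finite, increasing sets $S_n:=B_{(G,\ell)}(A,n)$. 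For each $n$ let $C_n$ be the unique component of $G-S_n$ that contains a tail of every ray in $\eps$; thus $C_{n+1}\subseteq C_n$, and every $\eps$-ray starting at $A$ contains an $A$--$V(C_n)$ path, since it meets $A$ in its first vertex and has a tail in $C_n$.

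Now apply \cref{thm:DistanceMenger2Paths:GP} with $X:=A$ and $Y:=V(C_n)$, for each $n\in\N$. If for some $n$ it returns a set $B$ with $\diam(B)\le K$ and no $A$--$V(C_n)$ path in $G-B$, then $G-B$ contains no $\eps$-ray starting at $A$, contradicting our assumption. Hence for every $n$ we obtain two $A$--$V(C_n)$ paths $P_n,Q_n$ with $d_G(P_n,Q_n)\ge d$, whose last vertices lie in $C_n\subseteq G-S_n$ and so have distance $>n$ from $A$. It remains to extract from $(P_n)_n$ and $(Q_n)_n$ two rays $R_1,R_2$ starting at $A$ with $d_G(R_1,R_2)\ge d$ and with a tail in $C_m$ for every $m\in\N$; the last property forces $R_1,R_2$ into $\eps$, because every finite $U\subseteq V(G)$ lies in some $S_m$, and then $C_m$ lies inside the component of $G-U$ belonging to $\eps$. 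For the extraction I would, for $t\in\N$, let $P_n^{\le t}$ be the longest initial segment of $P_n$ contained in $S_t$ (a path in the finite set $S_t$, monotone in $t$, with union $P_n$), and run a König-type diagonal argument over $t$ — each $P_n^{\le t}$ and $Q_n^{\le t}$ takes only finitely many values — to pass to a subsequence along which all these traces stabilise compatibly; then $R_1,R_2$ are the nested unions of the stabilised traces. On a common tail of the subsequence we have $R_1\subseteq P_n$ and $R_2\subseteq Q_n$, so $d_G(R_1,R_2)\ge d$, and the tails of $R_1,R_2$ enter every $C_m$ since $P_n,Q_n$ reach arbitrarily deep into the nested $C_n$; this yields conclusion \ref{itm:RaysOrBall:Rays}.

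The main obstacle is making this last extraction rigorous, i.e.\ ensuring the limits $R_1,R_2$ are genuine infinite rays lying in $\eps$ and not finite paths. In a metric graph the traces $P_n^{\le t}$ could a priori stabilise prematurely — if $P_n$ reaches far out of $A$ through only finitely many vertices via very long edges at a vertex of infinite degree — so one must show this does not happen for the paths in play. This is precisely where the finiteness of all balls $B_{(G,\ell)}(v,n)$ enters (and is the reason the hypothesis is phrased this way rather than as local finiteness), in combination with the standing assumption that no bounded set separates $A$ from $\eps$; together these preclude the degenerate behaviour. The remaining diagonalisation bookkeeping, and the verification that $R_1$ and $R_2$ lie in $\eps$, are then routine.
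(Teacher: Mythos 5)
There is a genuine gap at the limit-extraction step, and it is exactly the crux of the lemma. Your diagonal argument only retains the \emph{initial} behaviour of the paths $P_n,Q_n$, and nothing in the setup controls that behaviour: an $A$--$V(C_n)$ path is forbidden from meeting $C_n$ before its last vertex, but it may detour arbitrarily far through \emph{other} components of $G-B_G(A,n)$ (other ends, or unbounded non-$\eps$ components) before finally entering $C_n$. So for a perfectly valid choice of the $P_n$ (all pairwise constraints $d_G(P_n,Q_n)\ge d$ intact), the stabilised traces $\pi_t$ can converge to a ray belonging to a different end, and your justification ``the tails of $R_1,R_2$ enter every $C_m$ since $P_n,Q_n$ reach arbitrarily deep into the nested $C_n$'' is a non sequitur: the deep part of $P_n$ is its far end, which never survives in the limit of initial segments. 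Likewise your ``main obstacle'' is not actually disposed of: finiteness of all balls does not forbid vertices of infinite degree with edge-lengths tending to infinity, so the traces $P_n^{\le t}$ can indeed stabilise prematurely and the limit can be a finite path; the assumption that no bounded set separates $A$ from $\eps$ says nothing about the particular paths returned by \cref{thm:DistanceMenger2Paths:GP}, so it cannot ``preclude the degenerate behaviour'' without a further argument, which is missing.

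This is precisely why the paper's proof does not take a K\"onig-type limit of initial segments. Instead, for each $n$ it records finite data along the boundaries $N_G(C_m)$, $m\le n$: the distances of $P_n$ and $Q_n$ to each boundary vertex capped at $d$, plus a symbol describing whether that vertex lies on $P_n$ or $Q_n$ and how the path crosses between $C_m$ and its complement there. Stabilising these data along nested infinite index sets, it then glues, for each annulus $G[C_{m-1},1]-C_m$, the piece of $P_{i_m}$ (resp.\ $Q_{i_m}$) lying in it. The crossing symbols guarantee that every vertex of the glued object except its start in $A$ has degree $2$, so it contains a ray, which lies in $\eps$ because only finitely much of it avoids each $C_m$; and the capped distances are what re-establish $d_G(P,Q)\ge d$, which in that construction is \emph{not} free, since the two limit objects mix pieces of different $P_n$'s and $Q_n$'s. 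Your approach trades these two difficulties the other way (distance is free, control of the end is lost), but restoring that control is the real content of the lemma and essentially forces the paper's annulus-gluing argument or an equally substantial substitute; as written, the proposal does not prove the statement.
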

	
	We remark that the assumption that the balls $B_{(G,\ell)}(v,n)$ are finite is weaker than the assumption that the underlying graph is locally finite. We refer to this property as \defn{$(G,\ell)$ has finite balls}.
	
	\begin{proof}
		In the following, we abbreviate $d_{(G, \ell)}$ and $B_{(G, \ell)}$ with $d_G$ and $B_G$, respectively. In particular, all balls and distances in this proof are taken with respect to $\ell$.
		
		As $(G, \ell)$ has finite balls, and since~$A$ is finite, the balls $B_G(A, n)$, for $n \in \N$, are finite. Hence, there exists, for every $n \in \N$ a unique component~$C_n$ of $G - B_G(A,n)$ such that every $\eps$-ray has a tail in~$C_n$. Their neighbourhoods $N_G(C_n) \subseteq B_G(A,n)$ are finite; so we may set $k_n := |N_G(C_n)|$ and enumerate $N_G(C_n) =: \{v^n_1, \dots, v^n_{k_n}\}$.
		
		We apply \cref{thm:DistanceMenger2Paths:GP} to the sets $X := A$ and $Y := N_G(C_n)$ in~$G$. If, for some $n \in \N$, \cref{thm:DistanceMenger2Paths:GP} yields a set $B \subseteq V(G)$ of diameter $\leq K$ that separates $A$ and $N_G(C_n)$, then by the definition of $C_n$ this set $B$ satisfies~\ref{itm:DistMenger2Paths:GP:ball}. Hence, we may assume that for every $n \in \N$, we find two $A$--$N_G(C_n)$ paths $P_n, Q_n$ that are at least $d := K/272$ apart in $G$.
		
		For all $m \leq n \in \N$, we define a $k_m$-tuple 
		\[
		t^m_n = ((t^{nm}_1, \widetilde{t}^{nm}_1, s^{nm}_1), \dots, (t^{nm}_{k_m}, \widetilde{t}^{nm}_{k_m}, s^{nm}_{k_m})) \in T_m := (\{0, \dots, d\}^2 \times \{-4, \dots, 4\})^{k_m}
		\]
		of triples as follows. We let $t^{nm}_i$ be the distance $d_G(P_n, v_i^m)$ between $P_n$ and $v_i^m$ if it is less than~$d$; otherwise we set $t^{nm}_i := d$. Analogously, we let $\widetilde{t}^{nm}_i$ be the distance $d_G(Q_n, v_i^m)$ between $Q_n$ and $v_i^m$ if it is less than~$d$; otherwise we set $\widetilde{t}^{nm}_i := d$.
		Further, if $t^{nm}_i \neq 0$ and $\widetilde{t}^{nm}_i \neq 0$, then we set $s^{nm}_i := 0$. Otherwise, it follows that precisely one of $P_n$ and $Q_n$ meets $v^m_i$, and we then let $s^{nm}_i$ encode whether $v^m_i$ meets $P_n$ ($s^{nm}_i \in \{1,2,3,4\}$) or $Q_n$ ($s^{nm}_i \in \{-4,-3,-2,-1\}$) and whether its predecessor and successor on~$P_n$ or $Q_n$ both lie in $C_m$ ($|s^{nm}_i| = 1$), both lie in $G-C_m$ ($|s^{nm}_i| = 2$), or its predecessor lies in~$C_m$ and its successor lies in $G-C_m$  ($|s^{nm}_i| = 3$) or vice versa ($|s^{nm}_i| = 4$). 
		
		Since all $T_m$ are finite, there exist infinite index sets $\N \supseteq I_0 \supseteq I_1 \supseteq \ldots$ such that, for all $m \in \N$, all $t_n^m$ with $n \in I_m$ are equal. We pick, for every $m \in \N$, some $i_m \in I_m$.
		Now set 
		\[
		\widetilde{P}_{i_m} := P_{i_m} \cap (G[C_{m-1}, 1] - C_{m})\; \text{ and }\; \widetilde{Q}_{i_m} :=  Q_{i_m} \cap (G[C_{m-1}, 1] - C_{m}),
		\]
		and let $P := \bigcup_{m \in \N} \widetilde{P}_{i_m}$ and $Q := \bigcup_{m \in \N} \widetilde{Q}_{i_m}$.
		We claim that $P$ and $Q$ are at least $d$ apart in~$G$ and that they both contain an $\eps$-ray that starts in~$A$. It then follows that these rays are as in \ref{itm:RaysOrBall:Rays}.
		
		First, we show that $P$ and $Q$ are at least $d$ apart in~$G$. For this, recall that $d_G(P_{n}, Q_{n}) \geq d$ for all $n \in \N$ by the choice of $P_{n}, Q_{n}$. Now let $m \leq n \in \N$ be given. We show that $d_G(\widetilde{P}_{i_m}, \widetilde{Q}_{i_n}) \geq d$; the other case is symmetric. Clearly, if $m = n$, then $d_G(\widetilde{P}_{i_m}, \widetilde{Q}_{i_n}) \geq d$ holds by the choice of $P_{i_m}, Q_{i_n}$, so we may assume that $m < n$. Set $\ell := d_G(\widetilde{P}_{i_m}, \widetilde{Q}_{i_n})$, and let $W = w_0 \dots w_\ell$ be a shortest $\widetilde{P}_{i_m}$--$\widetilde{Q}_{i_n}$ path. Then $W$ meets $N_G(C_m)$ in a vertex $v_j^m$ because $\widetilde{Q}_{i_n} \subseteq G[C_{n-1},1] \subseteq G[C_m,1]$ and $\widetilde{P}_{i_m} \subseteq G-C_m$ as $m < n$. It follows that
		\begin{align*}
			\ell &= d_G(w_0, v_j^m) + d_G(v^m_j, w_\ell) \geq d_G(\widetilde{P}_{i_m}, v_j^m) + d_G(v^m_j, \widetilde{Q}_{i_n}) \geq d_G(P_{i_m}, v^m_j) + d_G(v^m_j, Q_{i_n}) \geq t^{i_mm}_j + \widetilde{t}^{i_nm}_j\\
			&= t^{i_nm}_j + \widetilde{t}^{i_nm}_j = \min\{d_G(P_{i_n}, v^m_j), d\} + \min\{d_G(v^m_j, Q_{i_n}), d\} \geq \min\{d_G(P_{i_n}, Q_{i_n}), d\} = d,
		\end{align*}
		where we used $t^{i_mm}_j = t^{i_nm}_j$ since $i_m, i_n \in I_m$. Hence, $d_G(P, Q) \geq d$ as desired.
		
		So to conclude the proof, it remains to show that $P$ and~$Q$ both contain an $\eps$-ray that starts at~$A$. We show the claim for~$P$; the other case is symmetric. By definition, it is clear that $P$ meets $A$ in a unique vertex~$a$, which is the endvertex of $P_{i_1}$ in~$A$; in particular, $a$ has degree~$1$ in~$P$. Hence, it suffices to show that all other vertices in $P$ have degree~$2$ in $P$, as then $P$ contains a ray that starts in $a$, and which then has to lie in $\eps$ since $P - C_m = \bigcup_{n \leq m} \widetilde{P}_{i_n}$ is finite for all $m \in \N$. 
		By definition of~$P$, every vertex of~$P$ that is not contained in some $N_G(C_m)$ is contained in precisely one $\widetilde{P}_{i_m}$, and has thus degree $2$ in~$P$.
		So let some $v^m_j \in V(P) \cap N_G(C_m)$ be given. Then $s^{i_mm}_j \in \{1, 2, 3, 4\}$, and it follows that $v^m_j$ has degree $2$ in~$G$ because $s^{i_mm}_j = s^{i_{m+1}m}_j$ by the choice of $i_m, i_{m+1}$.
	\end{proof}
	
	The remainder of the proof of \cref{thm:DivergingRays} is now analogous to the one of \cite{GP2023+}*{Theorem~8.16}. More precisely, we have the following auxiliary lemma, which is variant of \cite{GP2023+}*{Lemma 8.17}.
	
	\begin{lem} \label{lem:AgelosPanosLemmaForDivergingRays}
		Let $G$ be a graph of finite maximum degree, and let $\eps$ be a thick end of $G$. Then there is a finite set $A$ of vertices in $G$ and an assignment of edge-lengths $\ell\colon E(G) \rightarrow \R_{>0}$ with the following properties:
		\begin{enumerate}[label=\rm{(\roman*)}]
			\item no ball of radius $1$ in the corresponding metric $d_\ell$ separates $A$ from $\eps$,
			\item $\lim_{e\in E(G)} \ell(e) = 0$,\footnote{This means that for every $\eps > 0$ all but finitely many $e \in E(G)$ satisfy $\ell(e) < \eps$.} and 
			\item \label{itm:AgelosPanosLemma:3} every ball of finite radius in $d_\ell$ is finite.
		\end{enumerate} 
	\end{lem}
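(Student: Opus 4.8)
The plan is to adapt the argument of Georgakopoulos and Papasoglu for \cite{GP2023+}*{Lemma~8.17}, calibrating the edge-lengths to the prescribed end~$\eps$. We may assume $G$ is connected by passing to the component that contains the rays of~$\eps$; let $\Delta$ be its (finite) maximum degree. Fix a vertex~$v_0$ on some $\eps$-ray, put $B_n:=B_G(v_0,n)$ (finite, since $G$ is locally finite), let $C_n$ be the unique component of $G-B_n$ containing a tail of every $\eps$-ray, and set $S_n:=N_G(C_n)$; thus $S_n$ is a minimal $B_n$--$\eps$ separator, the $C_n$ are nested with $\bigcap_n V(C_n)=\es$, and $C_n$ is a component of $G-S_n$. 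First I would harvest two consequences of thickness. Writing $\mu_n$ for the least size of a vertex set separating $B_n$ from~$\eps$, one has $\mu_n\le\mu_{n+1}$ (a $B_{n+1}$--$\eps$ separator is a $B_n$--$\eps$ separator) and $\mu_n\to\infty$: otherwise some fixed value~$j$ is attained by infinitely many minimal separators, which being nested force every family of pairwise disjoint $\eps$-rays to have at most~$j$ members, contradicting thickness. Secondly, for every~$j$ and every sufficiently large~$n$, $G$ has $j$ pairwise disjoint $\eps$-rays each starting at a distinct vertex of~$S_n$ and otherwise contained in~$C_n$: take $j$ disjoint $\eps$-rays and, once $n$ exceeds all their starting distances from~$v_0$, replace each by its subray beginning at its last vertex in~$B_n$, which necessarily lies in~$S_n$.

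The heart of the construction is to build, from~$(\mu_n)$, a weight sequence~$(w_n)$ that is non-decreasing, $1$-Lipschitz ($w_n\le w_{n-1}+1$), satisfies $1\le w_n\le n$ and $w_n\to\infty$, and obeys $w_n\le\log_\Delta\mu_{\lfloor n/2\rfloor}$ -- concretely, one may take $w_n$ to be the running minimum of $w_{n-1}+1$ and $\max\{1,\lfloor\log_\Delta\mu_{\lfloor n/2\rfloor}\rfloor\}$. Then I would assign to each edge whose larger endpoint-distance from~$v_0$ equals~$n$ the length $\Lambda/w_n$, where $\Lambda$ is a large constant to be fixed last; call this assignment~$\ell$. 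Property (ii) is immediate since $w_n\to\infty$. For (iii): a path from~$v_0$ to a vertex at distance~$n$ uses an edge of each level $1,\dots,n$, hence has $d_\ell$-length at least $\Lambda\sum_{k\le n}w_k^{-1}\ge\Lambda\sum_{k\le n}k^{-1}\to\infty$, so finite-radius $d_\ell$-balls lie inside some finite~$B_n$. The crucial metric estimate is that a radius-$1$ $d_\ell$-ball~$D$ whose lowest-level vertex has level~$m$ spans at most~$2m$ levels and is contained in a graph-ball of radius at most $w_{2m}/\Lambda$; since $(w_n)$ is $1$-Lipschitz and $w_{\lfloor\cdot\rfloor}\le\log_\Delta\mu_{\lfloor\cdot/2\rfloor}$ this radius is $O\!\big(\tfrac1\Lambda\log_\Delta\mu_{\lfloor m/2\rfloor}\big)$, so $|D|\le\Delta\cdot\mu_{\lfloor m/2\rfloor}^{\,O(1/\Lambda)}$, which for $\Lambda$ large is $<\mu_{m-1}$ as soon as $\mu_{m-1}$ exceeds a fixed threshold (using $\mu_{\lfloor m/2\rfloor}\le\mu_{m-1}$).

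Finally I would set $A:=S_{n^*}$ with $n^*$ chosen large enough that both (a)~two disjoint $\eps$-rays start at distinct vertices of~$S_{n^*}$ and otherwise lie in~$C_{n^*}$, and (b)~$\mu_{m-1}$ exceeds the threshold above for every $m>n^*$; and then fix~$\Lambda$ large enough that in addition no radius-$1$ $d_\ell$-ball contains two vertices of~$A$ and every radius-$1$ $d_\ell$-ball meeting~$B_{n^*+1}$ is a single vertex (such balls are centred in a bounded range of levels, where~$w_n$ is bounded, so this can be arranged). To verify~(i), suppose a radius-$1$ ball~$D$ separates~$A$ from~$\eps$, i.e.\ $G-D$ has no $\eps$-ray starting in the nonempty set $A\setminus D$. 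If~$D$ meets~$B_{n^*+1}$ then $D=\{x\}$, and one of the two disjoint $\eps$-rays of~(a) -- which start in $S_{n^*}=A$ and otherwise avoid $S_{n^*}\ni x$ -- is an $\eps$-ray from $A\setminus\{x\}$ in $G-D$, a contradiction. Otherwise the lowest level~$m$ of~$D$ exceeds~$n^*+1$, so~$D$ is disjoint from the connected set~$B_{m-1}\supseteq A\cup\{v_0\}$; since~$D$ separates~$A$ from~$\eps$, all of~$B_{m-1}$ lies in non-$\eps$ components of $G-D$, so~$D$ separates~$B_{m-1}$ from~$\eps$ and hence $|D|\ge\mu_{m-1}>|D|$, again a contradiction. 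This proves~(i).

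I expect the main obstacle to be exactly the tension exploited in the last step: property~(iii) forbids the weights from decaying quickly, which makes radius-$1$ balls graph-fat far out, whereas property~(i) needs those fat balls too ``thin'' to enclose any $B_m$--$\eps$ separator. Thickness resolves this through $\mu_n\to\infty$, but only once the weights are tied to the growth of~$(\mu_n)$ and smoothed so they cannot jump (the $\lfloor n/2\rfloor$ look-back and the $1$-Lipschitz condition are what make the estimate on $|D|$ survive sudden jumps of~$\mu$), and once the finitely many small balls near~$A$ are dispatched by taking~$\Lambda$ large; keeping track throughout that all the rays involved lie in~$\eps$ -- the only part absent from~\cite{GP2023+} -- is the remaining, routine chore.
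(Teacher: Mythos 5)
Your argument is correct in outline, but it takes a genuinely different route from the paper. The paper's proof is a two-line adaptation of \cite{GP2023+}*{Lemma~8.17}: it runs that construction verbatim (a fixed vertex $o$, thickened annuli $B_G(o,r_n)\setminus B_G(o,r_{n-1})$ carrying edge-length about $1/n$, with the annuli calibrated to an infinite family of pairwise disjoint rays), the only change being that thickness lets one choose all these rays inside $\eps$, and (iii) is read off from the divergence of $\sum 1/n$. You instead re-derive the lemma from scratch via a connectivity count: thickness is used only to get $\mu_n\to\infty$ for the minimum sizes $\mu_n$ of $B_n$--$\eps$ separators, the weights are tied to $\log_\Delta\mu$, and (i) follows because a unit $d_\ell$-ball far out has fewer than $\mu_{m-1}$ vertices and so cannot be a separator, while the finitely many balls near $A$ are killed by taking $\Lambda$ large. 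This is self-contained and localizes the role of thickness nicely, at the price of the quantitative bookkeeping that the paper avoids by citation; both arguments deliver (ii) and (iii) the same way (harmonic-type divergence plus local finiteness).

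One step is mis-stated and needs repair, though the fix stays inside your framework. From your own definition you only get $w_{2m}\le\log_\Delta\mu_{\lfloor 2m/2\rfloor}=\log_\Delta\mu_m$, not $\log_\Delta\mu_{\lfloor m/2\rfloor}$; so the chain ``graph-radius $\le w_{2m}/\Lambda$, hence $|D|\le\Delta\cdot\mu_{\lfloor m/2\rfloor}^{O(1/\Lambda)}<\mu_{m-1}$'' does not follow as written, and since $\mu_m$ may jump arbitrarily far above $\mu_{m-1}$, a bound in terms of $\mu_m$ is useless for the comparison with $\mu_{m-1}$. The repair: for $\Lambda$ large the top level $M$ of a unit ball with bottom level $m$ satisfies $M-m\le 2w_M/\Lambda\le 2M/\Lambda$, so $M\le m/(1-2/\Lambda)\le\tfrac54 m$ rather than $2m$; then $w_M\le\log_\Delta\mu_{\lfloor 5m/8\rfloor}\le\log_\Delta\mu_{m-1}$ for all but boundedly many $m$, and the count $|D|\le\Delta\cdot\mu_{m-1}^{O(1/\Lambda)}<\mu_{m-1}$ goes through once $\mu_{m-1}$ exceeds a threshold depending only on $\Delta$ (alternatively, define $w_n$ with look-back $\lfloor n/4\rfloor$). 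Two smaller points: in the case where the unit ball meets $B_{n^*+1}$ and is a single vertex $x$, the reason one of your two rays survives is that they are disjoint, so at most one of them contains $x$ (your parenthetical presumes $x\in S_{n^*}$, which need not hold); and $S_n$ need not be a \emph{minimal} separator, but you only use $\mu_n<\infty$ and monotonicity, so nothing is lost.
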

	
	\begin{proof}
		This follows immediately from the proof of \cite{GP2023+}*{Lemma~8.17} if we choose the sequence $(R_n)_{n \in \N}$ of pairwise disjoint rays so that every $R_n$ is an $\eps$-ray, which is possible because $\eps$ is thick. 
		
		Note that \ref{itm:AgelosPanosLemma:3} follows easily from the proof, since the $S^n$'s are `thickened rings' $B_G(o, r_n) \setminus B_G(o, r_{n-1})$ around a vertex~$o \in V(G)$ and because $\sum_{i \in \N} 1/n$ is infinite. 
	\end{proof}
	
	\begin{proof}[Proof of \cref{thm:DivergingRays}]
		The proof is analogous to \cite{GP2023+}*{Theorem~8.16} with just one exception: instead of \cite{GP2023+}*{Corollary~8.15 \& Lemma~8.17} we apply \cref{cor:RaysOrBall,lem:AgelosPanosLemmaForDivergingRays}.
	\end{proof}

	\subsection{Quasi-geodesic 3-stars of rays} \label{subsec:QuasiGeod3Star}
	
	By \cref{thm:DivergingRays} every graph $G$ of finite maximum degree contains for every thick end $\eps$ a diverging double $\eps$-ray. For the proofs of \cref{main:AsymptoticFullGrid,main:DivergingFullGrid}, we need the following result, which strengthens \cref{thm:DivergingRays} in the special case where $G$ is \qt\ and accessible. 
	
	\begin{thm} \label{lem:QuasiGeodesic3StarOfRays}
		Let $\eps$ be a thick end of a locally finite, accessible, \qt\ graph $G$.
		Then there exists $c \in \N_{\geq 1}$ and $\eps$-rays $R_1, R_2, R_3$ in~$G$ such that $R_1 \cap R_2 = R_1 \cap R_3 = R_2 \cap R_3 = \{v\}$ for some $v \in V(G)$ and such that $R_1 \cup R_2 \cup R_3$ is $c$-quasi-geodesic in~$G$.
	\end{thm}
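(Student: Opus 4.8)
The plan is to build the three rays greedily as quasi-geodesics, using quasi-transitivity to relocate the construction whenever the end ``escapes'' in a new direction. First I would use accessibility to reduce to a cleaner local situation: since $G$ is accessible, there is a bound $n$ such that $\eps$ can be separated from any other end by $n$ vertices, so for every finite $S \subseteq V(G)$ there is a unique component $C_\eps(S)$ of $G-S$ containing a tail of every $\eps$-ray, and moreover $|\partial_G C_\eps(S)|$ is bounded (this is the role accessibility plays, and this is the only place it is used; without it the ``width'' of the end could grow and the argument below would fail). Fix a vertex $v$; by \qt ness there are finitely many orbits, and I will take $v$ in a fixed orbit $\cO$.

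Next I would construct a single $c$-quasi-geodesic double $\eps$-ray through $v$. Starting from \cref{thm:DivergingRays} we have \emph{some} diverging double $\eps$-ray; but diverging is weaker than quasi-geodesic, so instead I would build a double ray directly: take a geodesic ray $R_1$ from $v$ into $\eps$ (i.e.\ each initial segment $vR_1r_i$ is a shortest $v$--$r_i$ path — such a ray exists in a locally finite graph with an end, by König's lemma applied to the tree of geodesics, and one can force it into $\eps$ by restricting to the component $C_\eps$), and similarly a geodesic ray $R_2$ from $v$ into $\eps$ chosen so that $R_1 \cup R_2$ is quasi-geodesic. The point is that two geodesic rays from $v$ automatically give a $c$-quasi-geodesic union for some \emph{bounded} $c$: any two vertices $a \in R_1$, $b \in R_2$ satisfy $d_{R_1 \cup R_2}(a,b) = d(v,a)+d(v,b) \le (d(v,b)+d(a,b)) + (d(v,a)+d(a,b)) \le 3\,d(v,a) + \ldots$, wait — one needs $d(v,a) \le d(v,b)+d(a,b)$ and the triangle inequality bounds $d(v,a)+d(v,b) \le 2d(a,b) + 2\min\{d(v,a),d(v,b)\}$, which is \emph{not} bounded by a constant times $d(a,b)$ in general. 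So the genuinely hard step is exactly this: ensuring the union is quasi-geodesic, i.e.\ that the two rays leave $v$ in ``geometrically opposite'' directions rather than running close together. I would handle this by a limiting/relocation argument: consider all geodesic double rays through vertices of $\cO$; if none were quasi-geodesic with a uniform constant, then for each $k$ there is a geodesic double ray $D_k$ through some $v_k \in \cO$ and vertices $a_k, b_k$ on its two sides with $d_{D_k}(a_k,b_k) > k\, d_G(a_k,b_k)$; translating $v_k$ to $v$ by an automorphism and passing to a subsequence (local finiteness!) yields a limit geodesic double ray through $v$ together with a ``shortcut'' of bounded length between its two tails, contradicting that a geodesic double ray cannot be boundedly shortcut far out — here one uses that $\eps$ is thick (hence $G-\partial_G C_\eps(B_G(v,r))$ still has a component with a thick sub-end) to guarantee the double ray does not ``turn around''. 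This gives $R_1 \cup R_2$, a $c$-quasi-geodesic double $\eps$-ray through $v$, for a uniform $c$.

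Then I would add the third ray. Let $S := R_1 \cup R_2$. Apply (the detailed version of) \cref{thm:HalfGrid:BoundedCycles} — wait, accessibility alone is assumed here, not the cycle-space hypothesis, so instead I use only \qt ness and thickness of $\eps$: since $\eps$ is thick and $S$ is a double $\eps$-ray, for infinitely many vertices $w$ of $S$ there is a ray from $w$ into $\eps$ that leaves $S$, and by \qt ness (finitely many orbits of vertices) one such $w$ lies in the orbit of a fixed vertex; translating so that $w = v$ and re-running the construction, I can arrange that $v \in V(S)$ splits $S$ as $R_1 \cup R_2$ with $R_1, R_2$ the two tails from $v$, and that there is a further $\eps$-ray $R_3$ from $v$. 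Finally I make $R_3$ disjoint from $R_1 \cup R_2$ except at $v$ (take the last vertex of $R_3$ on $S$ — if it is not $v$, reroute along $S$; a short case analysis using that $S$ is quasi-geodesic keeps the constant bounded) and geodesic from $v$; the same relocation/limit argument as above, applied now to the $3$-star, upgrades $R_1 \cup R_2 \cup R_3$ to $c'$-quasi-geodesic for a uniform $c'$.

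The main obstacle is, as indicated, the second paragraph: turning ``diverging'' or ``geodesic on each branch'' into ``quasi-geodesic as a whole'' with a \emph{uniform} constant. Everything hinges on the compactness-plus-translation trick, and on extracting from thickness of $\eps$ the fact that the double ray genuinely separates ``two sides of the end'' so that it cannot be boundedly short-circuited arbitrarily far out; I expect this to require the most care, and it is presumably where the paper invokes accessibility (to keep the boundary $\partial_G C_\eps(B_G(v,r))$ bounded, so that the translated limit objects live in a locally finite picture of bounded ``width''). The rest — geodesic rays via König's lemma, rerouting the third ray to meet the others only at $v$, contracting — is routine.
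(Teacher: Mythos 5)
Your plan has a genuine gap at exactly the point you flag as the hard step, and the proposed fix does not work. You want to take two geodesic rays $R_1,R_2$ from $v$ into $\eps$ and then argue by relocation-plus-limits that some such pair has uniformly quasi-geodesic union, the contradiction being that a limit ``geodesic double ray cannot be boundedly shortcut far out'', using thickness of $\eps$. But a double ray both of whose halves are geodesic rays into a thick end can perfectly well admit bounded shortcuts arbitrarily far out: two geodesic rays to the same (thick) end may run at bounded distance from each other forever (already in the half-grid), and thickness gives no contradiction with that. So the limiting argument produces no contradiction, and your proposal never actually establishes the existence of a quasi-geodesic double $\eps$-ray through $v$; the later step, where ``the same relocation/limit argument'' is supposed to upgrade the $3$-star to quasi-geodesic, inherits the same problem, and in addition the third ray you produce (``a ray from $w$ into $\eps$ that leaves $S$'') may hug $S$, so nothing forces the union to be quasi-geodesic.

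The paper proceeds differently at both of the places where you improvise. First, it does not try to assemble a quasi-geodesic double ray from two one-way geodesics: in the one-ended case it invokes the known compactness result that an infinite, locally finite, \qt\ graph contains a bi-infinite geodesic $R$ (Thomassen--Woess), which is $1$-quasi-geodesic by definition; it then shows $G[R,K]\neq G$ for every $K$, picks vertices $x_i$ at distance $i$ from $R$ together with shortest $x_i$--$R$ paths $P_i$, notes that $R\cup P_i$ is $3$-quasi-geodesic by \cref{lem:CombiningGeodesicGraphs}, and uses quasi-transitivity plus local finiteness to translate and take a limit, yielding the $3$-star of geodesic rays whose union is $3$-quasi-geodesic. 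Second, accessibility is not used to bound separators of $\eps$; it is used via \cref{cor:1EndQTGeodSubgraph} (resting on a result of Diestel, Jacobs, Knappe and Kurkofka) to pass to a connected, one-ended, quasi-geodesic, \qt\ subgraph $H$ all of whose rays lie in $\eps$, so that the one-ended construction automatically lands in the prescribed end and the quasi-geodesic constant only degrades by the factor coming from $H\subseteq G$. Your bounded-boundary use of accessibility does not achieve this reduction: in the multi-ended case a bi-infinite geodesic (or your limit object) through $v$ need not have both tails in $\eps$, and you give no mechanism that keeps it in $\eps$ while preserving quasi-geodesicity. If you want to salvage your outline, the missing ingredients are precisely these two: a citation or proof of the existence of a two-way infinite geodesic in the one-ended \qt\ setting, and the accessibility-based reduction to that setting.
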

	
	For the proof of \cref{lem:QuasiGeodesic3StarOfRays}, we first need the following auxiliary lemma.
	
	\begin{lem}\label{cor:1EndQTGeodSubgraph}
		Let $G$ be a \lf, accessible, \qt\ graph that contains a thick end~$\eps$. Then there exists a connected, one-ended, quasi-geodesic subgraph $H$ of~$G$ such that every ray in~$H$ is an $\eps$-ray in~$G$ and such that the stabilizer of~$H$ acts \qt ly on~$H$.    
	\end{lem}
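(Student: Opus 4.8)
The plan is to use the accessibility of $G$ to build a tree-decomposition (or block-cut structure via the structure theory of \qt\ accessible graphs) whose torso at the part containing $\eps$ is itself \qt, one-ended, and essentially a \qi\ copy of a suitable subgraph of $G$; then extract the desired $H$ inside that torso. Concretely, first I would invoke \cref{thm:CycleSpaceAccessible} in spirit—though here accessibility is a hypothesis—to obtain a canonical tree-decomposition $(T,\cV)$ of $G$ of bounded adhesion into which $\Aut(G)$ acts, such that the end $\eps$ lives in (the part corresponding to) a single node or a single end of $T$. Since $G$ has a thick end, that part cannot be small, and one checks that after passing to the subgraph $H_0$ induced on the union of the parts whose rays all converge to $\eps$ (equivalently, removing the `tree-like' directions of $T$ pointing away from $\eps$), we get a connected, one-ended subgraph all of whose rays are $\eps$-rays, and whose stabilizer still acts \qt ly because the construction is canonical/$\Aut(G)$-equivariant.

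Second, I would upgrade $H_0$ to a quasi-geodesic subgraph. The standard device is: in a \lf, \qt\ graph, pick one vertex from each orbit, and for each orbit representative $v$ and each of its (finitely many) neighbours $w$, fix a geodesic from $v$ to $w$ inside $H_0$—wait, that is trivial. The real point is to replace $H_0$ by a spanning connected subgraph that is quasi-isometrically embedded: here one uses that a \lf, \qt, one-ended graph is quasi-geodesic inside itself trivially ($c=1$), so in fact it suffices to arrange that $H_0$ is an \emph{isometrically} or \emph{quasi-isometrically} embedded subgraph of $G$. That is the substance: I would take $H$ to be the union of $H_0$ with, for each pair $u,u'\in V(H_0)$ at small $G$-distance, a $G$-geodesic between them—but to keep $H$ \qt ly stabilized and \lf, only do this for finitely many orbit-pairs up to the group action, i.e.\ for each orbit of pairs $(u,u')$ with $d_G(u,u')\le D$ (where $D$ bounds the adhesion / the `jumps' that $H_0$ might skip), add one chosen geodesic and translate it around by $\mathrm{Stab}(H_0)$. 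Bounded adhesion of the tree-decomposition guarantees a uniform such $D$ exists, so finitely many orbits suffice, $H$ remains \lf\ and one-ended with all rays $\eps$-rays, and a Milnor–Schwarz / telescoping argument shows $d_H(x,y)\le c\,d_G(x,y)$ for a uniform $c$.

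The main obstacle I anticipate is the equivariance bookkeeping: ensuring simultaneously that (a) $H$ is \lf\ and connected, (b) every ray of $H$ is an $\eps$-ray (no rays escaping to other ends or to the tree-structure), (c) $\mathrm{Stab}(H)$ acts with finitely many orbits on $H$, and (d) the quasi-geodesic constant is uniform. Point (b) is delicate because adding geodesics of $G$ to $H_0$ could a priori introduce rays wandering off toward a different end; this is controlled by only adding geodesics between vertices of $H_0$ that are already $G$-close, so each added path is short and stays in a bounded neighbourhood of $H_0$, hence cannot form a new ray at all. Point (c) forces the whole construction to be done orbit-by-orbit under a fixed finite-index subgroup of $\Aut(G)$ stabilizing the relevant node of $T$, which is where accessibility (bounded adhesion, finitely many orbits of parts) is essential. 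I would expect the formal write-up to lean on a previously established structure theorem for accessible \qt\ graphs (e.g.\ from \cite{KM08} together with \cref{thm:CycleSpaceAccessible}'s proof method) to produce $(T,\cV)$ with all the equivariance built in, and then the rest is the routine geodesic-thickening argument sketched above.
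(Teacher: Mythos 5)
Your first step (extracting a one-ended, $\Aut$-equivariantly chosen subgraph in which $\eps$ lives) is in the right spirit, but you treat as routine exactly the part that is genuinely nontrivial: if $\eps$ is captured in a part (or node) of a canonical tree-decomposition, the subgraph induced on that part may still contain rays belonging to other ends of $G$, so ``the union of the parts whose rays all converge to $\eps$'' is not a well-defined or obviously correct recipe, and one-endedness plus the \qt\ action of the stabilizer require real work. The paper does not re-derive this; it cites a result of Diestel, Jacobs, Knappe and Kurkofka which produces a connected, induced, one-ended subgraph $H$ with all rays in $\eps$, with \qt\ stabilizer $\Gamma$, and -- crucially for what follows -- such that every component of $G-H$ has finite neighbourhood in $H$ and these components fall into finitely many $\Gamma$-orbits.

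The second step is where your proposal has a genuine gap. You want to add $G$-geodesics between pairs of vertices of $H_0$ at $G$-distance at most $D$, claiming bounded adhesion supplies such a $D$; but bounded adhesion bounds the \emph{cardinality} of the separators, not their diameter, so the entry and exit vertices of an excursion of a $G$-geodesic outside $H_0$ need not be $G$-close at all, and your construction then gives no control over such excursions -- the appeal to a ``Milnor--Schwarz / telescoping argument'' does not close this. (To make your route work you would need finitely many orbits of adhesion sets to bound their diameter, and you would then still owe proofs that the enlarged $H$ is one-ended and has all rays in $\eps$, obligations your modification creates and only partially addresses.) The paper avoids all of this by showing the unmodified subgraph $H$ is already quasi-geodesic: since the components $C$ of $G-H$ have finite neighbourhoods, lie in finitely many $\Gamma$-orbits, and $H$ is connected, there is a uniform $c$ such that any two vertices of $N_G(C)$ are within distance $c$ \emph{in $H$}; hence every maximal excursion of a shortest $x$--$y$ path through some $C$ (which has length at least $1$ and both endpoints in $N_G(C)$) can be replaced by an $H$-path of length at most $c$, giving $d_H(x,y)\leq c\, d_G(x,y)$ regardless of how long the excursion is. This bounded-$H$-diameter-of-$N_G(C)$ observation is the key idea missing from your argument.
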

	
	\begin{proof}
		By a result of Diestel, Jacobs, Knappe and Kurkofka \cite{DJKK}*{Lemma 7.12} and in particular its proof \cite{DJKKlong}*{Appendix A}, there exists a connected, induced, one-ended subgraph $H$ of~$G$ whose rays all lie in~$\eps$ such that every component of $G-H$ has finite neighbourhood in~$H$, such that there are only finitely many orbits of such components under the stabilizer $\Gamma$ of~$H$ in the automorphism group of~$G$ and such that $\Gamma$ acts \qt ly on~$H$.
		It remains to prove that $H$ is quasi-geodesic.
		Since there are only finitely many orbits of components of $G-H$ under~$\Gamma$, since each such component has finite neighbourhood in~$H$ and since $H$ is connected, there exists $c\in\N_{\geq 1}$ such that for every component $C$ of $G-H$ every two vertices in $N_G(C)$ have distance at most $c$ in~$H$.
		
		We claim that~$H$ is $c$-quasi-geodesic. Indeed, let $x, y \in V(H)$ be given, and let~$P$ be a shortest $x$--$y$ path in~$G$. 
		Further, let $Q_0, \dots, Q_m$ be the maximal non-trivial subpaths of~$P$ that are internally disjoint from~$H$. Then every~$Q_i$ is internally contained in some component~$C$ of $G-H$ and starts and ends in~$N_G(C)$.
		By the choice of~$c$, there exists a path~$Q'_i$ in~$H$ of length at most~$c$ which has the same endvertices as~$Q_i$. 
		It follows that the union~$W$ over $P \cap H$ and the~$Q'_m$ is connected, contained in~$H$, contains $x, y$, and hence contains an $x$--$y$ path.
		Since all~$Q_i$ are non-trivial and have thus length at least~$1$, it follows that $d_H(x, y) \leq |E(W)| \leq |E(P \cap H)| + c(m+1) \leq c\cdot |E(P)| = c\cdot d_G(x,y)$ as desired.
	\end{proof}
	
	We also need the following result of the first author, Diestel, Elm, Fluck, Jacobs, Knappe and Wollan~\cite{radialpathwidth}.
	
	\begin{lem}[\cite{radialpathwidth}*{Lemma 4.3}] \label{lem:CombiningGeodesicGraphs}
		Let $X$ be a $c$-quasi-geodesic subgraph of some graph~$G$ for some $c \in \N_{\geq 1}$.
		If~$P$ is a shortest $v$--$X$ path in~$G$ for some vertex~$v \in V(G)$, then $X \cup P$ is $(2c+1)$-quasi-geodesic in~$G$.
	\end{lem}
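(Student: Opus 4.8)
The plan is a short case analysis on the pair of query vertices, with the geometry of a shortest $v$--$X$ path carrying the argument. Write $w$ for the endvertex of $P$ on $X$, so that $P$ is a $v$--$w$ path with $V(P)\cap V(X)=\{w\}$, and put $Y:=X\cup P$; if $v\in V(X)$ then $P$ is trivial, $Y=X$, and there is nothing to prove, so I assume $v\notin V(X)$. Since $d_Y(a,b)$ is the length of a shortest $a$--$b$ walk in $Y$, it suffices to exhibit, for all $a,b\in V(Y)=V(X)\cup V(P)$, a walk in $Y$ from $a$ to $b$ of length at most $(2c+1)\,d_G(a,b)$.

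First I would record two facts about $P$. \emph{Fact 1: $P$ is a shortest $v$--$w$ path in $G$, and hence every subpath of $P$ is geodesic in $G$.} Indeed, any $v$--$w$ path has an initial segment that is a $v$--$X$ path of no greater length, so $|P|=d_G(v,X)\leq d_G(v,w)$; and a subpath of a shortest path is shortest between its endvertices. In particular $d_P(x,y)=d_G(x,y)$ for all $x,y\in V(P)$. \emph{Fact 2: $d_G(b,X)=d_G(b,w)$ for every $b\in V(P)$.} Here $\leq$ is clear since $w\in V(X)$. For the reverse, suppose some $b$--$X$ path $Q$ satisfied $|Q|<d_G(b,w)=d_P(b,w)$; concatenating the $v$--$b$ subpath of $P$ (of length $d_P(v,b)=d_G(v,b)$ by Fact 1) with $Q$ gives a walk from $v$ ending in $X$ of length $d_G(v,b)+|Q|<d_P(v,b)+d_P(b,w)=|P|=d_G(v,X)$, contradicting that every such walk has length at least $d_G(v,X)$.

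Now the case analysis. If $a,b\in V(X)$, then $d_Y(a,b)\leq d_X(a,b)\leq c\,d_G(a,b)\leq(2c+1)\,d_G(a,b)$. If $a,b\in V(P)$, then $d_Y(a,b)\leq d_P(a,b)=d_G(a,b)$ by Fact 1. It remains to treat $a\in V(X)$ and $b\in V(P)$ (the reverse being symmetric, and $b=w$ falling under the first case). Routing from $a$ to $b$ via $w$ — through $X$ from $a$ to $w$, then along $P$ from $w$ to $b$ — yields
\[
 d_Y(a,b)\ \leq\ d_X(a,w)+d_P(w,b)\ \leq\ c\,d_G(a,w)+d_G(w,b).
\]
By Fact 2 (with $a\in V(X)$), $d_G(w,b)=d_G(b,X)\leq d_G(b,a)$, and hence $d_G(a,w)\leq d_G(a,b)+d_G(b,w)\leq 2\,d_G(a,b)$ by the triangle inequality. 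Combining, $d_Y(a,b)\leq 2c\,d_G(a,b)+d_G(a,b)=(2c+1)\,d_G(a,b)$, as wanted.

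The only non-mechanical point is Fact 2: it is the sole place where ``shortest $v$--$X$ path'' is used rather than ``shortest $v$--$w$ path'', and it is precisely what forces $d_G(b,w)\leq d_G(a,b)$ in the mixed case, which in turn is what produces the constant $2c+1$ and nothing worse. Everything else is the triangle inequality together with the fact that subpaths of geodesics are geodesic.
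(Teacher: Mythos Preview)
Your proof is correct. The paper does not give its own proof of this lemma; it simply cites it from \cite{radialpathwidth}*{Lemma~4.3} and uses it as a black box. Your case analysis together with Fact~2 (that $w$ realises $d_G(b,X)$ for every $b\in V(P)$) is the natural argument, and the constant $2c+1$ falls out exactly as you say.
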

	
	We can now prove \cref{lem:QuasiGeodesic3StarOfRays}.
	
	\begin{proof}[Proof of \cref{lem:QuasiGeodesic3StarOfRays}]
		Let us first assume that $G$ is one-ended. In this case, we apply two compactness arguments.
		First, a standard compactness argument (see e.g.\ \cite{TW1993}*{Proposition~5.2}) implies the existence of a geodesic double ray~$R = \dots r_{-1}r_0r_1\dots$ in the \lf\ and \qt\ graph~$G$.
		
		For the second compactness argument, we first show that $G[R, K] \neq G$ for all $K \in \N$. Let $K \in \N$ be given. Since~$G$ is \lf, the set $B_G(r_1Rr_{2K+1}, K)$ is finite. As $R$ is geodesic, the sets $B_G(R_{\leq 0}, K)$ and $B_G(R_{\geq 2K+2}, K)$ are disjoint and not joined by an edge. Hence, every $B_G(R_{\leq 0}, K)$--$B_G(R_{\geq 2K+2}, K)$ path meets either $B_G(r_1Rr_{2K+1}, K)$ or $G - G[R, K]$. But since both $R_{\leq 0}$ and $R_{\geq 2K+1}$ lie in the unique end of $G$, there are infinitely many disjoint such paths, of which at most finitely many can meet the finite set $B_G(r_1Rr_{2K+1}, K)$. Hence, $G-G[R,K]$ is non-empty.
		
		Thus, there exist vertices in~$G$ of arbitrary distance from~$R$. 
		Let $x_i$ be a vertex at distance~$i$ from~$R$, let~$r_{j_i}$ be a vertex of~$R$ with $d_G(x_i,r_{j_i})=d_G(x_i,R)$, and let $P_i = p_0^i \dots p^i_i$ be a shortest $x_i$--$r_{j_i}$ path.
		Then $R\cup P_i$ is $3$-quasi-geodesic by \cref{lem:CombiningGeodesicGraphs}.
		Since $G$ is \qt, there is an infinite index set $I \subseteq \N$ such that all $r_{j_i}$ lie in the same orbit. Let $s \in V(G)$ be another vertex in that orbit. 
		For all $i \in I$, let~$\phi_i$ be an automorphism of~$G$ that maps $r_{j_i}$ to~$s$.
		Then, since $G$ is locally finite, there exists an infinite index set $I_1 \subseteq I$ such that $\phi_i(r_{j_i-1}Rr_{j_i+1} \cup p_0^iPp_1^i)$ coincides for all $i \in I_1$, amongst which we again find an infinite index set $I_2 \subseteq I_1$ such that $\phi_i(r_{j_i-2}Rr_{j_i+2} \cup p_0^iPp_2^i)$ coincides for all $i \in I_2$ and so on.
		This results in three internally disjoint, geodesic rays starting in $s$ whose union is $3$-quasi-geodesic. Obviously, all three rays must lie in the unique end~$\eps$ of~$G$, so they are as desired.
		
		Let us now assume that $G$ has more than one end.
		Since $G$ is accessible, there exists by \cref{cor:1EndQTGeodSubgraph} a connected, one-ended, $c$-quasi-geodesic, \qt\ subgraph $H$ of~$G$ for some $c\in \N_{\geq 1}$ such that every ray in~$H$ is an $\eps$-ray in~$G$.
		By the first case, we find the desired three rays $R_1, R_2, R_3$ in~$H$ whose union is $3$-quasi-geodesic.
		Since $H$ is a $c$-quasi-geodesic subgraph of~$G$, the union of the rays $R_1, R_2, R_3$ forms a $3c$-quasi-geodesic subgraph of~$G$.
	\end{proof}

	\section{Half-grid minors}\label{sec:HG:BC}
	
	In this section we prove \cref{thm:HalfGrid:BoundedCycles}; in fact, we show a more detailed version, which we need in the next section for the proof of \cref{thm:AsymptoticFullGrid}.
	
	Let $R = \dots r_{-1}r_0r_1\dots$ be a double ray in a graph $G$, and let $K \in \N$. A component $C$ of $G-B_G(R,K)$ is \defn{long} if $C$ has a neighbour in $B_G(R_{\geq i},K)$ and in $B_G(R_{\leq -i}, K)$ for all $i \in \N$. Further,~$C$ is \defn{thick} if, for every $L \geq K$, some long component of $G-B_G(R,L)$ is contained in~$C$.
	
	\begin{customthm}{\cref*{thm:HalfGrid:BoundedCycles}$^\prime$} \label{thm:HalfGrid:BoundedCycles:copy}
		\emph{Let $R$ be a diverging double ray in a thick end $\eps$ of a locally finite graph~$G$ whose cycle space is generated by cycles of bounded length. 
			Then either $K_{\aleph_0} \prec_{UF}^\eps G$ or $G$ contains an escaping subdivision $H$ of the hexagonal half-grid whose first vertical ray is $R$.}
		
		\emph{In particular, if $K_{\aleph_0} \not\prec^\eps_{UF} G$ and $C$ is a thick component of $G-B_G(R, L)$ for some $L \in \N$, then we may choose the vertical double rays $S^i$ of $H$ so that $S^i \subseteq C$ for all $i \geq 1$.}
	\end{customthm}
	
	\begin{proof}[Proof of \cref{thm:HalfGrid:BoundedCycles} given \cref{thm:HalfGrid:BoundedCycles:copy}]
		By \cref{thm:DivergingRays}, there exists a diverging double $\eps$-ray $R$ in $G$. Apply \cref{thm:HalfGrid:BoundedCycles} to $R$.
	\end{proof}
	
	In the remainder of this section we prove \cref{thm:HalfGrid:BoundedCycles:copy}; see \cref{subsec:ProofSketch34} for a sketch of the proof.
	
	\begin{lem} \label{lem:kappa/2NhoodIsConnected}
		Let $G$ be a graph whose cycle space is generated by cycles of length at most $\kappa \in \N$, and let~$Y$ be a connected subgraph of $G$. Then for every component $C$ of $G-Y$ that attaches to $Y$, the graph $C[\partial_G C, \lfloor \frac{\kappa-2}{2}\rfloor]$ is connected.
	\end{lem}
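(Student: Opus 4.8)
The plan is to set $r := \lfloor\frac{\kappa-2}{2}\rfloor$, write $W := C[B_C(\partial_G C, r)]$ for the graph in question, and split the claim into two parts: (a) every vertex of $W$ is joined inside $W$ to a vertex of $\partial_G C$, and (b) any two vertices of $\partial_G C$ are joined inside $W$. Since $\partial_G C \neq \emptyset$ because $C$ attaches to $Y$, parts (a) and (b) together give that $W$ is connected. Part (a) is immediate from the definitions: if $v\in V(W)$, then $C$ contains a $v$--$\partial_G C$ path of length at most $r$, and every vertex of that path is within $C$-distance $r$ of $\partial_G C$, so the path lies in $W$. The content is part (b), and it is here that the hypothesis on the cycle space is used. (The degenerate case $\kappa\le 2$, in which the hypothesis forces $G$ to be a forest and then $\partial_G C$ is a single vertex, is trivial and can be disposed of first; so assume $\kappa\ge 3$.)

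For part (b), fix distinct $a, b \in \partial_G C$. Choose a neighbour $a^* \in N_G(a)\cap V(Y)$ and a neighbour $b^* \in N_G(b)\cap V(Y)$, an $a^*$--$b^*$ path $Q_Y$ in the connected graph $Y$, and an $a$--$b$ path $Q_C$ in the connected graph $C$. Set $Z := aa^* \cup Q_Y \cup b^*b$, so that $D := E(Q_C) + E(Z)$ lies in the cycle space of $G$ (here $+$ denotes symmetric difference, as in the paper). By hypothesis $D = \sum_{i=1}^s E(O_i)$ for cycles $O_i$ of length at most $\kappa$. Now intersect this identity with the set $E(C)$ of edges of $G$ having both endpoints in $V(C)$: on the one hand $D\cap E(C) = E(Q_C)$, since none of the edges $aa^*$, $b^*b$ nor the edges of $Q_Y$ lie in $E(C)$; on the other hand, as intersection with $E(C)$ distributes over the $\mathbb{F}_2$-sum, $D\cap E(C) = \sum_i\bigl(E(O_i)\cap E(C)\bigr)$. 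Hence $E(Q_C) = \sum_i\bigl(E(O_i)\cap E(C)\bigr)$.

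The key step is the analysis of the pieces $E(O_i)\cap E(C)$. If $V(O_i)\subseteq V(C)$, then $E(O_i)\cap E(C) = E(O_i)$ is a cycle, so it has even degree at every vertex. If instead $O_i$ has a vertex outside $V(C)$, then $E(O_i)\cap E(C)$ is a disjoint union of subpaths of $O_i$; since every vertex of $G$ adjacent to $C$ but lying outside $C$ belongs to $Y$ (as $C$ is a component of $G-Y$), each such subpath has both endpoints in $\partial_G C$, and since it omits at least two edges of $O_i$ it has length at most $\kappa - 2 \le 2r+1$. Consequently every vertex on such a subpath is within $C$-distance $\lfloor(2r+1)/2\rfloor = r$ of one of its endpoints in $\partial_G C$, so the subpath lies in $W$. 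Therefore $S := \sum\{\,E(O_i)\cap E(C) : V(O_i)\not\subseteq V(C)\,\}$ is an edge set with $S\subseteq E(W)$; and, by the previous paragraph, $S$ equals $E(Q_C)$ plus a sum of cycles, hence $S$ has odd degree exactly at $a$ and $b$. An edge set whose set of odd-degree vertices is $\{a,b\}$ contains an $a$--$b$ path, and since $S\subseteq E(W)$ and $W$ is an induced subgraph of $G$, this path lies entirely in $W$. This proves part (b).

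The step I expect to be the main obstacle is exactly the handling of the pieces $E(O_i)\cap E(C)$ with $V(O_i)\subseteq V(C)$, which need not lie in $W$ at all: the point is that they are cycles, so they do not affect the parity of vertex degrees, which is why discarding them from the identity $E(Q_C)=\sum_i(E(O_i)\cap E(C))$ leaves an edge set $S$ that still has the correct odd-degree set $\{a,b\}$ while now lying inside $W$. The remaining points are routine: the bookkeeping inequality $\kappa-2\le 2\lfloor\frac{\kappa-2}{2}\rfloor+1$, the observation that the endpoints of the $C$-arcs of a cycle land in $\partial_G C$, and the standard fact that an edge set with odd-degree vertices exactly $\{a,b\}$ contains an $a$--$b$ path.
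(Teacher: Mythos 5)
Your proof is correct, and its first half follows the paper's own argument: you build the same cycle (an $a$--$b$ path through $C$, closed up by a detour through $Y$ via neighbours of $a$ and $b$), decompose it into generating cycles of length at most $\kappa$, and use the same key observation that any generating cycle meeting $G-C$ intersects $C$ in paths of length at most $\kappa-2$ whose endvertices lie in $\partial_G C$, so that these pieces lie in $C[\partial_G C, \lfloor\frac{\kappa-2}{2}\rfloor]$. The endgame, however, is genuinely different. The paper keeps the \emph{union} $H$ of these pieces and argues by contradiction: assuming the two boundary vertices lie in different components of $H$, it considers the edge cut $F$ around one component and plays the parity of $|F\cap E(P)|$ against the fact that cycle-space elements meet finite cuts evenly; making $F$ finite there invokes local finiteness of $G$, an assumption not present in the statement of the lemma (harmless for the paper, since all its applications are to locally finite graphs). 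You instead restrict the $\mathbb{F}_2$-identity $E(D)=\sum_i E(O_i)$ to $E(C)$, discard the summands coming from cycles contained in $C$ (they have even degree at every vertex), and read off directly that the remaining symmetric difference $S\subseteq E(W)$ has odd degree exactly at $a$ and $b$, hence contains an $a$--$b$ path inside $W$. This is more elementary, avoids the cut-and-contradiction step entirely, and proves the lemma for arbitrary graphs $G$ exactly as stated; you also spell out the reduction from connectivity of $W$ to connecting boundary vertices, which the paper dismisses as clear. One small remark: your separate treatment of $\kappa\le 2$ is unnecessary, since the existence of two distinct vertices of $\partial_G C$ already produces the cycle $D$, which forces $\kappa\ge 3$.
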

	
	Note that if $G$ is connected, then every component of $G-Y$ attaches to $Y$. We remark that $C[\partial_G C, \lfloor \frac{\kappa-2}{2}\rfloor] = C[B_G(\partial_G C, \lfloor \frac{\kappa-2}{2}\rfloor) \cap V(C)]$.
	
	\begin{proof}
		Clearly, it suffices to show for every two vertices $v_0, v_1 \in \partial_G C$ that there exists a $v_0$--$v_1$ path in~$C[\partial_G C, \lfloor\frac{\kappa-2}{2}\rfloor]$.
		So let $v_0,v_1 \in \partial_G C$ be given, and let~$u_0$ and~$u_1$ be vertices of~$Y$ which are adjacent to~$v_0$ and~$v_1$, respectively. Since~$Y$ is connected, there exists a $u_1$--$u_0$ path $Q$ in~$Y$. Let $P$ be a $v_0$--$v_1$ path in~$C$. Then $D := v_0Pv_1u_1Qu_0v_0$ is a cycle in~$G$.
		By the assumption that the cycle space of~$G$ is generated by cycles of bounded length, we can write~$D$ as a finite sum of cycles $D_1,\ldots, D_n$ in $G$ of length at most~$\kappa$, i.e.\ 
		\[
		D=\sum_{D_i \in \cD} D_i
		\]
		where $\cD := \{D_1, \dots, D_n\}$. 
		Let $\cD' \sub \{D_1,\ldots,D_n\}$ consist of those $D_i$ that do not lie completely in~$C$, i.e.\ that contain a vertex of $G - C$. Note that $D_i \cap C \subseteq C[\partial_G C, \lfloor\frac{\kappa-2}{2}\rfloor]$ for all $D_i \in \cD'$ since $D_i$ has length at most~$\kappa$ and meets $G-C$.
		Let 
		\[
		H := \left(\bigcup_{D_i \in \cD'} D_i\right) \cap C \subseteq C\left[\partial_G C, \left\lfloor\frac{\kappa-2}{2}\right\rfloor\right] \subseteq C
		\]
		be the subgraph of $C$ consisting of all vertices and edges in~$C$ that lie on cycles from~$\cD'$. 
		Note that $v_0, v_1 \in V(H)$ since $v_0u_0, v_1u_1 \in E(D)$. We claim that $v_0$ and $v_1$ lie in the same component of $H$, which clearly yields that there is a $v_0$--$v_1$ path in~$C[\partial_G C, \lfloor\frac{\kappa-2}{2}\rfloor]$, and hence concludes the proof.
		So suppose for a contradiction that~$v_0$ and~$v_1$ lie in distinct components~$H_0$, $H_1$ of~$H$.
		Then the set~$F$ of edges in $G$ between $H_0$ and $G-H_0$ is a cut in $G$  that separates $H_0$ and $H_1$; in particular,~$F$ is finite since $G$ is locally finite and because $H_0 \subseteq H$ is finite as $\bigcup_{D_i \in \cD'} D_i \supseteq H_0$ is a finite union of finite cycles.
		Since $P$ connects a vertex from $H_0$ with a vertex outside of~$H_0$, the cut $F$ must contain an edge $f$ from $P \subseteq C$. Then $f$ cannot lie in $\sum_{D_i \in \cD'} D_i \subseteq \bigcup_{D_i \in \cD'} D_i$, since $f \in E(C)$ but $f \notin E(H)$.
		Hence, as $f \in E(P) \subseteq E(D)$, it lies in 
		\begin{equation} \label{eq:NEW}
			H' := D+\sum_{D_i \in \cD'} D_i = \sum_{D_i \in \cD} D_i + \sum_{D_i \in \cD'} D_i = \sum_{D_i \in \cD\setminus \cD'} D_i \subseteq C,
		\end{equation}
		where for the last inclusion we used that $D_i \subseteq C$ for all $D_i \in \cD\setminus \cD'$ by the choice of $\cD'$. In particular, the same argument also yields that $E(P) \cap F \subseteq E(H')$.
		
		As $H'$ is a finite sum of cycles in $G$, it is an element of the cycle space of $G$. Thus, $H'$ meets the finite cut $F$ in an even number of edges. As $P$ is a finite path from $v_0 \in V(H_0)$ to $v_1 \in V(H_1) \subseteq V(G-H_0)$, it meets the finite cut $F$ in an odd number of edges. Combining these two facts with $E(P) \cap F \subseteq E(H')$ yields that $H'$ contains an edge $f' \neq f$ from $F$ which does not lie on~$P$.
		Since $H' \subseteq C$, the edge $f'$ must lie in~$C$. But since $f'$ is not an edge of $P = D \cap C$, it is not an edge of $D$ either. 
		Hence, as $f' \in E(H')$ and by \eqref{eq:NEW}, $f'$ is an edge of $\sum_{D_i \in \cD'} D_i$, and thus an edge of $\bigcup_{D_i \in \cD'} D_i$. Since $f'$ is also an edge of $C$, it lies in~$H$, which is a contradiction to the choice of~$F$.
	\end{proof}

	\begin{lem} \label{lem:DivDoubleRayDoesNotCoverEverything}
		Let $R = \dots r_{-1}r_0r_1 \dots$ be a diverging double ray in an end $\eps$ of a locally finite graph~$G$. Then for every $K, n \in \N$ some component of $G - B_G(R, K)$ attaches to $B_G(R_{\leq -n}, K)$ and $B_G(R_{\geq n}, K)$.
	\end{lem}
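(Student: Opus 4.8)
The plan is to exhibit a single path of $G$ whose relevant portion lies in one component $C$ of $G-B_G(R,K)$ and witnesses that $C$ attaches to both balls. Fix $K,n\in\N$. Since $R$ diverges and its disjoint tails $R_{\ge m}$ and $R_{\le -m}$ satisfy $d_G(R_{\ge m},R_{\le -m})\to\infty$ as $m\to\infty$, we may fix some $m\ge n$ with $d_G(R_{\ge m},R_{\le -m})>2K+2$. Then $B_G(R_{\ge m},K)$ and $B_G(R_{\le -m},K)$ are disjoint and no edge of $G$ joins them. Writing $R=R_{\ge m}\cup r_{-m+1}Rr_{m-1}\cup R_{\le -m}$, we obtain
\[
B_G(R,K)=B_G(R_{\ge m},K)\cup U\cup B_G(R_{\le -m},K),\qquad U:=B_G(r_{-m+1}Rr_{m-1},K),
\]
and $U$ is finite since $G$ is locally finite.

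Next I would invoke that both tails of $R$ lie in $\eps$. Then $R_{\ge m}$ and $R_{\le -m}$ are equivalent rays, so, $U$ being finite, they have tails in a common component $D$ of $G-U$; say $R_{\ge m'}\subseteq D$ and $R_{\le -m''}\subseteq D$ with $m',m''\ge m$. Since $D$ is connected, pick an $r_{m'}$--$r_{-m''}$ path $P=p_0\dots p_k$ inside $D$. Then $P$ is disjoint from $U$, so every vertex of $P$ lying in $B_G(R,K)$ lies in $B_G(R_{\ge m},K)$ or in $B_G(R_{\le -m},K)$; moreover $p_0=r_{m'}\in R_{\ge m}\subseteq B_G(R_{\ge m},K)$ and $p_k=r_{-m''}\in R_{\le -m}\subseteq B_G(R_{\le -m},K)$.

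Finally I would run a switching argument along $P$. Consider the maximal subpaths of $P$ all of whose vertices lie in $B_G(R,K)$; since consecutive vertices of $P$ are adjacent in $G$ and no edge joins $B_G(R_{\ge m},K)$ to $B_G(R_{\le -m},K)$, each such subpath lies entirely in one of these two balls. The first of them contains $p_0$, hence lies in $B_G(R_{\ge m},K)$; let $Q'$ be the last one lying in $B_G(R_{\ge m},K)$. The maximal subpath $Q$ immediately following $Q'$ exists (since the maximal subpath containing $p_k$ comes later) and, being a later maximal subpath than $Q'$, does not lie in $B_G(R_{\ge m},K)$, hence lies in $B_G(R_{\le -m},K)$. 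Let $p_s$ be the last vertex of $Q'$ and $p_t$ the first vertex of $Q$; then $p_{s+1},\dots,p_{t-1}$ all lie outside $B_G(R,K)$, and this segment is non-empty because $p_s$ and $p_t$ are non-adjacent. Hence $p_{s+1},\dots,p_{t-1}$ lie in a single component $C$ of $G-B_G(R,K)$, and $C$ has the neighbour $p_s\in B_G(R_{\ge m},K)\subseteq B_G(R_{\ge n},K)$ (via the edge $p_sp_{s+1}$) and the neighbour $p_t\in B_G(R_{\le -m},K)\subseteq B_G(R_{\le -n},K)$ (via $p_{t-1}p_t$), so $C$ is as desired.

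The only use of divergence is in choosing $m$ so that the two tail-balls become disjoint and mutually non-adjacent, which is exactly what forces each maximal subpath of $P$ inside $B_G(R,K)$ to lie in a single tail-ball and hence forces a switch; the end hypothesis is used only to route $P$ through $G-U$. I do not expect a genuine obstacle here, the only care needed being the bookkeeping of the maximal subpaths of $P$ inside $B_G(R,K)$.
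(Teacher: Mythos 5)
Your proof is correct and takes essentially the same route as the paper: divergence is used to make the two tail-balls $B_G(R_{\geq m},K)$ and $B_G(R_{\leq -m},K)$ disjoint and non-adjacent, the end hypothesis is used to route an $R_{\geq m}$--$R_{\leq -m}$ path avoiding the finite middle ball, and from that path one extracts a non-empty segment outside $B_G(R,K)$ whose endpoints witness attachment to both tail-balls. The only cosmetic difference is that you obtain the avoiding path via a common component of $G-U$ and a ``last switch'' bookkeeping, while the paper picks a path avoiding the finite ball directly and uses first-entry/last-exit indices; the two arguments are interchangeable.
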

	
	\begin{proof}
		Since~$R$ diverges, there exists some $m \in \N$ such that $R_{\leq -m}$ and $R_{\geq m}$ are at least $2K+2$ apart in~$G$. Set $N := \max\{n, m\}$.
		As~$G$ is locally finite, the set $B_G(r_{-N}Rr_N, K)$ is finite. Hence, as~$R_{\leq -N}$ and~$R_{\geq N}$ are both $\eps$-rays and thus equivalent, there exists an $R_{\leq -N}$--$R_{\geq N}$ path $P = p_0 \dots p_\ell$ in~$G$ that avoids $B_G(r_{-N}Rr_N,K)$. 
		Since $p_0 \in V(R_{\leq -N})$ and $p_\ell \in V(R_{\geq N})$, there is a first vertex~$p_i$ of~$P$ that is contained in $B_G(R_{\geq N}, K)$, and a last vertex~$p_j$ with $j \leq i$ that is still contained in $B_G(R_{\leq -N}, K)$.
		
		We claim that $i \geq j+2$, which then implies that $P' := p_{j+1}Pp_{i-1}$ is non-empty. As~$P$ avoids $B_G(r_{-N}Rr_N, K)$ and by the choice of~$p_i$ and~$p_j$, it then follows that~$P'$ is contained in a component of $G-B_G(R,K)$, which then attaches to $B_G(R_{\leq -N}, K)$ and $B_G(R_{\geq N}, K)$ via $p_{j}p_{j+1}$ and $p_{i-1}p_i$, respectively, and which is thus as desired.
		
		So suppose for a contradiction that $i - j \leq 1$. Then $d_G(p_j, p_i) \leq i - j \leq 1$, and thus
		\[
		d_G(R_{\leq -N}, R_{\geq N}) \leq d_G(R_{\leq -N}, p_{j}) + d_G(p_j, p_i) + d_G(p_i, R_{\geq N}) \leq K + 1 + K = 2K+1,
		\]
		which is a contradiction since $d_G(R_{\leq -N}, R_{\geq N}) \geq 2K+2$ by the choice of $N$.
	\end{proof}

	\begin{lem} \label{lem:LongCompsExist}
		Let $R$ be a diverging double ray in a thick end of a locally finite graph $G$ whose cycle space is generated by cycles of bounded length. Then for every $K \in \N$ some component of $G-B_G(R,K)$ is long.
	\end{lem}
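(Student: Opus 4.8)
The plan is to combine the two preceding lemmas. For each $n$, \cref{lem:DivDoubleRayDoesNotCoverEverything} hands us a component of $G-B_G(R,K)$ that ``crosses'' from far left to far right along $R$; and \cref{lem:kappa/2NhoodIsConnected} — this is the one point where the bounded cycle space is used — lets us realise each such crossing by a path confined to a bounded tube around $R$. Since $R$ diverges, any crossing path inside that tube must pass through a fixed finite set, so a pigeonhole argument forces one and the same component to be the crossing component for arbitrarily large $n$, which is exactly what it means to be long.

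In detail, I would fix $\kappa$ with the cycle space of $G$ generated by cycles of length $\le\kappa$, set $d:=\lfloor\frac{\kappa-2}{2}\rfloor$ and $D:=K+d+1$, and first observe that $Y:=G[R,K]$ is connected: a shortest $v$--$R$ path for $v\in B_G(R,K)$ has all its vertices within distance $K$ of $R$, so it lies in $G[R,K]$, and $R$ itself is connected. Hence \cref{lem:kappa/2NhoodIsConnected} applies and yields, for every component $C$ of $G-B_G(R,K)$ attaching to $Y$, that $C[\partial_G C,d]$ is connected; and since $\partial_G C\subseteq B_G(R,K+1)$ we get the key containment $C[\partial_G C,d]\subseteq G[R,D]$. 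Next, using that $R$ diverges, I would fix $m$ with $d_G(R_{\le -m},R_{\ge m})>2D+1$; then $B_G(R_{\le -m},D)$ and $B_G(R_{\ge m},D)$ are disjoint and non-adjacent and, together with the finite set $B:=B_G(r_{-m+1}Rr_{m-1},D)$, cover $V(G[R,D])$. Consequently every path inside $G[R,D]$ joining $B_G(R_{\le -m},D)$ to $B_G(R_{\ge m},D)$ meets $B$.

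With this in place, for each $n$ I would take, via \cref{lem:DivDoubleRayDoesNotCoverEverything}, a component $C_n$ of $G-B_G(R,K)$ attaching to both $B_G(R_{\le -n},K)$ and $B_G(R_{\ge n},K)$, and pick $v_n^-,v_n^+\in\partial_G C_n$ with a neighbour in $B_G(R_{\le -n},K)$ and in $B_G(R_{\ge n},K)$ respectively. As $C_n$ attaches to $Y$, the connected graph $C_n[\partial_G C_n,d]\subseteq G[R,D]$ contains a $v_n^-$--$v_n^+$ path $Q_n$, and for $n\ge m$ the endvertices of $Q_n$ lie in $B_G(R_{\le -m},D)$ and $B_G(R_{\ge m},D)$, so $Q_n$ meets $B$. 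Now either some component of $G-B_G(R,K)$ equals $C_n$ for infinitely many $n$ — in which case, since $B_G(R_{\le -n},K)\subseteq B_G(R_{\le -i},K)$ and $B_G(R_{\ge n},K)\subseteq B_G(R_{\ge i},K)$ for $n\ge i$, it attaches to $B_G(R_{\le -i},K)$ and $B_G(R_{\ge i},K)$ for every $i$, hence is long and we are done — or the $C_n$ with $n\ge m$ contain infinitely many pairwise distinct, hence vertex-disjoint, components, so the corresponding paths $Q_n$ are pairwise disjoint yet all meet the finite set $B$, which is absurd.

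The step I expect to be the crux is the confinement of the crossing paths to the tube $G[R,D]$: a crossing path that lives merely in a component of $G-B_G(R,K)$ could a priori wander arbitrarily far from $R$ and dodge any prescribed finite set, so \cref{lem:kappa/2NhoodIsConnected}, and with it the bounded-cycle-space hypothesis, is genuinely needed here. Once the paths are trapped in the tube, the divergence of $R$ and a pigeonhole count are routine.
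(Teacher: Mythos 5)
Your proof is correct, and while it runs on the same two ingredients as the paper — \cref{lem:kappa/2NhoodIsConnected} applied to the connected subgraph $G[R,K]$, and \cref{lem:DivDoubleRayDoesNotCoverEverything} to produce a crossing component at each scale — the way you combine them is genuinely different. The paper argues by contradiction: assuming no component of $G-B_G(R,K)$ is long, it uses local finiteness to see that only finitely many components attach to a central ball $B_G(r_{-N}Rr_N,K)$, extracts from ``not long'' a uniform $m$ confining the neighbourhoods of these components to one side of $R$, and then applies \cref{lem:DivDoubleRayDoesNotCoverEverything} to get a crossing component avoiding the central ball; the connectivity of $C[\partial_G C,\lfloor\frac{\kappa-2}{2}\rfloor]$ then produces a path of length at most $\kappa-1$ joining the two sides of its boundary, hence a connection of length at most $2K+\kappa+1$ between $R_{-N}$ and $R_N$, contradicting divergence. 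You instead argue directly: the same connectivity statement traps, for every $n$, a crossing path $Q_n$ of the component $C_n$ inside the tube $G[R,D]$ with $D=K+\lfloor\frac{\kappa-2}{2}\rfloor+1$; divergence makes the two ends of the tube disjoint and non-adjacent, so each $Q_n$ with $n\ge m$ must pass through the fixed finite ball $B$, and since distinct components yield disjoint paths, only finitely many distinct $C_n$ can occur — so a single component crosses at all scales and is long. Both proofs use the cycle-space hypothesis only via \cref{lem:kappa/2NhoodIsConnected} and exploit local finiteness plus divergence near the centre of $R$; your pigeonhole over scales replaces the paper's metric contradiction and avoids having to argue anything about non-long components, which makes the final step a little cleaner, at the cost of tracking one crossing path per scale rather than a single bad component. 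All the individual steps you flag (connectedness of $G[R,K]$, the containment $C[\partial_G C,\lfloor\frac{\kappa-2}{2}\rfloor]\subseteq G[R,D]$, and the separation of the two ends of the tube) check out.
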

	
	\begin{proof}
		Let $\kappa \in \N$ be such that the cycle space of $G$ is generated by cycles of length at most $\kappa$.
		Suppose for a contradiction that no component of $G-B_G(R, K)$ is long.
		Since $R$ diverges, there exists some $N \in \N$ such that $d_G(R_{\leq -N}, R_{\geq N}) \geq 2K + \kappa + 2$.
		As $G$ is locally finite, $B_G(r_{-N}Rr_N, K)$ is finite. Hence, the set $\cC$ of components of $G-B_G(R, K)$ which attach to $B_G(r_{-N}Rr_N, K)$ is finite. 
		Since no $C \in \cC$ is long by assumption and because $\cC$ is finite, there exists some $m \in \N$ such that $N_G(C) \subseteq B_G(R_{\geq -m}, K)$ or $N_G(C) \subseteq B_G(R_{\leq m}, K)$ for all $C \in \cC$.
		
		By \cref{lem:DivDoubleRayDoesNotCoverEverything}, some component $C$ of $G - B_G(R, K)$ attaches to $B_G(R_{\leq -m-1}, K)$ and $B_G(R_{\geq m+1}, K)$; in particular, $C \notin \cC$ by the choice of $m$.
		Let $U^-, U^+ \subseteq \partial_G C$ be the set of vertices in $C$ that send an edge to $B_G(R_{\leq -N},K)$ or to $B_G(R_{\geq N}, K)$, respectively. Then $U^{-} \cup U^+ = \partial_G C$ because $C \notin \cC$. Since $C[\partial_G C, \lfloor\frac{\kappa-2}{2}\rfloor]$ is connected by \cref{lem:kappa/2NhoodIsConnected}, this implies that $B_C(U^-, \lfloor\frac{\kappa-2}{2}\rfloor)$ and $B_C(U^+, \lfloor\frac{\kappa-2}{2}\rfloor)$ either intersect non-emptily or there is an edge between them. Hence, there are vertices $u^- \in U^-$ and $u^+ \in U^+$ of distance at most $\lfloor\frac{\kappa-2}{2}\rfloor + 1 + \lfloor\frac{\kappa-2}{2}\rfloor$ from each other. Thus,
		\begin{equation*}
			\begin{aligned}
				d_G(R_{-N}, R_N) &\leq d_G(R_{-N}, u^-) + d_G(u^-, u^+) + d_G(u^+, R_N)\\
				&\leq (K+1) + \left(\left\lfloor\frac{\kappa-2}{2}\right\rfloor + 1 + \left\lfloor\frac{\kappa-2}{2}\right\rfloor\right) + (K+1)\\
				&\leq 2K + \kappa + 1
			\end{aligned}
		\end{equation*}
		which is a contradiction since $d_G(R_{-N}, R_N) \geq 2K + \kappa + 2$ by the choice of $N$.
	\end{proof}

	\begin{lem} \label{lem:HalfGrid}
		Let $\eps$ be an end of a locally finite graph~$G$. Suppose there are $M_0 < M_1 < \ldots \in \N$ and double $\eps$-rays $S^0, S^1, \dots$ such that $S^0$ diverges, such that $S^i \subseteq G[S^0, M_i] - B_G(S^0, M_{i-1})$ for all $i \in \N_{\geq 1}$ and such that there are infinitely many disjoint $S^0_{\geq 0}$--$S^i_{\geq 0}$ paths and infinitely many disjoint $S^0_{\leq 0}$--$S^i_{\leq 0}$ paths in $G[S^0, M_i]$.
		Then either $K_{\aleph_0} \prec_{UF}^{\eps} G$, or there are $0 = i_0 < i_1 < \ldots \in \N$ and an escaping subdivision $H$ of the hexagonal half-grid whose vertical double rays are the~$S^{i_j}$.
	\end{lem}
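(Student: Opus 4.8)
The plan is to pick the subsequence $0=i_0<i_1<\cdots$ and build the escaping subdivision recursively, running the same construction on both ``halves'' $S^\bullet_{\ge 0}$ and $S^\bullet_{\le 0}$ in parallel; below I describe the ``upper'' half, the understanding being that everything is done verbatim on the ``lower'' half, so that each level receives infinitely many connecting paths on \emph{each} side -- which is what \cref{prop:HexGridAfterDeletingPaths} then needs in order to clean the result up into an honest subdivision of the hexagonal \emph{half}-grid. Step $1$ is free, since \ref{itm:DefEscHG:P_ij} imposes no ball-avoidance on the $P_{1j}$: take $i_1:=1$ and let the $P_{1j}$ be the disjoint $S^0$--$S^1$ paths from the hypothesis.

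For the recursive step, suppose $i_0,\dots,i_{k-1}$ and a partial escaping subdivision $H_{k-1}$ with vertical double rays $S^{i_0},\dots,S^{i_{k-1}}$ have been fixed, together with an infinite index set $\mc I$ (all members $>i_{k-1}$, and large enough that the required inequalities among the $M_n$ hold) such that for each $i\in\mc I$ there are infinitely many pairwise disjoint $S^0$--$S^i$ paths whose part outside $B_G(S^0,M_{i_{k-2}}+k)$ avoids $S^{i_1}\cup\dots\cup S^{i_{k-2}}$ (initially $\mc I=\N$ works, as $S^{i_1},\dots,S^{i_{k-2}}\subseteq B_G(S^0,M_{i_{k-2}})$ by hypothesis). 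Ask whether, for \emph{some} $i\in\mc I$, infinitely many of these paths have their part outside $B_G(S^0,M_{i_{k-2}}+k)$ meeting $S^{i_{k-1}}$. If \textbf{yes}, put $i_k:=i$ and truncate those paths past their last meeting with $S^{i_{k-1}}$ to get infinitely many disjoint $S^{i_{k-1}}$--$S^i$ paths in $G[S^0,M_{i_k}]-B_G(S^0,M_{i_{k-2}}+k)$; these (after a diagonal cleanup making them disjoint from one another and from the finitely many relevant portions of the other levels) serve as the $P_{kj}$, and since $S^{i_{k-1}},S^{i_k}$ lie far from $S^0$ the graph $H_k$ is again a partial escaping subdivision. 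The remaining $S^0$--$S^i$ paths still give, outside $B_G(S^0,M_{i_{k-1}}+(k+1))$, paths avoiding $S^{i_1}\cup\dots\cup S^{i_{k-1}}$ -- a short Menger argument inside the shell between those two balls does the upgrade from ``avoid the double ray $S^{i_{k-1}}$'' to the full inductive hypothesis -- so the recursion continues. If \textbf{no} for every $i\in\mc I$, then (shrinking $\mc I$ and using the symmetry of the two halves) for every $i\in\mc I$ infinitely many disjoint $S^0$--$S^i$ paths have their outer parts avoiding $S^{i_{k-1}}$, hence -- as $S^{i_{k-1}}$ lies outside $B_G(S^0,M_{i_{k-2}}+k)$ -- avoiding $S^{i_1}\cup\dots\cup S^{i_{k-1}}$ there; we then record $S^{i_{k-1}}$ as ``set aside'', drop it from the half-grid, and re-enter the recursion with this better $\mc I$.

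If the ``yes'' case recurs infinitely often, the nested $H_1\subseteq H_2\subseteq\cdots$ union to the desired escaping subdivision $H$ of the hexagonal half-grid, all of whose rays lie in $\eps$ since they come from $\eps$-rays. Otherwise only the ``no'' case occurs from some point on, the set-aside double rays $T_1,T_2,\dots$ (each some $S^i$) accumulate, and one obtains, for every $m$ and every $i$ in a fixed infinite index set, infinitely many disjoint $S^0$--$S^i$ paths on each side whose parts far from $S^0$ avoid $T_1\cup\dots\cup T_m$. From this I would build an ultra fat model of $K_{\aleph_0}$: choose indices $j_1<j_2<\cdots$ among those of the $T_m$, growing so fast that $d_G(S^{j_\ell},S^{j_{\ell'}})\ge M_{j_{\ell'}-1}-M_{j_\ell}\ge\min\{\ell,\ell'\}$ (by the hypothesised containments $S^i\subseteq G[S^0,M_i]-B_G(S^0,M_{i-1})$, which also make the $S^{j_\ell}$ pairwise disjoint), set $V_\ell:=S^{j_\ell}$, and for each unordered pair $\{a,b\}$, processed one at a time, let $E_{ab}$ be the concatenation of an $S^0$--$S^{j_a}$ path, a sub-arc of $S^0$, and an $S^0$--$S^{j_b}$ path, where the two radial paths are chosen with their parts far from $S^0$ avoiding $S^{j_1}\cup\dots\cup S^{j_{\max\{a,b\}}}$ (making $E_{ab}$ disjoint from every $V_c$ with $c\notin\{a,b\}$, using also that $S^0$ is not a branch set and that near $S^0$ a radial path cannot reach the far ray $S^{j_c}$), internally disjoint from the finitely many previously chosen $E$'s, and with all endvertices on $S^0$ pairwise far apart \emph{along} $S^0$ and all endvertices on any single $S^{j_\ell}$ pairwise far apart in $G$ -- the latter by the standard fact that an infinite vertex set in a locally finite graph has an infinite subset on which $d_G$ exceeds any prescribed function. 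Since $S^0$ diverges, ``far apart along $S^0$'' becomes ``far apart in $G$'', and the three distance inequalities defining an ultra fat minor follow, so $K_{\aleph_0}\prec_{UF}^\eps G$.

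The main obstacle is exactly this bookkeeping: organising the recursion so that perpetual failure to extend the half-grid genuinely yields the ``avoid $T_1\cup\dots\cup T_m$ for every $m$'' statement rather than stalling, and then choosing the $E_{ab}$ and spacing their endvertices so that all three ultra-fat inequalities hold at once while the $E_{ab}$ stay internally disjoint and disjoint from the non-incident branch sets. The other ingredients -- the short Menger argument propagating the inductive hypothesis through the ``yes'' case, the routine verification that each $H_k$ is escaping, and the parallel handling of the two halves -- are straightforward.
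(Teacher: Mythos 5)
Your high-level dichotomy (chain consecutive levels into a half-grid, or turn persistent failure into an ultra fat $K_{\aleph_0}$) matches the paper's, but two steps of your implementation do not hold as stated, and both are exactly where the paper's proof introduces ideas your proposal lacks. First, in the $K_{\aleph_0}$ branch you only secure that the radial $S^0$--$S^{j_a}$ pieces of a branch path \emph{avoid} the non-incident rays $S^{j_c}$; ultra fatness demands $d_G(E_{ab},V_c)\ge\min\{a,b,c\}$, and for $c<\max\{a,b\}$ the radial piece must cross the annulus containing $S^{j_c}$, where disjointness gives nothing. This cannot be repaired by your one-pair-at-a-time spacing, since each $E_{ab}$ faces infinitely many such constraints against branch sets fixed in advance. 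The paper resolves this by taking each connecting path to be a \emph{shortest} path to $S^0$ and then rerouting it to the tail $T'_{k_{ij}}$ with $k_{ij}$ \emph{maximal} such that $d_G(Q_{ij},T'_{k_{ij}})<k_{ij}$; the resulting paths $Q'_{ij}$ satisfy the quantitative bound $d_G(Q'_{ij},T'_k)\ge k$ for all other $k$, which is what makes both horns of the dichotomy (organised there via the Star--Comb Lemma on an auxiliary graph whose star case has a fixed hub ray carrying the branch paths) ultra fat, respectively escaping. Your proposal contains no mechanism producing such distance bounds.

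Second, in the ``yes'' case you claim that truncating an $S^0$--$S^i$ path at its last meeting with $S^{i_{k-1}}$ yields a path contained in $G[S^0,M_{i_k}]-B_G(S^0,M_{i_{k-2}}+k)$. For the arbitrary connecting paths supplied by the hypothesis this is unjustified: after leaving $S^{i_{k-1}}$ the path may dip back inside $B_G(S^0,M_{i_{k-2}}+k)$, where it can even meet the earlier vertical rays (your inductive hypothesis controls the paths only \emph{outside} that ball), so neither the subdivision property nor condition \ref{itm:DefEscHG:P_ij} of escaping subdivisions follows. The paper avoids this because its paths are shortest paths to $S^0$, so once they enter a ball around $S^0$ they stay inside it, which (after the rerouting) yields the required annulus containment. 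Two smaller points: ``far apart along $S^0$ implies far apart in $G$'' is not a consequence of divergence (divergence only compares the two opposite tails); what you actually need there is local finiteness plus recursive choice. And the bookkeeping of your recursion (dropping the last ray after a ``no'' and re-entering) is not shown to terminate in one of the two clean outcomes; the paper's auxiliary-graph/Star--Comb formulation is the device that makes this dichotomy precise.
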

	
	\begin{proof}
		By passing to a subsequence of the $S^i$'s if necessary, we may assume that $M_i > M_{i-1}+2i$ and that
		\begin{enumerate}[label=(\alph*)]
			\item \label{itm:HG:a} $S^i \subseteq G[S^0, M_i] - B_G(S^0, M_{i-1} + 2i)$ for all $i \in \N$.
		\end{enumerate}
		
		Set $T'_0 := S^0_{\geq 0}$ and $T''_0 := S^0_{\leq 0}$. By assumption and \cref{lem:FurtherPropertiesOfEscapingHGs}, every~$S^i$ has disjoint tails~$T'_i$ and~$T''_i$ that are contained in $G[T'_0, M_i]$ and in $G[T''_0, M_i]$, respectively.
		For each vertex $t$ in $T'_i$ we choose a shortest $t$--$T'_0$ path in $G$, which then has length $\leq M_i$ and lies in $G[T'_0, M_i]$. Then infinitely many of these paths are $T'_i$--$T'_0$ paths (i.e.\ they only have their first vertex on $T'_i$), of which infinitely many are pairwise disjoint since they have length $\leq M_i$ and because~$G$ is locally finite; let us denote these paths by~$Q_{ij}$.
		
		For every $Q_{ij}$ let $k_{ij} \neq i$ be maximal such that $d_G(Q_{ij}, T'_{k_{ij}}) < k_{ij}$; if no such $k_{ij}$ exists, we set $k_{ij} := 0$. Note that $k_{ij} < i$ since $d_G(Q_{ij}, T'_k) \geq d_G(Q_{ij}, S^k) \geq k$ for all $k > i$ by \ref{itm:HG:a} and because $Q_{ij} \subseteq G[T'_0, M_i]$ for all $i, j \in \N$.
		We now obtain $T'_i$--$T'_{k_{ij}}$ paths~$Q'_{ij}$ by concatenating a suitable (initial) subpath of~$Q_{ij}$ with a shortest $Q_{ij}$--$T'_{k_{ij}}$ path. In particular, since $G$ is locally finite, we may assume that the $Q'_{ij}$'s for every (arbitrary but fixed) $i \in \N_{\geq 1}$ are pairwise disjoint, by passing to a subfamily of the $Q'_{ij}$'s. 
		By the definition of the~$k_{ij}$ and by \ref{itm:HG:a}, it follows that
		\begin{enumerate}[label=(\alph*)]
			\setcounter{enumi}{1}
			\item \label{itm:HG:c} $d_G(Q'_{ij}, T'_k) \geq k$ for all $k, j \in \N$ and $i \in \N_{\geq 1}$ with $k \notin \{i, k_{ij}\}$.
		\end{enumerate}
		Moreover, since $Q_{ij}$ is a shortest path between its first vertex and $T'_0$, it follows that once $Q_{ij}$ meets $G[T'_0, M_{k_{ij}-1}+i]$ it will stay in there. By the definition of $Q'_{ij}$ and $k_{ij}$ and by \ref{itm:HG:a}, this implies that
		\begin{enumerate}[label=(\alph*)]
			\setcounter{enumi}{2}
			\item \label{itm:HG:b} $Q'_{ij} \subseteq G[T'_0, M_i] - B_G(T'_0, M_{k_{ij}-1}+ k_{ij})$ for all $i \in \N_{\geq 1}$ and $j \in \N$.
		\end{enumerate}
		
		Let $X$ be the auxiliary graph on the vertex set $\{T'_i \mid i \in \N\}$ where $T'_i$ and $T'_{i'}$ are connected by an edge in~$X$ for $i' < i$ if and only if infinitely many of the $Q'_{ij}$'s have one endvertex on $T'_{i'}$. Clearly, every $T'_i$ is adjacent to at least one $T'_{i'}$ with $i' < i$, and hence $X$ is connected. Thus, since $X$ is infinite, it either has a vertex of infinite degree or it contains a ray by \cref{lem:StarComb}.
		
		Let us first assume that there is some $\ell \in \N$ and an infinite subset $I = \{i_0, i_1, \dots\} \subseteq \N$ with $i_0 < i_1 < \ldots$ such that $T'_\ell$ is adjacent in $X$ to all $T'_i$ with $i \in I$. Then the $T'_i$'s for $i \in I$ form the branch sets $V_{n} := V(T'_{i_n})$ of an ultra fat model of $K_{\aleph_0}$ in~$G$. Indeed, we have $d_G(V_n, V_m) \geq \min\{n,m\}$ by \ref{itm:HG:a}, so it remains to find suitable branch paths. 
		Given any enumeration of $\N^2$, we may choose the branch paths~$P_{nm}$ between~$V_n$ and~$V_m$ recursively. Since $G$ is locally finite, and because at step $(n,m)$ we have only chosen finitely many branch paths~$P_{n'm'}$, there exist paths $Q'_{i_nj}$, $Q'_{i_mj'}$ that both end in $T'_{\ell}$ such that the path~$P_{nm}$ consisting of $Q'_{i_nj}, Q'_{i_mj'}$ and a suitable subpath of~$T'_\ell$ is at least $\min\{n,m\}$ apart from all earlier chosen branch paths~$P_{n'm'}$. Then by construction and \ref{itm:HG:a} and~\ref{itm:HG:c} it follows that also $d_G(P_{nm}, V_k) \geq k$ for all $k \notin \{n,m\}$, and hence the model of $K_{\aleph_0}$ is ultra fat. Moreover, since its branch sets are the vertex sets of the $\eps$-rays $T'_{i_n}$, we find $K_{\aleph_0} \prec_{UF}^\eps G$.
		
		Hence, we may assume that there are $0 = i_0 < i_1 < \ldots \in \N$ such that every $T'_{i_n}$ is adjacent in $X$ to~$T'_{i_{n-1}}$. 
		Then there are, for every $n \in \N$, infinitely many $Q'_{i_nj}$ that end in~$T'_{i_{n-1}}$. 
		We reindex these $Q'_{i_nj}$'s by $\N_{\geq 1} \times \N$, and the $S^{i_n}$'s by~$\N$.
		
		We now apply the same argument to the tails $T''_i$ of the (reindexed) $S^i$. This either yields $K_{\aleph_0} \prec_{UF}^\eps G$, or we find indices $0 = i_0 < i_1 < \ldots \in \N$ such that, for every $n \in \N$, there are pairwise disjoint $T''_{i_{n-1}}$--$T''_{i_n}$ paths $Q''_{i_nj}$ that satisfy \ref{itm:HG:c} and \ref{itm:HG:b} with $T''_k$ instead of~$T'_k$.
		Now the $S^{i_n}$'s form the vertical double rays of an escaping subdivision of the hexagonal half-grid.
		Indeed, we can choose for every $n \in \N$ infinitely many $T'_{i_{n-1}}$--$T'_{i_n}$ paths~$P'_{i_nj}$ in $\bigcup_{i_{n-1} < k \leq i_n} (T'_k \cup \bigcup_{j \in \N} Q'_{kj})$. 
		We also set $P'_{i_n(-j)} := Q''_{i_nj}$ for all $j \in \N$.
		Then combining \ref{itm:HG:b} of the $Q'_{i_nj}$'s and the $Q''_{i_nj}$'s with \cref{lem:FurtherPropertiesOfEscapingHGs} yields that
		\begin{enumerate}[label=\rm{(\alph*$^\prime$)}]
			\setcounter{enumi}{2}
			\item \label{itm:HG:c2} $P'_{i_nj} \subseteq G[S^0, M_{i_n}] - B_G(S^0, M_{i_n-2}+n)$
		\end{enumerate}
		for every $n \in \N_{\geq 1}$ and all but finitely $j \in \N$.
		Since $G$ is locally finite, we now obtain a subdivision of the hexagonal half-grid with vertical double rays the $S^{i_n}$'s by recursively selecting paths $P_{nj}$ amongst the $P'_{i_nk}$'s to represent the horizontal edges $e_{nj}$ as in the proof of \cref{prop:HexGridAfterDeletingPaths}. 
	\end{proof}
	
	We are now ready to prove \cref{thm:HalfGrid:BoundedCycles:copy}.
	
	\begin{proof}[Proof of \cref{thm:HalfGrid:BoundedCycles:copy}]
		Let $\kappa \in \N$ such that the cycle space of $G$ is generated by cycles of length at most~$\kappa$. Let $N_n$, for $n \in \N$, be such that $d_G(R_{\geq N_n}, R_{\leq -N_n}) > n$, which exists since $R$ diverges.
		
		By \cref{lem:HalfGrid}, it suffices to show that there are $M_0 < M_1 < \ldots \in \N$ and double $\eps$-rays $R := S^0, S^1, \dots$ such that $S^0$ is diverging, such that $S^i \subseteq G[S^0, M_i] - B_G(S^0, M_{i-1})$ for all $i \in \N_{\geq 1}$ and such that there are infinitely many disjoint $S^0_{\geq 0}$--$S^i_{\geq 0}$ paths and infinitely many disjoint $S^0_{\leq 0}$--$S^i_{\leq 0}$ paths in $G[S^0, M_i]$. 
		We will prove the assertion with $M_0 := 0$ and $M_i := M_0 + \lfloor \kappa/2 \rfloor$ for all $i > 0$. 
		
		Let $i > 0$ be given.
		By \cref{lem:LongCompsExist}, there exists a long component~$C_i$ of $G-B_G(R, M_{i-1})$. For the `in particular' part we note that if we are given some $L \in \N$ and a thick component $C$ of $G-B_G(R, L)$, then we may set $M_0 := L$ instead of $M_0 := 0$ and choose as $C_i$ always a long component of $G-B_G(R, M_{i-1})$ which is contained in~$C$.
		
		Set $U^+ := \partial_G C_i \cap B_G(R_{\geq 0}, M_{i-1}+1)$ and $U^- := \partial_G C_i \cap B_G(R_{\leq 0}, M_{i-1}+1)$. Since~$G$ is locally finite and~$C_i$ is long,~$U^+$ and~$U^-$ are infinite. As $C_i[\partial_G C_i, \lfloor \frac{\kappa-2}{2}\rfloor]$ is connected by \cref{lem:kappa/2NhoodIsConnected}, applying the Star-Comb Lemma (cf.\ \cref{lem:StarComb}) in $C_i[\partial_G C_i, \lfloor \frac{\kappa-2}{2}\rfloor]$ to $U^+$ and $U^-$, respectively, yields two combs $D^+$ and $D^-$. 
		By \cref{lem:FurtherPropertiesOfEscapingHGs}, their spines $S^+$ and $S^-$ are eventually contained in $G[R_{\geq N_{M_i}}, M_i]$ and $G[R_{\leq -N_{M_i}}, M_i]$, respectively, i.e.\ they have tails $T^+, T^-$ such that $T^+ \subseteq G[R_{\geq N_{M_i}}, M_i]$ and $T^- \subseteq G[R_{\leq -N_{M_i}}, M_i]$. In particular, $T^+, T^-$ are disjoint by the choice of $N_{M_i}$, so we can link them by a path in the connected $C_i[\partial_G C_i, \lfloor \frac{\kappa-2}{2}\rfloor]$ to obtain a double ray $S^i \subseteq C_i[\partial_G C_i, \lfloor \frac{\kappa-2}{2}\rfloor]$. Clearly, $S^i$ is as desired. Indeed, the infinitely many $S^0_{\geq 0}$--$S^i_{\geq 0}$ paths can be obtained by extending the paths in $D^+$ from $T^+$ to its teeth by shortest paths to $S^0$, and analogously for~$T^-$.
	\end{proof}

	\section{Full-grid minors}\label{sec:FG}
	
	In this section we prove \cref{thm:AsymptoticFullGrid}, which we restate here for convenience. 
	
	\begin{customthm}{\cref{thm:AsymptoticFullGrid}} \label{thm:AsymptoticFullGrid:copy}
		\emph{Let $\eps$ be a thick end of a \lf, \qt\ graph $G$ whose cycle space is generated by cycles of bounded length. Then either $K_{\aleph_0} \prec_{UF}^\eps G$ or $G$ contains an escaping subdivision of the hexagonal full-grid whose rays all lie in $\eps$.}
	\end{customthm}
	
	In fact, we will prove the following variant of \cref{thm:AsymptoticFullGrid:copy}, which implies \cref{thm:AsymptoticFullGrid:copy}:
	
	\begin{thm} \label{thm:AsymptoticFullGridWeaker}
		Let $G$ be a locally finite, \qt\ graph whose cycle space is generated by cycles of bounded length. 
		If $G$ has a thick end, then either $K_{\aleph_0} \prec_{UF} G$ or $G$ contains an escaping subdivision of the hexagonal full-grid. 
	\end{thm}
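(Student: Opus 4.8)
The plan is to reduce this theorem to the half-grid result by gluing two escaping half-grids along a shared first vertical double ray, as outlined in \cref{subsec:ProofSketch12}. Fix a thick end $\eps$ of $G$; by \cref{thm:CycleSpaceAccessible}, $G$ is accessible. If $K_{\aleph_0}\prec_{UF}^\eps G$ we are done, since this trivially implies $K_{\aleph_0}\prec_{UF}G$, so assume $K_{\aleph_0}\not\prec_{UF}^\eps G$. The crux is to produce a \emph{diverging double $\eps$-ray $R'$ such that, for some $K\in\N$, the graph $G-B_G(R',K)$ has two distinct thick components}. Granting this, I would apply the detailed half-grid theorem \cref{thm:HalfGrid:BoundedCycles:copy} to $R'$ together with each of the two thick components $C\neq D$ of $G-B_G(R',K)$ separately: in each case either the ultra-fat alternative holds and we are finished, or we obtain an escaping subdivision of the hexagonal half-grid with first vertical double ray $R'$ whose remaining vertical double rays lie in $C$, respectively in $D$. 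Since $C$ and $D$ are separated by $B_G(R',K)$, taking the union of these two half-grids yields a subdivision of the hexagonal full-grid; verifying that it is escaping, with the separation of the two sides witnessed by a ball around $R'$, and if necessary cleaning it up via \cref{prop:HexGridAfterDeletingPaths}, is then routine bookkeeping.

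To produce $R'$ I would proceed in two stages. \emph{Stage one:} it suffices to find a \emph{quasi-geodesic} double $\eps$-ray $R'$ for which, for some $K$, $G-B_G(R',K)$ has a thick component together with a merely half-thick component, that is, a component that for every larger radius $L$ contains a component having a neighbour in $B_G(R'_{\geq n},L)$ \emph{or} in $B_G(R'_{\leq -n},L)$ for all $n\in\N$. Indeed, given such $R'$, translating it by an automorphism of $G$ that pushes $R'$ far into the thick component yields a new quasi-geodesic double $\eps$-ray whose two complementary halves each contain a thick component; here quasi-transitivity supplies the automorphism, and the quasi-geodesic hypothesis keeps the translate well-behaved with respect to distance balls.

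\emph{Stage two:} produce such an $R'$. Apply \cref{lem:QuasiGeodesic3StarOfRays} to obtain $c\in\N_{\geq 1}$ and $\eps$-rays $R_1,R_2,R_3$ that pairwise meet in a single common vertex $v$ with $R_1\cup R_2\cup R_3$ being $c$-quasi-geodesic; then $R:=R_1\cup R_2$ is a quasi-geodesic, hence diverging, double $\eps$-ray. Apply \cref{thm:HalfGrid:BoundedCycles:copy} to $R$: either the ultra-fat alternative finishes the proof, or we obtain an escaping subdivision $H$ of the hexagonal half-grid with first vertical double ray $R$, and by the escaping property $H$ lies, outside a finite part, in a single component $C_K$ of $G-B_G(R,K)$ for each $K$, each $C_K$ being thick. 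Now examine a tail of $R_3$. If for some large $L$ this tail lies in a component $D_L\neq C_L$ of $G-B_G(R,L)$, then $D_L$ is half-thick, because $R_3$ diverges from $R$ yet still belongs to $\eps$, and $R$ itself is the ray sought in stage one. Otherwise $R_3$ has a tail in $C_K$ for every $K$, and one distinguishes two cases: if $R_3$ stays far from $H$, then using that $R_3$ reaches each $C_K$ one joins $R_3$ to $H$ by infinitely many pairwise well-separated paths, which together with $R_3$ give $H$-paths ``jumping over'' $H$ that, combined with $H$, assemble into an ultra-fat model of $K_{\aleph_0}$; if instead $R_3$ runs close to $H$, then either $R_3$ splits $H$ into an upper part containing a tail of $R_1$ and a lower part containing a tail of $R_2$, so that $R_1\cup R_3$ (or $R_2\cup R_3$, after passing to a quasi-geodesic subray) has a thick and a half-thick component, or there are infinitely many $H$-paths jumping over $R_3$, again yielding an ultra-fat $K_{\aleph_0}$.

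The main obstacle I expect is this last case analysis: quantifying ``far from $H$'' versus ``close to $H$'', extracting in each sub-case the infinitely many pairwise well-separated $H$-paths, and checking that they genuinely produce an \emph{ultra-fat} model of $K_{\aleph_0}$ --- the $\min$-type distance bounds in the definition of ultra fat must be squeezed out of the escaping structure of $H$ together with the divergence of $R_3$. A secondary, more mechanical obstacle is the gluing step, where one must ensure that the two escaping half-grids share exactly the double ray $R'$ and that their union, after possibly deleting superfluous horizontal paths, is an escaping subdivision of the hexagonal full-grid.
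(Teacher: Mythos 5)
Your proposal is correct and follows essentially the same route as the paper: it reproduces the paper's decomposition into \cref{lem:ThickAndHalfThickComponent} (the quasi-geodesic 3-star $R_1,R_2,R_3$ from \cref{lem:QuasiGeodesic3StarOfRays}, the half-grid applied to $R_1\cup R_2$, and the far/close case analysis yielding either a half-thick component or jumping $H$-paths and an ultra-fat $K_{\aleph_0}$), \cref{lem:TwoThickComponents} (upgrading a thick plus half-thick pair to two thick components via quasi-transitivity), and \cref{lem:TwoThickCompsYieldFullGrid} (gluing two escaping half-grids obtained from \cref{thm:HalfGrid:BoundedCycles:copy} along the common first vertical double ray). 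The one place your sketch is loose is stage one: the paper does not apply a single automorphism ``pushing $R'$ into the thick component'' but builds a limit double ray from translates recentred at orbit vertices far along $R'$, which is how the half-thick side becomes thick --- a matter of execution rather than strategy.
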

	
	We remark that instead of adding \cref{thm:AsymptoticFullGridWeaker} as an intermediate step in the proof of \cref{thm:AsymptoticFullGrid:copy} we could have also formulated the three lemmas below which we use to construct either an ultra fat model of $K_{\aleph_0}$ or an escaping subdivision of the hexagonal full-grid (\cref{lem:TwoThickComponents,lem:ThickAndHalfThickComponent,lem:TwoThickCompsYieldFullGrid}) so that we may choose a thick end $\eps$, and the lemma then returns the desired structure `in' $\eps$. However, while this would have been possible in \cref{lem:TwoThickCompsYieldFullGrid,lem:ThickAndHalfThickComponent} without changing their proofs, this is not true for \cref{lem:TwoThickComponents}. There, we would then have to use the fact that $G$ is accessible, to reduce the problem to one-ended graphs. 
	
	\begin{proof}[Proof of \cref{thm:AsymptoticFullGrid} given \cref{thm:AsymptoticFullGridWeaker}]
		By \cref{thm:CycleSpaceAccessible}, $G$ is accessible. Hence, by \cref{cor:1EndQTGeodSubgraph}, there exists a connected, one-ended, $c$-quasi-geodesic, \qt\ subgraph $X$ of $G$ for some $c \in \N$ such that every ray in~$X$ is an $\eps$-ray in~$G$. 
		Applying \cref{thm:AsymptoticFullGridWeaker} to $X$ yields either an ultra fat model $((V_i)_{i \in \N}, (E_{ij})_{i \neq j \in \N})$ of $K_{\aleph_0}$ in $X$ or an escaping subdivision $H$ of the hexagonal full-grid in $X$. Since $X$ is a $c$-quasi-geodesic subgraph of~$G$, we find in the former case that $((V_i)_{i \in c\N}, (E_{ij})_{i \neq j \in c\N})$ is an ultra fat model of $K_{\aleph_0}$ in $G$. In fact, since every ray in $X$ is an $\eps$-ray in $G$, we have $K_{\aleph_0} \prec_{UF}^\eps G$. Similarly, in the latter case, $H$ contains a subdivision of the hexagonal full-grid which is escaping in $G$. Indeed, we may choose vertical double rays $\dots, S^{i_{-1}}, S^{i_0}, S^{i_1}, \dots$ of $H$ such that $i_0 = 0$ and $M_{i_j-1} + 2i_j \geq c(M_{i_{j-1}} + 2j)$ for $j > 0$, and similarly $M_{i_j+1} + 2|i_j| \geq c(M_{i_{j+1}} + 2|j|)$ for $j < 0$. Then adding suitable $S^{i_j}$--$S^{i_{j+1}}$ paths in~$H$ yields a subdivision $H' \subseteq H$ of the hexagonal full-grid which is escaping in $G$ since $X$ is a $c$-quasi-geodesic subgraph of $G$. Moreover, all rays in $H'$ are $\eps$-rays.
	\end{proof}
	
	In the remainder of this section we prove \cref{thm:AsymptoticFullGridWeaker}. The formal proof of \cref{thm:AsymptoticFullGridWeaker}, which collects the tools from this whole section, can be found at the end of the last subsection, \cref{subsec:TwoThickComps}.
	
	We first give a brief overview of this section; a more detailed sketch of the proof of \cref{thm:AsymptoticFullGridWeaker} can be found in \cref{subsec:ProofSketch12}.
	Let $G$ be a locally finite, \qt\ graph whose cycle space is generated by cycles of bounded length and which has a thick end. 
	Further, let $R = \dots r_{-1}r_0r_1\dots$ be a double ray, and let $K \in \N$. Recall that a component $C$ of $G-B_G(R,K)$ is \defn{long} if $C$ has a neighbour in $B_G(R_{\geq i},K)$ and in $B_G(R_{\leq -i}, K)$ for all $i \in \N$. 
	Further,~$C$ is \defn{thick} if, for every $L \geq K$, some long component of $G-B_G(R,L)$ is contained in~$C$.
	
	In \cref{lem:TwoThickCompsYieldFullGrid} below, we show that if $G$ contains a diverging double ray $R$ such that, for some $L \in \N$, $G-B_G(R, L)$ has at least two thick components, then either $K_{\aleph_0} \prec_{UF} G$ or $G$ contains an escaping subdivision of the hexagonal full-grid.
	Our remaining task then is to prove that $G$ indeed contains such a double ray $R$. Showing this will be the main effort of this proof, and it will be done in \cref{subsec:TwoThickComps} (see \cref{lem:TwoThickComponents,lem:ThickAndHalfThickComponent}). For this, in \cref{subsec:HGWithCrosses}, we provide with \cref{lem:KFatHexGridWithJumpingPaths} a sufficient condition for $G$ to contain $K_{\aleph_0}$ as an ultra fat minor, which enables us to find an ultra fat $K_{\aleph_0}$ minor in $G$ if we cannot find such a double ray~$R$. 
	
	\begin{lem} \label{lem:TwoThickCompsYieldFullGrid}
		Let $R$ be a diverging double ray in a locally finite graph $G$ whose cycle space is generated by cycles of bounded length. Suppose that for some $L \in \N$, there are at least two thick components of $G-B_G(R, L)$. Then either $K_{\aleph_0} \prec_{UF} G$, or $G$ contains an escaping subdivision of the hexagonal full-grid.
	\end{lem}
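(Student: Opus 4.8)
The plan is to apply the detailed half-grid result \cref{thm:HalfGrid:BoundedCycles:copy} separately to the two given thick components, and then to glue the two resulting escaping hexagonal half-grid subdivisions along their common first vertical double ray~$R$.

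So fix distinct thick components $C\neq D$ of $G-B_G(R,L)$. Both are in particular long; from this one checks that $R$ lies in a thick end $\eps$ (in the applications $R$ is anyway given as a double ray in a thick end), so that \cref{thm:HalfGrid:BoundedCycles:copy} applies to~$R$. Applied with the thick component~$C$, it yields either $K_{\aleph_0}\prec^\eps_{UF}G$ -- whence $K_{\aleph_0}\prec_{UF}G$ and we are done -- or an escaping subdivision $H$ of the hexagonal half-grid whose first vertical double ray is $S^0=R$ and whose further vertical double rays $S^i$ (for $i\ge 1$) lie in~$C$; let $P_{ij}$ be its horizontal paths. Applying \cref{thm:HalfGrid:BoundedCycles:copy} again with the thick component~$D$ yields (unless we are already done) an escaping subdivision $H'$ of the hexagonal half-grid with first vertical double ray $\bar S^0=R$, further vertical double rays $\bar S^i\subseteq D$, and horizontal paths $\bar P_{ij}$.

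Next I would glue $H$ and~$H'$ along~$R$, relabelling the vertical double rays to a bi-infinite family $(T^i)_{i\in\Z}$ with $T^0=R$, $T^i:=S^i\subseteq C$ for $i>0$, and $T^i:=\bar S^{-i}\subseteq D$ for $i<0$. Between consecutive $T^i$ with $i\ge 0$ we keep (a subcollection of) the $P_{ij}$, between consecutive $T^i$ with $i\le 0$ (a subcollection of) the $\bar P_{ij}$; away from column~$0$ these already realise the hexagonal incidence pattern. The only genuine work is at column~$0$: we must reconcile the two families of first-level paths $P_{1j}$ (attaching to $R$ and running into $C$) and $\bar P_{1j}$ (attaching to $R$ and running into $D$) so that no vertex of~$R$ acquires degree four and so that the rungs along column~$0$ of the hexagonal full-grid alternate correctly between ``going to~$T^1$'' and ``going to~$T^{-1}$''. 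Here I would use that, from the construction in \cref{thm:HalfGrid:BoundedCycles:copy} (or after a harmless rerouting of initial segments through $B_G(R,L)$), one may take $P_{1j}\subseteq B_G(R,L)\cup C$ and $\bar P_{1j}\subseteq B_G(R,L)\cup D$, so that a path of the first family and one of the second can meet only inside $B_G(R,L)$; since $G$ is locally finite and, in each family, infinitely many attachment points on~$R$ tend to~$+\infty$ and infinitely many to~$-\infty$, I can select bi-infinite subcollections that are pairwise internally disjoint and whose attachment points on~$R$ strictly interleave. Carrying this out together with a recursive selection of the remaining horizontal paths as in the proof of \cref{prop:HexGridAfterDeletingPaths} (now performed for the hexagonal full-grid) turns the union into a genuine subdivision of the hexagonal full-grid with vertical double rays the~$T^i$.

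Finally, this subdivision is escaping: each of its two halves is an escaping subdivision of the hexagonal half-grid by construction, with defining radii inherited from the two applications of \cref{thm:HalfGrid:BoundedCycles:copy}, and with $M:=L$ the $T^i$ with $i>0$ all lie in~$C$ while the $T^i$ with $i<0$ all lie in~$D$, two distinct components of $G-B_G(R,L)=G-B_G(T^0,M)$ -- which is exactly the last clause in the definition of an escaping subdivision of the hexagonal full-grid. Thus $G$ contains an escaping subdivision of the hexagonal full-grid, as required. I expect the main obstacle to be precisely this gluing step: ensuring that merging the two half-grids along~$R$ yields a bona fide subdivision of the hexagonal full-grid (no degree-four vertices on~$R$, the correct hexagonal pattern at column~$0$, and internal disjointness of all branch paths across the two sides), for which the interleaved choice of the two first-level families of horizontal paths is the key point; checking the dichotomy and the escaping conditions is then immediate.
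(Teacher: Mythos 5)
Your proposal follows essentially the same route as the paper: apply the refined half-grid theorem (\cref{thm:HalfGrid:BoundedCycles:copy}) once to each of the two thick components $C$ and $D$, obtain two escaping subdivisions of the hexagonal half-grid sharing the first vertical double ray $R$, glue them along $R$, repair the interface at column $0$ using local finiteness and a selection in the spirit of \cref{prop:HexGridAfterDeletingPaths}, and verify the escaping condition for the full-grid via the two components of $G-B_G(R,L)$.

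The one step where your argument outruns what you can cite is the claim that ``from the construction'' one may take $P_{1j}\subseteq B_G(R,L)\cup C$ and $\bar P_{1j}\subseteq B_G(R,L)\cup D$. Neither the statement of \cref{thm:HalfGrid:BoundedCycles:copy} nor its proof gives this: the first-level horizontal paths are only guaranteed to lie in $G[R,M_1]$, and in the construction (via \cref{lem:HalfGrid}) they are pieced together from shortest paths to a tail of $R$, which may well dip through $B_G(R,L)$ into $D$ and back. So your parenthetical fallback -- rerouting each first-level path from its last exit from $B_G(R,L)$ by a short path back to $R$ -- is genuinely needed, and it is a little less ``harmless'' than stated: the reroutings can collide with each other, with $R$ at interior vertices, and with other branch paths, so you must also restore internal disjointness within each family (local finiteness and the boundedness of the rerouting segments make a greedy selection work, and one must check that the surviving attachment points still go to $+\infty$ and $-\infty$ along $R$). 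The paper sidesteps this entirely by rerouting inside the half-grids instead: it discards the columns $1,\dots,M-1$ of each half-grid for a suitable $M$, takes as new first-level branch paths infinitely many disjoint $R$--$S^C_M$ (resp.\ $R$--$S^D_M$) paths within $H_C$ (resp.\ $H_D$), so that all remaining structure above level $M$ lies safely inside $C$ (resp.\ $D$) and the only possible cross-side intersections are between the two families of new connecting paths; these are then thinned out and \cref{prop:HexGridAfterDeletingPaths} is applied. With your rerouting carried out carefully, your argument closes the same way, so the proposal is correct in substance; just do not attribute the containment of the first-level paths to \cref{thm:HalfGrid:BoundedCycles:copy} itself.
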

	
	\begin{proof}
		Let $C \neq D$ be two distinct thick components of $G-B_G(R, L)$. 
		Since we are done if $K_{\aleph_0} \prec_{UF} G$, we may assume that applying \cref{thm:HalfGrid:BoundedCycles:copy} to~$R$, $K$ and $C$ or $D$, respectively, yields escaping subdivisions~$H_C$ and~$H_D$, respectively, of the hexagonal half-grid.
		Let $M^C_0 < M_1^C < \ldots$ and $M^D_0 < M^D_1 < \ldots$ witness that~$H^C$ and~$H^D$ are escaping. Further, let $S^C_i, S^D_i$ and $P^C_{ij}, P^D_{ij}$ be the vertical double rays and horizontal paths of~$H_C$ and~$H_D$, respectively. 
		By the `in particular' part of \cref{thm:HalfGrid:BoundedCycles:copy}, it follows that the $S^C_i$'s and the $S^D_i$'s are contained in $C$ and $D$, respectively; in particular, they are disjoint. Moreover, by property~\ref{itm:DefEscHG:P_ij} of escaping subdivisions, we have that for $M := \max\{M^C_1+2, M^D_1+2\}$ the paths $P^C_{ij}$ and $P^D_{ij}$ with $i \geq M$ are contained in $C$ and $D$, respectively, and are hence disjoint from each other. Let $H'_C \subseteq H_C$ be a subdivision of the hexagonal half-grid with vertical double rays $S^C_0$ and $S^C_i$ for $i \geq M$, which we may obtain by choosing as the new branch paths for the horizontal edges $e_{1j}$ infinitely many disjoint $S^C_0$--$S^C_M$ paths $Q^C_{j}$ in~$H'_C$. 
		Let $H'_D$ be chosen analogously.
		Clearly, $H'_C$ and $H'_D$ are still escaping.
		
		Now since $S^C_0 = R = S^D_0$, gluing $H'_C$ and $H'_D$ together along $R$ yields a graph $H'$ which is nearly as desired except that the paths $Q^C_{j}$ and $Q^D_{\ell}$ may intersect. But since $G$ is locally finite, we can delete some of the $Q^C_{j}$'s and $Q^D_{\ell}$'s and apply \cref{prop:HexGridAfterDeletingPaths} to obtain a subdivision $H \subseteq H'$ of the hexagonal full-grid. By construction, $H$ is escaping. 
	\end{proof}

	\subsection{Half-grids with crosses} \label{subsec:HGWithCrosses}
	
	In this section we establish a sufficient condition which ensures that a graph $G$ contains $K_{\aleph_0}$ as an ultra fat minor. This condition essentially requires an escaping subdivision $H \subseteq G$ of the hexagonal half-grid, and infinitely many $H$-paths in~$G$ that `jump over' the vertical double rays in~$H$.
	For this, we first need the following two auxiliary statements about~$K_{\aleph_0}$ minors in half-grids with certain additional edges.
	
	\begin{lem} \label{lem:HalfGridWithCrosses}
		Let $G$ be obtained from the half-grid by adding all edges of the form $(i,0)(i+1, 1)$ and $(i,1)(i+1,0)$ for $i \in \N$.
		Then $G$ contains a model $(\cV, \cE)$ of $K_{\aleph_0}$ such that $V_i, V(E_{ij}) \subseteq \N_{\geq i} \times \Z$ for all $i < j \in \N$ and such that the $E_{ij}$'s are pairwise disjoint.
	\end{lem}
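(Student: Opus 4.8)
The plan is to give an explicit construction whose organisation is recursive, driven by the self-similarity of $G$: deleting column $0$ together with the two incident crosses leaves a graph isomorphic to $G$ under the shift $(m,n)\mapsto(m-1,n)$, which is exactly what the column constraints $V_i,V(E_{ij})\subseteq\N_{\ge i}\times\Z$ are calibrated for. Accordingly I would aim to prove a stronger statement carrying an \emph{interface}: for every $m\in\N$, the subgraph $G_m$ of $G$ induced on $\N_{\ge m}\times\Z$ contains a model $\bigl((V_i)_{i\ge m},(E_{ij})_{m\le i<j}\bigr)$ of $K_{\aleph_0}$ on the index set $\{m,m+1,\dots\}$ obeying the column constraints and with the $E_{ij}$ pairwise disjoint, \emph{and} a family $(I^{(k)}_i)_{i\ge m,\ k\in\N}$ of pairwise disjoint paths, each $I^{(k)}_i$ joining a vertex in a bounded strip of the leftmost columns of $G_m$ to $V_i$ and otherwise meeting no $V_{i'}$ and no $E_{i'j'}$; the infinite multiplicity in $k$ ensures that each $V_i$ retains infinitely many unused ``ports''.

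The recursion step then produces the data for $G_m$ from that for $G_{m+1}$: one introduces a single new vertical ray $V_m$ in a fresh column of $G_m$ near its left boundary, leaves the deeper data $V_i,E_{ij}$ ($m<i<j$) untouched, takes each $E_{mj}$ ($j>m$) to be a short path from $V_m$ to the start of $I^{(1)}_j$ followed by $I^{(1)}_j$ itself, and obtains the new interface from the leftover paths $I^{(k)}_j$ with $k\ge 2$, each prolonged one step towards column $m$, together with trivial paths inside $V_m$ for $i=m$. Running this description simultaneously over all $m$ yields the desired model on $\N$; the bookkeeping needed here is to keep the attachment points on the left strip used by the $E_{mj}$ disjoint from those used by the $I^{(k)}_i$, and to place the columns of the successive rays far enough apart so that inherited and new data never collide.

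Everything thus reduces to routing the interface paths $I^{(k)}_i$ pairwise disjointly, and this is precisely where the added crosses are indispensable: since $K_{\aleph_0}$ has pairs of edges that are forced to cross, these paths cannot all be realised inside the planar half-grid alone. The construction pushes all the necessary crossings into the band consisting of rows $0$ and $1$: there the two crossing edges of a column-gap let two interface paths interchange their vertical order, while away from this band each path either runs along a row of the low strip or makes a ``bump'' up into the corridor between two consecutive rays, the bumps being arranged laminarly (pairwise nested or disjoint) so that they never meet. The main obstacle, and the real content of the proof, is exactly this scheduling: ordering the crossings performed in rows $0$--$1$ and choosing the positions of the rays so that all branch paths and all interface paths come out pairwise disjoint while respecting the bounds $V_i,V(E_{ij})\subseteq\N_{\ge i}\times\Z$ — informally, one is drawing $K_{\aleph_0}$ inside the half-grid with every crossing of the drawing lined up along rows $0$ and $1$.
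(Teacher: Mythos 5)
Your proposal is a strategy outline rather than a proof, and the part you yourself flag as ``the real content'' -- the explicit routing of the paths with all crossings scheduled in rows $0$--$1$ -- is exactly what is missing. The statement is not hard because one cannot guess a reasonable framework; it is hard (to the extent it is) because one must actually exhibit pairwise disjoint branch sets and branch paths respecting the column constraints, and every pair of objects that must cross in the plane has to be assigned its own crossing gadget, which also consumes the four vertices $(i,0),(i,1),(i+1,0),(i+1,1)$ and thereby blocks vertical passes through those columns in the critical band. Your interface invariant makes this obligation heavier, not lighter: at every stage you demand, for each of infinitely many branch sets $V_i$, infinitely many pairwise disjoint ``port'' paths to the left strip, all disjoint from all other branch sets, branch paths and ports. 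You never verify that such a configuration exists, nor that the recursion step preserves it -- in particular, all the new branch paths $E_{mj}$ emanate from the single ray $V_m$ and run through the same left strip in which the leftover ports $I^{(k)}_j$, $k\geq 2$, must be prolonged, so showing these do not collide is again precisely the unsolved scheduling problem. There is also a structural flaw in the argument's shape: your recursion produces the data for $G_m$ from the data for $G_{m+1}$, so it has no base case; ``running it simultaneously over all $m$'' is not a construction unless you replace it by an explicit shift-invariant pattern (which you do not specify) or by a limit of a forward recursion.

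For contrast, the paper's proof of \cref{lem:HalfGridWithCrosses} runs the recursion in the opposite, increasing direction: one builds a model of $K_n$ for each $n$, keeping everything constructed so far inside a finite box $\{i,\dots,\ell_n\}\times\{-j_n,\dots,j_n\}$ with the invariant that every branch set meets the top or the bottom row of that box, and the extension step (adding $V_{n+1}$ and the paths $E_{(n+1)j}$, which are single horizontal edges) is carried out explicitly outside the box, using fresh crossing gadgets in columns beyond $\ell_n$; see \cref{fig:KAleph0MinorInHGWithCrosses}. The point of that finite-box invariant is that at each step only finitely many objects have to be threaded past one another, so the routing is an elementary, local picture, and the infinite model is obtained as the increasing union. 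If you want to salvage your approach, the honest way is either to write down the self-similar pattern completely explicitly (columns of the rays, rows of every horizontal run, and the exact gadget used for every crossing) and check disjointness, or to abandon the interface formulation in favour of a forward recursion with a finite invariant of the paper's kind.
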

	
	\begin{proof}
		One can easily construct the $K_{\aleph_0}$ minor recursively starting from $K_1$ with branch set $V_1 := \{(0,0)\}$. For this, assume that we have already defined a model of $K_n$ with branch sets $V_1, \dots, V_n$, branch paths~$E_{ij}$ and integers $j_n, \ell_n \in \N$ such that $V_i, E_{ij} \subseteq \{i, \dots, \ell_n\} \times \{-j_n, \dots, 0, \dots, j_n\}$ and such that either for each $i\in\N$, the branch set $V_i$ meet $\{i, \ell_n\} \times \{-j_n\}$ or for each $i\in\N$, the branch set $V_i$ meet $\{i, \ell_n\} \times \{j_n\}$ and such that the $E_{ij}$'s are pairwise disjoint. We can then extend the branch sets $V_1, \dots, V_n$ and add a new branch set~$V_{n+1}$ as well as new branch paths $E_{(n+1)j}$ as depicted in \cref{fig:KAleph0MinorInHGWithCrosses} such that $V_1, \dots, V_{n+1}$ are the branch sets and the $E_{ij}$'s are the branch paths of a $K_{n+1}$ minor.
	\end{proof}
	
	\begin{figure}[ht]
		\centering
		\scalebox{1.2}{
\begin{tikzpicture}
    % horizontal lines
    \foreach \y in {0.5,1,2.5,3,3.5,4}
    \draw[-stealth] (0,\y) -- (7,\y);
    \foreach \y in {1.5,2}
    \draw[-stealth] (1.5,\y) -- (7,\y);

    % vertical lines
    \foreach \y in {0,1.5,2,2.5,3,3.5,4,4.5,5,5.5,6,6.5}
    \draw[stealth-stealth] (\y,0) -- (\y,4.5);
    \foreach \y in {0.5,1}
    \draw[stealth-] (\y,0) -- (\y,1);
    \foreach \y in {0.5,1}
    \draw[-stealth] (\y,2.5) -- (\y,4.5);

	%crosses
	\draw[Black] (1.5,1.5) -- (1.675,1.675);
	\draw[Black] (1.825,1.825) arc (45:225:0.10606601717);
	\draw[Black] (1.825,1.825) -- (2,2);
	\draw[Black] (2,1.5) -- (1.5,2);

	\draw[Black] (2,1.5) -- (2.175,1.675);
	\draw[Black] (2.325,1.825) arc (45:225:0.10606601717);
	\draw[Black] (2.325,1.825) -- (2.5,2);
	\draw[Black] (2.5,1.5) -- (2,2);

	\draw[Black] (2.5,1.5) -- (2.675,1.675);
	\draw[Black] (2.825,1.825) arc (45:225:0.10606601717);
	\draw[Black] (2.825,1.825) -- (3,2);
	\draw[Black] (3,1.5) -- (2.5,2);

	\draw[Black] (3,1.5) -- (3.175,1.675);
	\draw[Black] (3.325,1.825) arc (45:225:0.10606601717);
	\draw[Black] (3.325,1.825) -- (3.5,2);
	\draw[Black] (3.5,1.5) -- (3,2);

	\draw[Black] (3.5,1.5) -- (3.675,1.675);
	\draw[Black] (3.825,1.825) arc (45:225:0.10606601717);
	\draw[Black] (3.825,1.825) -- (4,2);
	\draw[Black] (4,1.5) -- (3.5,2);

	\draw[Black] (4,1.5) -- (4.175,1.675);
	\draw[Black] (4.325,1.825) arc (45:225:0.10606601717);
	\draw[Black] (4.325,1.825) -- (4.5,2);
	\draw[Black] (4.5,1.5) -- (4,2);

	\draw[Black] (4.5,1.5) -- (4.675,1.675);
	\draw[Black] (4.825,1.825) arc (45:225:0.10606601717);
	\draw[Black] (4.825,1.825) -- (5,2);
	\draw[Black] (5,1.5) -- (4.5,2);

	\draw[Black] (5,1.5) -- (5.175,1.675);
	\draw[Black] (5.325,1.825) arc (45:225:0.10606601717);
	\draw[Black] (5.325,1.825) -- (5.5,2);
	\draw[Black] (5.5,1.5) -- (5,2);

 	\draw[Black] (5.5,1.5) -- (5.675,1.675);
	\draw[Black] (5.825,1.825) arc (45:225:0.10606601717);
	\draw[Black] (5.825,1.825) -- (6,2);
	\draw[Black] (6,1.5) -- (5.5,2);

 	\draw[Black] (6,1.5) -- (6.175,1.675);
	\draw[Black] (6.325,1.825) arc (45:225:0.10606601717);
	\draw[Black] (6.325,1.825) -- (6.5,2);
	\draw[Black] (6.5,1.5) -- (6,2);

  	\draw[Black] (6.5,1.5) -- (6.675,1.675);
	\draw[Black] (6.825,1.825) arc (45:225:0.10606601717);
	\draw[Black] (6.825,1.825) -- (6.9,1.9);
	\draw[Black] (6.9,1.6) -- (6.5,2);

	%K_n
	\draw[pattern=north east lines, pattern color=black] (0,1) rectangle (1.5,2.5);
	\fill[white] (0.5,1.5) rectangle (1,2);
	\draw[black] (0.75,1.75) node {$K_4$};

	%V_1
    \fill[Mulberry] (0,2.5) circle (2pt);
	\draw[Mulberry, line width=1.5,line cap=round] (0,2.5) -- (0,4);
	\draw[Mulberry, line width=1.5,line cap=round] (0,4) -- (5,4);
	\draw[Mulberry, line width=1.5,line cap=round] (5,4) -- (5,2);
	\draw[Mulberry, line width=1.5,line cap=round] (5,2) -- (5.5,1.5);
	\draw[Mulberry, line width=1.5,line cap=round] (5.5,1.5) -- (5.5,0.5);
    \fill[Mulberry] (5.5,0.5) circle (2pt);
	\draw[Mulberry] (0.25,2.75) node {\footnotesize{$V_1$}};

	%V_2
    \fill[blue] (0.5,2.5) circle (2pt);
	\draw[blue, line width=1.5,line cap=round] (0.5,2.5) -- (0.5,3.5);
	\draw[blue, line width=1.5,line cap=round] (0.5,3.5) -- (4,3.5);
	\draw[blue, line width=1.5,line cap=round] (4,3.5) -- (4,2);
	\draw[blue, line width=1.5,line cap=round] (4,2) -- (4.5,1.5);
	\draw[blue, line width=1.5,line cap=round] (4.5,1.5) -- (4.5,0.5);
    \fill[blue] (4.5,0.5) circle (2pt);
	\draw[blue] (0.75,2.75) node {\footnotesize{$V_2$}};

	%V_3
    \fill[BurntOrange] (1,2.5) circle (2pt);
	\draw[BurntOrange, line width=1.5,line cap=round] (1,2.5) -- (1,3);
	\draw[BurntOrange, line width=1.5,line cap=round] (1,3) -- (3,3);
	\draw[BurntOrange, line width=1.5,line cap=round] (3,3) -- (3,2);
	\draw[BurntOrange, line width=1.5,line cap=round] (3,2) -- (3.5,1.5);
	\draw[BurntOrange, line width=1.5,line cap=round] (3.5,1.5) -- (3.5,0.5);
    \fill[BurntOrange] (3.5,0.5) circle (2pt);
	\draw[BurntOrange] (1.25,2.75) node {\footnotesize{$V_3$}};

	%V_4
    \fill[Green] (1.5,2.5) circle (2pt);
	\draw[Green, line width=1.5,line cap=round] (1.5,2.5) -- (2,2.5);
	\draw[Green, line width=1.5,line cap=round] (2,2.5) -- (2,2);
	\draw[Green, line width=1.5,line cap=round] (2,2) -- (2.5,1.5);
	\draw[Green, line width=1.5,line cap=round] (2.5,1.5) -- (2.5,0.5);
    \fill[Green] (2.5,0.5) circle (2pt);
	\draw[Green] (1.75,2.75) node {\footnotesize{$V_4$}};

	%V_5
    \fill[Red] (2,0.5) circle (2pt);
    \draw[Red, line width=1.5,line cap=round] (2,0.5) -- (2,1.5);
    %\fill[Mulberry] (2,1.5) circle (2pt);
%	\draw[Mulberry, line width=1.5] (0,0.5) -- (2,0.5);
%	\draw[Mulberry, line width=1.5] (2,0.5) -- (2,1.5);
	\draw[Red, line width=1.5,line cap=round] (2,1.5) -- (2.175,1.675);
	\draw[Red, line width=1.5,line cap=round] (2.325,1.825) arc (45:225:0.10606601717);
	\draw[Red, line width=1.5,line cap=round] (2.325,1.825) -- (2.5,2);
	\draw[Red, line width=1.5,line cap=round] (2.5,2) -- (3,1.5);
	\draw[Red, line width=1.5,line cap=round] (3,1.5) -- (3.175,1.675);
	\draw[Red, line width=1.5,line cap=round] (3.325,1.825) arc (45:225:0.10606601717);
	\draw[Red, line width=1.5,line cap=round] (3.325,1.825) -- (3.5,2);
	\draw[Red, line width=1.5,line cap=round] (3.5,2) -- (4,1.5);
	\draw[Red, line width=1.5,line cap=round] (4,1.5) -- (4.175,1.675);
	\draw[Red, line width=1.5,line cap=round] (4.325,1.825) arc (45:225:0.10606601717);
	\draw[Red, line width=1.5,line cap=round] (4.325,1.825) -- (4.5,2);
	\draw[Red, line width=1.5,line cap=round] (4.5,2) -- (5,1.5);
	\draw[Red, line width=1.5,line cap=round] (5,1.5) -- (5.175,1.675);
	\draw[Red, line width=1.5,line cap=round] (5.325,1.825) arc (45:225:0.10606601717);
	\draw[Red, line width=1.5,line cap=round] (5.325,1.825) -- (5.5,2);
	\draw[Red, line width=1.5,line cap=round] (5.5,2) -- (6,1.5);
	%\draw[Mulberry, line width=1.5] (6,1.5) -- (6,0.5);
    %\fill[Mulberry] (6,0.5) circle (2pt);
	%\draw[Mulberry] (2.25,1.25) node {$V_5$};
    \draw[Red] (2.25,0.75) node {\footnotesize{$V_5$}};
    \fill[Red] (6,1.5) circle (2pt);

	%Black edges
	\draw[Black, line width=1.2,line cap=round] (2,1.5) -- (2.5,1.5);
	\draw[Black, line width=1.2,line cap=round] (3,1.5) -- (3.5,1.5);
	\draw[Black, line width=1.2,line cap=round] (4,1.5) -- (4.5,1.5);
	\draw[Black, line width=1.2,line cap=round] (5,1.5) -- (5.5,1.5);

	%K_{n+1}
	\draw[black,rounded corners,dashed] (-0.25,0.25) rectangle (6.25,4.25);
	\draw[black] (-0.75,1.75) node {$K_{5}$};

\end{tikzpicture}}
		\vspace{-3em}
		\caption{Sketch of a $K_5$ minor in a half-grid with all $(i,0)(i+1,1)$ and $(i,1)(i+1,0)$ edges. The branch paths $E_{ij}$ between $V_5$ and the $V_i$'s are the thickened $(i,0)(i+1,0)$ edges.}
		\label{fig:KAleph0MinorInHGWithCrosses}
	\end{figure}
	
	In particular, we have the following corollary:
	
	\begin{cor} \label{cor:HalfGridWithJumpingEdges:NEW}
		For every $n\in\N$, let $(i_n, k_n),(j_n, \ell_n)\in\N\times\N$ such that $\max\{i_n,j_n\}<\min\{i_{n'},j_{n'}\}$ and $|i_n - j_n| \geq 2$ for all $n < n'$.
		Let $X$ be obtained from the half-grid by adding a new edge $f_n=(i_n, k_n)(j_n, \ell_n)$ for every $n \in \N$.
		Then $X$ contains a model $(V, \cE)$ of $K_{\aleph_0}$ such that $V_i, V(E_{ij}) \subseteq \N_{\geq i} \times \Z$ for all $i < j \in \N$ and such that the $E_{ij}$'s are pairwise disjoint.
	\end{cor}
	
	\begin{proof}
		It is straight forward to check that the lines $\{i_n\} \times \Z$ in $X$ form the vertical double rays of a subdivision $X'$ of the graph $G$ in the premise of \cref{lem:HalfGridWithCrosses} such that every branch path in $X'$ corresponding to an edge of $G$ between $\{n\} \times \Z$ and $\{n+1\} \times \Z$ is contained in $X[\{i_n, \dots, i_{n+1}\} \times \Z]$. 
		Hence, \cref{lem:HalfGridWithCrosses} immediately yields the assertion.
	\end{proof}
	
	Before we can state the main lemma of this subsection, we first need the following definition.
	Let $H$ be an escaping subdivision of the hexagonal half-grid in a graph $G$ with vertical double rays $S^i$ and horizontal paths $P_{ij}$. An $H$-path $Q$ in $G$ with endvertices on $S^i$ and $S^j$ for some $i, j \in \N$ is \defn{$K$-fat} for some $K \in \N$ if 
	\begin{itemize}
		\item $d_G(Q, S^k) \geq \min\{k,K\}$ for all $k \neq i,j \in \N$, and
		\item $d_G(Q, P_{k\ell}) \geq \min\{k,K\}$ for all $k \in \N$ and $\ell \in \Z$.
	\end{itemize}
	
	\begin{lem} \label{lem:KFatHexGridWithJumpingPaths}
		Let $H$ be an escaping subdivision of the hexagonal half-grid in a locally finite graph~$G$ with vertical double rays~$S^i$. Suppose there are infinitely many pairwise disjoint $H$-paths~$Q_m$ such that for each $m\in\N$, the path~$Q_m$ has endvertices on $S^{i_m}$ and~$S^{j_m}$ for some $i_m, j_m \in \N$, such that~$Q_m$ is $m$-fat and such that $\max\{i_m, j_m\} < \min\{i_{m'}, j_{m'}\}$ and $|i_m - j_m| \geq 2$ for all $m < m'$.
		Then $H$ contains $K_{\aleph_0}$ as an ultra fat minor.
	\end{lem}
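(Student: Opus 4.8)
The plan is to produce an ultra fat model of $K_{\aleph_0}$ inside $\hat H := H \cup \bigcup_m Q_m$, using \cref{cor:HalfGridWithJumpingEdges:NEW} to supply the combinatorial skeleton and the escaping property of $H$ together with the $m$-fatness of the $Q_m$ to supply the distance bounds. For the preprocessing, I write $i_m < j_m$, so that $j_m \geq i_m + 2$; since $G$ is locally finite and the $Q_m$ are pairwise disjoint, after passing to a subsequence and relabelling I may assume in addition that $d_G(Q_m, Q_{m'}) \geq \min\{m, m'\}$ for all $m \neq m'$, which does not affect the remaining hypotheses. I also record the only two features of escaping subdivisions I will use: by \ref{itm:DefEscHG:S^i} and \ref{itm:DefEscHG:P_ij}, any two of the sets $S^k$ and $P_{k'\ell}$ whose levels $k,k'$ (the level of $P_{k'\ell}$ being $k'$) differ by at least $2$ are at distance $\geq \min\{k,k'\}$ in $G$, and each $S^k$ with $k\geq 1$ and each $P_{k\ell}$ with $k \geq 2$ is disjoint from $B_G(S^0,k)$.

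Next I would set up the skeleton. I claim $\hat H$ contains a subdivision of a graph $X$ as in \cref{cor:HalfGridWithJumpingEdges:NEW} in which the column $\{c\}\times\Z$ of $X$ is represented by $S^c$, each grid edge of $X$ between consecutive columns by a path running through the horizontal paths $P_{k\ell}$ and short subpaths of vertical double rays (possible since between $S^c$ and $S^{c+1}$ there are infinitely many pairwise disjoint connections on either side of $S^0$, routed exactly as in the proof of \cref{prop:HexGridAfterDeletingPaths}), and the jump $f_m$ of $X$ between columns $i_m$ and $j_m$ by $Q_m$ — here $|i_m-j_m| \geq 2$ is precisely the hypothesis of the corollary. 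The routing can be arranged so that the image of the subgrid $\N_{\geq n}\times\Z$ of $X$ lies in $\hat H_n := \bigcup_{k\geq n} S^k \cup \bigcup_{k>n,\,\ell\in\Z} P_{k\ell} \cup \bigcup\{Q_m : i_m \geq n\}$, together with finite subpaths of the $S^k$ with $k\geq n$. Applying \cref{cor:HalfGridWithJumpingEdges:NEW} to $X$ and pulling the resulting model back along this subdivision yields a model $((W_n)_n,(F_{nm})_{n<m})$ of $K_{\aleph_0}$ in $G$ with the $F_{nm}$ pairwise disjoint, such that $W_n$ together with every $F_{nm}$ and $F_{mn}$ with $m\geq n$ is contained in $\hat H_n$.

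The heart of the proof is the upgrade of this (a priori not even fat) model to an ultra fat one: two distinct branch sets $W_n,W_{n'}$ may each meet a common vertical double ray $S^k$ with $k$ large, and each may approach $S^0$ closely along distinct paths $Q_m$. To repair this I would pass to a sufficiently fast-growing subsequence $n_0 < n_1 < \dots$ and, at step $t$, replace $W_{n_t}$ by the component of $W_{n_t} - B_G(S^0,\rho_t)$ that still contains every attachment vertex of the retained branch paths incident to $W_{n_t}$, with $\rho_t$ growing in $t$; this is legitimate because $W_{n_t}$, lying in $\hat H_{n_t}$ and being a branch set of a $K_{\aleph_0}$ model, contains such attachment vertices arbitrarily far from $S^0$ (so only a bounded part is removed), and the finitely many collisions of the $n_t$-part with earlier levels are cut away along with it. Granting these choices, the ultra-fat bounds $d_G(W_{n_s},W_{n_t}) \geq \min\{s,t\}$, $d_G(F_{n_sn_t},F_{n_kn_\ell}) \geq \min\{s,t,k,\ell\}$ and $d_G(W_{n_s},F_{n_kn_\ell}) \geq \min\{s,k,\ell\}$ (after relabelling) follow from the two recorded escaping properties — which separate different levels and push level $k$ outside $B_G(S^0,k)$ — together with the $m$-fatness of the $Q_m$ and the preprocessing bound $d_G(Q_m,Q_{m'}) \geq \min\{m,m'\}$; these jointly control the only places where distinct $W$'s or $F$'s can come close, namely along shared vertical double rays and along the $Q_m$. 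Local finiteness of $G$ lets all the choices be made recursively, fixing only finitely many interactions at each step.

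I expect this last step to be the main obstacle: one must pin down \emph{where} in $H$ each transferred branch set and branch path lies precisely enough to excise the bounded collisions while keeping branch sets connected and incident to all retained branch paths. The escaping property and the $m$-fatness of the $Q_m$ are tailored to exactly this interaction, but matching the fixed locations of the $Q_m$ against the annular structure around $S^0$ is delicate, whereas the skeleton-and-transfer step before it is routine.
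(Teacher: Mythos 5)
Your skeleton (build a subdivision of a graph $X$ as in \cref{cor:HalfGridWithJumpingEdges:NEW} inside the union of $H$ and the $Q_m$, transfer the resulting $K_{\aleph_0}$ model, and use the location control that pieces indexed $\geq n$ lie in the levels-$\geq n$ part) is the same as the paper's, but the step you yourself flag as the crux is where the argument has a genuine gap. The only metric input you allow yourself is the raw escaping conditions \ref{itm:DefEscHG:S^i} and \ref{itm:DefEscHG:P_ij}, the $m$-fatness of the $Q_m$, and the pairwise separation of the $Q_m$. These separate parts of $H$ whose levels differ, but they give no control at all over pairs at the same or adjacent levels: two horizontal paths $P_{k\ell},P_{k\ell'}$, a horizontal path and a far-away segment of $S^{k}$ or $S^{k-1}$, or two non-incident segments of the same vertical double ray may all sit at distance $1$ in an escaping subdivision, and the transferred branch sets and branch paths do use such pairs (the model of \cref{cor:HalfGridWithJumpingEdges:NEW} sweeps through essentially the whole grid). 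Excising $B_G(S^0,\rho_t)$ from the branch sets cannot repair this, because the offending close pairs can lie arbitrarily far from $S^0$. What is needed is to re-select the horizontal paths so that the images of any two non-incident grid edges lying at levels $\geq K$ are at least (roughly) $K$ apart; this is precisely \cref{lem:DivSubHexGridOfEscHexGrid}, which the paper invokes at this point and which you never use — its recursive-selection content is not a consequence of \ref{itm:DefEscHG:S^i} and \ref{itm:DefEscHG:P_ij}.

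There is a second, independent gap: $m$-fatness of $Q_m$ says nothing about $d_G(Q_m,S^{i_m})$ and $d_G(Q_m,S^{j_m})$ (these indices are excluded in the definition), so $Q_m$ may run at distance $1$ alongside a long stretch of $S^{i_m}$; then the branch path representing the jump $f_m$ is close to the images of non-incident edges of column $i_m$, hence to other model pieces, and again no surgery near $S^0$ helps. The paper deals with this by trimming each $Q_m$ to a $B_G(S^{i_m},i'_m)$--$B_G(S^{j_m},j'_m)$ path extended by shortest paths to $S^{i_m}$ and $S^{j_m}$, deleting the finitely many $P_{k\ell}$ per level that become too close and restoring the grid via \cref{prop:HexGridAfterDeletingPaths}. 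Once these two preparations are made, no modification of the transferred model is needed at all: since the branch sets are pairwise disjoint and (as every vertex of $K_{\aleph_0}$ has degree at least $2$) may be taken to consist of full branch paths of the combined subdivision, distinct pieces only ever meet it along non-incident branch paths, so the fatness just established, together with the location control from \cref{cor:HalfGridWithJumpingEdges:NEW}, gives ultra fatness directly after restricting to indices in $3\N$. Your ball-excision step is thus aimed at the wrong problem, and moreover is not justified as stated: there is no reason that all attachment vertices of the retained branch paths lie in a single component of $W_{n_t}-B_G(S^0,\rho_t)$.
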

	
	\begin{proof}
		Since $G$ is locally finite and the $Q_m$'s are finite, we may assume that $d_G(Q_m, Q_{m'}) \geq m$ for all $m < m' \in \N$, by possibly deleting some of the $Q_m$'s. 
		Further, we may assume, for every $Q_m =: q_0^m \dots q_{n_m}^m$, that $Q_m \cap G[S^{i_m}, i'_m] = q_0^m \dots q_{i'_m}^m$ and $Q_m \cap G[S^{j_m}, j'_m] = q^m_{n_m-j'_m} \dots q^m_{n_m}$ where $i'_m := \min\{\lfloor i_m/3\rfloor, m\}$ and $j'_m := \min\{\lfloor j_m/3 \rfloor, m\}$. In particular, if $Q_m$ has distance less than $\lfloor i'_m/3 \rfloor$ to vertices $u, v \in V(S^{i_m})$, then $d_G(u, v) < i'_m$, and similarly for $j_m$.
		Indeed, let $Q'_m$ be a subpath of $Q_m$ which is a $B_G(S^{i_m}, i'_m)$--$B_G(S^{j_m}, j'_m)$ path. Then we can replace $Q_m$ by a path that consists of $Q'_m$ and a shortest $S^{i_m}$--$Q'_m$ path and a shortest $Q'_m$--$S^{j_m}$ path. 
		Note that to regain that $d_G(Q_m, P_{k\ell}) \geq \min\{k,m\}$, we might have to delete some of the $P_{k\ell}$'s. But since the new $Q_m$ can only be too close to paths $P_{k\ell}$ with $k \in \{i_m, i_m+1, j_m, j_m+1\}$ and because $G$ is locally finite, we only need to delete at most finitely many $P_{k\ell}$ for every $k$, so by \cref{prop:HexGridAfterDeletingPaths} this yields a subdivision $H' \subseteq H$ of the hexagonal half-grid with the same vertical double rays as $H'$.
		Note that for the new $Q_m$, we in particular have that if $Q_m$ has distance less than $i'_m$ to vertices $u, v \in V(S^{i_m})$, then $d_G(u, v) < i_m$, and similarly for $j_m$.
		
		Then the graph $\widetilde{H}$ obtained from the union of $H'$ and the $Q_m$'s is a subdivision of a graph $X$ as in the premise of \cref{cor:HalfGridWithJumpingEdges:NEW}.
		Hence,~$G$ contains a model $(\cV, \cE)$ of $K_{\aleph_0}$ such that $V_i, V(E_{ij}) \subseteq \widetilde{H}_{i} := H'_{\geq i} \cup \bigcup_{m \geq i} Q_m$ and such that the $E_{ij}$'s are pairwise disjoint. 
		By \cref{lem:DivSubHexGridOfEscHexGrid} and the assumptions on the $Q_m$'s, we may assume for every $K \in \N$ that the images of every two non-incident edges of $X$ have distance at least $\lfloor K/3 \rfloor$ in $G$ if their images in $\widetilde{H}$ are contained in $\widetilde{H}_{K}$. 
		Since every vertex of $K_{\aleph_0}$ has degree at least~$2$, we may assume that if some $V_i \in \cV$ meets some branch paths of $\widetilde{H}$ in an inner vertex, then it in fact contains it. Similarly, every $E_{ij} \in \cE$ contains any branch path of $\widetilde{H}$ as soon as it meets an inner vertex of it.
		Hence, since the $V_i$'s, $E_{ij}$'s are pairwise disjoint (except for incident branch set - path pairs) and contained in $\widetilde{H}_{i}$, they have distance at least $\lfloor i/3\rfloor$ to all $V_k, E_{k\ell}$ with $k \geq i$. It follows that $((V_i)_{i \in 3\N}, (E_{ij})_{i \neq j \in 3\N})$ is ultra fat.
	\end{proof}

	\subsection{Finding two thick components} \label{subsec:TwoThickComps}
	
	Let $G$ be a locally finite, \qt\ graph whose cycle space is generated by short cycles. In this subsection we show that if $G$ has a thick end, then either $K_{\aleph_0} \prec_{UF} G$, or $G$ contains a quasi-geodesic double ray such that, for some $L \in \N$, there are at least two thick components of $G-B_G(R, L)$. Together with \cref{lem:TwoThickCompsYieldFullGrid}, this then concludes the proof of \cref{thm:AsymptoticFullGridWeaker}.
	The proof of this assertion is mainly divided into two lemmas, \cref{lem:TwoThickComponents,lem:ThickAndHalfThickComponent} below; see \cref{subsec:ProofSketch12} for a sketch of the proof. 
	
	To carry out the proofs of \cref{lem:TwoThickComponents,lem:ThickAndHalfThickComponent}, we need the following two auxiliary statements.
	
	\begin{lem} \label{lem:ShortPathsInComp:NEW}
		Let $R=\dots r_{-1}r_0r_1\dots$ be a quasi-geodesic double ray in a graph $G$ of finite maximum degree whose cycle space is generated by cycles of bounded length. For every $K, N \in \N$ there exist $d = d(K, N), \ell = \ell(K) \in \N$ such that the following holds: if a component $C$ of $G-B_G(R, K)$ has neighbours in $B_G(R_{\leq i}, K)$ and $B_G(R_{\geq i + N}, K)$ for some $i \in \Z$, then there exist vertices $x \in \partial_G C \cap B_G(r_{i-\ell}Rr_i, K+1)$ and $y \in \partial_G C \cap B_G(r_{i+N}Rr_{i+N+\ell}, K+1)$ such that $d_C(x, y) \leq d$.
	\end{lem}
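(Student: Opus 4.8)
The plan is to combine the connectivity of the "$\kappa/2$-thickening" of the boundary of a component (Lemma \ref{lem:kappa/2NhoodIsConnected}) with the quasi-geodesic property of $R$ to bound, in $C$, the distance between a vertex near $r_i$ and a vertex near $r_{i+N}$. Let $\kappa$ be such that the cycle space of $G$ is generated by cycles of length $\leq \kappa$, and set $\ell := \ell(K) := 2(K+1)+\kappa$ (a value that will come out of the estimate below; the precise constant is immaterial). Since $G$ has finite maximum degree $\Delta$, the ball $B_G(r_{i-\ell}Rr_{i+N+\ell}, K+1)$ has size at most some $f(K,N) := (\ell+N+\ell+1)\cdot\Delta^{K+1}$, independent of $i$ by vertex-transitivity of the bound (it only depends on $\Delta$, $K$, $N$, not on $i$). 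We will take $d := d(K,N)$ to be this quantity $f(K,N)$ — or rather, the number of vertices in the relevant portion of $C[\partial_G C, \lfloor\frac{\kappa-2}{2}\rfloor]$, which we now argue is globally bounded.

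First I would pin down the vertices $x$ and $y$. Since $C$ has a neighbour in $B_G(R_{\leq i}, K)$, there is a vertex of $\partial_G C$ sending an edge into $B_G(R_{\leq i}, K)$; this boundary vertex lies in $B_G(R_{\leq i}, K+1)$. I claim we may take it in $B_G(r_{i-\ell}Rr_i, K+1)$: indeed, since $R$ is $c$-quasi-geodesic and $C$ is connected and also touches $B_G(R_{\geq i+N}, K)$, a boundary vertex attaching to a far-left part $r_m$ with $m \ll i$ would, together with a boundary vertex attaching to $R_{\geq i+N}$, give (via a path through the connected graph $C[\partial_G C,\lfloor\frac{\kappa-2}{2}\rfloor]$) a short detour in $G$ between $r_m$ and $r_{i+N}$, contradicting that $R$ is quasi-geodesic once $|m - (i+N)|$ is large enough — this is where $\ell = \ell(K)$ (independent of $N$!) must be chosen carefully: the bound on $d_G$ coming from the detour does not depend on $N$, so the contradiction with $d_R(r_m, r_{i+N}) \geq |m-i-N|$ forces $|m-i| \leq \ell(K)$. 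Symmetrically we get $y \in \partial_G C \cap B_G(r_{i+N}Rr_{i+N+\ell}, K+1)$. (I would be a little careful here: the detour argument already needs Lemma \ref{lem:kappa/2NhoodIsConnected} to route from $x$ to $y$ inside $C$, so logically I would first establish the existence of \emph{some} pair $x,y$ of boundary vertices — one on each side — and then iteratively push them inward using the quasi-geodesic bound until both lie in the prescribed windows.)

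Next, for the distance bound $d_C(x,y) \leq d$: both $x$ and $y$ lie in $B_G(r_{i-\ell}Rr_{i+N+\ell}, K+1)$, and more to the point, a geodesic $P$ in $G$ from $x$ to $y$ has length $d_G(x,y) \leq d_G(x, r_i) + d_R(r_i, r_{i+N}) + d_G(r_{i+N}, y) \leq (K+1+\ell) + N + (K+1+\ell)$, so $P \subseteq B_G(r_{i-2\ell}Rr_{i+N+2\ell}, K+1+\ell+N)$ — a ball of bounded size $D(K,N)$ since $\Delta < \infty$. Now the issue is that $P$ need not lie in $C$. To fix this, apply Lemma \ref{lem:kappa/2NhoodIsConnected} to the connected subgraph $Y := B_G(R, K)$: the component $C$ satisfies that $C[\partial_G C, \lfloor\frac{\kappa-2}{2}\rfloor] = C \cap G[R, K+\lfloor\kappa/2\rfloor]$ is connected. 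Reroute $P$: each maximal subpath of $P$ that leaves $C$ either stays inside $B_G(R,K)$ or... actually the clean move is: since $x, y \in \partial_G C$ and $C[\partial_G C, \lfloor\frac{\kappa-2}{2}\rfloor]$ is connected, there is \emph{some} $x$--$y$ path $W$ inside this connected graph; and this connected graph is contained in $C \cap G[R, K+\lfloor\kappa/2\rfloor]$, hence inside $B_G(r_{i-2\ell'}Rr_{i+N+2\ell'}, K + \lfloor\kappa/2\rfloor)$ for an appropriate $\ell' = \ell'(K)$ — this needs the quasi-geodesic argument once more to confine the thickened boundary of $C$ between the two windows. This confining region is again a ball of globally bounded size $d = d(K,N)$ (bounded because $\Delta < \infty$ and the region has bounded "length" $N + O_K(1)$ and bounded "width" $K + O_\kappa(1)$). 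Since $W$ is a path in a graph on at most $d$ vertices, $d_C(x,y) \leq |W| \leq d$, as required.

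\textbf{Main obstacle.} The delicate point is ensuring that $\ell(K)$ depends only on $K$ and not on $N$, and that the confinement of $C[\partial_G C, \lfloor\frac{\kappa-2}{2}\rfloor]$ between the left window (near $r_i$) and the right window (near $r_{i+N}$) is genuinely a region of bounded \emph{length}. Both rest on the same idea: any boundary vertex of $C$ sitting over a point $r_m$ of $R$ can be linked inside the connected graph $C[\partial_G C,\lfloor\frac{\kappa-2}{2}\rfloor]$ to any other boundary vertex; if two such boundary vertices sat over points $r_m, r_{m'}$ of $R$ with $|m - m'|$ enormous, we would get an $r_m$--$r_{m'}$ path in $G$ whose length is bounded purely in terms of the number of boundary vertices — but there is no a priori bound on that number. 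The resolution is to \emph{not} use "number of boundary vertices" but instead to run the argument incrementally: pick a geodesic (in the connected graph $C[\partial_GC,\lfloor\frac{\kappa-2}{2}\rfloor]$) from a left-touching boundary vertex to a right-touching one; consecutive vertices on it are at $G$-distance $\leq \lfloor\frac{\kappa-2}{2}\rfloor \cdot 2 + 1 \leq \kappa$ apart, hence their "$R$-shadows" (closest points on $R$, which exist within distance $K+1$) move by at most $\kappa + 2(K+1)$ per step along $R$; if the shadow ever ventured more than $\ell(K)$ to the left of $r_i$ it would have to come back, and the quasi-geodesic inequality applied to that excursion (whose two endpoints have bounded $G$-distance via the path along the boundary-thickening, but whose $R$-distance is $\geq$ the excursion length) yields the contradiction. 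This is exactly the kind of "quasi-geodesics don't backtrack" estimate that is standard but needs to be written out carefully; I expect it to be the bulk of the proof.
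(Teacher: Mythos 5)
You have the right ingredients (the connectedness of $C[\partial_G C, \lfloor\frac{\kappa-2}{2}\rfloor]$ from \cref{lem:kappa/2NhoodIsConnected}, bounded-degree counting, and the quasi-geodesic inequality), and you correctly isolate the main difficulty, but your resolution of it is wrong. Several of your intermediate claims rest on the misconception that the hypothesis confines $C$ near the window $r_iRr_{i+N}$: a boundary vertex of $C$ sitting over $r_m$ with $m \ll i$ does \emph{not} contradict quasi-geodesicity (the connecting route through $C[\partial_G C,\lfloor\frac{\kappa-2}{2}\rfloor]$ may be arbitrarily long, so it gives no short detour in $G$), and the whole thickened boundary is certainly \emph{not} contained in $B_G(r_{i-2\ell'}Rr_{i+N+2\ell'},\cdot)$ — in the paper's applications $C$ is typically a long or thick component attaching to $R$ arbitrarily far out. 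The same flaw kills your ``excursion'' argument: if the shadow of your geodesic ventures far left of $r_i$ and returns, the two relevant vertices are joined along the path only by a \emph{long} segment (its length is at least the excursion depth divided by the per-step shadow jump), so their $G$-distance is not bounded and no contradiction with $d_R \leq c\, d_G$ arises. Excursions simply can occur; they cannot be ruled out, so an argument that tries to forbid them will not close.

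The paper's proof sidesteps this by \emph{choosing} the path rather than constraining the boundary. Partition the thickened boundary $B_C(\partial_G C, \lfloor\frac{\kappa-2}{2}\rfloor)$ into $U^-$, $U^*$, $U^+$ according to whether a vertex is within $K+\lfloor\frac{\kappa}{2}\rfloor$ of $R_{\leq i}$, of $r_iRr_{i+N}$, or of $R_{\geq i+N}$ (these three sets cover it, since every such vertex is within $K+\lfloor\frac{\kappa}{2}\rfloor$ of $R$), and take a $U^+$--$U^-$ path $P$ in the connected graph $C[\partial_G C, \lfloor\frac{\kappa-2}{2}\rfloor]$; equivalently, in your language, pass to the subpath between the last vertex whose shadow index is $\leq i$ and the first later vertex whose shadow index is $\geq i+N$. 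Its interior then lies in $U^*$, which has at most $N\cdot\Delta(G)^{K+\lfloor\kappa/2\rfloor+1}$ vertices, giving the length bound $d(K,N)$ directly — no confinement of $C$ is needed. The window bound comes from a \emph{single} application of the quasi-geodesic inequality at each end: an interior vertex adjacent to the endpoint is within $K+\lfloor\frac{\kappa}{2}\rfloor$ of $r_iRr_{i+N}$, so the endpoint's shadow $r_j$ is joined to some $r_k$ with $i\leq k\leq i+N$ by a path of length at most $2K+\kappa+1$, whence $|j-k|\leq c(2K+\kappa+1)=:\ell(K)$ — note $\ell$ must carry the quasi-geodesic constant $c$, which your $\ell=2(K+1)+\kappa$ omits. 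Finally, the endpoints produced this way lie only in the thickened boundary; to get $x,y\in\partial_G C$ at distance $\leq K+1$ from $R$ one appends paths of length $\leq\lfloor\frac{\kappa-2}{2}\rfloor$ to genuine boundary vertices and reruns the same one-step estimate, a detail your sketch does not address.
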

	
	\begin{proof}
		Let $c, \kappa \in \N$ such that $R$ is $c$-quasi-geodesic and such that the cycle space of $G$ is generated by cycles of length at most $\kappa$.
		By assumption, the maximum degree $\Delta(G)$ of~$G$ is finite.
		We prove the assertion with $d := N\cdot \Delta(G)^{K+\lfloor\frac{\kappa}{2}\rfloor+1} + \kappa$ and $\ell := c(2K + \kappa + 1)$.
		
		Let $U^+, U^*, U^- \subseteq B_C(\partial_G C, \lfloor \frac{\kappa-2}{2}\rfloor)$ be the set of vertices in $C$ that have distance at most $K + \lfloor\frac{\kappa}{2}\rfloor$ in~$G$ from $R_{\geq i + N}$, $r_iRr_{i+N}$ and $R_{\leq i}$, respectively; and note that by the assumptions on~$C$, the sets~$U^+$ and~$U^-$ are non-empty.
		By \cref{lem:kappa/2NhoodIsConnected}, $C[\partial_G C, \lfloor\frac{\kappa-2}{2}\rfloor]$ is connected. 
		Hence, since $U^+, U^- \subseteq B_C(\partial_G C, \lfloor\frac{\kappa-2}{2}\rfloor)$, there is a $U^+$--$U^-$ path $P = p_0 \dots p_n$ in~$C[\partial_G C, \lfloor\frac{\kappa-2}{2}\rfloor]$. By the definition of a $U^+$--$U^-$ path, $P$ has only its first vertex in $U^+$ and only its last vertex in $U^-$. Thus, since $B_C(\partial_G C, \lfloor\frac{\kappa-2}{2}\rfloor) = U^+ \cup U^* \cup U^-$, it follows that $\mathring{P} = p_1 \dots p_{n-1}$ is contained in~$U^*$, and hence~$P$ has length at most $|U^*| + 1 \leq N\cdot\Delta(G)^{K+\lfloor\frac{\kappa}{2}\rfloor+1}$. 
		Let $j \geq i+N$ and $j' \leq i$ such that $P$ starts in $V(C) \cap B_G(r_j, K+\lfloor\frac{\kappa}{2}\rfloor)$ and ends in $V(C) \cap B_G(r_{j'}, K+\lfloor\frac{\kappa}{2}\rfloor)$, and let $Q$ be a shortest $B_G(r_j, K+1)$--$p_0$ path and $Q'$ a shortest $B_G(r_{j'}, K+1)$--$p_n$ path. Then the concatenation of $Q, P$ and $Q'$ yields a $B_G(r_j, K+1)$--$B_G(r_{j'}, K+1)$ path $P'$ in $G$ which starts in a vertex $x \in \partial_G C \cap B_G(r_j, K+1)$ and ends in some $y \in \partial_G C \cap B_G(r_{j'}, K+1)$. In particular,~$Q$ and~$Q'$ have length at most $\lfloor\frac{\kappa-2}{2}\rfloor$, and hence $P'$ has length at most $d$, which implies $d_C(x,y) \leq d$. Moreover, $j \geq i+N$ and $j' \leq i$. It remains to show that $j \leq i+N+\ell$ and $j' \geq i - \ell$.  
		Since $\mathring{P}$ lies in~$U^*$, there exists a $p_1$--$r_iRr_{i+N}$ path~$Q''$ of length at most $K + \lfloor \frac{\kappa}{2}\rfloor$. Then $Qp_0p_1Q''$ is an $r_j$--$r_k$ path for some $k \leq i+N$. As $Qp_0p_1Q''$ has length at most $(K + \lfloor\frac{\kappa}{2}\rfloor) + 1 + (K + \lfloor\frac{\kappa}{2}\rfloor) \leq 2K + \kappa + 1$ and $R$ is $c$-quasi-geodesic, it follows that $|j - k| = d_R(r_j, r_k) \leq c(2K + \kappa + 1) = \ell$, and hence $j \leq k + \ell \leq i + N + \ell$. The case $j' \geq i-\ell$ is analogous.
	\end{proof}
	
	\begin{cor} \label{cor:OnlyFinitelyManyLongComps}
		Let $L \in \N$, and let $R$ be a quasi-geodesic double ray in some locally finite graph $G$ whose cycle space is generated by cycles of bounded length. Then $G-B_G(R,L)$ has at most finitely many long components.
		
		In particular, if, for every $K \geq L$, $G-B_G(R,K)$ has a long component $C_K$, then $G-B_G(R,L)$ has a thick component which contains infinitely many $C_K$.
	\end{cor}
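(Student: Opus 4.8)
The plan is to prove the two statements in turn. The first follows from \cref{lem:ShortPathsInComp:NEW}, applied to each long component $C$ of $G-B_G(R,L)$ with the parameters $K:=L$, $N:=1$ and $i:=-1$. Indeed, a long component of $G-B_G(R,L)$ has a neighbour in $B_G(R_{\le-1},L)$ and a neighbour in $B_G(R_{\ge0},L)$ (by definition of long), so the hypothesis of \cref{lem:ShortPathsInComp:NEW} is satisfied, and it provides a vertex $x\in\partial_G C\cap B_G(r_{-1-\ell}Rr_{-1},L+1)$, where $\ell=\ell(L)$ depends only on $L$. (Examining the proof of \cref{lem:ShortPathsInComp:NEW}, the part of its conclusion that locates $x$ near $r_i$ only uses local finiteness, \cref{lem:kappa/2NhoodIsConnected}, and that $R$ is quasi-geodesic, so no bound on the maximum degree is needed for it.) Since $G$ is locally finite, $F:=B_G(r_{-1-\ell}Rr_{-1},L+1)$ is finite, and every long component of $G-B_G(R,L)$ contains a vertex of $F$; as distinct components of $G-B_G(R,L)$ are vertex-disjoint, there can be at most $|F|$ of them.

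For the second statement, the key ingredient I would isolate is the following downward-monotonicity of longness: if $C$ is a long component of $G-B_G(R,K)$ and $L'\le K$, then the component $C'$ of $G-B_G(R,L')$ containing $C$ is again long. To prove this, fix $i\in\N$ and let $c$ be such that $R$ is $c$-quasi-geodesic; I would show that $C'$ attaches to $B_G(R_{\ge i},L')$, the argument for $B_G(R_{\le-i},L')$ being symmetric. Choose $j\in\N$ with $j>i-1+c(K+L'+1)$. As $C$ is long it attaches to $B_G(R_{\ge j},K)$, so there is a path in $G$ of length at most $K+1$ from some $u\in V(C)$ to some $r_k$ with $k\ge j$. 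Since $u\notin B_G(R,K)\supseteq B_G(R,L')$ but $r_k\in B_G(R,L')$, walking along this path from $u$ reaches $B_G(R,L')$ for the first time at some vertex $w^+$, whose predecessor $w^-$ on the path lies in $V(C')$ (the initial segment avoids $B_G(R,L')$ and starts in $C\subseteq C'$); thus $C'$ attaches to $B_G(R,L')$ via $w^+$. Finally $d_G(w^+,r_k)\le K+1$, so if $w^+$ were within $L'$ of some $r_b$ with $b<i$ we would get $d_G(r_k,r_b)\le K+L'+1$, contradicting $d_G(r_k,r_b)\ge(k-b)/c\ge(j-i+1)/c>K+L'+1$; hence $w^+\in B_G(R_{\ge i},L')$, as desired. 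I expect this claim to be the main obstacle: it is the only place where the quasi-geodesic hypothesis is genuinely used, and the delicate point is to pick $j$ large enough that the boundary vertex $w^+$ one produces sits near the tail $R_{\ge i}$ rather than near the initial segment $Rr_{i-1}$.

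To conclude, for each $K\ge L$ let $D_K$ be the component of $G-B_G(R,L)$ containing $C_K$. By the downward-monotonicity claim with $L':=L$, every $D_K$ is long, so by the first statement $\{D_K:K\ge L\}$ is finite; hence some component $D$ of $G-B_G(R,L)$ satisfies $D=D_K$ for infinitely many $K$, and in particular $D$ contains infinitely many of the $C_K$. It remains to check that $D$ is thick, so let $L'\ge L$ be given and pick $K\ge L'$ with $C_K\subseteq D$. The component $C'$ of $G-B_G(R,L')$ containing $C_K$ is long by the downward-monotonicity claim, and since $C'$ is connected, contained in $G-B_G(R,L')\subseteq G-B_G(R,L)$, and meets $D$, we get $C'\subseteq D$. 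Thus for every $L'\ge L$ some long component of $G-B_G(R,L')$ is contained in $D$, i.e.\ $D$ is thick, which completes the proof.
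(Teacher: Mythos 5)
Your proof is correct and follows essentially the same route as the paper: the first statement via \cref{lem:ShortPathsInComp:NEW} (pinning every long component of $G-B_G(R,L)$ to a fixed finite ball around a bounded segment of $R$) and the second via downward-monotonicity of longness plus pigeonhole. The only difference is one of detail: you spell out, using the quasi-geodesic constant, the monotonicity step that the paper dismisses with a one-line appeal to local finiteness (and you correctly note that the location part of \cref{lem:ShortPathsInComp:NEW}'s conclusion does not need the maximum-degree bound), which is a faithful filling-in rather than a different argument.
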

	
	\begin{proof}
		Applying \cref{lem:ShortPathsInComp:NEW} to $R$ and $K := L$ and $N := 0$ yields some $\ell \in \N$ such that every long component of $G - B_G(R, L)$ has a neighbour in $B_G(r_0Rr_{\ell}, L)$. Hence, since $B_G(r_0Rr_{\ell}, L)$ is finite as $r_0Rr_\ell$ is finite and $G$ is locally finite, there are at most finitely many long components.
		
		For the `in particular'-part note that since $G$ is locally finite, every component of $G-B_G(R,L)$ which contains a long component of $G-B_G(R, K)$ for some $K \geq L$ is long.
		Hence, the assertion follows from the first part by the pigeonhole principle.
	\end{proof}
	
	We can now prove the two main lemmas of this section.
	Given a double ray $R = \dots r_{-1}r_0r_1 \dots$ in a graph $G$, a component $C$ of $G-B_G(R,L)$ is \defn{half-long} if~$C$ has neighbours in $B_G(R_{\geq i}, L)$ \emph{or} in $B_G(R_{\leq -i}, K)$ for all $i \in \N$, and $C$ is \defn{half-thick} if, for every $M \geq L$, some half-long component of $G-B_G(R,M)$ is contained in~$C$.
	
	\begin{lem} \label{lem:TwoThickComponents}
		Let $c, L \in \N$ and let $R$ be a $c$-quasi-geodesic double ray in a locally finite, quasi-transitive graph $G$ whose cycle space is generated by cycles of bounded length. Assume that $G-B_G(R, L)$ has distinct components $C \neq D$ such that $C$ is thick and $D$ is half-thick. 
		Then $G$ contains a $c$-quasi-geodesic double ray $S$ such that $G-B_G(S,L)$ has two thick components.
	\end{lem}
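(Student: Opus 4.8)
Write $\eps$ for the end of $G$ containing the tail $R_{\ge 0}$. The plan is to use quasi-transitivity to replace $R$ by a $c$-quasi-geodesic double ray $S$ both of whose tails run towards $\eps$ alongside $R_{\ge 0}$; then $C$ and $D$, which already accompany $R_{\ge 0}$, will give two thick components of $G-B_G(S,L)$.

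First, a reduction. Since $D$ is half-thick, for every $M\ge L$ some half-long component of $G-B_G(R,M)$ lies in $D$, and, the two defining conditions of ``half-long'' being monotone in $i$, such a component in fact has a neighbour in $B_G(R_{\ge i},M)$ for all $i$, or in $B_G(R_{\le -i},M)$ for all $i$. A pigeonhole over $M\in\N$, using the nesting of the balls $B_G(R,M)$, lets me assume after possibly reversing $R$ that for every $M\ge L$ the component $D$ contains a half-long component $D_M$ of $G-B_G(R,M)$ having a neighbour in $B_G(R_{\ge i},M)$ for all $i$; that is, $D$ accompanies $R_{\ge 0}$ towards $\eps$, both arbitrarily far out and at distance $>M$ from $R$ for every $M$. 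As $C$ is thick, it likewise contains for every $M\ge L$ a long component $C_M$ of $G-B_G(R,M)$ accompanying $R_{\ge 0}$ towards $\eps$.

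Next, the construction of $S$. Since $G$ is locally finite, balls of a fixed radius have boundedly many vertices, so among the vertices of $R$ there are only finitely many isomorphism types of the $R$-decorated pointed balls $(B_G(r_n,\rho),r_n,(r_{n+i})_{0\le i\le\rho})$; a type therefore repeats, and a compactness argument over $\rho\to\infty$ produces an automorphism $g$ of $G$ and indices $n<m$ with $g(r_{m+i})=r_{n+i}$ for all $i\ge 0$. Then $g$ fixes $\eps$, maps $R_{\ge m}$ isometrically onto $R_{\ge n}$, and transports the configuration of Step 1 (the deep components $C_M,D_M$ accompanying $R_{\ge 0}$) further out along $R_{\ge 0}$. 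A careful bookkeeping that stitches together such transported copies, using that $R$ is $c$-quasi-geodesic to keep distances under control, then assembles a $c$-quasi-geodesic double ray $S$ one of whose tails is a tail of $R_{\ge 0}$ and whose other tail runs back towards $\eps$ within a bounded distance $\rho_0$ of $R_{\ge 0}$.

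Finally, granting such an $S$, the conclusion follows: pick $M\gg L+\rho_0+c$ and take $C_M\subseteq C$, $D_M\subseteq D$ from Step 1, both at distance $>M$ from $R$. As $S\subseteq B_G(R_{\ge 0},\rho_0)$ we have $B_G(S,L)\subseteq B_G(R,\rho_0+L)$, disjoint from $C_M$ and $D_M$; hence $C_M,D_M$ lie in components of $G-B_G(S,L)$, and in distinct ones because a $C_M$--$D_M$ path must cross $B_G(R,L)$, of which $B_G(S,L)$ retains the part near $\eps$ separating $C$ from $D$. Since $C_M,D_M$ both accompany $R_{\ge 0}$ towards $\eps$ and both tails of $S$ run towards $\eps$ alongside $R_{\ge 0}$, each of them, and hence its component of $G-B_G(S,L)$, reaches arbitrarily far along both tails of $S$; rerunning this with larger $M$ shows these two components are thick. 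The main obstacle is the middle step --- arranging that the second tail of $S$ genuinely heads back towards $\eps$ staying close to $R_{\ge 0}$ (rather than towards the other end of $R$), while remaining far enough from $C_M,D_M$ for them to survive and not reconnecting the $C$-side to the $D$-side around the shortened separator. These requirements pull against each other, which is exactly why $R$ is assumed quasi-geodesic and why $D$ is assumed half-\emph{thick} (supplying $C_M,D_M$ at distance $>M$ from $R$); it is also the reason the replacement double ray must coarsely preserve the end $\eps$, and hence why an $\eps$-labelled version of the lemma would first have to reduce to the one-ended case via accessibility.
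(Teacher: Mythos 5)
Your construction of $S$ cannot work as described, and this is the decisive gap. You want a $c$-quasi-geodesic double ray one of whose tails is a tail of $R_{\geq 0}$ and whose other tail also ``runs back towards $\varepsilon$ within a bounded distance $\rho_0$ of $R_{\geq 0}$''. Such a double ray is never quasi-geodesic (with any constant): since $G$ is locally finite, the second tail must contain vertices $v_k$ with $d_G(v_k, r_{j_k}) \leq \rho_0$ for indices $j_k \to \infty$, while the first tail contains $r_{j_k}$ itself for all large $k$; these pairs have bounded distance in $G$ but their distance along $S$ tends to infinity because they lie arbitrarily far out on opposite tails. Equivalently, a quasi-geodesic double ray necessarily diverges (the paper uses exactly this fact), whereas your $S$ is built so that its two tails do \emph{not} diverge. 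So the object you aim for does not exist, and the lemma's conclusion (and its later use, which feeds $S$ into \cref{lem:TwoThickCompsYieldFullGrid} as a diverging ray) cannot be reached this way. Two further steps are also unjustified: quasi-transitivity plus local finiteness does not give a single automorphism $g$ with $g(r_{m+i})=r_{n+i}$ for all $i\geq 0$ (the pigeonhole/compactness argument only yields automorphisms $\varphi_i$ with $\varphi_i(r_i)=v$ that agree on longer and longer windows, i.e.\ a limit double ray, not a self-map of $R$); and your final separation claim --- that a $C_M$--$D_M$ path avoiding $B_G(S,L)$ is impossible because ``$B_G(S,L)$ retains the part of $B_G(R,L)$ near $\varepsilon$'' --- is exactly the point that needs proof, since such a path could cross $B_G(R,L)$ in the portion not covered by $B_G(S,L)$; you name this obstacle but do not resolve it.

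For contrast, the paper avoids folding the ray back: it \emph{re-centres} instead. Using quasi-transitivity it picks $r_i \mapsto v$ for infinitely many $i$ in one orbit with $i \to \infty$ along the tail that $D$ accompanies, refines so that the images of larger and larger windows $r_{i-\ell}Rr_{i+\ell}$ coincide, and takes the resulting limit double ray $S$ through $v$, which is automatically $c$-quasi-geodesic. Because $D$ is half-long, near each deep vertex $r_i$ it attaches to $R$ on both sides of $r_i$, and \cref{lem:ShortPathsInComp:NEW} supplies uniformly short paths in $D$ spanning $[i-n,i+n]$; pushing these forward along the $\varphi_i$ and taking limits produces, for every $K$, \emph{long} components $C'_K, D'_K$ of $G-B_G(S,K)$, and a hypothetical $C'_K$--$D'_K$ path avoiding $B_G(S,L)$ is pulled back by $\varphi_i^{-1}$ (using quasi-geodesicity of $R$ to keep it away from the far parts of $R$) to contradict that $C$ and $D$ are distinct components of $G-B_G(R,L)$; \cref{cor:OnlyFinitelyManyLongComps} then converts this into two thick components of $G-B_G(S,L)$. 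If you want to salvage your write-up, this re-centring mechanism is the missing idea.
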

	
	\begin{proof}
		Let $\kappa\in\N$ such that the cycle space of~$G$ is generated by cycles of length at most~$\kappa$.
		As we are done if $D$ is thick, we may assume that we can enumerate $R =: \dots r_{-1}r_0r_1\dots$ so that $N_G(D) \subseteq B_G(R_{\geq 0}, L)$.
		Since $G$ is quasi-transitive, there is an infinite index set $I_0 \subseteq \N$ such that all~$r_i$ with $i \in I_0$ lie in the same $\Aut(G)$-orbit.
		Let $v$ be another vertex in that orbit.
		Then there exists a sequence $(\varphi_i)_{i \in I_0}$ of automorphisms of~$G$ such that $\varphi_i(r_i) = v$.
		Since $G$ is locally finite, there is an infinite index set $I_1 \subseteq I_0$ such that $\varphi_i(r_{i-1}r_{i}r_{i+1})$ coincides for all $i \in I_1$ amongst which we again find some infinite set $I_2 \subseteq I_1$ such that $\varphi_i(r_{i-2}\ldots r_{i+2})$ coincides for all $i \in I_2$ and so on.
		Now pick for every $n \in \N$ some $i_n \in I_n$, and let $I$ consist of these $i_n$'s.
		This leads to a $c$-quasi-geodesic double ray $S$ that contains~$v$ and such that every subpath of~$S$ of length $2\ell \in \N$ that contains~$v$ as central vertex is the image of $r_{i_n-\ell}Rr_{i_n+\ell}$ under~$\phi_{i_n}$ for all $i_n \in I$ with $n \geq \ell$. We enumerate $S =: \dots s_{-1}s_0s_1\dots$ where $s_0 := v$ and $s_1 = \phi_{i_1}(r_{i_1+1})$.
		
		We claim that $S$ is as desired. For this, we show that, for every $K \geq L$, $G-B_G(S,K)$ has long components $C'_K, D'_K$ such that every $C'_K$--$D'_K$ path in $G$ meets $B_G(S, L)$. We first show that it will imply the desired result. Indeed, by \cref{cor:OnlyFinitelyManyLongComps}, $G-B_G(S, L)$ has a thick component $E$ which contains infinitely many of the $C'_K$'s, $D'_K$'s. If infinitely many of the $C'_K$'s, $D'_K$'s do not lie in $E$, then applying \cref{cor:OnlyFinitelyManyLongComps} again yields a second thick component $E' \neq E$. Otherwise, at most finitely many of the $C'_K$'s, $D'_K$'s are not contained in~$E$, which implies that~$E$ contains both~$C'_K$ and~$D'_K$ for some $K \geq L$. But since~$E$ is connected and avoids $B_G(S,L)$, this contradicts that every $C'_K$--$D'_K$ path meets $B_G(S,L)$.
		
		So let $K \geq L$ be given. It remains to show that $G-B_G(S,K)$ has long components $C'_K, D'_K$ such that every $C'_K$--$D'_K$ path in $G$ meets $B_G(S,L)$.
		Since $C$ is thick and $D$ is half-thick there exist components $C_K \subseteq C$ and $D_K \subseteq D$ of $G-B_G(R,K)$ such that $C_K$ is long and $D_K$ is half-long. 
		
		\begin{claim} \label{claim:TwoThickComps}
			There are long components $C'_K, D'_K$ of $G-B_G(S, K)$ and vertices $x \in \partial_G C'_K$ and $y \in \partial_G D'_K$ such that $\phi^{-1}_i(x) \in \partial_G C_K$ and $\phi^{-1}_i(y) \in \partial_G D_K$ for infinitely many $i \in I$.
		\end{claim}
		
		\begin{claimproof}
			Let us first find a component $D'_K$.
			Since $N_G(D) \subseteq B_G(R_{\geq 0}, L)$ and $K \geq L$, we also have $N_G(D_K) \subseteq B_G(R_{\geq 0}, K)$. 
			Let $m \in \N$ such that $N_G(D_K) \cap B_G(r_m, K) \neq \emptyset$. For every $n \in \N$, let $\ell, d_n$ be given by applying \cref{lem:ShortPathsInComp:NEW} to $R$, $K$ and $N := 2n$ (note that $\ell$ does not depend on~$N$). 
			
			Then there exists, for every $n \in \N$ and $i \geq m+n$, a $B_G(r_{i-n-\ell}Rr_{i-n}, K+1)$--$B_G(r_{i+n}Rr_{i+n+\ell}, K+1)$ path $P_{in}$ in $D_K$ of length at most $d_n$ with endvertices $p_{in} \in \partial_G D_K \cap B_G(r_{i-n-\ell}Rr_{i-n}, K+1)$ and $q_{in} \in \partial_G D_K \cap B_G(r_{i+n}Rr_{i+n+\ell}, K+1)$.
			
			By the choice of $I$, we have $\phi_i(r_{i-\ell}Rr_{i+\ell}) = s_{-\ell}Ss_{\ell}$ for all large enough $i \in I$, and hence these automorphisms $\phi_i$ map $B_G(r_{i-\ell}Rr_{i+\ell}, K+1)$ to $B_G(s_{-\ell}Ss_{\ell}, K+1)$. As $G$ is locally finite, the set $B_G(s_{-\ell}Ss_{\ell}, K+1)$ is finite. Combining these facts with $p_{i0}, q_{i0} \in B_G(r_{i-\ell}Rr_{i+\ell}, K+1)$ yields that there are $p_0, q_0 \in V(G)$ and an infinite index set $J_0 \subseteq I$ such that $\phi_i(p_{i0}) = p_0$ and $\phi_i(q_{i0}) = q_0$ for all $i \in J_0$.
			Using again that $G$ is locally finite and $P_{i0}$ has length at most $d_0$ for all $i \in J_0$, we find a $p_0$--$q_0$ path $P_0 \subseteq G$ and an infinite index set $J'_0 \subseteq J_0$ such that $\phi_i(P_{i0}) = P_0$ for all $i \in J'_0$.
			By the same argument, we find subsets $J'_0 \supseteq J'_1 \supseteq \dots$ such that, for all $n \in \N$ and $i \in J'_n$, $\phi_i(P_{in}) = P_n$ for some path $P_n \subseteq G$ with endvertices $p_n, q_n \in V(G)$.
			Pick for every $n \in \N$ some $i \in J'_n$ and let $J$ consist of these $i$'s.
			
			We claim that there is a component $D'_K$ of $G-B_G(S,K)$ which contains infinitely many of the $P_n$'s, and which is hence long. For this, we first show that no $P_n$ meets $B_G(S, K)$.
			So let $n \in \N$ be given, and set $d'_n := n+\ell + c(d_n+2K+2)$. By the choice of $P_n$, we have $P_n = \phi_i(P_{in})$ for all large enough $i \in J$. Since also $\phi_i(B_G(r_{i-d'_n}Rr_{i+d'_n}, K)) = B_G(s_{-d'_n}Ss_{d'_n}, K)$ for all large enough $i \in J$ and since $P_{in}$ avoids $B_G(R, K)$, it follows that $P_n$ avoids $B_G(s_{-d'_n}Ss_{d'_n},K)$. Moreover, $P_n$ is also disjoint from $B_G(S_{< -d'_n}, K)$ and $B_G(S_{> d'_n}, K)$ since $d'_n = n+\ell+c(d_n+2K+2)$ and because $S$ is $c$-quasi-geodesic and~$P_n$ has length at most $d_n$ and starts in $B_G(s_{-n-\ell}Ss_{-n}, K+1)$ and ends in $B_G(s_{n}Ss_{n+\ell}, K+1)$. 
			
			So every $P_n$ avoids $B_G(S, K)$ and is hence contained in a component of $G-B_G(S, K)$.
			By the choice of~$\ell$ via \cref{lem:ShortPathsInComp:NEW} and because $P_n$ starts in $B_G(S_{\geq n}, K+1)$ and ends in $B_G(S_{\leq -n}, K+1)$, every component that contains some $P_n$ with $n \geq \ell$ attaches to $B_G(s_0Ss_{\ell}, K)$. (This follows by setting $i=N=0$ in \cref{lem:ShortPathsInComp:NEW}, where then $y$ is a witness.) Since this set is finite as $G$ is locally finite, infinitely many $P_n$ lie in the same component $D'_K$ of $G-B_G(S, K)$, which then needs to be long.
			Now since $\phi_i^{-1}(P_n) = P_{in} \subseteq D_K$ for all $i\in I'$, where $I'$ is an infinite subset of~$I$, we may choose $y \in \partial_G D'_K$ as one endvertex of some $P_n$ which is contained in~$D'_K$.
			
			The proof for $C_K'$ is now completely analogous except that we have to use $I'$ instead of~$I$ in order to ensure that $\phi^{-1}_i(x) \in \partial_G C_K$ and $\phi^{-1}_i(y) \in \partial_G D_K$ for (the same) infinitely many $i \in I$.
		\end{claimproof}
		
		Let $C'_K, D'_K$ and $x \in \partial_G C'_K$, $y \in \partial_G D'_K$ be given by \cref{claim:TwoThickComps}.
		To finish the proof, we are left to show that every $C'_K$--$D'_K$ path in $G$ meets $B_G(S, L)$.
		For this, suppose for a contradiction that there is a $C'_K$--$D'_K$ path that avoids $B_G(S,L)$. Then, since $C'_K \ni x$ and $D'_K \ni y$ are connected, there also exists an $x$--$y$ path~$Q$ that avoids $B_G(S,L)$. Denote by $m$ the length of $Q$, let $\ell \in \N$ such that $x, y \in B_G(s_{-\ell}Ss_\ell, K+1)$, and set $m' := c(K+m+L+2)+\ell$. 
		By the choice of $S$ there is some $i$ among the infinitely many $i \in I$ that satisfies $\phi_i^{-1}(x) \in \partial_G C_K$ and $\phi^{-1}_i(y) \in \partial_G D_K$ such that $\phi_i(r_{i-m'}Rr_{i+m'}) = s_{-m'}Ss_{m'}$. Since $Q$ avoids $B_G(S,L)$, it follows that $\phi^{-1}_i(Q)$ avoids $B_G(r_{i-m'}Rr_{i+m'}, L)$. But $\phi^{-1}_i(Q)$ also avoids $B_G(R_{< i-m'}, L) \cup B_G(R_{> i+m'}, L)$: otherwise, since $\phi^{-1}_i(Q)$ has length $m$ and starts in $B_G(r_{i-\ell}Rr_{i+\ell}, K+1)$, there would be a path of length at most $L + m + (K+1)$ that joins a vertex from $R_{< i-m'} \cup R_{> i+m'}$ to a vertex from $r_{i-\ell}Rr_{i+\ell}$. Since $m'-\ell = c(L+m+K+2)$, this would contradict that $R$ is $c$-quasi-geodesic.
		Hence, $\phi_i^{-1}(Q)$ is an $\phi^{-1}_i(x)$--$\phi_i^{-1}(y)$ path which avoids $B_G(R, L)$. But since $\phi^{-1}_i(x) \in \partial_G C_K \subseteq C_L$ and $\phi_i^{-1}(y) \in \partial_G D_K \subseteq D_L$, this contradicts that $C_L$ and $D_L$ are distinct components of $G-B_G(R, L)$.
	\end{proof}

	\begin{lem} \label{lem:ThickAndHalfThickComponent}
		Let $G$ be a locally finite, quasi-transitive graph with a thick end whose cycle space is generated by cycles of bounded length. If $G$ does not contain $K_{\aleph_0}$ as an ultra fat minor, then there exists $L \in \N$ and a quasi-geodesic double ray $R$ in~$G$ such that $G - B_G(R, L)$ has distinct components $C \neq D$ such that~$C$ is thick and $D$ is half-thick.
	\end{lem}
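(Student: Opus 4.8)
The plan is to work inside a fixed thick end, produce a candidate double ray from a quasi-geodesic $3$-star of rays via \cref{lem:QuasiGeodesic3StarOfRays}, and then push through a case analysis on where the third ray of the star sits relative to an escaping half-grid obtained from \cref{thm:HalfGrid:BoundedCycles:copy}.

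\textbf{Setup.} First I would note that $G$ is accessible by \cref{thm:CycleSpaceAccessible}, fix a thick end $\eps$ of $G$, and apply \cref{lem:QuasiGeodesic3StarOfRays} to obtain $c \in \N_{\geq 1}$ and $\eps$-rays $R_1, R_2, R_3$ with $R_1 \cap R_2 = R_1 \cap R_3 = R_2 \cap R_3 = \{v\}$, where $v$ is their common starting vertex, such that $R_1 \cup R_2 \cup R_3$ is $c$-quasi-geodesic. Set $R := R_1 \cup R_2$. Then $R$ is a $c$-quasi-geodesic double $\eps$-ray, hence diverges, and since any path in the $3$-star from a vertex of $R_3$ to $R$ runs through $v$, a vertex of $R_3$ at $R_3$-distance more than $cn$ from $v$ has distance more than $n$ from $R$; thus $R_3$ diverges from $R$, i.e.\ a tail of $R_3$ avoids $B_G(R,L)$ for every $L$. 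Throughout, assume $K_{\aleph_0} \not\prec_{UF} G$ (so also $K_{\aleph_0} \not\prec^\eps_{UF} G$); whenever the construction below yields an ultra fat model of $K_{\aleph_0}$ we have reached a contradiction.

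\textbf{A thick component for free.} By \cref{lem:LongCompsExist}, $G-B_G(R,K)$ has a long component for every $K \in \N$, so by the `in particular' part of \cref{cor:OnlyFinitelyManyLongComps} it has a thick component. Applying \cref{thm:HalfGrid:BoundedCycles:copy} to $R$ and such a thick component of $G-R$ either yields $K_{\aleph_0}\prec^\eps_{UF} G$ (a contradiction) or an escaping subdivision $H$ of the hexagonal half-grid whose first vertical double ray is $R$ and whose remaining vertical double rays $S^i$ ($i \geq 1$) all lie in that thick component. For $L \in \N$ let $C_L$ be the component of $G-B_G(R,L)$ containing all but finitely many $S^i$; since each $S^i$ was built inside a long component of $G-B_G(R,M_{i-1})$ and the $M_i$ are unbounded, one checks that each $C_L$ is thick, and $C_{L'} \subseteq C_L$ for $L \leq L'$.

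\textbf{Locating $R_3$.} Since $R_3$ diverges from $R$, a tail of $R_3$ lies in a single component $D^{(L)}$ of $G-B_G(R,L)$.
\emph{Case A: $D^{(L)} \neq C_L$ for some $L$.} I claim $D := D^{(L)}$ is half-thick. Indeed, for $M \geq L$ the tail of $R_3$ lies in a component $D_M \subseteq D$ of $G-B_G(R,M)$; as $R_3$ and the $\eps$-ray $R_{\geq 0}$ are equivalent there are infinitely many disjoint $R_3$--$R_{\geq 0}$ paths, so $\partial_G D_M$ is infinite, and since $B_G(R,M+1)$ meets any bounded stretch of $R$ in a finite set, $\partial_G D_M$ meets $B_G(R_{\geq i},M+1)$ for all $i$ or meets $B_G(R_{\leq -i},M+1)$ for all $i$; hence $D_M$ is half-long. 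So $C_L$ and $D$ are the required thick and half-thick components of $G-B_G(R,L)$, and we are done with this $R$ and $L$. \emph{Otherwise $D^{(L)} = C_L$ for all $L$}, i.e.\ $R_3$ stays with $H$. \emph{Case B: for every $m$, $R_3$ has a vertex at distance $\geq m$ from $H$.} Using that a tail of $R_3$ lies in every $C_L$ and $C_L$ is connected, connect $R_3$ to $H$ by infinitely many paths; concatenating consecutive ones along $R_3$ produces infinitely many pairwise far-apart $H$-paths of unbounded fatness joining vertical double rays $S^{i_m}, S^{j_m}$ with $i_m,j_m$ increasing and $|i_m-j_m|$ large, so \cref{lem:KFatHexGridWithJumpingPaths} gives $K_{\aleph_0}\prec_{UF} G$, a contradiction. \emph{Case C: $R_3$ runs within bounded distance of $H$.} Then either a finite modification of $R_3$ separates $H$ into two halves, one containing a tail of $R_1$ and the other a tail of $R_2$, in which case $R' := R_1 \cup R_3$ (or symmetrically $R_2 \cup R_3$) is $c$-quasi-geodesic and the two halves of $H$ furnish, for a suitable $L$, a thick and a half-thick component of $G-B_G(R',L)$, finishing the proof; or $R_3$ does not separate $H$ in this way, and then there are again infinitely many pairwise far-apart $H$-paths of unbounded fatness, now jumping over $R_3$, yielding $K_{\aleph_0}\prec_{UF} G$, a contradiction.

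\textbf{Main obstacle.} I expect Case C to be the hard part: making precise what ``$R_3$ runs close to $H$'' should mean, establishing the dichotomy between ``$R_3$ separates $H$'' and ``there are $H$-paths jumping over $R_3$'', and—in the separating subcase—verifying that the two sides of $H$ genuinely give a thick and a half-thick component of $G-B_G(R_1\cup R_3,L)$. This requires careful bookkeeping with the escaping structure via \cref{cor:FurtherPropertiesOfEscapingHGs} and with distances controlled by the $c$-quasi-geodesicity of the $3$-star. The half-thickness verification in Case A and the claim that each $C_L$ is thick are secondary technical points.
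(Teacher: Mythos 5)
Your setup (accessibility, the quasi-geodesic $3$-star from \cref{lem:QuasiGeodesic3StarOfRays}, the escaping half-grid $H$ on $R_1\cup R_2$ from \cref{thm:HalfGrid:BoundedCycles:copy}, the thick components $C_L$, and the "Case A" where a tail of $R_3$ lands in a component other than $C_L$) matches the paper's argument, and Case A is essentially correct. The problem is the remaining dichotomy. You split according to whether $R_3$ stays within bounded distance of $H$ or not, but this is not the dichotomy that makes the jumping-path argument work. In your Case B you only know that $R_3$ has \emph{some} vertices arbitrarily far from $H$; it may nonetheless come within a fixed distance $K$ of infinitely many high-index vertical rays $S^i$. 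The subpaths of $R_3$ you want to use as $H$-paths then pass within $K$ of vertical rays $S^k$ with $k\notin\{i_m,j_m\}$, so the $m$-fatness required by \cref{lem:KFatHexGridWithJumpingPaths} ($d_G(Q_m,S^k)\geq\min\{k,m\}$) fails as soon as $m>K$, and you also have no control ensuring $i_m,j_m$ increase with $|i_m-j_m|\geq 2$. What the construction actually needs is the stronger hypothesis that for every $K$ all but finitely many $S^i$ are at distance $>K$ from $R_3$; combined with the escaping structure this yields a \emph{tail} of $R_3$ uniformly far from all of $H$, which is how the paper's Case~1 produces fat jumping paths. The correct complementary case is therefore "there is a $K$ with $d_G(R_3,S^i)<K$ for infinitely many $i$", not "bounded distance from $H$".

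That complementary case is exactly the part you defer as your "main obstacle", and it is where most of the paper's work lies: after re-selecting the vertical rays so that $R_3$ comes close to each $S^i$ exactly once (in increasing order), one splits $H$ along these near-points into an upper and a lower quarter-grid, takes tails $T'_{iL},T''_{iL}$ far from $R_3$, and considers the components of $G-B_G(R_1\cup R_3,L)$ containing them. If these components differ for some $L$ one gets the thick/half-thick pair with $R=R_1\cup R_3$ (your "separating" subcase); if they coincide for all $L$ one must build connecting paths $W'_L$ between the two quarter-grids avoiding $B_G(R_1\cup R_3,2L)$, and then a further case split (on whether the lower endpoints' indices are unbounded or bounded) produces, after re-routing some vertical rays through $W'_L$ respectively through $R_3$, fat $\tilde H$-paths feeding \cref{lem:KFatHexGridWithJumpingPaths}. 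None of this — the precise splitting, the verification of fatness of the $W'_L$-based paths via repeated appeals to \cref{cor:FurtherPropertiesOfEscapingHGs} and \cref{prop:HexGridAfterDeletingPaths}, and the thick/half-thick verification for $R_1\cup R_3$ — is supplied in your proposal, so as it stands the proof has a genuine gap at its core rather than a mere bookkeeping deficit.
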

	
	\begin{proof}
		Let $R_1, R_2, R_3$ be given by applying \cref{lem:QuasiGeodesic3StarOfRays} to some thick end of the \lf, \qt\ graph $G$, which is accessible by \cref{thm:CycleSpaceAccessible}. Then applying \cref{thm:HalfGrid:BoundedCycles} to the quasi-geodesic, and hence diverging, double ray $R_1 \cup R_2$ yields an escaping subdivision~$H$ of the hexagonal half-grid with vertical double rays $S^i$ and horizontal paths $P_{ij}$ such that $S^0 = R_1 \cup R_2$. Since $H$ is escaping, there exist $M_0 < M_1 < \ldots \in \N$ such that $M_i > M_{i-1} + 2i$ for all $i \geq 1$ and
		\begin{enumerate}[label=(\roman*)]
			\item \label{itm:ThickHalfThickComp:Si} $S^i \subseteq G[S^0, M_i] - B_G(S^0, M_{i-1}+2i)$ for all $i \in \N_{\geq 1}$, and
			\item \label{itm:ThickHalfThickComp:Pij} $P_{1j} \subseteq G[S^0, M_1]$ and $P_{ij} \subseteq G[S^0, M_{i}]-B_G(S^0, M_{i-2}+i)$ for all $i \in \N_{\geq 2}$ and $j \in \Z$.
		\end{enumerate}
		Let us also note that, since $R_1 \cup R_2 \cup R_3$ is quasi-geodesic, we also have that
		\begin{enumerate}[label=(\roman*)]
			\setcounter{enumi}{2}
			\item \label{itm:ThickHalfThickComp:R3} the set $V(R_3) \cap B_G(S^0, L)$ is finite for all $L \in \N$.
		\end{enumerate}
		Hence, by \ref{itm:ThickHalfThickComp:Pij} and \ref{itm:ThickHalfThickComp:R3} and because $G$ is locally finite, we may assume, by deleting at most finitely many~$P_{ij}$ for every $i \in \N$ and applying \cref{prop:HexGridAfterDeletingPaths}, that
		\begin{enumerate}[label=(\roman*)]
			\setcounter{enumi}{3}
			\item \label{itm:ThickHalfThickComp:R3Pij} $d_G(R_3, P_{ij}) > 2i$ for all $i \in \N$ and $j \in \Z$.
		\end{enumerate}
		
		Let $\defnm{H_{\geq n}} \subseteq H$, for $n \in \N$, be the subgraph consisting of all $S^i, P_{kj}$ with $i \geq n$ and $k > n$. 
		By \ref{itm:ThickHalfThickComp:Si} and \ref{itm:ThickHalfThickComp:Pij}, we have, for every $L \in \N_{\geq 1}$, that $d_G(H_{\geq L}, S^0) > L$; let~\defn{$C_L$} be the component of $G-B_G(S^0, L)$ containing $H_{\geq L}$. Clearly, the $C_L$'s are long and hence, since $C_1 \supseteq C_2 \supseteq \dots$, the $C_L$'s are thick.
		
		If there is some $L \in \N_{\geq 1}$ such that $R_3 \cap C_L = \emptyset$, then we are done. 
		Indeed, by \ref{itm:ThickHalfThickComp:R3}, there is, for every $L' \geq L$, a component $D_{L'}$ of $G-B_G(S^0, L')$ that contains a tail of $R_3$. Since $R_3$ and $R_1$ belong to the same end, the components $D_{L'}$ are half-long. As clearly $D_{L'} \subseteq D_L$ for all $L' \geq L$, we find that $D_L$ is half-thick. Since also $D_L \neq C_L$ by assumption, $R := R_1 \cup R_2$, $L$, $C := C_L$ and $D:=D_L$ are as desired.
		\medskip
		
		Thus, we may assume that $R_3 \cap C_L \neq \emptyset$ for all $L \in \N_{\geq 1}$. We distinguish two cases. 
		
		\paragraph{\textbf{Case 1:}} \emph{For all $K \in \N$ there is some $N_K \in \N$ such that $d_G(R_3, S^i) > K$ for all $i \geq N_K$.} (See \cref{fig:ThickHalfThickComp:1}.)
		
		We will use~$R_3$ to find fat $H$-paths as in \cref{lem:KFatHexGridWithJumpingPaths}, which then implies that $G$ contains $K_{\aleph_0}$ as an ultra fat minor, concluding the first case of the proof.
		
		Without loss of generality let $N_K \geq K$ for all $K \in \N$.
		By \ref{itm:ThickHalfThickComp:Si} and \ref{itm:ThickHalfThickComp:R3}, for every $K \in \N$, the ray $R_3$ has a tail $T_K$ which avoids $B_G(S^0, M_{N_K-1}+K)$, and which thus satisfies $d_G(T_K, S^i) > K$ for all $i \in \N$ by \ref{itm:ThickHalfThickComp:Si}.
		By \ref{itm:ThickHalfThickComp:Pij} and \ref{itm:ThickHalfThickComp:R3Pij}, we also find $d_G(P_{ij}, T_K) > K$ for all $i \in \N$ and $j \in \Z$, and hence $d_G(H, T_K) > K$. 
		
		\begin{figure}[ht]
			\centering
			\begin{tikzpicture}
%vertical lines

    \foreach \x in {0,1,2,3,4,5,6,7,8}
    \draw[stealth-stealth, line width=0.6] (\x,-0.5) -- (\x,4.5);

%Edges from even to the right

    \foreach \x in {0,2,4,6}
	\foreach \y in {0,1,2,3,4}
    \draw[line width=0.6] (\x,\y) -- (\x+1,\y);

%Edges from odd to the right

    \foreach \x in {1,3,5,7}
    \foreach \y in {0.5,1.5,2.5,3.5}
    \draw[line width=0.6] (\x,\y) -- (\x+1,{\y});

% short edges

	\foreach \y in {0,1,2,3,4}
    \draw[line width=0.6] (8,\y) -- (8.5,\y);

% R_3

	\draw[blue, line width=1.2,line cap=round] (0,1.25) -- (1.25,1.25);

    \begin{scope}[opacity=0.5]
        \draw[Gray, line width=6pt,line cap=round] (3.5,3) to[out angle=270, in angle=150, curve through = {(3.5,2.5)}] (4,1.75);
        \draw[Gray, line width=6pt,line cap=round] (3.5,3) to[out angle=20, in angle=187, curve through = {(4.5,3.28)}] (6.5,3.55);	   \draw[Gray, line width=6pt,line cap=round] (6.5,3.55) to[out angle=290, in angle=30, curve through = {(6.75,1.5)}] (6,0.25);
    \end{scope}
	\draw[blue, line width=1.2,line cap=round] (1.25,1.25) to[out angle=70, in angle=200, curve through = {(2.5,2.5)}] (3.5,3);
	\draw[blue, line width=1.2,line cap=round] (3.5,3) to[out angle=20, in angle=187, curve through = {(4.5,3.28)}] (6.5,3.55); 
	\draw[blue, thick,-stealth, line width=1.2,line cap=round] (6.5,3.55) to[out angle=7, in angle=182, curve through = {(7.5,3.625)}] (9,3.7);
    \draw[Red, line width=1.2,line cap=round] (3.5,3) to[out angle=270, in angle=150, curve through = {(3.5,2.5)}] (4,1.75);
	\draw[Red, line width=1.2,line cap=round] (6.5,3.55) to[out angle=290, in angle=30, curve through = {(6.75,1.5)}] (6,0.25);
	\draw[Red,line width=1.2,line cap=round] (2.5,2.5) to[out angle=270, in angle=30, curve through = {(2.5,2)}] (2,0.75);
	\draw[Red,line width=1.2,line cap=round] (4.5,3.28) to[out angle=270, in angle=150, curve through = {(4.5,2.75)}] (5,1.25);
	\draw[Red,line width=1.2,line cap=round] (7.5,3.625) to[out angle=270, in angle=150, curve through = {(7.5,2.5)}] (8,1.75);

%names

	\draw (-0.5,2.75) node {$R_1$};
	\draw (-0.5,0.5) node {$R_2$};
	\draw[blue] (9,3.3) node {$R_3$};
	\draw (0,-0.8) node {\scalebox{0.85}{$S^0=R_1\cup R_2$}};
	\draw (4.2,-0.8) node {\scalebox{0.85}{$S^{i_{3m}}$}};
	\draw (5.35,-0.8) node {\scalebox{0.85}{$S^{i_{3m+1}}$}};
	\draw (6.38,-0.8) node {\scalebox{0.85}{$S^{i_{3m+2}}$}};
	\draw[Red] (4.45,1.35) node {\scalebox{0.7}{$Q'_{3m+1}$}};
	\draw[Gray] (6.35,1.5) node {\scalebox{0.85}{$W_m$}}; 
	\draw (8.75,0.5) node {$H$};

\end{tikzpicture}
			\vspace{-3em}
			\caption{Sketch of Case 1 in the proof of \cref{lem:ThickAndHalfThickComponent}: the hexagonal half-grid $H$ and the ray $R_3$, which has large distance from $H$ but is connected to $H$ by paths $Q'_m$. The paths $W_m$ consisting of $Q'_{3m}, Q'_{3m+2}$ and a suitable subpath of $R_3$ are $m$-fat $H$-paths.}
			\label{fig:ThickHalfThickComp:1}
		\end{figure}
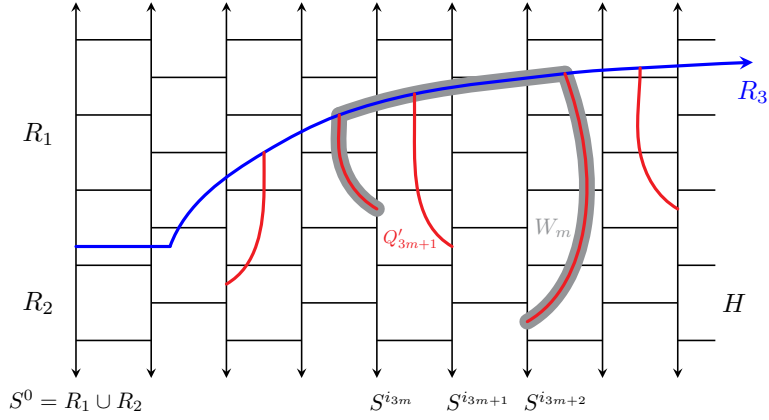
		
		By \ref{itm:ThickHalfThickComp:Si}, and because $R_3 \cap C_L \neq \emptyset$ as well as $H_{\geq L} \subseteq C_L$ for all $L \in \N_{\geq 1}$, there exists, for every $m \in \N$, a $T_m$--$\bigcup_{i \geq N_m} B_G(S^i, m)$ path $Q_m$ that ends in some $B_G(S^{i_m}, m)$ for $i_m \geq m$ and avoids $B_G(S^0, M_{N_m}+m)$. In particular, by definition of an $A$--$B$ path, $Q_m$ has only its last vertex in $\bigcup_{i \geq N_m} B_G(S^i, m)$.
		We extend each $Q_m$ to a $T_m$--$S^{i_m}$ path $Q'_m$ by adding a shortest $S^{i_m}$--$Q_m$ path. 
		Since $d_G(Q_m, S^i) > m$ for all $i \neq i_m$ as mentioned above and again by~\ref{itm:ThickHalfThickComp:Si}, we find $d_G(Q'_m, S^i) > m$ for all $i \neq i_m$. 
		Moreover, by~\ref{itm:ThickHalfThickComp:Si}, the paths $Q'_m$ still avoid $B_G(S^0, M_{N_m}+m)$.
		Hence, by~\ref{itm:ThickHalfThickComp:Pij}, we have that, for every $i \in \N$, the paths $Q'_m$ with $m \geq i$ have distance at least $m$ from the paths $P_{ij}$. Since $G$ is locally finite, this implies that we may assume, by deleting for every $i \in \N$ at most finitely many $P_{ij}$ and applying \cref{prop:HexGridAfterDeletingPaths}, that $d_G(P_{ij}, Q'_m) > m$ for all $m, i \in \N$ and $j \in \Z$.
		All in all, we find $d_G(H-S^{i_m}, Q'_m) > m$ for all $m \in \N$.
		
		Since $Q'_m$ avoids $B_G(S^0, M_{N_m}+m)$ but is itself eventually contained in some $B_G(S^0, m')$ for $m' > m$ as~$Q'_m$ is finite, we may assume, by passing to a subsequence of the $Q'_m$'s if necessary, that the $Q'_m$'s are pairwise disjoint.
		Moreover, by \ref{itm:ThickHalfThickComp:Si} and again since $Q'_m$ avoids $B_G(S^0, M_{N_m})$, we may assume that $i_1 < i_2 < \dots$, by once again passing to a subsequence of the $Q'_m$'s if necessary.
		
		Now since $d_G(H-S^{i_m}, Q'_m) > m$ and $d_G(H, T_m) > m$, the paths $W_m$ that consist of $Q'_{3m}$, $Q'_{3m+2}$ and a suitable subpath of $T_{3m}$ are $m$-fat $H$-paths with endvertices on $S^{i_{3m}}$ and $S^{i_{3m+2}}$ (see \cref{fig:ThickHalfThickComp:1}). 
		In particular, since the $Q'_m$'s are pairwise disjoint, we may assume, by passing to a subsequence of the $W_m$'s, that also the $W_m$'s are pairwise disjoint.
		Hence, the $W_m$'s are $m$-fat $H$-paths as in \cref{lem:KFatHexGridWithJumpingPaths}, which implies that $G$ contains $K_{\aleph_0}$ as an ultra fat minor. This concludes the first case of the proof.
		\medskip
		
		\paragraph{\textbf{Case 2:}} \emph{There exists some $K$ such that $d_G(R_3, S^i) < K$ for infinitely many $i \in \N$.} (See \cref{fig:ThickHalfThickComp:2}.) 
		
		We first show that we may assume, by passing to a subgraph of $H$ if necessary, that
		\begin{enumerate}[label=(\alph*)]
			\item \label{itm:ThickHalfThickComp:2:S^i} $d_G(R_3, S^{i}) < i$ for all $i \geq K$, and
			\item \label{itm:ThickHalfThickComp:2:S^ileq} for all $K \leq j < i$, if $d_G(r, S^i) < i$ for some $r \in V(R_3)$, then $d_G(rR_3, S^{j}) \geq j$.
		\end{enumerate}
		Indeed, set $i_j := j$ for all $j <K$, and let $i_K \geq K$ be minimal such that $d_G(R_3, S^{i_K}) < K$. By \ref{itm:ThickHalfThickComp:Si} and~\ref{itm:ThickHalfThickComp:R3},~$R_3$ has a tail $T$ that is disjoint from $B_G(S^0, M_{i_K}+i_K)$. 
		Since $R_3-T$ is finite and because of~\ref{itm:ThickHalfThickComp:Si}, we have $d_G(R_3-T, S^i) > i$ for all large enough $i \in \N$, and hence, as $d_G(R_3, S^i) < K$ for infinitely many $i \in \N$, there is some $i_{K+1} > i_K$ such that $d_G(T, S^{i_{K+1}}) < i_{K+1}$ and $d_G(R_3-T, S^{i}) \geq i$ for all $i \geq i_{K+1}$. By continuing in this way, we obtain a sequence $0 := i_0 < i_1 < \dots \in \N$ such that the rays $S^{i_j}$ satisfy~\ref{itm:ThickHalfThickComp:2:S^i} and~\ref{itm:ThickHalfThickComp:2:S^ileq}.
		Now pick a subdivision $H' \subseteq H$ of the hexagonal half-grid whose vertical double rays are precisely the~$S^{i_j}$. It is easy to see that $H'$ still satisfies \ref{itm:ThickHalfThickComp:Si} to \ref{itm:ThickHalfThickComp:R3}. By deleting from $H'$ at most finitely many $P'_{ij}$ for every $i \in \N$ and applying \cref{prop:HexGridAfterDeletingPaths}, we also regain property \ref{itm:ThickHalfThickComp:R3Pij} for~$H'$. Hence, $H'$ is the desired subgraph of~$H$, and we denote $H'$ again by~$H$.
		
		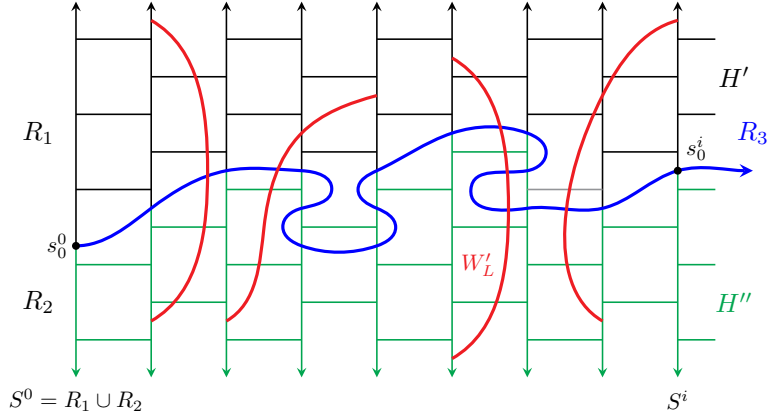
\begin{figure}[ht]
			\centering
			\begin{tikzpicture}
%vertical lines

    \draw[-stealth, line width=0.6] (0,1.25) -- (0,4.5);
    \draw[-stealth, line width=0.6] (1,1.75) -- (1,4.5);
    \draw[-stealth, line width=0.6] (2,2.25) -- (2,4.5);
    \draw[-stealth, line width=0.6] (3,2.25) -- (3,4.5);
    \draw[-stealth, line width=0.6] (4,2.25) -- (4,4.5);
    \draw[-stealth, line width=0.6] (5,2.75) -- (5,4.5);
    \draw[-stealth, line width=0.6] (6,2.75) -- (6,4.5);
    \draw[-stealth, line width=0.6] (7,1.75) -- (7,4.5);
    \draw[-stealth, line width=0.6] (8,2.25) -- (8,4.5);

    \draw[Green, -stealth, line width=0.6] (0,1.25) -- (0,-0.5);
    \draw[Green, -stealth, line width=0.6] (1,1.75) -- (1,-0.5);
    \draw[Green, -stealth, line width=0.6] (2,2.25) -- (2,-0.5);
    \draw[Green, -stealth, line width=0.6] (3,2.25) -- (3,-0.5);
    \draw[Green, -stealth, line width=0.6] (4,2.25) -- (4,-0.5);
    \draw[Green, -stealth, line width=0.6] (5,2.75) -- (5,-0.5);
    \draw[Green, -stealth, line width=0.6] (6,2.75) -- (6,-0.5);
    \draw[Green, -stealth, line width=0.6] (7,1.75) -- (7,-0.5);
    \draw[Green, -stealth, line width=0.6] (8,2.25) -- (8,-0.5);

%Edges from 0 to the right

	\foreach \y in {2,3,4}
    \draw[line width=0.6] (0,\y) -- (1,\y);
	\foreach \y in {0,1}
    \draw[Green, line width=0.6] (0,\y) -- (1,\y);

%Edges from 1 to the right

    \foreach \y in {2.5,3.5}
    \draw[line width=0.6] (1,\y) -- (2,\y);
    \foreach \y in {0.5,1.5}
    \draw[Green, line width=0.6] (1,\y) -- (2,\y);

%Edges from 2 to the right

	\foreach \y in {3,4}
    \draw[line width=0.6] (2,\y) -- (3,\y);
	\foreach \y in {0,1,2}
    \draw[Green, line width=0.6] (2,\y) -- (3,\y);

%Edges from 3 to the right

    \foreach \y in {2.5,3.5}
    \draw[line width=0.6] (3,\y) -- (4,\y);
    \foreach \y in {0.5,1.5}
    \draw[Green, line width=0.6] (3,\y) -- (4,\y);

%Edges from 4 to the right

	\foreach \y in {3,4}
    \draw[line width=0.6] (4,\y) -- (5,\y);
	\foreach \y in {0,1,2}
    \draw[Green, line width=0.6] (4,\y) -- (5,\y);

%Edges from 5 to the right

    \foreach \y in {3.5}
    \draw[line width=0.6] (5,\y) -- (6,\y);
    \foreach \y in {0.5,1.5,2.5}
    \draw[Green, line width=0.6] (5,\y) -- (6,\y);

%Edges from 6 to the right

	\foreach \y in {3,4}
    \draw[line width=0.6] (6,\y) -- (7,\y);
    \draw[Gray, line width=0.6] (6, 2) -- (7, 2);
	\foreach \y in {0,1}
    \draw[Green, line width=0.6] (6,\y) -- (7,\y);

%Edges from 7 to the right

    \foreach \y in {2.5,3.5}
    \draw[line width=0.6] (7,\y) -- (8,\y);
    \foreach \y in {0.5,1.5}
    \draw[Green, line width=0.6] (7,\y) -- (8,\y);

% short edges

	\foreach \y in {0,1,2}
    \draw[Green, line width=0.6] (8,\y) -- (8.5,\y);

	\foreach \y in {3,4}
    \draw[line width=0.6] (8,\y) -- (8.5,\y);

% R_3

	\draw[blue,-stealth, line width=1.2] (0,1.25) to[out angle=0, in angle=180, curve through = {(1,1.75) (2,2.25) (3,2.25) (3.4,2) (3,1.75) (2.75,1.5) (3,1.25) (4,1.25) (4.25,1.5) (4,1.75) (3.75,2) (4,2.25) (5,2.75) (6,2.75) (6.25,2.5) (6,2.25) (5.25,2) (6,1.75) (7,1.75) (8,2.25)}] (9,2.25);

%W'_L

	\draw[Red, line width=1.2] (1,4.25) to[out angle=330, in angle=30, curve through = {(1.75,2.25)}] (1,0.25);
	\draw[Red, line width=1.2] (4,3.25) to[out angle=195, in angle=30, curve through = {(3,2.75) (2.5,1.5)}] (2,0.25);
	\draw[Red, line width=1.2] (5,3.75) to[out angle=330, in angle=30, curve through = {(5.75,1.75)}] (5,-0.25);
	\draw[Red, line width=1.2] (8,4.25) to[out angle=195, in angle=150, curve through = {(7,3.25) (6.5,1.5)}] (7,0.25);

%names

	\draw (-0.5,2.75) node {$R_1$};
	\draw (-0.5,0.5) node {$R_2$};
	\draw[blue] (9,2.75) node {$R_3$};
	\draw[Green] (8.75,0.5) node {$H''$};
	\draw (8.75,3.5) node {$H'$};
	\draw (0,-0.8) node {\scalebox{0.85}{$S^0=R_1\cup R_2$}};
	\draw (8,-0.8) node {\scalebox{0.85}{$S^i$}};
	\draw[Red] (5.35,1) node {\scalebox{0.85}{$W'_L$}};

%s^0_i

    \fill (0,1.25) circle (1.5pt);
	\draw (-0.25,1.25) node {\scalebox{0.85}{$s_0^0$}};
    
%s^i_j

    \fill (8,2.25) circle (1.5pt);
	\draw (8.25,2.55) node {\scalebox{0.85}{$s_0^i$}};

\end{tikzpicture}
			\vspace{-3em}
			\caption{Sketch of Case 2 in the proof of \cref{lem:ThickAndHalfThickComponent}: the hexagonal half-grid $H$ and the ray $R_3$, which has distance $<i$ to every vertical double ray $S^i$ of $H$ by \ref{itm:ThickHalfThickComp:2:S^i}. By \ref{itm:ThickHalfThickComp:2:S^ileq}, once $R_3$ comes close to some $S^i$, it will never come close to some $S^j$ with $j < i$ again.
				The paths $W'_L$ have distance at least $L$ from $R_3$ and `jump over' $R_3$.}
			\label{fig:ThickHalfThickComp:2}
		\end{figure}
		
		Let $S^i =: \dots s^i_{-1} s^i_0 s^i_1 \dots$ such that $d_G(R_3,s^i_0) < i$ and such that $S^i_{\geq 0}$ is the `upper half' and $S^i_{\leq 0}$ is the `lower half' of $S^i$ (see \cref{fig:ThickHalfThickComp:2}).
		Further, let $H'$ be the `upper half' of $H$ with respect to this enumeration, that is, let $H'$ be the subgraph of $H$ that consists of the $S^i_{\geq 0}$'s and all paths $P_{ij}$ whose endvertices lie on the $S^i_{\geq 0}$'s. Let $H''$ be the `lower half' of~$H$ defined analogously.
		
		Now choose for all $i, L \in \N$ maximal tails \defn{$T'_{iL}, T''_{iL}$} of $S^i_{\geq 0}$ and $S^i_{\leq 0}$, respectively, such that 
		\begin{enumerate}[label=(\alph*)]
			\setcounter{enumi}{2}
			\item \label{itm:ThickHalfThickComp:2:T_iL} $d_G(R_3, T'_{iL}) \geq 2L$ and $d_G(R_3, T''_{iL}) \geq 2L$.
		\end{enumerate}
		Note that by \ref{itm:ThickHalfThickComp:Si} and \ref{itm:ThickHalfThickComp:R3}, the $T'_{iL}$'s, $T''_{iL}$'s are non-empty. Note further that, by \ref{itm:ThickHalfThickComp:Pij} and \ref{itm:ThickHalfThickComp:R3}, for all $i, L \in \N$ all but finitely many of the $P_{ij}$'s avoid $B_G(R_3, L)$. 
		Hence, there are, for every $L \in \N$, escaping subdivisions $H'_L, H''_L \subseteq H_{\geq L}$ of the hexagonal `quarter-grid' whose vertical rays are the $T'_{iL}$'s or $T''_{iL}$, respectively, for $i \geq L$ such that $H'_1 \supseteq H'_2 \supseteq \dots$ as well as $H''_1 \supseteq H''_2 \supseteq \dots$, and such that $H'_L, H''_L$ avoid $B_G(R_1 \cup R_3, L)$.
		
		Since $H'_L, H''_L$ avoid $B_G(R_1 \cup R_3, L)$, they are contained in components $C_L, D_L$ of $G-B_G(R_1 \cup R_3, L)$, respectively. Clearly, $C_L$ is long and $D_L$ is half-long. As $C_1 \supseteq C_2 \supseteq \dots$ and $D_1 \supseteq D_2 \supseteq \dots$, this implies that $C_L$ is thick and $D_L$ is half-thick.
		If there is some $L \in \N$ such that $C_L \neq D_L$, then we are done as then $L$, $R := R_1 \cup R_3$, $C := C_L$ and $D := D_L$ are as desired. 
		
		Hence, we may assume that $C_L = D_L$ for all $L \in \N$.
		We will now once again construct fat $H$-paths as in the premise of \cref{lem:KFatHexGridWithJumpingPaths}, which then yields that $G$ contains $K_{\aleph_0}$ as an ultra fat minor, and which thus concludes the proof.
		Since $C_L = D_L$ and $H'_L \subseteq C_L$, $H''_{L} \subseteq D_L$ for all $L \geq K$, there are $\left(\bigcup_{i \in \N} B_G(T'_{iL}, \min\{i,L\})\right)$--$\left(\bigcup_{i \in \N} B_G(T''_{iL}, \min\{i,L\})\right)$ paths $W_L$ that avoid $B_G(R_1 \cup R_3, 5L)$. 
		We now modify $W_L$ as follows. 
		Let $i'_L$ be such that $W_L$ starts in $B_G(T'_{i'_LL}, \min\{i'_L,L\})$.
		If $W_L$ meets $B_G(P_{ij}, \min\{i,L\})$ for some $P_{ij} \subseteq H'_{4L}$ with $i \notin \{i'_L, i'_L+1\}$, then we let $P_{ij}$ be the last such path, and we shorten $W_L$ so that it meets $B_G(P_{ij}, \min\{i,L\})$ precisely in its first vertex. Then we extend $W_L$ by a shortest $W_L$--$P_{ij}$ path and a suitable subpath of~$P_{ij}$ so that $W_L$ ends in $T'_{iL}$ (or in $T'_{(i-1)L}$ if the shortest $W_L$--$P_{ij}$ path has distance $< \min\{i,L\}$ from $T'_{(i-1)L}$). Otherwise, we extend $W_L$ by a shortest $W_L$--$T'_{i'_LL}$ path. Analogously, we modify the end of~$W_L$.
		Let \defn{$W'_L$} be the path which we obtain in this way from $W_L$ and let $i_L, j_L \in \N$ be such that $W'_L$ starts in $T'_{i_LL}$ and ends in $T''_{j_LL}$. Further, let \defn{$w^0_L, w^1_L$} be the endvertices of $W'_L$ on $T'_{i_LL}$ and $T''_{j_LL}$, respectively. Then, for all $L \in \N$,
		\begin{enumerate}[label=(\greek*)]
			\item \label{itm:ThickHalfThickComp:3:Ends} $W'_L$ starts at $w^0_L \in T'_{i_LL}$ and ends at $w^1_L \in T''_{j_LL}$, 
			\item \label{itm:ThickHalfThickComp:3:R_3} $W'_L$ avoids $B_G(R_1 \cup R_3, 2L)$, and
			\item \label{itm:ThickHalfThickComp:3:T_iK} $d_G(W'_L, T'_{kL} \cup T''_{kL}) \geq \min\{k,L\}$ for all $k \neq i_L, j_L \in \N$.
		\end{enumerate}
		
		We now update the paths $W'_L$, for $L \in \N$, so that they additionally satisfy
		\begin{enumerate}[label=(\greek*)]
			\setcounter{enumi}{3}
			\item \label{itm:ThickHalfThickComp:NEW} $d_G(S^0, W'_L) > L'$ or $j_L \leq L'$ for all $L' \leq L$.
		\end{enumerate}
		For this, let $L \in \N$ be given. If $W'_L$ avoids $B_G(S^0, L)$, then it satisfies the first option of \ref{itm:ThickHalfThickComp:NEW} for all $L' \leq L$, and we can leave $W'_L$ unchanged. Therefore, assume that $W'_L$ meets $B_G(S^0, L)$, and let $w$ be the first vertex on $W'_L$ when going along $W'_L$ from $w^0_L$ to $w^1_L$ that is contained in $B_G(S^0,L)$. Let $P$ be a shortest $w$--$S^0$ path in $G$ (of length at most $L$), and let $p$ be the first vertex of $P$ in $\bigcup_{i \leq L} B_G(T''_{iL}, i)$. (Note that $P$ ends in $T''_{0L}$, and hence meets this union.) We update $j_L$ to be the (unique) $i \leq L$ such that $p \in B_G(T''_{iL},i)$, we let $W'_L$ be the concatenation of $w^0_LW'_Lw$ and $wPp$ and a $p$--$T''_{iL}$ path of length $i$, and again denote by $w^1_L$ the endvertex of (this new) $W'_L$ in $T''_{j_LL}$. Then $W'_L$ clearly still satisfies \ref{itm:ThickHalfThickComp:3:Ends} and \ref{itm:ThickHalfThickComp:3:T_iK}. Moreover, it also satisfies \ref{itm:ThickHalfThickComp:3:R_3} as the old $W'_L$ had actually satisfied \ref{itm:ThickHalfThickComp:3:R_3} with $4L$ instead of $2L$. Additionally, $W'_L$ now satisfies \ref{itm:ThickHalfThickComp:NEW}: for every $L' \geq j_L$ it satisfies the second option, and for every $L' < j_L$ it satisfies the first option by construction and by \ref{itm:DefEscHG:S^i} of escaping subdivisions (and because $M_{j_L} \geq 2j_L$). 
		
		Since $W'_L$ avoids $B_G(R_1 \cup R_3, 2L)$ but is eventually contained in $B_G(R_1 \cup R_3, L')$ for some $L' > 2L$ because~$W'_L$ is finite, we may assume, by passing to a subsequence if necessary, that the $W'_L$'s are pairwise disjoint. 
		Moreover, by \cref{cor:FurtherPropertiesOfEscapingHGs}~\ref{itm:DefEscHG:S^i:2}, every $S^i_{\geq 0}$ is contained in $B_G(R_1, L)$ for some large enough $L \in \N$ and hence, by \ref{itm:ThickHalfThickComp:3:R_3} and by once again passing to a subsequence, we may assume that 
		\begin{enumerate}[label=(\greek*)]
			\setcounter{enumi}{4}
			\item \label{itm:ThickHalfThickComp:3:i_Lleq} $i_1 < i_2 < \dots$.
		\end{enumerate} 
		Finally, by construction, $W'_L$ can have distance $< \min\{i,L\}$ to some $P_{ij}$ only if $i \in \{i_L, i_L+1, j_L, j_L+1\}$. So since~$G$ is locally finite and because of \ref{itm:ThickHalfThickComp:3:Ends} and \ref{itm:ThickHalfThickComp:3:i_Lleq}, we may assume, by deleting at most finitely many~$P_{ij}$ for every $i \in \N$ and applying \cref{prop:HexGridAfterDeletingPaths}, that 
		\begin{enumerate}[label=(\greek*)]
			\setcounter{enumi}{5}
			\item \label{itm:ThickHalfThickComp:3:P_ij} $d_G(W'_L, P_{ij}) > \min\{i,L\}$ for all $P_{ij}$ with $i \notin \{j_L, j_L+1\}$.
		\end{enumerate}
		
		\begin{figure}
			\centering
			\begin{subfigure}[b]{0.4\linewidth}
				\centering
				\begin{tikzpicture}
%vertical lines

	\foreach \x in {0,1,2,3,4}
    \draw[line width=1.2] (\x,-0.5) -- (\x,4.5);

% R_3

	\draw[blue, line width=1.7pt] (-0,3.25) to[out angle=0, in angle=180, curve through = {(1,3.5) (1.2,3.25) (1,3) (0.6,1.75) (1,1.5) (1.625,2.25) (1.625,2.5) (2,3.25) (2.175,2.45) (2.125,2.15) (1.8,1) (2.55,1.375) (2.75,1.375) (3.25,1.85) (2.75,3) (3.25,2.5)}] (4,2.25);

%S'_{i_{2L}}

\begin{scope}[opacity=0.5]

	\draw[Gray, line width=6pt,line cap=round] (2,4.5) to[out angle=270, in angle=75, curve through = {(2,4) (1.5,3.25)}] (1,2);
	\draw[Gray, line width=6pt,line cap=round] (1,2) to[out angle=40, in angle=90, curve through = {(2.3,2.15) (2.85,0.75) (3,0)}] (3,-0.5);
	\foreach \x in {0,4}
    \draw[Gray, line width=6pt,line cap=round] (\x,-0.5) -- (\x,4.5);

\end{scope}

%W'_{2L}

	\draw[Red, line width=1.5,line cap=round] (2,4) to[out angle=270, in angle=75, curve through = {(1.5,3.25)}] (1,2);
	\draw[Red, line width=1.5,line cap=round] (1,2) to[out angle=40, in angle=90, curve through = {(2.3,2.15) (2.85,0.75)}] (3,0);

%names

	\draw (0,-0.8) node {\scalebox{0.85}{$S^{i_{2L-1}}$}};
	\draw (2,-0.8) node {\scalebox{0.85}{$S^{i_{2L}}$}};
	\draw (3,-0.8) node {\scalebox{0.85}{$S^{j_{2L}}$}};
    \draw (4,-0.8) node {\scalebox{0.85}{$S^{i_{2L+1}}$}};
	\draw[Gray] (2.5,4.2) node {\scalebox{0.85}{$\tilde{S}^{i_{2L}}$}};
	\draw[Red] (1.45,4) node {\scalebox{0.85}{$W'_{2L}$}};
    \draw[blue] (3.6,2.75) node {$Q_{2L}$};

\end{tikzpicture}
				\vspace{-3em}
				\subcaption{Case 2a}
				\label{fig:ThickHalfThickComp:2a}
			\end{subfigure}
			\hspace{3em}
			\begin{subfigure}[b]{0.4\linewidth}
				\centering
				\begin{tikzpicture}
%vertical lines

	\foreach \x in {0,1,2,3,4}
    \draw[line width=1.5] (\x,-0.5) -- (\x,4.5);

% R_3

    \draw[blue, line width=1.7] (-0.5,2.75) to[out angle=20, in angle=160, curve through = {(0,3) (0.3,2.9) (0,2.3) (-0.3,1.5) (0,1.4) (0.65,2) (1,2.5) (1.4,2.7) (0.8,1.3) (1,0.5) (1.5,1.1) (2,1.8) (2.3,2.2) (1.8,3.4) (2,3.5) (2.5,3.4) (3,3.5) (3.3,3.55) (2.7,2.6) (3,2.4) (4,0.5) (4.4, 1) (3.75, 1.5) (4, 1.9)}] (4.5,2);

%S'_{i_{2L}}

\begin{scope}[opacity=0.5]

	\draw[Gray, line width=6pt,line cap=round] (1,4.5) -- (1,2.5);
	\draw[Gray, line width=6pt,line cap=round] (1,2.5) to[out angle=50, in angle=170, curve through = {(1.4,2.7) (0.8,1.3)}] (1,0.5);
	\draw[Gray, line width=6pt,line cap=round] (1,0.5) -- (1,-0.5);

	\draw[Gray, line width=6pt,line cap=round] (2,4.5) -- (2,3.5);
	\draw[Gray, line width=6pt,line cap=round] (2,3.5) to[out angle=185, in angle=45, curve through = {(1.8,3.4) (2.3,2.2)}] (2,1.8);
	\draw[Gray, line width=6pt,line cap=round] (2,1.8) -- (2,-0.5);

	\draw[Gray, line width=6pt,line cap=round] (3,4.5) -- (3,3.5);
	\draw[Gray, line width=6pt,line cap=round] (3,3.5) to[out angle=25, in angle=180, curve through = {(3.3,3.55) (2.7,2.6)}] (3,2.4);
	\draw[Gray, line width=6pt,line cap=round] (3,2.4) -- (3,-0.5);

	\draw[Gray, line width=6pt,line cap=round] (4,4.5) -- (4,1.9);
	\draw[Gray, line width=6pt,line cap=round] (4,1.9) to[out angle=205, in angle=0, curve through = {(3.75, 1.5) (4.4, 1)}] (4,0.5);
	\draw[Gray, line width=6pt,line cap=round] (4,0.5) -- (4,-0.5);

\end{scope}

%W'_{2L}

	\draw[Red, line width=1.5,line cap=round] (2,4) to[out angle=220, in angle=35, curve through = {(1.5,3.4) (1.33,2.8) (1.3,2.4)}] (1,2); 
	\draw[Red, line width=1.5,line cap=round] (1,2) to[out angle=140, in angle=40, curve through = {(0.6,2.2)}] (0,1.8);
	\draw[Red, line width=1.5,line cap=round] (0,1.8) to[out angle=180, in angle=150, curve through = {(-0.45,1.65)}] (0,0.5);

%W'_{2L+2}

	\draw[Red, line width=1.5,line cap=round] (0,0) to[out angle=30, in angle=150, curve through = {(0.3,0.5) (0.75,1)}] (1,0.9);
	\draw[Red, line width=1.5,line cap=round] (1,0.9) to[out angle=10, in angle=160, curve through = {(1.55,1.4) (2.3,1.2) (3.2,2.2)}] (4,2.5);

\begin{scope}[opacity=0.4]
    
	\draw[Orange, line width=5pt,line cap=round] (2,4) to[out angle=220, in angle=35, curve through = {(1.5,3.4) (1.33,2.8) (1.3,2.4)}] (1,2);
	\draw[Orange, line width=5pt,line cap=round] (1,2) to[out angle=140, in angle=40, curve through = {(0.6,2.2)}] (0,1.8);
	\draw[Orange, line width=5pt,line cap=round] (0,1.8) to[out angle=180, in angle=150, curve through = {(-0.45,1.65)}] (0,0.5);

	\draw[Orange, line width=5pt,line cap=round] (0,0.5) -- (0,0);

    \draw[Orange, line width=5pt,line cap=round] (0,0) to[out angle=30, in angle=150, curve through = {(0.3,0.5) (0.75,1)}] (1,0.9);
	\draw[Orange, line width=5pt,line cap=round] (1,0.9) to[out angle=10, in angle=160, curve through = {(1.55,1.4) (2.3,1.2) (3.2,2.2)}] (4,2.5); 
    
\end{scope}

%names

    \draw[blue] (4.4,2.3) node {$R_3$};
	\draw (0,-0.8) node {\scalebox{0.85}{$S^{N'}$}};
	\draw (2,-0.8) node {\scalebox{0.85}{$S^{i_{2L}}$}};
    \draw (4,-0.8) node {\scalebox{0.85}{$S^{i_{2L+2}}$}};
	\draw[Gray] (2.45,4.2) node {\scalebox{0.85}{$\tilde{S}^{i_{2L}}$}};
    \draw[Gray] (4.55,4.2) node {\scalebox{0.85}{$\tilde{S}^{i_{2L+2}}$}};
	\draw[Red] (1.45,4) node {\scalebox{0.85}{$W'_{2L}$}};
    \draw[Red] (2.5, 0.9) node {\scalebox{0.85}{$W'_{2L+2}$}};
	\draw[Orange] (-0.5,0.25) node {\scalebox{0.85}{$W''_m$}};
    
\end{tikzpicture}
				\vspace{-3em}
				\subcaption{Case 2b}
				\label{fig:ThickHalfThickComp:2b}
			\end{subfigure}
			\caption{The new double rays $\widetilde{S}^{i_L}$ in Case 2a and 2b (indicated in grey), which again form the vertical double rays of an escaping subdivision $\widetilde{H}$ of the hexagonal half-grid. In Case 2a, subpaths $Q_{2L}$ of $R_3$ are fat $\widetilde{H}$-paths, while in Case 2b, the paths $W''_m$ are fat $\widetilde{H}$-paths.} 
			\label{fig:ThickHalfThickComp:2ab}
		\end{figure}
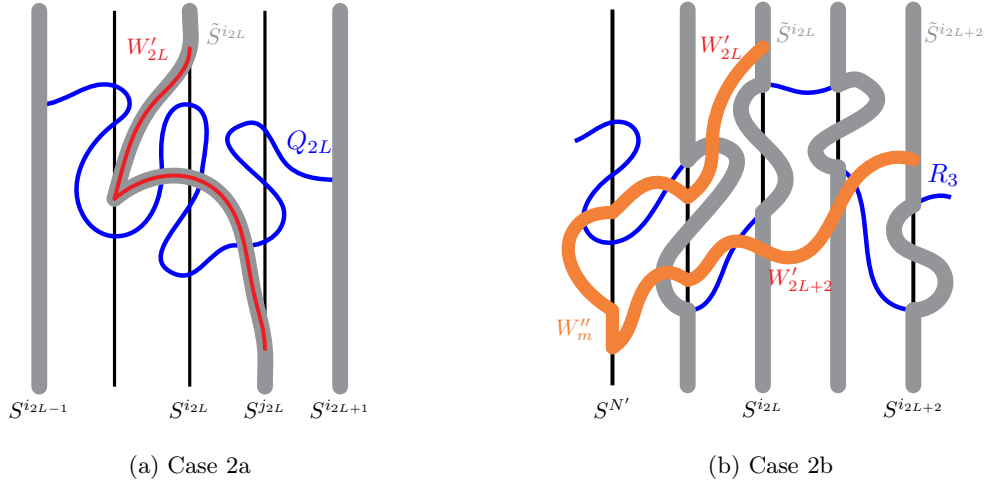
		
		Note that we might not be able to obtain \ref{itm:ThickHalfThickComp:3:P_ij} for all $i \in \N$ if infinitely many $j_L$ are the same. We now distinguish two cases based on this observation.
		
		\paragraph{\textbf{Case 2a:}}
		\emph{There is a sequence $L_1 < L_2 < \ldots \in \N$ such that $j_{L_1} < j_{L_2} < \dots$.}
		
		Since also $i_{1} < i_2 < \dots$ by \ref{itm:ThickHalfThickComp:3:i_Lleq}, we may assume, by passing to a subsequence of the $W'_L$'s, that $i_L, j_L < i_{L'}, j_{L'}$ for all $L < L' \in \N$. 
		Then we obtain, similar to~\ref{itm:ThickHalfThickComp:3:P_ij}, that
		\begin{enumerate}[label=(\greek*)]
			\setcounter{enumi}{6}
			\item \label{itm:ThickHalfThickComp:3:P_ij:2} $d_G(W'_L, P_{ij}) > \min\{i,L\}$ also for all $P_{ij}$ with $i \in \{j_L, j_L+1\}$.
		\end{enumerate}
		Moreover, by \ref{itm:ThickHalfThickComp:NEW}, and again passing to a subsequence of the $W'_L$'s, we may assume that 
		\begin{enumerate}[label=(\greek*$^\prime$)]
			\setcounter{enumi}{3}
			\item \label{itm:ThickHalfThickComp:NEW1} $d_G(S^0, W'_L) > L$ for all $L \in \N$.
		\end{enumerate}
		In particular, since the $W'_L$'s are finite, and by once again passing to a subsequence of the $W'_L$'s, we may assume that $W'_L \subseteq G[S^0, M_{i_{L+1}}-2i_{L+1}]-B_G(S^0, M_{i_{L-1}}+2i_{L-1})$.
		
		For every $2L \in \N$, we set $\widetilde{S}^{i_{2L}} := S^{j_{2L}}w^1_{2L}W'_{2L}w^0_{2L}S^{i_{2L}}$ (see \cref{fig:ThickHalfThickComp:2ab}\,\subref{fig:ThickHalfThickComp:2a}). 
		Since every $\widetilde{S}^{i_{2L}}$ has a tail in $S^{i_{2L}}_{\geq 0}$ and in $S^{j_{2L}}_{\leq 0}$ and because of \ref{itm:ThickHalfThickComp:3:P_ij} and \ref{itm:ThickHalfThickComp:3:P_ij:2}, we can find in $H$ infinitely many $S^{i_{2L-1}}$--$\widetilde{S}^{i_{2L}}$ paths and infinitely many $\widetilde{S}^{i_{2L}}$--$S^{i_{2L+1}}$ paths that make the $\widetilde{S}^{i_{2L}}$'s and the $S^{i_{2L+1}}$'s into an escaping subdivision~$\widetilde{H}$ of the hexagonal half-grid.
		By \ref{itm:ThickHalfThickComp:2:S^i} and \ref{itm:ThickHalfThickComp:2:S^ileq}, $R_3$ contains for every $2L$ a subpath $Q_{2L}$ that starts in $B_G(S^{i_{2L-1}}, i_{2L-1})$, ends in $B_G(S^{i_{2L+1}}, i_{2L+1})$ and is otherwise disjoint from all $B_G(S^{i_{L'}}, i_{L'})$ and $B_G(S^{j_{L'}}, j_{L'})$ with $L' \neq 2L \in \N$. 
		Since $d_G(R_3, W'_{2L'}) \geq 2L'$ by \ref{itm:ThickHalfThickComp:3:R_3}, it follows that also $d_G(Q_{2L}, \widetilde{S}^{i_{2L'}}) \geq 2L'$ for all $L' \neq L \in \N$. Moreover, by the definition of $\widetilde{S}^{i_{2L}}$ and because of \ref{itm:ThickHalfThickComp:3:R_3} and \ref{itm:ThickHalfThickComp:2:T_iL}, we also have $d_G(Q_{2L}, \widetilde{S}^{i_{2L}}) \geq 2L$.
		Hence, extending the $Q_{2L}$'s by shortest paths to $S^{i_{2L-1}}$--$S^{i_{2L+1}}$ paths yields fat $\widetilde{H}$-paths as in \cref{lem:KFatHexGridWithJumpingPaths}. This implies that $G$ contains $K_{\aleph_0}$ as an ultra fat minor, and hence concludes this case of the proof.
		
		\paragraph{\textbf{Case 2b:}} 
		\emph{There is some $N \in \N$ such that $j_L \leq N$ for infinitely many $L \in \N$.}
		
		By passing to a subsequence of the $W'_L$'s, we may assume that $j_L = N'$ for some $N' \in \N$ and all $L \in \N$.
		We again modify $H$ as follows.
		For every $i \in \N$, let $\widetilde{S}^{i}$ be obtained from $S^{i}$ by replacing the subpath $\widetilde{T}_i := S^i \setminus (T'_{ii} \cup T''_{ii})$ of~$S^i$ by a path $Q_i$ that consists of a suitable subpath of $R_3$ together with shortest $T'_{ii}$--$R_3$ and $R_3$--$T''_{ii}$ paths (see \cref{fig:ThickHalfThickComp:2ab}\,\subref{fig:ThickHalfThickComp:2b}). Then the $\widetilde{S}^i$'s are again double rays. 
		By \ref{itm:ThickHalfThickComp:Si} and \ref{itm:ThickHalfThickComp:2:S^ileq} and because $G$ is locally finite and the $Q_i$'s are finite and pairwise disjoint, there are $N' < i_{L_1} < i_{L_2} < \ldots \in \N$ such that the $\widetilde{S}^{i_{L_j}}$'s again satisfy \ref{itm:ThickHalfThickComp:Si} with $M_{i_{Lj}}$ updated to $M_{i_{L(j+1)}-1}$. Since every $\widetilde{S}^{i}$ still has a tail in~$S^{i}_{\geq 0}$ and in~$S^{i}_{\leq 0}$ and because of \ref{itm:ThickHalfThickComp:R3Pij}, we can find in $H$, for every $j \in \N$, infinitely many $\widetilde{S}^{i_{L_j}}$--$\widetilde{S}^{i_{L_{j+1}}}$ paths that make the $\widetilde{S}^{i_{L_j}}$'s into an escaping subdivision $\widetilde{H}$ of the hexagonal half-grid. 
		Since all $W'_{L_j}$ end in~$S^{N'}$, we can connect the $W'_{L_{2j}}$'s pairwise by suitable subpaths of~$S^{N'}$, to obtain infinitely many pairwise disjoint $\widetilde{H}$-paths $W''_m$. Note that a path $W''_m$ obtained from $W'_{L_i}$ and $W'_{L_j}$ for $i, j \in 2\N$ is an $\widetilde{H}$-path that starts in~$\widetilde{S}^{i_{L_i}}$ and ends in~$\widetilde{S}^{i_{L_j}}$. Moreover, by \ref{itm:ThickHalfThickComp:3:R_3}, \ref{itm:ThickHalfThickComp:3:T_iK} and \ref{itm:ThickHalfThickComp:3:P_ij} and because $\widetilde{H}$ is escaping, $W''_m$ is $\min\{L_i, L_j\}$-fat. Hence, by \ref{itm:ThickHalfThickComp:3:i_Lleq} and because we only used the paths $W'_{L_i}$ with $i \in 2\N$ to construct the $W''_m$'s, infinitely many of the $W''_m$'s yield fat $\widetilde{H}$-paths as in \cref{lem:KFatHexGridWithJumpingPaths}, which implies that $G$ contains $K_{\aleph_0}$ as an ultra fat minor, and which thus concludes the proof.
	\end{proof}
	
	We are now in a position to prove \cref{thm:AsymptoticFullGridWeaker}.
	
	\begin{proof}[Proof of \cref{thm:AsymptoticFullGridWeaker}]
		The assertion follows immediately by first applying \cref{lem:ThickAndHalfThickComponent} and \cref{lem:TwoThickComponents} and then \cref{lem:TwoThickCompsYieldFullGrid}.
	\end{proof}

	\section{Further comments}\label{sec:problems}
	
	Before we discuss other related topics to this paper, let us note that our main results give only partial answers to the problems that we mentioned in the introduction, that is to \cite{GP2023+}*{Problem~7.3} and \cite{GH24+}*{Problems 4.1 and 4.2}.
	Hence, these problems are still open for arbitrary finitely generated groups that need not be finitely presented and, more generally, for \lf, \qt\ graphs without any restrictions on their cycle spaces.
	Moreover, we propose the following strengthening of those problems that aligns well with \cref{thm:AsymptoticFullGrid}.
	
	\begin{conj}\label{prob:strengthening:GP7.3+GH4.1+4.2}
		Let $G$ be a \lf, \qt\ graph that has a thick end. Then $G$ contains either $K_{\aleph_0}$ as an ultra-fat minor or an escaping subdivision of the hexagonal full-grid.
	\end{conj}

	\subsection{Coarse embeddings}
	
	\cref{thm:DivergingFG} asserts that we can find a diverging subdivision of the hexagonal full-grid in every \lf, \qt\ graph whose cycle space is generated by cycles of bounded length and that has a thick end. The advantage of diverging subdivisions over arbitrary subdivisions is that they preserve some of the geometry of the original graph.
	One might wish to strengthen \cref{thm:DivergingFG}, by asking for a subdivision of the hexagonal full-grid whose geometry is even closer related to the geometry of~$G$. 
	
	For two graphs $G$ and~$H$, a map $f\colon V(H) \to V(G)$ is a \defn{coarse embedding} if there exist functions $\rho^-\colon [0, \infty) \to [0, \infty)$ and $\rho^+\colon [0, \infty) \to [0, \infty)$ such that $\rho^-(a) \to \infty$ for $a \to\infty$
	and
	\[
	\rho^-(d_H(u, v)) \leq d_G(f(u), f(v)) \leq \rho^+(d_H(u, v))
	\] 
	for all $u, v \in V(H)$.
	It is easy to check that a coarse embedding of the hexagonal full-grid always yields a diverging subdivision; however, conversely, a diverging subdivision is in general much weaker than a coarse embedding. One may thus ask whether we can always find a coarse embedding of the hexagonal full-grid in a \lf, \qt\ graph with a thick end.
	
	However, it was already discussed in~\cite{GH24+} that for arbitrary \lf, \qt\ graphs (without any condition on their cycle spaces) we cannot ask for coarse embeddings of the hexagonal full-grid.
	Indeed, coarse embeddings preserve the asymptotic dimension\footnote{See e.g.\ \cite{asymptoticdimminorclosed} for a definition of the asymptotic dimension.}, that is, if $H$ has asymptotic dimension at least~$n$ and $H$ is coarsely embeddable into~$G$, then the asymptotic dimension of~$G$ is at least~$n$, too.
	Since every \lf\ Cayley graph of the lamplighter group has asymptotic dimension~$1$, see Gentimis~\cite{G08}, and has a thick end, but the full-grid has asymptotic dimension~$2$, we cannot ask for coarse embeddings of the hexagonal full-grid into all \lf, \qt\ graphs with thick ends.
	
	However, a special case of a theorem by Fujiwara and Whyte~\cite{FW07} states that every \lf, \qt\ graph with a thick end whose cycle space is generated by cycles of bounded length has asymptotic dimension at least~$2$.
	Thus, the asymptotic dimension of the full-grid does not prevent it from being coarsely embeddable into such graphs. This motivates the following problem.
	
	\begin{conj}
		Let $G$ be a \lf, \qt\ graph whose cycle space is generated by cycles of bounded length and that has a thick end.
		Is the hexagonal full-grid coarsely embeddable into~$G$?
	\end{conj}
	
	\noindent Note that a positive answer to this question would also yield a positive answer to \cite{GH24+}*{Problem 4.5}.
	
	\subsection{Quasi-isometries to trees} \label{subsec:QuasiIsomToTrees}
	
	For two graphs $G$ and~$H$, a map $f\colon V(H) \to V(G)$ is a \defn{quasi-isometry} if there exist $c\geq 1$ and $d\geq 0$ such that
	\[
	\frac{1}{c}(d_H(u, v))-d \leq d_G(f(u), f(v)) \leq c(d_H(u, v))+d
	\]
	for all $u, v \in V(H)$ and
	\[
	d_H(f(V(G)),w)\leq d
	\]
	for all $w\in V(H)$.
	Two graphs are \defn{\qi} if there exists a \qiy\ between them.
	
	As we have discussed in the introduction, a result by Kr\"on and M\"oller \cite{KM08}*{Theorem~5.5} asserts that a \lf, \qt, connected graph is \qi\ to a tree if and only if it has no thick end.
	Hence, we obtain the following corollary from \cref{main:AsymptoticFullGrid,main:DivergingFullGrid}, which yields two new characterisations of \qt, \lf, connected graphs that are \qi\ to trees for the special case that the cycle space is generated by cycles of bounded length.
	
	\begin{cor} \label{cor:AsymptoticFullGrid+DivergingFullGrid}
		Let $G$ be a locally finite, \qt, connected graph whose cycle space is generated by cycles of bounded length.
		Then the following are equivalent:
		\begin{enumerate}[label=\rm{(\roman*)}]
			\item $G$ has a thick end.
			\item $G$ contains the full-grid as an asymptotic minor.
			\item $G$ contains the full-grid as a diverging minor.
			\item $G$ is not \qi\ to a tree. \qed
		\end{enumerate}
	\end{cor}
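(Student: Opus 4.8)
The plan is to establish the implications $(iv) \Leftrightarrow (i) \Rightarrow (ii) \Rightarrow (i)$ together with $(i) \Rightarrow (iii) \Rightarrow (i)$; this clearly gives the full equivalence.

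The implications $(i) \Rightarrow (ii)$ and $(i) \Rightarrow (iii)$ are exactly \cref{main:AsymptoticFullGrid} and \cref{main:DivergingFullGrid} applied to $G$: here one only has to note that the hypotheses are met, since $G$ is locally finite, \qt, and its cycle space is generated by cycles of bounded length.

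For $(ii) \Rightarrow (i)$ and $(iii) \Rightarrow (i)$ I would argue simultaneously, via the intermediate claim that every graph containing the full-grid as a minor has a thick end. Both an asymptotic minor and a diverging minor are in particular minors (an asymptotic minor is a $K$-fat minor for every $K$, hence a minor, and a diverging minor is a minor by definition), so the claim applies. To prove the claim, take a model $(\cV,\cE)$ of $FG$ in $G$ and, for each $i \in \Z$, let $A_i := \bigcup_{x \in V(R^i)} G[V_x] \cup \bigcup_{e \in E(R^i)} E_e$ be the connected, infinite subgraph of $G$ corresponding to the $i$-th vertical double ray $R^i$ of $FG$. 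Since $G$ is locally finite, each $A_i$ contains a ray $S_i$ by \cref{lem:StarComb}; the $A_i$ are pairwise vertex-disjoint, and for each $i$ the branch paths $E_{e_{ij}}$ with $j \in \Z$ form infinitely many pairwise disjoint $A_i$--$A_{i+1}$ paths in $G$. Hence $S_i$ and $S_{i+1}$ lie in a common end of $G$ for every $i$, so all the pairwise disjoint rays $S_i$ lie in one end $\eps$ of $G$, which is therefore thick. (This is the remark from the introduction that a full-grid minor forces a thick end.)

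Finally, for $(i) \Leftrightarrow (iv)$ I would invoke the theorem of Kr\"on and M\"oller \cite{KM08}*{Theorem~5.5}, which asserts that a locally finite, \qt, connected graph is \qi\ to a tree if and only if it has no thick end; since $G$ is connected, this is precisely the equivalence of $(i)$ and $(iv)$. I do not expect any real obstacle here: every step is a direct appeal to a result proved or cited earlier in the paper, and the only mildly technical point — that a full-grid minor forces a thick end — is the standard star-comb argument sketched above.
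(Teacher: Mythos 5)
Your top-level structure is exactly the paper's: (i)$\Rightarrow$(ii) and (i)$\Rightarrow$(iii) are \cref{main:AsymptoticFullGrid,main:DivergingFullGrid}, (i)$\Leftrightarrow$(iv) is the cited theorem of Kr\"on and M\"oller \cite{KM08}*{Theorem~5.5}, and (ii),(iii)$\Rightarrow$(i) reduce to the fact that a full-grid minor forces a thick end, which the paper treats as known (it is only asserted in the introduction). So the one step where you supply an argument of your own is the star-comb argument, and that argument has a gap.

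The gap is the inference from ``there are infinitely many pairwise disjoint $A_i$--$A_{i+1}$ paths'' to ``$S_i$ and $S_{i+1}$ lie in a common end''. Since branch sets may be infinite, the ray $S_i$ you obtain from \cref{lem:StarComb} is an arbitrary ray of $A_i$, and disjoint paths between the connected sets $A_i$ and $A_{i+1}$ do not yield disjoint paths between two prescribed rays inside them. Concretely, a single infinite branch set $V_x$ may contain a dangling ray attached to the rest of $V_x$ at a cutvertex; such a ray is a legitimate choice for $S_i$, yet it may lie in a thin end of $G$ and be inequivalent to every ray of $A_{i+1}$. The claim itself is true, but it needs either a reduction or a more careful choice of rays. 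The cleanest repair: since every vertex of $FG$ has bounded degree and each $G[V_x]$ is connected, shrink each $V_x$ to a finite connected subgraph containing the endvertices of its incident branch paths, obtaining a model of $FG$ with finite branch sets and the same branch paths. Then each column $A_i$ is a chain of finite pieces along a double ray, so every ray of $A_i$ eventually climbs the column, and for any finite $U\subseteq V(G)$ there is a height above which the pieces of columns $i$ and $i+1$ together with one horizontal branch path avoid $U$; this shows $S_i$ and $S_{i+1}$ are equivalent, and your conclusion (infinitely many disjoint rays in one end, hence a thick end) then goes through. Alternatively, keep the infinite branch sets but take $S_i$ to be the spine of a comb whose teeth are attachment vertices of the horizontal branch paths, refining index sets column by column. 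With either repair your proposal is correct and coincides with the paper's intended, essentially immediate, proof.
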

	
	For further characterisations of \qt, \lf, connected graphs that are \qi\ to trees, we refer the reader to \cites{A11,HLMR,KM08,W89}.

	\subsection{Quasi-isometries to planar graphs}
	
	Finally, we would like to draw the reader's attention to another related problem, which is still open. As we discussed in the previous subsection, the (global) geometry of \lf, \qt\ graphs without a thick end is well understood as they are quasi-isometric to forests. In that sense, our results, \cref{main:AsymptoticFullGrid,main:DivergingFullGrid}, can be seen as a step towards understanding the (global) geometry of the remaining \lf, \qt\ graphs -- those with a thick end. In the case where the cycle space of a \lf, \qt\ graph $G$ is generated by cycles of bounded length, we showed that $G$ contains the full-grid as an asymptotic minor. Since asymptotic minors cannot hide in a ball of small radius, they will appear in the global structure of~$G$.
	However, even if $G$ is one-ended, this does not mean that the geometry of~$G$ resembles that of the full-grid or, more generally, of a one-ended, planar graph. The reason for this is simple: the global structure of~$G$ may be far more involved, and may contain the full-grid only as a substructure.
	Indeed, even in our proof, we might have found the asymptotic full-grid inside an asymptotic minor of the infinite complete graph. However, Georgakopoulos and Papasoglu conjectured that this is in fact the only thing that can happen.
	
	\begin{conj}\cite{GP2023+}*{Conjecture 9.3}\label{prob:GP:9.4}
		Let $G$ be a \lf, transitive graph.
		Then $G$ either is \qi\ to a planar graph or contains every finite graph as an asymptotic minor.
	\end{conj}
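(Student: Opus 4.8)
The plan is to establish a coarse dichotomy in the spirit of the main theorems of this paper: either $G$ contains $K_{\aleph_0}$ as an ultra fat minor---in which case, by \cref{obs:UFImpliesAsym}, $G$ already contains every countable graph as an asymptotic minor and we are done---or $G$ is \qi\ to a planar graph. It suffices to treat the conjecture for one-ended $G$: a multi-ended \lf, transitive graph is either \qi\ to a line (two ends, hence planar), or \qi\ to a tree (hence planar), or---when accessible---decomposes as a tree amalgamation of its one-ended torsos; in the last case, if some torso is not \qi\ to a planar graph then it contains every finite graph as an asymptotic minor by the one-ended case, and hence so does $G$, while if every torso is \qi\ to planar one must argue that the tree amalgamation is again \qi\ to planar. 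This last point is already delicate---whether, for instance, $\Z^2 \ast \Z^2$ is \qi\ to a planar graph is a good test case---and \emph{inaccessible} transitive graphs would need a separate argument, presumably showing directly that inaccessibility forces complicated asymptotic minors.

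For one-ended $G$ the strategy is to strengthen the conclusion of \cref{thm:HalfGrid:BoundedCycles,thm:AsymptoticFullGrid} from a full-grid minor to arbitrarily large fat clique minors: it is enough to show that if $G$ is \lf, transitive, one-ended and not \qi\ to a planar graph, then $K_n \prec_K G$ for all $n, K \in \N$, since a model of $K_n$ contains every $n$-vertex graph as a minor and fatness is inherited by deleting branch sets and paths, so this yields every finite graph as an asymptotic minor. Following the template of \cref{sec:HG:BC,sec:FG}, one would fix a diverging double $\eps$-ray $R$ (via \cref{thm:DivergingRays}), build the escaping hexagonal grid around $R$, and then look for a \emph{transversal} direction: enough room, between consecutive cylinders $G[R, M_i] - B_G(R, M_{i-1})$, to route families of paths that are themselves pairwise far apart and can be braided through the grid to produce fat $K_n$'s, much as \cref{lem:KFatHexGridWithJumpingPaths} turns crossing $H$-paths into an ultra fat $K_{\aleph_0}$-minor. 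The crux is then a coarse Kuratowski/Wagner theorem for transitive graphs: the absence of this transversal routing must be shown to produce a planar coarse layout of $G$; equivalently, one classifies the \lf, transitive graphs that do not contain $K_5$ (or $K_{3,3}$) as an asymptotic minor and shows they are all \qi\ to planar graphs.

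The main obstacle is exactly the one flagged in \cref{sec:NewDefs}: without the hypothesis that the cycle space of $G$ is generated by cycles of bounded length, the cylinder lemma \cref{lem:kappa/2NhoodIsConnected} is unavailable, so we lose all control over how the thickened cylinders around $R$ split into components, and with it the constructions of this paper. Diestel--Leader graphs and inaccessible groups show that the large-scale geometry of general \lf, transitive graphs can genuinely be this intricate. A replacement must come from the structure theory of transitive graphs itself---canonical tree-decompositions and structure trees, now allowed to be infinitely branching---used either to localise the argument to a ``planar-like'' region or to exhibit an explicitly non-planar pattern (a thickened $K_5$- or $K_{3,3}$-subdivision, or a crossing family of $H$-paths) whenever $G$ fails to be \qi\ to a planar graph. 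Making ``planar-like'' precise in a \qi-invariant way---via a coarse notion of genus, or separation and Poincar\'e profiles, with the Fujiwara--Whyte asymptotic-dimension bound from \cref{sec:problems} as a sanity check---is where the real difficulty lies; indeed, a full solution would essentially amount to a coarse graph-minor structure theorem in the transitive setting, and is well beyond the reach of the methods developed here.
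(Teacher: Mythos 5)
There is nothing in the paper to compare your argument against: the statement you were asked about is \cref{prob:GP:9.4}, which the paper records as an \emph{open conjecture} of Georgakopoulos and Papasoglu. The authors do not prove it; they only note that MacManus verified it for locally finite Cayley graphs of finitely presented groups and state explicitly that it remains open for arbitrary locally finite (quasi-)transitive graphs, even under the bounded-cycle-space hypothesis that drives all of the paper's own theorems. So the honest benchmark here is not ``does your proof match the paper's proof'' but ``is this actually a proof'' --- and it is not.

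Your text is a programme outline, and each of its three stages rests on a step that is itself unproven and is essentially the whole content of the conjecture. In the reduction to the one-ended case you need that a tree amalgamation of one-ended torsos each quasi-isometric to a planar graph is again quasi-isometric to a planar graph (you flag $\Z^2 \ast \Z^2$ yourself as an unresolved test case), and you leave inaccessible transitive graphs entirely to a ``separate argument'' that is never supplied. In the one-ended case your key step is a ``coarse Kuratowski/Wagner theorem'' --- that a transitive graph with no asymptotic $K_5$ or $K_{3,3}$ minor is quasi-isometric to a planar graph --- which is not a known result and is at least as strong as the conjecture you are trying to prove; invoking it is circular. Finally, you correctly observe that without the bounded-cycle-space assumption \cref{lem:kappa/2NhoodIsConnected} and the cylinder/escaping-grid machinery of \cref{sec:HG:BC,sec:FG} are unavailable, but you offer only the hope that ``structure trees'' or ``Poincar\'e profiles'' might substitute, without any concrete lemma. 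Since you concede in your own closing paragraph that the decisive steps are ``well beyond the reach of the methods developed here,'' what you have written is a plausible research plan with clearly identified gaps, not a proof of \cref{prob:GP:9.4}.
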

	
	\noindent Note that this can be seen as a coarse version of Thomassen's~\cite{T1992} result that every \lf, one-ended, transitive graph is either planar or can be contracted into the infinite complete graph.
	\medskip
	
	MacManus~\cite{M24+} proved \cref{prob:GP:9.4} in the special case where $G$ is a locally finite Cayley graph of a finitely presented group.
	Recall that every Cayley graph of a finitely presented group is transitive and has a cycle space which is generated by cycles of bounded length.
	So the assumption that the cycle space is generated by cycles of bounded length, which was already crucial for our proofs of \cref{main:AsymptoticFullGrid,main:DivergingFullGrid,main:HalfGrid:BoundedCycles,main:DivergingHalfGrid}, reappears here.
	
	We remark that MacManus's proof uses deep group-theoretic results that have no counterpart for \qt\ graphs. Hence, \cref{prob:GP:9.4} is still open for arbitrary \lf, \qt\ graphs whose cycle space is generated by cycles of bounded length.
	
	In line with our results, we may ask for a strengthening of Georgakopoulos and Papasoglu's conjecture, and thus for a more direct coarse analogue of Thomassen's theorem, as follows.
	
	\begin{conj}\label{conj:Thomassen:ultraFat}
		Let $G$ be a \lf, \qt\ graph.
		Then $G$ is either \qi\ to a planar graph or it contains $K_{\aleph_0}$ as an ultra-fat minor.
	\end{conj}
	
	We remark that a positive solution to \cref{conj:Thomassen:ultraFat} implies one for \cref{prob:strengthening:GP7.3+GH4.1+4.2}.
	For this, by \cref{thm:AsymptoticFullGrid}, it suffices to show that the cycle space of every \lf, \qt\ graph that is \qi\ to a planar graph is generated by cycles of bounded length.
	MacManus \cite{M24+b}*{Theorem~A} showed that every \lf, \qt\ graph that is \qi\ to a planar graph is \qi\ to a planar graph that is additionally \qt, and whose cycle space is thus generated by cycles of bounded length by a result of the second author \cite{H18b}*{Theorem 7.2}.
	To show that a positive solution to \cref{conj:Thomassen:ultraFat} implies one for \cref{prob:strengthening:GP7.3+GH4.1+4.2}, it thus remains to prove that the property of having a cycle space that is generated by cycles of bounded length is preserved under quasi-isometries.
	This, however, follows using the standard back-and-forth argument in the proof that finite presentability of finitely generated groups is preserved under quasi-isometries. 
	
	In light of this, we also propose the following weakening of \cref{conj:Thomassen:ultraFat}:
	\begin{conj}
		Let $G$ be a locally finite, \qt\ graph. 
		If the cycle space of $G$ is not generated by cycles of bounded length, then $G$ contains $K_{\aleph_0}$ as an ultra-fat minor.
	\end{conj}

	\section*{Acknowledgement}
	
	We thank the referees for carefully reading the paper and spotting a mistake in the proof of \cref{lem:ThickAndHalfThickComponent}.

	\bibliographystyle{amsplain}
	\bibliography{referencesArXiv}
	
\end{document}